\newlength{\fighskip} \fighskip=2pt
\newlength{\figvskip} \figvskip=3pt
\newcommand*{\figbox}[2]{{
  \def\figscale{#1}
  \def\arraystretch{0.8}
  \arraycolsep=0pt
  \begin{array}{c}
    \vbox{\vskip\figscale\figvskip
      \hbox{\hskip\figscale\fighskip
        \includegraphics[scale=\figscale]{#2}}}
  \end{array}}}
\numberwithin{equation}{section}
\newcommand{\C}{\mathbb{C}}
\newcommand{\R}{\mathbb{R}}
\newcommand{\Z}{\mathbb{Z}}
\newcommand{\E}{{\mathcal E}}
\renewcommand{\H}{\mathbb{H}}
\newcommand{\g}{\mathfrak{g}}
\newcommand{\abracket}[1]{\left\langle#1\right\rangle}
\newcommand{\bbracket}[1]{\left[#1\right]}
\newcommand{\fbracket}[1]{\left\{#1\right\}}
\newcommand{\bracket}[1]{\left(#1\right)}
\newcommand{\cinfty}{C^{\infty}}
\newcommand{\pa}{\partial}
\newcommand{\OO}{{\mathcal O}}
\newcommand{\cG}{{\mathcal G}}
\newcommand{\BV}{Batalin-Vilkovisky }
\newcommand{\CE}{Chevalley-Eilenberg }
\newcommand{\Ol}{\mathcal O_{loc}}
\newcommand{\iso}{\cong}
\newcommand{\W}{\mathcal{W}}
\DeclareMathOperator{\Sym}{Sym}
\DeclareMathOperator{\Hom}{Hom}
\DeclareMathOperator{\Tr}{Tr}
\newcommand{\A}{\mathcal A}
\renewcommand{\L}{\mathcal L}
\DeclareMathOperator{\Mult}{Mult}
\theoremstyle{plain}
\newtheorem{thm}{Theorem}[section]
\newtheorem{thm-defn}{Theorem/Definition}[section]
\newtheorem{lem}[thm]{Lemma}
\newtheorem{lem-defn}[thm]{Lemma/Definition}
\newtheorem{prop}[thm]{Proposition}
\newtheorem{cor}[thm]{Corollary}
\theoremstyle{definition}
\newtheorem{defn}[thm]{Definition}
\newtheorem{notn}[thm]{Notation}
\newtheorem{eg}[thm]{Example}
\theoremstyle{remark}
\newtheorem{rmk}[thm]{Remark}
\begin{document}
\title{Batalin-Vilkovisky quantization and the algebraic index}
%%39 characters%%

\author{Ryan E. Grady}
\address{Montana State University\\Bozeman 59717\\USA}
\email{ryan.grady1@montana.edu} % ÕoptionalÕ

\author{Qin Li}
\address{Southern University of Science and Technology\\ Shenzhen\\ China}
\email{liqin@sustc.edu.cn}

\author{Si Li}
\address{YMSC\\Tsinghua University\\Beijing 100084\\China}
\email{sili@mail.tsinghua.edu.cn}

\subjclass[2010]{53D55 (58J20, 81T15, 81Q30)}
\keywords{Deformation quantization, BV quantization, renormalization group flow, quantum observables, algebraic index theory}
\thanks{R Grady and Si Li were partially supported by the National Science Foundation under Award DMS-1309118. Qin Li was supported by a grant from  National Natural Science Foundation of China (Project No.11501537).}

\begin{abstract}

Into a geometric setting,  we import the physical interpretation of index theorems via semi-classical analysis in topological quantum field theory. We develop a direct relationship between Fedosov's deformation quantization of a symplectic manifold $X$ and the \BV (BV) quantization of a one-dimensional sigma model with target $X$. This model is a quantum field theory of AKSZ type and is quantized rigorously using Costello's homotopic theory of effective renormalization. We show that Fedosov's Abelian connections on the Weyl bundle produce solutions to the effective quantum master equation. Moreover, BV integration produces a natural trace map on the deformation quantized algebra.  This formulation allows us to exploit a (rigorous) localization argument in quantum field theory to deduce the algebraic index theorem via  semi-classical analysis, i.e.,  one-loop Feynman diagram computations. 

%%%131 words%%%%

\end{abstract}

\maketitle

\tableofcontents

\section{Introduction} 

Deformation quantization is one approach to encapsulating the algebraic aspects of observables in a quantum mechanical system.  More precisely, the Poisson algebra of classical observables is deformed to an associative algebra of quantum observables; the idea goes back to the work of Bayen, Flato, Fronsdal, Lichnerowicz, and Sternheimer \cite{5author}. By constructing a trace map on the algebra of quantum observables, correlation functions are defined.  In the early 1990's--using this paradigm--an algebraic analogue of the Atiyah-Singer index theorem was established by Fedosov \cite{Fedbook} and jointly by Nest and Tsygan \cite{Nest-Tsygan}. Beyond an analogy, Nest and Tsygan recover the Atiyah-Singer index theorem (as well as the index theorems of Connes and Moscovici) from the algebraic index theorem \cite{Nest-Tsygan2}. 

On the other hand, the Atiyah-Singer type index theorem has a  simple form in physics.  The index of an elliptic operator $D$ is expressed as an infinite dimensional path integral of a related action functional $S_D$
\[
\mathrm{index}(D) = \lim_{\hbar \to \infty}  \int e^{S_D/\hbar} .
\]
This path integral is independent of $\hbar$ for topological quantum field theories. Therefore it can be computed semi-classically, i.e.,  in the limit $\hbar\to 0$, which amounts to a one-loop Feynman diagram computation and produces exaclty the topological index \cite{Alvarez, FW,Witten, Witten-index}. Though it lacks some mathematical rigor, this physical approach provides a conceptually clean understanding of the origins of index theorems. 

We are interested in the relationship between the above two approaches to the Atiyah-Singer index theorem. 

The existence of deformation quantizations is highly nontrivial. In the general setting, where the classical observables are given by functions on a Poisson manifold, existence of a quantization was established by Kontsevich via the celebrated Formality theorem \cite{Kontsevich-deformation}; Kontsevich  provides an expression of the associative star product in terms of Feynman diagrams. Interestingly, the one dimensional (quantum mechanical) nature of this deformation quantization is realized via the boundary of the two dimensional Poisson sigma model on the disk \cite{Poisson-sigma}. In the symplectic case, the existence was first proved by De Wilde and Lecomte \cite{DL}. Later, Fedosov \cite{Fed} established a beautifully simple differential geometric approach by constructing a flat structure (an Abelian connetion) on the Weyl bundle of a symplectic manifold. This construction is also realized as a BRST quantization in \cite{Grigoriev-Lyakhovich}. In addition to Fedosov's geometric approach, trace maps in deformation quantization are often constructed via explicit (algebraic) computations in Hochschild/cyclic homology; this is the strategy employed in \cite{Nest-Tsygan}, as well as by Feigin, Felder, Shoiket  \cite{FFS} and Dolgushev, Rubtsov\cite{dolgushev}.

The purpose of this paper is to construct a rigorous quantum field theory (which we call one-dimensional Chern-Simons theory) that directly relates the algebraic index theorem with the above physics interpretation. The quantization of one-dimensional Chern-Simons theory requires quantum corrections at all loops to maintain quantum gauge symmetry (see Section \ref{sec: homotopic-BV} for the precise definitions). We show that such quantum corrections are produced at once by Fedosov's Abelian connections. The study of observables in our model leads naturally to a proof of the algebraic index theorem which has a flavor of the semi-classical method in topological quantum mechanics. 

More precisely,  by investigating the geometric content of our one-dimensional Chern-Simons theory, we extend Fedosov's construction to a flat structure on the bundle of \BV (BV) algebras (see Section \ref{sec-BV-bundle}), thereby providing the tools of BV integration \cite{Schwarz}. Fixing a Lagrangian super-submaninfold (see equation \eqref{BV-lagrangian}), yields naturally a BV integration (Definition \ref{defn:BVint}) and a trace map (Definition \ref{defn:trace})
\[
\Tr : C^\infty(M)[[\hbar]] \to \R((\hbar)).
\]
The algebraic index theorem alluded to above (see Section \ref{section-index}), is recovered by computing the partition function, i.e., the trace of the constant function $1$. The main idea of our computation is to consider the equivariant extension of the trace map with respect to the $S^1$-rotation on the circle and show that varying $\hbar$ leads to explicit equivariant exact terms (see equation \eqref{rescaling-eqn}). Then the algebraic index theorem follows from the semi-classical computation where $\hbar\to 0$ (Lemma \ref{Semi-classical limit}), i.e., by Feynman diagrams up to one-loop. This leads to the algebraic index theorem \cite{Fedbook, Nest-Tsygan}
\[
\Tr (1) = \int_M e^{-\omega_\hbar/\hbar} \hat{A} (M).
\]
In this expression $\omega_\hbar \in H^2 (M)[[\hbar]]$ parametrizes the moduli of deformation quantizations of the Poisson algebra $(C^\infty (M), \{-,-\})$. Our approach via BV quantization of one-dimensional Chern-Simons theory illustrates certain geometric aspects of the algebraic computations in \cites{Nest-Tsygan,FFS}.

Our construction is motivated by the AKSZ formulation \cite{AKSZ} of one-dimensional sigma models in a neighborhood of the constant maps \cite{Kevin-HCS}. See also  \cite{AM} for a perspective on one-dimensional Chern-Simons in the vein of Atiyah and Segal. The space of fields in our one-dimensional Chern-Simons theory is 
\[
\mathcal{E}:=\A^\bullet_{S^1}\otimes_{\R} \g[1],
\]
where $\A^\bullet_{S^1}$ is the de Rham complex on the circle $S^1$, $\g$ is a curved $L_\infty$-algebra equipped with a symmetric pairing of cohomology degree $-2$, and $\g[1]$ refers to a shift of degree by $1$ (see Conventions). Integration over $S^1$ induces a symplectic pairing on $\mathcal{E}$ of cohomological degree $-1$ and the classical action functional is then obtained via the AKSZ construction \cite{AKSZ}. To any symplectic manifold $M$ we can associate such an $L_\infty$-algebra $\g_M$ \cite{Owen-Ryan} and we work with this geometric example. In order to quantize this system, we follow a convenient, rigorous formulation of perturbative quantization developed by Costello \cite{Kevin-book}. This requires solving an effective quantum master equation. We briefly review Costello's theory in Section \ref{Costello-renormalization}.

In our setting, solutions of the quantum master equation can be described in terms of Fedosov's Abelian connections.
More precisely, let $\nabla$ be a sympletic connection on the symplectic manifold $M$, which is in general not flat. Fedosov showed that the induced connection $\nabla$ on the Weyl bundle $\W$ over $M$ can be modified to be flat by adding ``quantum corrections''. Such a flat connection is called an ``Abelian connection." In this picture, the quantum observables are simply realized as the flat sections of the Weyl bundle (see Section \ref{section-weyl}). Our main result (Theorem \ref{thm: QME=Fedosov's equation}) establishes the following 
\[
   \boxed{\text{Fedosov's Abelian connections}  \Longrightarrow  \text{solutions of the quantum master equation}}.
\]

One immediate corollary of the correspondence above is our ability to quantize our theory perturbatively  and compute the all-loop  (all orders of $\hbar$) quantum corrections exactly. In the literature there is a simplified situation, described in \cite{Owen-Ryan}, called the cotangent theory; the general notion is defined in \cite{Kevin-HCS}. In the cotangent setting, quantum corrections terminate at one-loop, and it can be viewed as the semi-classical model of our theory. 

In \cite{Kevin-Owen}, Costello and Gwilliam developed an observable theory within the framework of perturbative quantum field theory. In this theory, solutions of the quantum master equation naturally provide quantum deformations of the algebra of classical observables. From the correspondence above, one would expect that Fedosov's quantization via an Abelian connection would agree with the corresponding quantization of observables in our one-dimensional Chern-Simons theory.  This is indeed the case as we explicitly verify  in  Section \ref{section-observable}. 

By realizing the deformation quantization via such an explicit quantum field theory we obtain our geometric constructions outlined above: the observables have the structure of factorization algebra, on which renormalization group flow (in the sense of \cite{Kevin-book}) induces a homotopy equivalence. This allows us to construct correlation functions for local observables via the local-to-global factorization product and produce an effective theory on ``zero modes.'' Additionally, we get for free a trace map via BV integration. This geometric context has its own interest and will be explained in sections \ref{section-Fedosov-BV} and \ref{section-trace}.  

\noindent \textbf{Conventions}
\begin{itemize}
\item We work over characteristic $0$ and our base field $k$ will mainly be $\R$. 
\item Let $V=\bigoplus\limits_{m\in \Z} V_m$ be a $\Z$-graded $k$-vector space. Given $a\in V_m$, we let $\bar a=m$ be its degree.
\begin{itemize}
\item $V[n]$ denotes the degree shifting such that $V[n]_m=V_{n+m}$.
\item $V^*$ denotes its linear dual such that $V^*_m=\Hom_k(V_{-m},k)$.
\item All tensors are in the category of graded vector spaces. 
\item $\Sym^m(V)$ and $\wedge^m(V)$ denote the graded symmetric product and graded skew-symmetric product respectively. The Koszul sign rule is always assumed. We also denote
$$
  \Sym(V):=\bigoplus_{m\geq 0}\Sym^m(V), \quad \widehat{\Sym}(V):=\prod_{m\geq 0} \Sym^m(V). 
$$

\item Given $P\in \Sym^2(V)$, it defines a ``second order operator" $\pa_P$ on $\Sym(V^*)$ or $\widehat{\Sym}(V^*)$ by 
$$
  \pa_P:  \Sym^m(V^*)\to \Sym^{m-2}(V^*), \quad I \to \pa_P I,
$$
where for any $a_1, \cdots, a_{m-2}\in V$, 
$$
  \pa_P I(a_1, \cdots a_{m-2}):= I(P, a_1, \cdots a_{m-2}). 
$$
\item $V[\hbar]$, $V[[\hbar]]$ and $V((\hbar))$ denote polynomial series, formal power series and Laurent series respectively in a variable $\hbar$ valued in $V$.

\item Given $A, B\in \Hom(V, V)$, the commutator $[A, B]\in \Hom(V, V)$ always means the graded commutator: $[A, B]=AB-(-1)^{\bar A \bar B}BA$. 

\end{itemize}

\item We will also work with infinite dimensional functional spaces that carry natural topologies. The above notions for $V$ will be generalized as follows. We refer the reader to \cite{treves} or Appendix 2 of \cite{Kevin-book} for further details.

\begin{itemize}
\item All topological vector spaces we consider will be {\it nuclear} (or at least {\it locally convex Hausdorff}) and we use $\otimes$ to denote the {\it completed projective tensor product}. For example, 
\begin{enumerate}
\item Given two manifolds $M, N$, we have a canonical isomorphism
\[
C^\infty (M) \otimes C^\infty (N) = C^\infty (M \times N);
\]
\item Similarly, if $\mathcal{D} (M)$ denotes compactly supported distributions, then again we have
\[
\mathcal{D} (M) \otimes \mathcal{D} (N) = \mathcal{D} (M \times N).
\]
\end{enumerate}

\item Given two such vector spaces $E$ and $F$, we let $\Hom (E,F)$ denote the space of continuous linear maps and when critical, we will equip $\Hom (E,F)$ with the topology of uniform convergence on bounded sets.
\item We define symmetric powers and the completed symmetric algebra by
\[
\Sym^n (E) := \left (E^{\otimes n} \right )_{S_n} \quad \text{ and } \quad \widehat{\Sym} (E) := \prod_{n \ge 0} \Sym^n (E),
\]
where the subscript $S_n$ denotes coinvariants with respect to the natural symmetric group action. 
\end{itemize}

\item Let $M$ be a manifold. We will denote $\Omega^\bullet_M=\bigoplus_{k}\Omega^k_M$, where $\Omega^k_M$ is the bundle of differential $k$-forms on $M$. Global smooth differential forms on  $M$ will be denoted by 
$$
  \A_M^\bullet=\Gamma(M, \Omega^\bullet_M), \quad \text{where}\ \A_M^k=\Gamma(M, \Omega^k_M).
$$
Given a vector bundle $E$, differential forms valued in a vector bundle $E$ will be denoted by $\A^\bullet_M(E)$:
$$
\A^\bullet_M(E)=\Gamma(M, \Omega^\bullet_M\otimes E). 
$$

\item Let $(M,\omega)$ be a symplectic (smooth) manifold. In local coordinates
\[
\omega=\frac{1}{2}\omega_{ij}dx^i\wedge dx^j,
\] 
where \emph{Einstein summation convention} is implicit throughout this paper. 
We use $(\omega^{ij})$ to denote the inverse of $(\omega_{ij})$.   

\end{itemize}

\section{Fedosov's deformation quantization and BV quantization}\label{sect:2}
In this section we  recall Fedosov's approach to deformation quantization on symplectic manifolds, and then extend Fedosov's constructon of the Weyl bundle to a bundle of \BV algebras (the BV bundle). We present an explicit formula to transfer Fedosov's flat connection on the Weyl bundle (called an Abelian connection) to a flat connection on our BV bundle in terms of Feynman graph integrals. We show that fiberwise BV integration leads to a natural trace map on the deformation quantized algebra. By exploring the $S^1$-equivariant localization within BV integration, we deduce the algebraic index theorem via the semi-classical method, i.e., that of one-loop Feynman diagram computations. Our construction originates from a rigorous quantum field theory that will be explained in Section \ref{sect:3}. 

\subsection{Linear symplectic structure}
Let us begin by considering a linear symplectic space $(V, \omega)$, where $V\iso \R^{2n}$ with a symplectic pairing $\omega \in \wedge^2 V^*$. To the pair $(V, \omega)$ we associate several algebraic structures; we will globalize these structures over a symplectic manifold in later sections. 

\subsubsection{Weyl algebra} 

\begin{defn} Given a vector space $V$, we denote its ring of formal functions by
$$
   \widehat{\OO}(V):=\widehat{\Sym}(V^*)=\prod_{k\geq 0}\Sym^k(V^*). 
$$
Here $V^*$ is the linear dual of $V$.
\end{defn}

Let $\hbar$ be a formal parameter.
There are two natural ring structures on $\widehat{O}(V)[[\hbar]]$.  The first one is the obvious $\hbar$-bilinear commutative product, which we denote by $a\cdot b$ or simply $ab$ for $a,b \in \widehat{O}(V)[[\hbar]]$. The sympletic pairing $\omega$ induces a $\hbar$-bilinear Poisson bracket on $\widehat{O}(V)[[\hbar]]$ which we denote by $\{-,-\}$. 

The second structure on $\widehat{O}(V)[[\hbar]]$ is the Moyal product \cite{Moyal}, which we denote by $a\star b$. Let us choose a basis $\{e_i\}$ for $V$ and let $\{x^i\}\in V^*$ be the dual basis viewed as linear coordinates. The linear symplectic form can be written as
\[
  \omega=\frac{1}{2}\omega_{ij}dx^i\wedge dx^j. 
\]
Then 
\[
   (a\star b)(x, \hbar)=\left. \exp\bracket{\frac{\hbar}{2}\omega^{ij}\frac{\pa}{\pa y^i}\frac{\pa}{\pa z^j}}a(y,\hbar) b(z, \hbar)\right |_{y=z=x},
\]
where $(\omega^{ij})$ is the inverse matrix of $(\omega_{ij})$. This definition does not depend on the choice of basis. It is straightforward to check that the Moyal product is associative and its first-order non-commutativity is measured by the Poisson bracket
\[
  \lim_{\hbar \to 0}\frac{1}{\hbar}(a\star b-b\star a)=\{a,b\}, \quad a,b\in \widehat{O}(V)[[\hbar]].
\]

\begin{defn} We define the (formal) Weyl algebra $\W(V)$ to be the associative algebra $(\widehat{O}(V)[[\hbar]], \star)$.

\end{defn}

\subsubsection{\BV (BV) algebra}  \label{sec-BV}

\begin{defn}\label{defn-BV} A \BV (BV) algebra is a pair $(\A, \Delta)$ where
\begin{itemize}
\item $\A$ is a $\Z$-graded commutative associative unital algebra, and
\item $\Delta: \A \to \A$ is a second-order operator of degree $1$ such that $\Delta^2=0$. 
\end{itemize}
\end{defn}

Here $\Delta$ is called the BV operator. $\Delta$ being ``second-order" means the following: let us define the \emph{BV bracket} $\fbracket{-,-}_\Delta$ as the failure of $\Delta$ to be a derivation
$$
    \fbracket{a,b}_\Delta:=\Delta(ab)-(\Delta a)b- (-1)^{\bar a}a \Delta b. 
$$
Then $\fbracket{-,-}: \A\otimes \A\to \A$ defines a Poisson bracket of degree $1$ satisfying
\begin{itemize}
\item $\fbracket{a,b}=(-1)^{\bar a \bar b}\fbracket{b,a}$,
\item $\fbracket{a, bc}=\fbracket{a,b}c+(-1)^{(\bar a+1)\bar b}b\fbracket{a,c}$, and
\item $\Delta\fbracket{a,b}=-\fbracket{\Delta a, b}-(-1)^{\bar a}\fbracket{a, \Delta b}$. 
\end{itemize}

We will associate to $(V, \omega)$ a canonical BV algebra related to Koszul-Brylinski homology \cite{Brylinski}.

\begin{defn}  

Given a vector space $V$,  we define its ring of formal differential forms by
\[
\widehat\Omega^{-\bullet}_V:=\widehat{\OO}(V)\otimes \wedge^{-\bullet}(V^*),
\]
where the ring structure is induced by the wedge product on forms and the commutative product $\cdot$ on $\widehat{\OO}(V)$. With respect to the natural decomposition 
\[
   \widehat\Omega^{-\bullet}_{V}=\oplus_{p}\widehat\Omega^{-p}_{V}, \quad \widehat\Omega^{-p}_{V}=\widehat{\OO}(V)\otimes \wedge^p (V^*),
\]
our degree assignment is such that $p$-forms  $\widehat\Omega^{-p}_{V}$ sit in degree $-p$. 
\end{defn}

Let $d_{V}$ be the de Rham differential (which is of degree $-1$)
\[
   d_V: \widehat\Omega_{V}^{-\bullet}\to \widehat\Omega_V^{-(\bullet+1)}. 
\]
Let $\Pi=\omega^{-1} \in \wedge^2 V$ be the Poisson tensor. It defines a map of degree $2$ via contraction 
\[
  \iota_{\Pi}: \widehat\Omega_{V}^{-\bullet}\to \widehat\Omega_{V}^{-(\bullet-2)}. 
\]

\begin{defn}
We define the BV operator by the Lie derivative with respect to $\Pi$
\[
  \Delta:=\L_{\Pi}=[d_V, \iota_\Pi]:  \widehat\Omega_{V}^{-\bullet}\to  \widehat\Omega_{V}^{-(\bullet-1)}.
\]
\end{defn}

It is easy to check that $\Delta^2=0$, which is equivalent to the Jacobi identity for the Poisson tensor $\Pi$.  The following proposition is well-known

\begin{prop} The triple $\bracket{\widehat{\Omega}_V^{-\bullet}, \cdot, \Delta}$ defines a BV algebra. 

\end{prop}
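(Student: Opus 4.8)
The plan is to verify the two defining conditions of Definition \ref{defn-BV}: that $(\widehat{\Omega}_V^{-\bullet},\cdot)$ is a graded-commutative associative unital algebra, and that $\Delta$ is a degree $+1$ second-order operator with $\Delta^2=0$. The algebra structure is immediate: the wedge product on $\wedge^{-\bullet}(V^*)$ together with the commutative product $\cdot$ on $\widehat{\OO}(V)$ is associative, unital (with unit $1\in\widehat{\OO}(V)\otimes\wedge^0(V^*)$), and graded-commutative, the Koszul sign rule being exactly graded-commutativity in our degree convention. Thus all of the content lies in the analysis of $\Delta$.

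For the degree, I would simply add the contributions of the two pieces: $d_V$ lowers the grading by one (degree $-1$) while $\iota_\Pi$ raises it by two (degree $+2$), so $\Delta=[d_V,\iota_\Pi]$ has degree $+1$. For $\Delta^2=0$, writing $\Delta^2=\tfrac12[\Delta,\Delta]$, I would invoke the standard identity expressing the square of the generalized Lie derivative through the Schouten--Nijenhuis self-bracket, $\Delta^2=\tfrac12\,\mathcal{L}_{[\Pi,\Pi]}$ (up to a conventional sign). Since $\Pi$ is a Poisson bivector we have $[\Pi,\Pi]=0$, whence $\Delta^2=0$; in the present linear case $\Pi$ has constant coefficients, so this vanishing is manifest and matches the remark already made in the text.

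The main step, carrying the real content, is showing that $\Delta$ is second-order. I would argue this in two complementary ways. Conceptually, the order of a graded commutator satisfies $\mathrm{ord}([D_1,D_2])\le \mathrm{ord}(D_1)+\mathrm{ord}(D_2)-1$; since $d_V$ is a derivation (order $1$) and $\iota_\Pi=\tfrac12\omega^{ij}\iota_{\partial_i}\iota_{\partial_j}$ is a composition of two contractions (order $2$), the commutator $\Delta$ has order $\le 2$. Concretely, I would compute $\Delta$ in the linear coordinates $\{x^i\}$ and their differentials $\{dx^i\}$, obtaining a formula of the schematic form $\Delta=\omega^{ij}\,\partial_{x^i}\,\iota_{\partial_{x^j}}$, i.e. the product of the even derivation $\partial_{x^i}$ (acting on the $\widehat{\OO}(V)$ factor) with the odd derivation $\iota_{\partial_{x^j}}$ (acting on the $\wedge(V^*)$ factor). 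As the product of two graded derivations is a second-order operator, this both confirms the order count and identifies the induced BV bracket $\fbracket{-,-}_\Delta$ with the Koszul--Brylinski bracket, which is visibly a biderivation.

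Finally, the three bracket identities listed after Definition \ref{defn-BV} are automatic consequences of $\Delta$ being a second-order operator with $\Delta^2=0$, so they require no separate verification. The only genuine subtlety to monitor is the bookkeeping of Koszul signs in the explicit formula for $\Delta$ (the interplay of the degree $-1$ of $d_V$, the degree $+2$ of $\iota_\Pi$, and the odd-derivation property of contraction), but these signs do not affect any of the structural conclusions above.
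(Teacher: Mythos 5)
Your verification is correct and is exactly the standard argument; the paper itself offers no proof, declaring the proposition well-known and noting only that $\Delta^2=0$ is equivalent to the Jacobi identity for $\Pi$ (which in the linear case is automatic since $\Pi$ has constant coefficients). Your identification of $\Delta$ as the product of the even derivation $\partial_{x^i}$ and the odd derivation $\iota_{\partial_{x^j}}$, and your observation that the three bracket identities follow formally from graded commutativity, the second-order property, and $\Delta^2=0$ respectively, are all consistent with the paper's conventions and with the explicit formula $\Delta=\omega^{ij}\L_{\pa_{y^i}}\iota_{\pa_{y^j}}$ given later in the bundle setting.
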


One advantage of a BV structure is that it allows us to do integration. We can view $\widehat\Omega^{-\bullet}_V$ as (formal) functions on the graded space $V\otimes H^*(S^1)=V[\epsilon]$, where $\epsilon$ has cohomological degree $1$ and $\epsilon^2=0$. $V[\epsilon]$ is equipped with an odd symplectic pairing
\[
  (v_1+\epsilon v_2, u_1+\epsilon u_2):= \omega(v_1, u_2)+\omega(v_2, u_1), \quad u_i, v_i \in V. 
\]
Hence, $V[\epsilon]$ can be viewed as a BV-manifold (see for example \cite{Schwarz,Getzler}). The analogue of de Rham theorem for BV manifolds \cites{BV,Schwarz} says that there is a natural pairing between BV-cohomology of functions and homology classes of Lagrangian super-submanifolds. There is a natural odd linear lagrangian subspace 
\begin{align}\label{BV-lagrangian}
\epsilon V\subset V[\epsilon]. 
\end{align}

\begin{defn}
We define the BV integration 
\[
    a\to \int_{Ber} a, \quad a\in \widehat\Omega^{-\bullet}_V
\]
by the Berezin integral  over the odd linear space $\epsilon V$. In terms of linear coordinates $x^i, \theta^i$, where $\theta^i$ is the odd variable representing $dx^i$, 
\begin{align}\label{fiber-BV-integration}
   \int_{Ber} a(x, \theta, \hbar):=\left. \frac{1}{n!}\bracket{\frac{1}{2}\omega^{ij}{\pa_{\theta^i}}{\pa_{\theta^j}}}^n a(x, \theta, \hbar)\right|_{x=\theta=0}. 
\end{align}
\end{defn}
In other words, the BV integration $\int_{Ber}$ picks out the top product of odd variables and sets the even variables to zero. 

Our next goal is to globalize the Weyl and BV algebra constructions over a  sympletic manifold. 

\subsection{Weyl bundle and Fedosov's flat connection}\label{section-weyl}
Let $(M,\omega)$ be a $2n$-dimensional symplectic manifold. The $2$-form $\omega$ induces a
linear symplectic structure on each fiber of $TM$. Let $\{x^i\}$ denote local coordinates in a neighborhood of some reference point. We will let
$y^i$ denote the dual of the tangent vector $\partial_{x^i}$. 

\begin{notn}
We will keep this notation in the rest of this section: $x^i$'s always denote coordinates on the base $M$, while $y^i$'s always denote coordinates along the fiber. 
\end{notn}

\subsubsection{Weyl bundle}\label{subsection-Weyl}

\begin{defn}
The Weyl bundle of $(M, \omega)$ is defined to be $\W(M):=\prod_{k\geq 0}{\Sym}^k(T^* M)[[\hbar]].
$ A (smooth) section $a$ of $\W$ is given locally by 
\[
a(x,y)=\sum_{k, l\geq 0}\hbar^k a_{k,i_1\cdots i_l}(x)y^{i_1}\cdots y^{i_l},
\]
where the $a_{k,i_1\cdots i_l}(x)'s$ are smooth functions. We will simply write $\W$ for the Weyl bundle when the manifold $M$ is clear from the context. 
\end{defn}

The fiber of $\W$ over each point $p\in M$ is given by 
$\W(T_pM)
$. In particular, the fiberwise commutative product $\cdot$ and Moyal product $\star$ endow $\W$ with the structure of a bundle of commutative and associative algebras respectively. 

\begin{defn}\label{defn-AW}
We let $\A_M^\bullet(\W)$ denote differential forms on $M$ with values in $\W$ (See Conventions).
\end{defn}
To be precise, a section of $\A_M^\bullet(\W)$ is given locally by a sum of the following terms
\begin{equation}\label{eqn:monomial-p-q}
a_{k,i_1\cdots i_p,j_1\cdots j_q}(x)\hbar^k dx^{j_1}\cdots dx^{j_q}y^{i_1}\cdots y^{i_p},
\end{equation}
where $a_{k,i_1\cdots i_p,j_1\cdots j_q}$'s are smooth tensors symmetric in $i_1,\cdots, i_p$ and antisymmetric in $j_1,\cdots, j_q$. The Moyal product is $\A_M^\bullet$-linearly extended to $\A^\bullet_M(\W)$ and we can define the associated commutator by
\[
\big[a,b\big]_{\star}:=a\star b-(-1)^{q_1q_2}b\star a,\hspace{3mm} \text{for\ }a\in \A_M^{q_1}(\W), b\in\A_M^{q_2}(\W).
\]

\begin{defn}Following \cite{Fed}, we define two operators on $\A^\bullet_M(\W)$:
\[
\delta a:=dx^k\wedge\frac{\partial a}{\partial y^k}\hspace{3mm}, \hspace{3mm}\delta^* a:=y^k\cdot \iota_{\partial_{x^k}} a. 
\]
Here $\iota_{\partial_{x^k}}$ denotes the contraction between forms and the vector field $\frac{\partial}{\partial x^k}$. We define $\delta^{-1}$ by requiring that it acts on the monomial (\ref{eqn:monomial-p-q}) as $\frac{\delta^*}{p+q}$ if $p+q>0$ and zero if $p+q=0$. 
\end{defn}

\subsubsection{Fedosov's flat connection}

It turns out that the bundle $\W$ carries a flat connection. This surprising fact is proved by Fedosov \cite{Fed} and we recall his construction here.  We first need the notion of a symplectic connection.

\begin{defn}
A connection $\nabla$ on the tangent bundle of a symplectic manifold $(M,\omega)$ is called symplectic if it is  torsion-free and compatible with the symplectic form $\omega$ (i.e., $\nabla(\omega)=0$). 
\end{defn}

\begin{rmk}
There always exist symplectic connections on $(M,\omega)$ and they form an affine space modeled on $\Gamma(M,\Sym^3 T^* M)$. Throughout this paper,
we will fix a choice of $\nabla$.
\end{rmk}

The connection $\nabla$ naturally induces a connection on $\W$, which we still denote by $\nabla$. Let 
\begin{equation}\label{curvature}
R_\nabla:=\frac{1}{4}R_{ijkl}y^iy^jdx^k\wedge dx^l, \quad R_{ijkl}=\omega_{im}R^m_{jkl},
\end{equation}
where $\frac{1}{2}R^i_{jkl}dx^k\wedge dx^l$ is the curvature $2$-form of $\nabla$. Note that in contrast to the Riemannian curvature tensor, the symplectic one $R_{ijkl}$ (with index raised and lowered by $\omega$) is skew-symmetric in two indices ($kl$) and symmetric in the other two indices ($ij$). As in \cite{Fed}, the curvature on $\W$ can be represented by 
\begin{equation}\label{eqn:curvature-Weyl-bundle}
\nabla^2(a)=\frac{1}{\hbar}\big[R_\nabla,a\big]_\star \;, \quad a\in \Gamma(M, \W). 
\end{equation}
Note that in the above formula, the factor $\frac{1}{\hbar}$ will cancel with another factor $\hbar$ coming from the commutator associated to the Moyal product. Let us consider connections on the Weyl bundle of the following form:
\[
\nabla +\frac{1}{\hbar}\big[\gamma, -\big]_\star \; ,
\]
where $\gamma\in \A^1_M(\W)$. Connections of the above form are compatible with the Moyal product: 
$$
  D(a \star b)=D(a)\star b+a \star D(b), 
$$
where $D=\nabla +\frac{1}{\hbar}\big[\gamma, -\big]_\star$, and $a,b$ are sections of the Weyl bundle. 

\begin{defn}
The connection $\nabla +\frac{1}{\hbar}\big[\gamma, -\big]_\star$ on the Weyl bundle $\W$ is called an $Abelian$ connection if  its curvature is zero, i.e., $(\nabla +\frac{1}{\hbar}\big[\gamma, -\big]_\star)^2=0$. 
\end{defn}

We define a $\Z$-grading of weights on the Weyl bundle $\W$ by assigning weight $2$ to $\hbar$ and weight $1$ to $y^i$'s. $\W_{(m)}$ will denote the sub bundle of homogenous weight $m$. In particular, we have the weight decomposition 
\[
   \W=\prod_{m\geq 0} \W_{(m)}.
\]

In \cite{Fedbook}, Fedosov proved the following theorem:
\begin{thm}\label{thm: existence-uniqueness-Fedesov-equation}Given a sequence $\{\omega_k\}_{k\geq 1}$ of closed 2-forms on $M$, there exists a unique Abelian connection $\nabla+\frac{1}{\hbar}[\gamma,-]_\star$ such that 
\begin{enumerate}
 \item $\gamma$ is of the form
\[
\gamma=\sum_{i,j}\omega_{ij}y^idx^j+r,
\] 
where $r\in\A_M^1(\W)$ consists of elements of weight at least $3$ which satisfies the gauge fixing condition:
\[
\delta^{-1}(r)=0.
\]
 \item $\gamma$ satisfies the following equation
\begin{equation}\label{Fedosov-eqn}
   \nabla \gamma+\frac{1}{2\hbar }\big[\gamma, \gamma\big]_\star+R_\nabla=\omega_\hbar,
\end{equation}
where $\omega_\hbar=-\omega+\sum_{k\geq 1}\hbar^k \omega_k.$
\end{enumerate}

\end{thm}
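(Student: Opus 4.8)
The plan is to translate the Abelian (flatness) condition into the purely algebraic requirement that a ``Weyl curvature'' be central, to solve equation \eqref{Fedosov-eqn} recursively on the weight grading, and finally to bootstrap the contracted equation back to the full one via a Bianchi identity.

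\textbf{Step 1 (flatness as centrality).} Write $D=\nabla+\frac{1}{\hbar}\big[\gamma,-\big]_\star$ and set $\Omega_\gamma:=R_\nabla+\nabla\gamma+\frac{1}{2\hbar}\big[\gamma,\gamma\big]_\star$. Using $\nabla^2=\frac{1}{\hbar}\big[R_\nabla,-\big]_\star$ from \eqref{eqn:curvature-Weyl-bundle}, the graded Leibniz rule for $\nabla$ with respect to $\big[-,-\big]_\star$, and the graded Jacobi identity in the form $\frac{1}{\hbar^2}\big[\gamma,\big[\gamma,-\big]_\star\big]_\star=\frac{1}{2\hbar^2}\big[\big[\gamma,\gamma\big]_\star,-\big]_\star$, a direct computation gives $D^2=\frac{1}{\hbar}\big[\Omega_\gamma,-\big]_\star$. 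Since the fiberwise center of the Weyl algebra consists precisely of the $y$-independent elements, $D$ is Abelian if and only if $\Omega_\gamma\in\A_M^\bullet[[\hbar]]$ is $y$-free. As $\omega_\hbar$ is $y$-free, equation \eqref{Fedosov-eqn}, namely $\Omega_\gamma=\omega_\hbar$, \emph{implies} flatness; it therefore suffices to produce a unique $r$ of the prescribed form solving \eqref{Fedosov-eqn}.

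\textbf{Step 2 (contraction and the weight recursion).} The linear term of the Moyal bracket gives $\frac{1}{\hbar}\big[\omega_{ij}y^idx^j,-\big]_\star=-\delta$, and one checks directly that $\nabla(\omega_{ij}y^idx^j)=0$ and $\frac{1}{2\hbar}\big[\omega_{ij}y^idx^j,\omega_{kl}y^kdx^l\big]_\star=-\omega$. Substituting $\gamma=\omega_{ij}y^idx^j+r$ into \eqref{Fedosov-eqn}, the two copies of $-\omega$ cancel and the equation becomes
\[
\delta r=R_\nabla+\nabla r+\frac{1}{2\hbar}\big[r,r\big]_\star-\sum_{k\ge 1}\hbar^k\omega_k=:B(r).
\]
Let $\sigma$ denote the projection onto the $(p=q=0)$ summand, so that $\delta\delta^{-1}+\delta^{-1}\delta+\sigma=\mathrm{id}$. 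Applying $\delta^{-1}$ and using the gauge condition $\delta^{-1}r=0$ together with $\sigma r=0$ (as $r$ has positive form degree) converts $\delta r=B(r)$ into the fixed-point equation $r=\delta^{-1}B(r)$. Now $\delta^{-1}$ raises weight by $1$, $\nabla$ preserves it, $R_\nabla$ has weight $2$, each $\hbar^k\omega_k$ has weight $\ge 2$, and $\frac{1}{2\hbar}\big[r,r\big]_\star$ strictly raises weight; hence the weight-$m$ component of $r=\delta^{-1}B(r)$ is determined by the components of $r$ of weight strictly less than $m$. Since $\delta^{-1}R_\nabla$ has weight $3$, the recursion begins in weight $3$ and determines $r$ uniquely, establishing existence and uniqueness of a solution satisfying the normalization in part (1).

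\textbf{Step 3 (from the contracted to the full equation).} It remains to show that the solution of $r=\delta^{-1}B(r)$ actually solves $\delta r=B(r)$; equivalently, that the defect $A:=\Omega_\gamma-\omega_\hbar$ vanishes. A short computation using Step 2 gives $A=B(r)-\delta r=\delta^{-1}(\delta B(r))\in\im\delta^{-1}$, so $A$ has positive $y$-degree, $\delta^{-1}A=0$, and $\sigma A=0$. The crux is the \emph{Bianchi identity} $D\Omega_\gamma=0$, which holds as an exact algebraic identity: expanding $D\Omega_\gamma$ and invoking $\delta R_\nabla=0$, $\nabla R_\nabla=0$, $\delta\nabla+\nabla\delta=0$, $\delta^2=0$ and graded Jacobi, all terms cancel. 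Since $\omega_\hbar$ is closed and central, $D\omega_\hbar=\nabla\omega_\hbar+\frac{1}{\hbar}\big[\gamma,\omega_\hbar\big]_\star=d\omega_\hbar=0$, whence $DA=0$. Writing out $DA=\nabla A-\delta A+\frac{1}{\hbar}\big[r,A\big]_\star$ and rearranging gives $\delta A=\nabla A+\frac{1}{\hbar}\big[r,A\big]_\star$; applying $\delta^{-1}$ and using $\delta^{-1}A=\sigma A=0$ yields $A=\delta^{-1}\big(\nabla A+\frac{1}{\hbar}\big[r,A\big]_\star\big)$, whose right-hand side lies in strictly higher weight than $A$. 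A weight induction then forces $A=0$.

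I expect \textbf{Step 3} to be the main obstacle. Contracting by $\delta^{-1}$ discards the $\delta$-exact part of the equation, so recovering the full equation hinges on the exact Bianchi identity $D\Omega_\gamma=0$ interacting with the gauge condition $\delta^{-1}r=0$, with the two feeding a second induction on weight. By contrast, the identities in Steps 1--2 (the Moyal bracket computations, $\nabla(\omega_{ij}y^idx^j)=0$, and the Fedosov identities $\delta R_\nabla=\nabla R_\nabla=0$, $\delta\nabla+\nabla\delta=0$) and the recursion solving $r=\delta^{-1}B(r)$ are routine verifications in the weight-graded calculus.
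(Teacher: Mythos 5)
The paper does not actually prove this theorem --- it is quoted from Fedosov's book \cite{Fedbook} --- so there is no in-paper proof to compare against. Your argument is correct and is essentially Fedosov's original one: centrality of the Weyl curvature $\Omega_\gamma$, reduction to the contracted fixed-point equation $r=\delta^{-1}B(r)$ solved by recursion on the weight filtration, and the Bianchi-identity bootstrap $DA=0$ with a second weight induction to recover the full equation from the contracted one; the individual identities you invoke (e.g.\ $\frac{1}{\hbar}[\omega_{ij}y^idx^j,-]_\star=-\delta$, $\frac{1}{2\hbar}[\omega_{ij}y^idx^j,\omega_{kl}y^kdx^l]_\star=-\omega$, and $\delta\delta^{-1}+\delta^{-1}\delta+\sigma=\mathrm{id}$) all check out under the paper's conventions.
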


Equation \eqref{Fedosov-eqn}  implies flatness. In fact, 
\[
 (\nabla+\frac{1}{\hbar}[\gamma,-]_\star)^2=\frac{1}{\hbar}\big[ \nabla \gamma+\frac{1}{2\hbar }\big[\gamma, \gamma\big]_\star+R_\nabla, -\big]_\star=\frac{1}{\hbar}\big[\omega_\hbar, -\big]_\star=0
\]
since $\omega_\hbar$ is a central element in $\A^\bullet_M(\W)$.

In \cite{Fed-index}, it is shown that $\gamma$ can be solved iteratively in terms of weights. In particular, the  weight 3 component is given by the following lemma. This term is key in our localization computation of the algebraic index later. 

\begin{lem}[\cite{Fed-index}]\label{lemma-curvature}
Modulo $\hbar$, the weight 3 component in $\gamma$ is given by 
$
\frac{1}{8}R_{(ijk)l}dx^l\otimes y^iy^jy^k.
$
Here $R_{(ijk)l}$ is the symmetrization of $R_{ijkl}$ with respect to the first three indices:
$$
R_{(ijk)l}:=\frac{1}{3}\left(R_{ijkl}+R_{jkil}+R_{kijl}\right).
$$
\end{lem}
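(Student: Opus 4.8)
The plan is to extract the weight-$3$ piece of $\gamma$ from Fedosov's equation \eqref{Fedosov-eqn} by a weight-by-weight analysis, following the iterative scheme used to solve \eqref{Fedosov-eqn}. Write $\gamma = A_0 + r$, where $A_0 := \omega_{ij}y^i dx^j$ is the weight-$1$ leading term and $r = \sum_{m\geq 3} r_{(m)}$ is the higher-weight remainder. The first and most delicate step is to identify the operator $\frac{1}{\hbar}[A_0, -]_\star$ on $\A^\bullet_M(\W)$. A direct computation with the Moyal product (using $y^i \star y^j = y^iy^j + \frac{\hbar}{2}\omega^{ij}$ and $\omega_{ij}\omega^{ik} = -\delta^k_j$ from the matrix-inverse convention) shows that $\frac{1}{\hbar}[A_0, -]_\star = -\delta$, where $\delta a = dx^k\wedge \partial_{y^k} a$. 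This sign is exactly what makes the low-weight terms consistent, as I will note below.

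Next I would substitute $\gamma = A_0 + r$ and expand. Since $A_0$ and $r$ are both odd (form-degree $1$), $[\gamma,\gamma]_\star = [A_0,A_0]_\star + 2[A_0,r]_\star + [r,r]_\star$. One checks that $\frac{1}{2\hbar}[A_0,A_0]_\star = -\omega$ — its potential weight-$2$ part $\omega_{ij}\omega_{kl}y^iy^k\,dx^j\wedge dx^l$ vanishes by the antisymmetry of $dx^j\wedge dx^l$ against the symmetry of $y^iy^k$ — while $\frac{1}{\hbar}[A_0,r]_\star = -\delta r$. Invoking $\nabla A_0 = 0$ (valid for any torsion-free symplectic connection) and cancelling the matching $-\omega$ on both sides of \eqref{Fedosov-eqn}, the equation reduces to the recursion $\delta r = \nabla r + \frac{1}{2\hbar}[r,r]_\star + R_\nabla - \sum_{k\geq 1}\hbar^k\omega_k$.

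I would then read off the weight-$2$ component modulo $\hbar$. Because $\nabla$ preserves weight, $\nabla r$ has weight $\geq 3$; the bracket $\frac{1}{2\hbar}[r,r]_\star$ has weight $\geq 4$; and $\sum_{k\geq 1}\hbar^k\omega_k \equiv 0 \pmod{\hbar}$. Hence the unique weight-$2$ contribution modulo $\hbar$ on the right-hand side is $R_\nabla$, giving $\delta r_{(3)} = R_\nabla \pmod{\hbar}$. Since $r$ obeys the gauge condition $\delta^{-1}r = 0$ and $r_{(3)}$ has positive weight and form-degree $1$, the standard Hodge-type decomposition for $\delta$ collapses to $r_{(3)} = \delta^{-1}\delta r_{(3)} = \delta^{-1}R_\nabla \pmod{\hbar}$.

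The last step is the explicit evaluation of $\delta^{-1}R_\nabla$. As $R_\nabla = \frac14 R_{ijkl}y^iy^j\,dx^k\wedge dx^l$ is a monomial of type $(p,q) = (2,2)$, we have $\delta^{-1} = \frac14\delta^*$ with $\delta^* = y^m\iota_{\partial_{x^m}}$; applying $\iota_{\partial_{x^m}}(dx^k\wedge dx^l) = \delta^k_m dx^l - \delta^l_m dx^k$ and using $R_{ijkl} = -R_{ijlk}$ yields $\delta^{-1}R_\nabla = \frac18 R_{ijkl}\,y^iy^jy^k\,dx^l$, which, since $y^iy^jy^k$ is totally symmetric, equals $\frac18 R_{(ijk)l}\,dx^l\otimes y^iy^jy^k$, as claimed. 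The only real obstacle here is bookkeeping: pinning down the normalization $\frac{1}{\hbar}[A_0,-]_\star = -\delta$ (equivalently, that $\frac{1}{2\hbar}[A_0,A_0]_\star = -\omega$ cancels the $-\omega$ in $\omega_\hbar$), and confirming that no term other than $R_\nabla$ contributes at weight $2$ modulo $\hbar$.
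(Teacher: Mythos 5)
Your derivation is correct. The paper itself gives no proof of this lemma --- it is quoted directly from Fedosov's paper \cite{Fed-index} --- so there is no internal argument to compare against; what you have written is a correct, self-contained reconstruction of Fedosov's iterative weight-by-weight solution of equation \eqref{Fedosov-eqn}. All the delicate points check out: with the paper's convention that $(\omega^{ij})$ is the matrix inverse of $(\omega_{ij})$, one indeed gets $\tfrac{1}{\hbar}[\omega_{ij}y^i dx^j,-]_\star=-\delta$ and $\tfrac{1}{2\hbar}[A_0,A_0]_\star=-\omega$, which is forced by matching the weight-$0$ part of $\omega_\hbar$; the vanishing of $\nabla A_0$ uses both torsion-freeness and $\nabla\omega=0$ as you say; the weight count isolating $R_\nabla$ at weight $2$ modulo $\hbar$ is right ($\tfrac{1}{2\hbar}[r,r]_\star$ has weight $\geq 4$, $\hbar\omega_1$ is $O(\hbar)$); and the final evaluation $\delta^{-1}R_\nabla=\tfrac18 R_{ijkl}y^iy^jy^k dx^l=\tfrac18 R_{(ijk)l}dx^l\otimes y^iy^jy^k$ uses the antisymmetry $R_{ijkl}=-R_{ijlk}$ and the total symmetry of $y^iy^jy^k$ exactly as needed. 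The only facts you invoke without proof --- the homotopy identity $\delta\delta^{-1}+\delta^{-1}\delta=\mathrm{id}$ on components with $p+q>0$, and the first-order truncation of the Moyal commutator against a linear element --- are standard and harmless.
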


\subsubsection{Deformation quantization}\label{sec:DQ}
\begin{defn}
A deformation quantization of the algebra of smooth functions on a symplectic manifold $(M,\omega)$ is an associative $\mathbb{R}[[\hbar]]$-bilinear multiplication $\star$ on $C^\infty(M)[[\hbar]]$ of the following form:
\[
f\star g=f\cdot g+\sum_{k\geq 1}\hbar^{k}C_k(f,g),\hspace{3mm}\text{for\ } f,g\in C^\infty(M),
\]
where  the $C_k$'s are bi-differential operators on $C^\infty(M)$ such that 
\[
\{f,g\}=\lim_{\hbar\to 0} \frac{1}{\hbar}(f \star g-g\star f). 
\] 
\end{defn}

In other words, $\star$ deforms $C^\infty(M)$ to an associative algebra and the Poisson bracket measures the leading order degree of noncommutativity.

A choice of an Abelian connection leads to a simple construction of deformation quantization on symplectic manifolds as follows. We  consider the symbol map obtained by sending the $y^i$'s to $0$:
\[
\sigma: \Gamma(M,\W)\rightarrow C^\infty(M)[[\hbar]].
\]
Let $\Gamma^{flat}(M, \W)\subset \Gamma(M,\W)$ be the subspace of flat sections with respect to the chosen Abelian connection. 

\begin{thm}[\cite{Fed}] The symbol map $\sigma$ induces an isomorphism of vector spaces
$$
  \sigma: \Gamma^{flat}(M, \W)\stackrel{\iso}{\to} C^\infty(M)[[\hbar]]. 
$$
\end{thm}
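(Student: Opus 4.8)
The plan is to construct an explicit two-sided inverse to $\sigma$, the \emph{Fedosov quantization map} $\tau\colon C^\infty(M)[[\hbar]]\to\Gamma^{flat}(M,\W)$, by solving the flatness equation $Da=0$ iteratively with respect to the weight grading $\W=\prod_{m\geq 0}\W_{(m)}$. I write the Abelian connection as $D=\nabla-\delta+\frac1\hbar[r,-]_\star$, where the linear-in-$y$ piece $\frac1\hbar[\omega_{ij}y^idx^j,-]_\star$ reproduces $-\delta$ (a short Moyal computation gives $\frac1\hbar[\omega_{ij}y^idx^j,a]_\star=-dx^k\partial_{y^k}a$) and $r$ has weight $\geq 3$. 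The essential structural input is Fedosov's Hodge-type decomposition $\mathrm{id}=\delta\delta^{-1}+\delta^{-1}\delta+\sigma$ on $\A^\bullet_M(\W)$ (where $\sigma$ also denotes the projection onto the part independent of both $y$ and $dx$), together with the bookkeeping facts that $\delta^{-1}a=0$ for $a\in\Gamma(\W)$, that $\sigma\delta^{-1}=0$, and the nilpotency $\delta^{-1}\delta^{-1}=0$.

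First I would recast flatness as a fixed-point equation. For $a\in\Gamma(\W)$ we have $\delta^{-1}a=0$, so the decomposition reads $a=\sigma(a)+\delta^{-1}\delta a$; when $Da=0$, i.e.\ $\delta a=\nabla a+\frac1\hbar[r,a]_\star$, this becomes
\[
a=\sigma(a)+\delta^{-1}\Big(\nabla a+\tfrac1\hbar[r,a]_\star\Big).
\]
Fixing the symbol $a_0:=\sigma(a)$, a flat lift is a fixed point of $\Phi_{a_0}(a):=a_0+\delta^{-1}(\nabla a+\frac1\hbar[r,a]_\star)$. The key observation is that the correction operator $T:=\delta^{-1}(\nabla(-)+\frac1\hbar[r,-]_\star)$ \emph{strictly raises weight}: $\delta^{-1}$ raises weight by one, $\nabla$ preserves it, and $\frac1\hbar[r,-]_\star$ raises weight by at least one because $r$ has weight $\geq 3$ while $[r,-]_\star$ is divisible by $\hbar$ (the $\hbar^0$ term of the star-commutator cancels) and $\star$ preserves weight. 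Hence $\Phi_{a_0}$ is a contraction in the weight filtration, the iteration $a_0,\Phi_{a_0}(a_0),\ldots$ stabilizes in each fixed weight after finitely many steps, and there is a unique fixed point $\tau(a_0)\in\Gamma(\W)$. Since $\sigma\delta^{-1}=0$, we get $\sigma(\tau(a_0))=a_0$. This already yields injectivity of $\sigma$ on flat sections: any flat $a$ with $\sigma(a)=0$ solves the fixed-point equation with zero source, so $a=\tau(0)=0$. Surjectivity will follow once we know $\tau(a_0)$ is itself flat.

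The main obstacle---and the only place the Abelian (flatness) hypothesis $D^2=0$ enters---is verifying that the fixed point $\tau(a_0)$ is genuinely flat, rather than merely a solution of the weaker fixed-point equation. I would set $a:=\tau(a_0)$ and $b:=Da\in\A^1_M(\W)$. Applying $\delta$ to the fixed-point equation and using $\delta\delta^{-1}c=c-\delta^{-1}\delta c$ for the one-form $c:=\nabla a+\frac1\hbar[r,a]_\star$ (so $\sigma(c)=0$), one computes $b=\delta^{-1}\delta c=\delta^{-1}\delta b$, whence $\delta^{-1}b=\delta^{-1}\delta^{-1}\delta b=0$ by nilpotency of $\delta^{-1}$. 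On the other hand $Db=D^2a=0$ because $D$ is Abelian, i.e.\ $\delta b=\nabla b+\frac1\hbar[r,b]_\star$. Feeding this back through the decomposition (with $\sigma(b)=0$ since $b$ is a one-form and $\delta\delta^{-1}b=0$) gives $b=\delta^{-1}(\nabla b+\frac1\hbar[r,b]_\star)=T(b)$: another weight-raising fixed-point equation with zero source, forcing $b=0$ by the same weight induction. Thus $a$ is flat.

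Finally I would record that $\sigma$ and $\tau$ are mutually inverse $\R[[\hbar]]$-linear (in particular $\R$-linear) maps: $\sigma\circ\tau=\mathrm{id}$ was shown above, and $\tau\circ\sigma=\mathrm{id}$ on $\Gamma^{flat}(M,\W)$ follows from the uniqueness of the flat lift of a prescribed symbol. This establishes $\sigma\colon\Gamma^{flat}(M,\W)\xrightarrow{\iso}C^\infty(M)[[\hbar]]$ as an isomorphism of vector spaces. I expect the weight-filtration convergence to be routine once the grading bookkeeping is in place; the genuinely delicate point is the flatness verification of $\tau(a_0)$, where the cancellation forcing $\delta^{-1}b=0$ must be combined with $D^2=0$.
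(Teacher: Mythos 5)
The paper offers no proof of this statement---it is quoted directly from Fedosov's work \cite{Fed}---so there is nothing internal to compare against; your argument is in fact Fedosov's original iterative proof. It is correct: the weight-raising fixed-point iteration, the use of the Koszul-type identity $\mathrm{id}=\delta\delta^{-1}+\delta^{-1}\delta+\sigma$, and in particular the delicate step showing $b=Da$ satisfies $\delta^{-1}b=0$ (via $(\delta^{-1})^2=0$) and then $b=T(b)$ using $D^2=0$, are all handled exactly as in the standard reference.
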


Let
$$
\sigma^{-1}: C^\infty(M)[[\hbar]]\to \Gamma^{flat}(M, \W)
$$
denote the inverse of the above identification. Since Abelian connections are compatible with the Moyal product, $\Gamma^{flat}(M, \W)$ inherits an associative algebra structure.  This induces a star product on $C^\infty(M)[[\hbar]]$ as follows: let $f,g\in C^\infty(M)$, we define  their star product (with the same notation $\star$) as
\[
f\star g:=\sigma(\sigma^{-1}(f)\star \sigma^{-1}(g)).
\]
It is easy to check that this star product defines a deformation quantization on $(M, \omega)$. 

\subsection{BV bundle and quantum master equation} 
\subsubsection{BV bundle} \label{sec-BV-bundle}
Similar to the Weyl bundle, we globalize our BV algebra construction as follows.

\begin{defn} The BV bundle of $(M, \omega)$ is defined to be 
\[
 \widehat \Omega^{-\bullet}_{TM} :=  \widehat{\Sym} (T^* M) \otimes \wedge^{-\bullet}(T^* M), \quad \wedge^{-\bullet}(T^* M):=\bigoplus_{k}\wedge^k (T^* M)[k].
\]
\end{defn}
$ \widehat \Omega^{-\bullet}_{TM} $ can be viewed as the relative de Rham complex for $TM \to M$ formally completed at the zero section . Following our shift conventions, $\wedge^k(T^* M)$ sits in cohomological degree $-k$. 

Let $\Pi=\omega^{-1}\in \Gamma(M, \wedge^2 TM)$ be the Poisson tensor. For each point $p \in M$, $\Pi$ induces a constant Poisson tensor for  the tangent space $T_pM$. The fiberwise operators $d_V$, $\iota_\Pi, \Delta$ defined in Section \ref{sec-BV} carry over to the bundle case, which we denote by 
\[
  d_{TM}: \widehat\Omega^{-\bullet}_{TM}\to \widehat\Omega^{-(\bullet+1)}_{TM}, \quad   \iota_{\Pi}: \widehat\Omega_{TM}^{-\bullet}\to \widehat\Omega_{TM}^{-(\bullet-2)}, \quad   \Delta=\L_{\Pi}:  \widehat\Omega_{TM}^{-\bullet}\to  \widehat\Omega_{TM}^{-(\bullet-1)}.
\]
Similarly, we have a fiberwise BV bracket (which is a morphism of bundles):
\[
  \{-,-\}_\Delta:  \widehat \Omega^{-p}_{TM} \otimes \widehat\Omega^{-q}_{TM}\to \widehat\Omega^{-p-q+1}_{TM}.
\]
The fiberwise BV integration will be denoted by 
\[
  \int_{Ber}: \Gamma(M,\widehat\Omega^{-\bullet}_{TM})\to \cinfty(M). 
\]
which is nonzero only for sections of the component $\widehat\Omega^{-2n}_{TM}$. 

Let $\nabla$ be the symplectic connection as above, it induces a connection 
$
\nabla$ on   $\widehat\Omega^{-\bullet}_{TM}$ since $\widehat\Omega^{-\bullet}_{TM}$ is a tensor bundle constructed from $TM$. Let 
\[
  \A_M^\bullet(\widehat \Omega^{-\bullet}_{TM})=\oplus_p   \A^p_M(\widehat\Omega^{-\bullet}_{TM}), 
\]
where $\A^p_M(\widehat \Omega^{-\bullet}_{TM})$ denotes differential p-forms valued in $\widehat\Omega^{-\bullet}_{TM}$. Our degree assignment is such that $\A^p_M(\Omega^{-q}_\W)$ has cohomological degree $p-q$. As usual, $\nabla$ is naturally extended to an operator on $\widehat\Omega^{-\bullet}_{TM}$-valued forms
$$
 \nabla: \A^p_M(\widehat\Omega^{-\bullet}_{TM})\to \A^{p+1}_M(\widehat\Omega^{-\bullet}_{TM})
$$
which we will denote by the same symbol. 

\begin{lem} \label{lem-compatible}The operators $d_{TM}, \iota_\Pi, \Delta, \{-,-\}_\Delta$ are all parallel with respect to $\nabla$. 
\end{lem}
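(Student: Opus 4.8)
The plan is to reduce the parallelness of all four operators to two basic facts: that the fiberwise de Rham differential $d_{TM}$ is parallel, and that the Poisson tensor $\Pi$ is a parallel section of $\wedge^2 TM$. Recall that here ``parallel'' means that, viewed as a section of the appropriate bundle built from $TM$, the operator is annihilated by the induced connection; equivalently, it commutes with $\nabla$ as an operator on $\A^\bullet_M(\widehat\Omega^{-\bullet}_{TM})$. The key structural observation is that $\iota_\Pi$, $\Delta$ and $\fbracket{-,-}_\Delta$ are all manufactured from $d_{TM}$, from $\Pi$, and from the commutative (symmetric/wedge) product, and that $\nabla$ is by construction a derivation of all the tensor-algebra structure maps. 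So once the two basic facts are in place, the remaining assertions follow formally from the Leibniz rule.

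First I would check that $\Pi$ is parallel. Since $\nabla$ is a symplectic connection we have $\nabla \omega = 0$; differentiating the identity $\Pi^{ik}\omega_{kj} = \delta^i_j$ and using $\nabla \omega=0$ yields $\nabla \Pi = 0$. Because the duality contraction between $TM$ and $T^*M$ is itself parallel, contraction with the parallel tensor $\Pi$ is a parallel bundle operator, so $\iota_\Pi$ is parallel.

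Next I would verify that $d_{TM}$ is parallel by a short local computation. In a coordinate-induced local frame the induced connection reads $\nabla = d - \Gamma^i_{jl}\,dx^l\bracket{y^j \pa_{y^i} + \theta^j \pa_{\theta^i}}$, where $\theta^i$ denotes the odd generator $dy^i$ of $\wedge^\bullet(T^*M)$, and $d_{TM} = \theta^k \pa_{y^k}$. Computing $[\nabla, d_{TM}]$, the flat part $d$ commutes with $d_{TM}$ since they act on disjoint sets of variables, while the term coming from the action on the symmetric factors contributes $+\Gamma^i_{jl}\,dx^l\,\theta^j \pa_{y^i}$ and the term coming from the action on the exterior factors contributes $-\Gamma^i_{jl}\,dx^l\,\theta^j \pa_{y^i}$; these cancel exactly. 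Invariantly, this cancellation reflects the fact that $d_{TM}$ is the composite of the parallel comultiplication $\widehat{\Sym}(T^*M)\to \widehat{\Sym}(T^*M)\otimes T^*M$ splitting off one cotangent factor with the parallel wedge multiplication $T^*M\otimes \wedge^\bullet(T^*M)\to \wedge^{\bullet+1}(T^*M)$: since $\nabla$ acts by the \emph{same} connection matrix on the two copies of $T^*M$ that $d_{TM}$ interchanges, the operator is parallel. It is worth noting that, in contrast to Fedosov's identity $[\nabla,\delta]=0$ for the operator $\delta$ built from base one-forms, this computation does \emph{not} use torsion-freeness of $\nabla$.

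Finally, the remaining operators are assembled formally. The induced connection on $\End(\widehat\Omega^{-\bullet}_{TM})$ is a derivation for composition, so the graded commutator of two parallel operators is again parallel; applying this to $\Delta = [d_{TM}, \iota_\Pi]$ shows $\Delta$ is parallel. Likewise $\nabla$ is a derivation of the commutative product, so the BV bracket $\fbracket{a,b}_\Delta = \Delta(ab) - (\Delta a)b - (-1)^{\bar a} a\,\Delta b$ --- being built out of $\Delta$ and the parallel product --- is parallel as well. The only genuine computation is the cancellation establishing that $d_{TM}$ is parallel; once that and $\nabla\Pi = 0$ are secured, the rest is a formal consequence of the derivation properties of $\nabla$, so I expect no serious obstacle.
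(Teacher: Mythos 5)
Your proposal is correct and follows essentially the same route as the paper's proof: the paper likewise notes that $d_{TM}$ is parallel, deduces parallelness of $\iota_\Pi$ from $\nabla\omega=0$ (hence $\nabla\Pi=0$), and obtains $\Delta=[d_{TM},\iota_\Pi]$ and the BV bracket formally from the first two facts. You merely supply the short local computation for $[\nabla,d_{TM}]=0$ that the paper dismisses as ``clearly parallel,'' which is a welcome but inessential elaboration.
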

\begin{proof} 
For the operator $d_{TM}$, it is clearly parallel with respect to $\nabla$. For the  operator $\iota_\Pi$, the statement follows from the fact that $\omega$, hence $\Pi$, is $\nabla$-flat. As to the remaining operators, the statement follows from the first two. 
\end{proof}

\subsubsection{The quantum master equation}

Following Fedosov, we would like to construct a flat structure on the BV bundle $\widehat\Omega^{-\bullet}_{TM}$. As a preliminary, we consider the following differential. 

\begin{lem} The operator $\nabla+\hbar \Delta+ \hbar^{-1} d_{TM}(R_\nabla)$ defines a differential on $\A^\bullet_M(\widehat\Omega^{-\bullet}_{TM})[[\hbar]]$, i.e.,
\[
  \bracket{\nabla+\hbar \Delta+ \hbar^{-1} d_{TM}(R_\nabla)}^2=0. 
\]
\end{lem}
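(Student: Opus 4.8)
The plan is to expand the square directly and organize the result by powers of $\hbar$. Write $D := \nabla + \hbar\Delta + \hbar^{-1}d_{TM}(R_\nabla)$, where the last summand acts by wedge multiplication. Each of the three summands is an operator of odd total cohomological degree, so $D^2 = \tfrac12\{D,D\}$ is a sum of graded (anti)commutators. Collecting by $\hbar$-weight produces five contributions, in degrees $\hbar^{-2},\hbar^{-1},\hbar^0,\hbar^1,\hbar^2$:
\[
\hbar^{-2}\bigl(d_{TM}R_\nabla\bigr)^2,\quad \hbar^{-1}\{\nabla, d_{TM}R_\nabla\},\quad \nabla^2+\{\Delta, d_{TM}R_\nabla\},\quad \hbar\{\nabla,\Delta\},\quad \hbar^2\Delta^2,
\]
where I abbreviate multiplication by $d_{TM}R_\nabla$ by the same symbol. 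It suffices to show each term vanishes.

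Four of these are immediate. The term $\hbar^2\Delta^2$ vanishes because $\Delta$ is a BV operator. The term $\hbar\{\nabla,\Delta\}$ vanishes because $\{\nabla,\Delta\}$ is the graded commutator of two parallel-compatible odd operators: this is exactly the parallelness of $\Delta$ recorded in Lemma \ref{lem-compatible}. The term $\hbar^{-2}(d_{TM}R_\nabla)^2$ vanishes since $d_{TM}R_\nabla$ has odd total degree (form degree $2$ minus fiber-form degree $1$) and the algebra $\A^\bullet_M(\widehat\Omega^{-\bullet}_{TM})$ is graded-commutative, so this element squares to zero. For the $\hbar^{-1}$ term, the Leibniz rule identifies $\{\nabla, d_{TM}R_\nabla\}$ with multiplication by $\nabla(d_{TM}R_\nabla)$; since $d_{TM}$ is parallel (Lemma \ref{lem-compatible}) this is $\pm\, d_{TM}(\nabla R_\nabla)$, which vanishes by the second Bianchi identity $\nabla R_\nabla = 0$.

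The crux is the $\hbar^0$ term $\nabla^2 + \{\Delta, d_{TM}R_\nabla\}$. Using the defining relation $\fbracket{a,b}_\Delta = \Delta(ab) - (\Delta a)b - (-1)^{\bar a}a\Delta b$ of the BV bracket, I first rewrite the anticommutator as $\{\Delta, d_{TM}R_\nabla\} = \fbracket{d_{TM}R_\nabla,-}_\Delta + \Delta(d_{TM}R_\nabla)$. The multiplication term is zero: either anticommute $\Delta$ past $d_{TM}$ and use $\Delta R_\nabla=0$ (as $R_\nabla$ has no $\theta$-legs), or compute directly $\Delta(d_{TM}R_\nabla)=\tfrac12 R_{ijkl}\omega^{ij}dx^k dx^l=0$, which vanishes because $R_{ijkl}$ is symmetric in $ij$ while $\omega^{ij}$ is antisymmetric. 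It then remains to prove the key identity $\fbracket{d_{TM}R_\nabla,-}_\Delta = -\nabla^2$, the BV-bundle analogue of Fedosov's Weyl-bundle formula \eqref{eqn:curvature-Weyl-bundle}. I would establish it in a local parallel frame, where $\nabla^2$ acts as the $\mathfrak{sp}(2n)$-valued curvature, i.e. as the derivation $\tfrac12 R^i_{jkl}\,dx^k dx^l\,(y^j\partial_{y^i}+\theta^j\partial_{\theta^i})$ on both the even fiber coordinates $y^i$ and the odd fiber coordinates $\theta^i$, and then check that the odd bracket generated by $\Delta=\L_\Pi$ against $d_{TM}R_\nabla=\tfrac12 R_{ijkl}\,y^i\theta^j\,dx^k dx^l$ reproduces precisely this derivation, with the sign that makes the $\hbar^0$ term collapse to $\nabla^2+0-\nabla^2=0$.

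The one genuinely computational step, and the main obstacle, is this last identity $\fbracket{d_{TM}R_\nabla,-}_\Delta = -\nabla^2$. Its conceptual content is that, whereas the quadratic Hamiltonian $R_\nabla$ generates via the even Poisson bracket only the curvature's action on the $y$-variables (as in the Weyl picture of \eqref{eqn:curvature-Weyl-bundle}), applying $d_{TM}$ trades one $y$ for a $\theta$, so the resulting odd BV bracket differentiates simultaneously in $y$ and $\theta$ and thereby recovers the full action of the symplectic curvature on the tensor bundle $\widehat\Omega^{-\bullet}_{TM}$. Verifying this match with the correct Koszul and odd-symplectic signs is where all the real work lies; everything else reduces to the Bianchi identity, graded-commutativity, and the compatibilities already packaged in Lemma \ref{lem-compatible}.
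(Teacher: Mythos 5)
Your proof is correct and follows essentially the same route as the paper: both reduce the statement to the Bianchi identity, the parallelness statements of Lemma \ref{lem-compatible}, the vanishing of $\Delta(d_{TM}R_\nabla)$ via the $ij$-symmetry of $R_{ijkl}$ against the antisymmetry of $\omega^{ij}$, and the key identity $\fbracket{d_{TM}(R_\nabla),-}_\Delta=-\nabla^2$. The paper simply asserts this last identity as an opening observation, whereas you correctly isolate it as the one genuinely computational step and sketch its verification in a local frame.
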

\begin{proof} Observe that the curvature operator
$
  \nabla^2=-\{ d_{TM}(R_\nabla), -\}_{\Delta}
$
on $\A_M^\bullet(\widehat\Omega^{-\bullet}_{TM})$. It follows that
\begin{align*}
2\bracket{\nabla+\hbar \Delta+ \hbar^{-1} d_{TM}(R_\nabla)}^2&=\bbracket{{\nabla+\hbar \Delta+ \hbar^{-1} d_{TM}(R_\nabla)}, {\nabla+\hbar \Delta+\hbar^{-1} d_{TM}(R_\nabla)}}\\
&=\nabla^2+\bbracket{\Delta, d_{TM}(R_\nabla)}\\
&=\nabla^2+\{ d_{TM}(R_\nabla), -\}_{\Delta}=0. 
\end{align*}
Here in the second equality we have used $[\nabla, \Delta]=0$ (Lemma \ref{lem-compatible}) and $[\nabla, d_{TM}(R_\nabla)]=0$ (the Bianchi identity). In the third equality we have used $\Delta(d_{TM}(R_\nabla))\propto R_{ijkl}\omega^{ij}dx^k\wedge dx^l=0$ (since $R_{ijkl}$ is symmetric in $ij$) and the BV algebra relation. 
\end{proof}

\begin{defn}\label{defn-QME}
A degree $0$ element $\Gamma$ of $\A^\bullet_M(\widehat\Omega^{-\bullet}_{TM})[[\hbar]]$ is said to satisfy the \emph{quantum master equation} (QME) if
\begin{align}\label{QME-Gamma}
   \bracket{\nabla+\hbar \Delta +\hbar^{-1} d_{TM}(R_\nabla)}e^{\Gamma/\hbar}=0.
\end{align}
Such a $\Gamma$ is called a nilpotent solution of quantum master equation if $\Gamma^N=0$ for $N\gg 0$.
\end{defn}

The quantum master equation can be viewed as an analogue of Fedosov's equation \eqref{Fedosov-eqn}.

\begin{lem}\label{lem-naive-differential} If $\Gamma$ satisfies the quantum master equation, then the operator $ \nabla+\hbar \Delta+ \{\Gamma,-\}_{\Delta}$ defines a differential on $\A^\bullet_M(\widehat\Omega_{TM}^{-\bullet})$. 
\end{lem}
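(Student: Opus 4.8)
Write $D_0 := \nabla+\hbar\Delta+\hbar^{-1}d_{TM}(R_\nabla)$ for the differential of the preceding lemma and $D_\Gamma := \nabla+\hbar\Delta+\{\Gamma,-\}_\Delta$ for the operator at hand. The whole point is that $D_\Gamma$ is obtained from $D_0$ by conjugating with the (invertible) operator of multiplication by $e^{\Gamma/\hbar}$; since conjugation preserves the property of squaring to zero, $D_\Gamma^2=0$ will follow at once from $D_0^2=0$. So the plan is: first establish an intertwining identity relating $D_0$ and $D_\Gamma$ through $e^{\Gamma/\hbar}$, then observe that the QME is exactly the condition making that identity a genuine conjugation, and finally transport $D_0^2=0$.

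\emph{Step 1 (the master identity).} I will prove that for every section $a$,
\[
D_0\bracket{e^{\Gamma/\hbar}\cdot a}=e^{\Gamma/\hbar}\cdot\bracket{D_\Gamma a+\tfrac1\hbar F\cdot a},\qquad F:=\nabla\Gamma+\hbar\Delta\Gamma+\tfrac12\{\Gamma,\Gamma\}_\Delta+d_{TM}(R_\nabla),
\]
treating the three summands of $D_0$ separately. As $\nabla$ is a derivation of the commutative product and $\bar\Gamma=0$, one gets $\nabla(e^{\Gamma/\hbar}a)=e^{\Gamma/\hbar}\bracket{\hbar^{-1}(\nabla\Gamma)a+\nabla a}$; the summand $\hbar^{-1}d_{TM}(R_\nabla)$ is multiplication by a central element and commutes past $e^{\Gamma/\hbar}$. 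The substance is the $\hbar\Delta$ term: exploiting that $\Delta$ is a second-order operator in the sense of Definition \ref{defn-BV}, and that $\bar\Gamma=0$ kills all Koszul signs, one computes
\[
\Delta\, e^{\Gamma/\hbar}=e^{\Gamma/\hbar}\bracket{\tfrac1\hbar\Delta\Gamma+\tfrac1{2\hbar^2}\{\Gamma,\Gamma\}_\Delta},\qquad \{e^{\Gamma/\hbar},a\}_\Delta=e^{\Gamma/\hbar}\,\tfrac1\hbar\{\Gamma,a\}_\Delta,
\]
the first from iterating $\Delta(\Gamma^n)=n\Gamma^{n-1}\Delta\Gamma+\binom{n}{2}\Gamma^{n-2}\{\Gamma,\Gamma\}_\Delta$ and the second from the first-slot derivation property of the bracket. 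Feeding these into $\hbar\Delta(e^{\Gamma/\hbar}a)=\hbar\bracket{(\Delta e^{\Gamma/\hbar})a+e^{\Gamma/\hbar}\Delta a+\{e^{\Gamma/\hbar},a\}_\Delta}$ and summing the three contributions gives the master identity after regrouping.

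\emph{Step 2 (role of the QME) and Step 3 (conclusion).} Applying the master identity to $a=1$, and noting $D_\Gamma 1=0$, shows that the quantum master equation $D_0 e^{\Gamma/\hbar}=0$ of Definition \ref{defn-QME} holds if and only if $F=0$. Hence, under the QME, the master identity collapses to the operator equation $D_0\circ e^{\Gamma/\hbar}=e^{\Gamma/\hbar}\circ D_\Gamma$, that is $D_\Gamma=e^{-\Gamma/\hbar}\circ D_0\circ e^{\Gamma/\hbar}$. Squaring and cancelling the adjacent $e^{\Gamma/\hbar}e^{-\Gamma/\hbar}=1$ yields $D_\Gamma^2=e^{-\Gamma/\hbar}\,D_0^2\,e^{\Gamma/\hbar}=0$ by the preceding lemma, which is the assertion.

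\emph{Main obstacle.} The only real content is the BV-exponential computation in Step 1: making rigorous how the second-order operator $\Delta$ acts on the product $e^{\Gamma/\hbar}a$ and verifying the two boxed fiberwise identities. (Compatibility of $\nabla$ with $\Delta$ and $\{-,-\}_\Delta$ from Lemma \ref{lem-compatible}, and the centrality of $d_{TM}(R_\nabla)$, already entered in establishing $D_0^2=0$, so they need not be reproven here.) A direct alternative is to expand $D_\Gamma^2=\tfrac12[D_\Gamma,D_\Gamma]$ and collect the six graded commutators using $\nabla^2=-\{d_{TM}(R_\nabla),-\}_\Delta$, the Jacobi identity $\{\Gamma,-\}_\Delta^2=\tfrac12\{\{\Gamma,\Gamma\}_\Delta,-\}_\Delta$, $[\nabla,\hbar\Delta]=0$, and $[\hbar\Delta,\{\Gamma,-\}_\Delta]=-\hbar\{\Delta\Gamma,-\}_\Delta$; this shows $D_\Gamma^2=\{F,-\}_\Delta$ directly, but demands careful sign bookkeeping, so I favor the conjugation route above.
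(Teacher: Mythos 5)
Your proposal is correct and follows essentially the same route as the paper: the paper's proof consists precisely of asserting the conjugation identity $e^{-\Gamma/\hbar}\bracket{\nabla+\hbar\Delta+\hbar^{-1}d_{TM}(R_\nabla)}e^{\Gamma/\hbar}=\nabla+\hbar\Delta+\{\Gamma,-\}_\Delta$ as a consequence of the QME and then squaring. Your Step 1 simply supplies the explicit BV-exponential computation that the paper leaves as a ``formal expression,'' so the argument is the same, just more detailed.
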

\begin{proof}
Equation \eqref{QME-Gamma} implies that as a composition of operators the following formula holds
\[
  e^{-\Gamma/\hbar} \bracket{\nabla+\hbar \Delta+ \hbar^{-1} d_{TM}(R_\nabla)}e^{\Gamma/\hbar}= \nabla+\hbar \Delta+ \{\Gamma,-\}_{\Delta}.
  \]
Here the conjugation should be understood as 
$
     e^{-\Gamma/\hbar}(-)  e^{\Gamma/\hbar}:=e^{-ad_{\Gamma/\hbar}}(-)
$ where $ad_{\Gamma/\hbar}(A)=(\Gamma/\hbar \cdot) A-A  (\Gamma/\hbar\cdot)$ for a linear operator $A$ on $\A^\bullet_M(\widehat\Omega_{TM}^{-\bullet})$. The lemma follows easily from this formal expression. 
\end{proof}

\subsubsection{The integration map}\label{sect:integration}
The fiberwise integration $\int_{Ber}$ extends $\A_M^\bullet$-linearly to a map
\[
  \int_{Ber}: \A_M^\bullet(\widehat\Omega_{TM}^{-\bullet})\to \A^\bullet_M
\]
which is of cohomological degree $2n$ as a morphism of graded vector spaces. When we work with various spaces involving the formal variable $\hbar$, it is understood that $\int_{Ber}$ is always $\hbar$-linear. 

\begin{lem}\label{lem-cochain-BV} The map $\int_{Ber}$ is a cochain map from the complex $( \A^\bullet_M(\widehat\Omega_{TM}^{-\bullet})[[\hbar]], \nabla+\hbar \Delta+ \hbar^{-1} d_{TM}(R_\nabla))$ to the complex $(\A^\bullet_M[[\hbar]], d_{M})$, where $d_{M}$ is the de Rham differential on $M$. 
\end{lem}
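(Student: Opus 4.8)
The plan is to verify the cochain condition $\int_{Ber}\circ\,(\nabla+\hbar\Delta+\hbar^{-1}d_{TM}(R_\nabla))=d_M\circ\int_{Ber}$ by treating the three summands of the source differential separately. Since $\int_{Ber}$ raises cohomological degree by the \emph{even} number $2n$, no Koszul signs intervene when commuting it past a differential, so it suffices to establish $\int_{Ber}\circ\nabla=d_M\circ\int_{Ber}$ together with the vanishing of the other two contributions. I would dispatch the two ``anomalous'' terms first and isolate the $\nabla$-contribution as the essential point.

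For the multiplication operator $\hbar^{-1}d_{TM}(R_\nabla)$, I would use the explicit form $d_{TM}(R_\nabla)=\tfrac14 R_{ijkl}(\theta^i y^j+y^i\theta^j)\,dx^k\wedge dx^l$ (writing $\theta^i$ for $dy^i$), which is homogeneous of degree $1$ in the even fiber variables $y$. Hence every monomial of $d_{TM}(R_\nabla)\cdot a$ carries at least one factor of $y$; because the Berezin contraction $(\tfrac12\omega^{ij}\pa_{\theta^i}\pa_{\theta^j})^n$ does not alter the $y$-degree and $\int_{Ber}$ sets $y=0$ at the end, one obtains $\int_{Ber}(d_{TM}(R_\nabla)\cdot a)=0$. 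For the term $\hbar\Delta$, recall that $\Delta$ lowers the absolute fiber-form degree by one while $\int_{Ber}$ is nonzero only on the top component $\widehat\Omega^{-2n}_{TM}$; since there is no component $\widehat\Omega^{-(2n+1)}_{TM}$ from which $\Delta$ could land in top degree, the composite $\int_{Ber}\circ\Delta$ vanishes identically. This last vanishing is the Berezin/BV incarnation of Stokes' theorem.

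The main point is the identity $\int_{Ber}\circ\nabla=d_M\circ\int_{Ber}$, which I would obtain by showing that $\int_{Ber}$ is \emph{parallel} with respect to $\nabla$, i.e. that it is induced by a $\nabla$-flat bundle map $\widehat\Omega^{-2n}_{TM}\to\R$ intertwining $\nabla$ on the source with $d_M$ on the trivial target; the full statement then follows from $\A^\bullet_M$-linearity together with the Leibniz rule. This rests on two observations. First, the connection induced on the tensor bundle $\widehat\Omega^{-\bullet}_{TM}$ preserves both the symmetric (polynomial-in-$y$) degree and the fiber-form degree, so the projections implicit in $\int_{Ber}$ — extracting the top fiber form $\wedge^{2n}(T^*M)$ and the $\Sym^0$, i.e. $y$-independent, part — commute with $\nabla$ (any Christoffel correction carries a factor of $y$ and so cannot alter the $\Sym^0$ component). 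Second, the contraction against $\Pi^{\wedge n}$ defining $\int_{Ber}$ is $\nabla$-parallel because $\Pi$, equivalently the volume form $\omega^{\wedge n}$ trivializing the line bundle $\wedge^{2n}(T^*M)$, is $\nabla$-flat by Lemma~\ref{lem-compatible}. Combining the three computations gives $\int_{Ber}(\nabla+\hbar\Delta+\hbar^{-1}d_{TM}(R_\nabla))a=d_M\int_{Ber}(a)$.

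I expect the only real obstacle to be this last step, and precisely the point that — in contrast to the full Abelian connection $\nabla+\tfrac1\hbar[\gamma,-]_\star$, which does \emph{not} commute with the symbol map — the bare symplectic connection $\nabla$ does commute with the Berezin/symbol projection because it respects the weight grading. The calculation to double-check is that no Christoffel term survives the $y=0$ specialization, which holds since all such terms are of positive $y$-degree.
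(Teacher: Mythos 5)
Your proposal is correct and follows essentially the same route as the paper's proof: the $\hbar\Delta$ and $\hbar^{-1}d_{TM}(R_\nabla)$ contributions are killed by type reasons (form-degree counting and the surviving factor of $y$, respectively), and the $\nabla$-term is handled by observing that the induced connection preserves the bigrading so that only the $\Sym^0\otimes\wedge^{2n}(T^*M)$ component $\alpha\otimes\omega^n$ contributes, where $\nabla$-flatness of $\omega^n$ gives $\nabla(\alpha\otimes\omega^n)=d\alpha\otimes\omega^n$. Your version merely spells out in more detail what the paper compresses into ``by type reason.''
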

\begin{proof} Formula \eqref{fiber-BV-integration} implies (by type reason) that 
\[
  \int_{Ber} (\hbar \Delta+ \hbar^{-1} d_{TM}(R_\nabla))a=0, \quad \text{for}\ a \in \A^\bullet_M (\widehat\Omega_{TM}^{-\bullet})
\]
and
\[
  \int_{Ber} \nabla a_+=0, \quad \mbox{for} \; \; a_+\in  \Gamma \left (M,\prod_{p+2n-q>0}\Sym^p (T^* M)\otimes \wedge^{-q}(T^* M) \right ). 
\]
Moreover, when $p+2n-q=0$, i.e., $q=2n, p=0$, we can write the section $a$ as 
\[
a=\alpha\otimes \omega^n, \quad \alpha\in \A^\bullet_M.
\]
Then  $\nabla(a)=d\alpha\otimes\omega^n$ since $\omega$ is $\nabla$-parallel. The lemma follows immediately. 
\end{proof}

\begin{defn}\label{defn:BVint} Given a nilpotent solution $\Gamma$ of the quantum master equation, we define the twisted integration map
\begin{align}\label{defn-BV-integration}
  \int_{\Gamma}: \A^\bullet_M(\widehat\Omega_{TM}^{-\bullet})\to \A^\bullet_M((\hbar)), \quad    \int_{\Gamma} a:= \int_{Ber} e^{\Gamma/\hbar} a. 
\end{align}
\end{defn} 
It follows from Lemma \ref{lem-naive-differential} (its proof) and Lemma \ref{lem-cochain-BV} that $\int_\Gamma$ is a cochain map between complexes 
$$
\int_\Gamma: 
( \A^\bullet_M(\widehat\Omega_{TM}^{-\bullet})[[\hbar]], \nabla+\hbar \Delta+ \{\Gamma,-\}_{\Delta})\to (\A^\bullet_M((\hbar)), d_{M}).
$$

In particular, we have a well-defined evaluation map
\[
  \int_M \int_\Gamma: H^0(\A^\bullet_M(\widehat\Omega_{TM}^{-\bullet})[[\hbar]], \nabla+\hbar \Delta+ \{\Gamma,-\}_{\Delta})\to \R((\hbar)),
\]
which is the composition of BV-integration $\int_\Gamma$ with a  further integration on $M$. Later, in our discussion of quantum field theory in Section \ref{sect:3}, we will see that this evaluation map corresponds to the expectation value on global quantum observables. 

\subsection{Fedosov quantization vs. BV quantization}\label{section-Fedosov-BV}

Let $\gamma$ be a solution of Fedosov's equation \eqref{Fedosov-eqn}, so $\gamma$ describes a flat connection on the Weyl bundle $\W$. In this section, we construct an explicit Feynman integral formula to transfer $\gamma$ to a solution $\gamma_\infty$ of quantum master equation on the BV bundle. The main result is the following.

\begin{thm}\label{main-thm} Let $(M, \omega)$ be a symplectic manifold. Let $\gamma$ be a solution of Fedosov's equation as in Theorem \ref{thm: existence-uniqueness-Fedesov-equation}.  Let (Definition \ref{defn-gamma-infty})
$$
  \gamma_\infty:=\hbar \log \Mult \int_{S^1[*]} e^{\hbar \pa_P+D}e^{\otimes \gamma/\hbar}. 
$$
Then $\gamma_\infty$ is a nilpotent solution of quantum master equation on the BV bundle (Definition \ref{defn-QME}). 
\end{thm}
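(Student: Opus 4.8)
The plan is to reduce the quantum master equation for $\gamma_\infty$ to Fedosov's equation \eqref{Fedosov-eqn}, using the graph-integral operator $\Mult\int_{S^1[*]}e^{\hbar\pa_P+D}$ as the bridge. The first step is purely formal: expanding \eqref{QME-Gamma} and using that $\Delta$ is second order (so that $\Delta e^{\Gamma/\hbar}=(\hbar^{-1}\Delta\Gamma+\tfrac{1}{2}\hbar^{-2}\{\Gamma,\Gamma\}_\Delta)e^{\Gamma/\hbar}$) shows that \eqref{QME-Gamma} is equivalent to the action-form identity
\begin{equation*}
\nabla\gamma_\infty+\tfrac{1}{2}\{\gamma_\infty,\gamma_\infty\}_\Delta+\hbar\Delta\gamma_\infty+d_{TM}(R_\nabla)=0.
\end{equation*}
This is visibly the BV-bundle analogue of Fedosov's equation: $\nabla\gamma\mapsto\nabla\gamma_\infty$, the curvature $R_\nabla\mapsto d_{TM}(R_\nabla)$, and the Moyal term $\tfrac{1}{2\hbar}[\gamma,\gamma]_\star$ splits into its tree part $\tfrac{1}{2}\{\gamma_\infty,\gamma_\infty\}_\Delta$ and its one-loop (self-contraction) part $\hbar\Delta\gamma_\infty$. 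Crucially the central term $\omega_\hbar$ has no counterpart here, and explaining its disappearance is part of the argument.

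Next I would unpack $\gamma_\infty=\hbar\log\Mult\int_{S^1[*]}e^{\hbar\pa_P+D}e^{\otimes\gamma/\hbar}$ as a sum over connected graphs whose vertices carry $\gamma/\hbar$, whose internal edges carry the propagator $P$ contracting fiber variables through $\omega^{ij}\pa_{y^i}\pa_{y^j}$, and whose remaining decorations (packaged in $D$) supply the connection $\nabla$ and the curvature insertion $R_\nabla$; the integral runs over the compactified configuration spaces of points on $S^1$, and the $\hbar\log$ restricts to connected graphs. The core of the proof is then a Stokes/Leibniz computation: I would show that applying $\nabla+\hbar\Delta+\hbar^{-1}d_{TM}(R_\nabla)$ to the integrand differs from a total configuration-space derivative by terms controlled by Fedosov's data, so that by Stokes' theorem the bulk contribution is traded for an integral over the boundary $\pa(\overline{S^1[*]})$.

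The heart of the matter is the identification of the boundary strata. Since the theory is one-dimensional, the only codimension-one faces that should contribute are those where two cyclically adjacent vertices collide; on such a face the propagator degenerates and the two colliding copies of $\gamma/\hbar$ fuse into the Moyal product, producing exactly $\tfrac{1}{2\hbar}[\gamma,\gamma]_\star$, while the zero modes along $S^1$ (the odd class in $H^\bullet(S^1)$) supply the $\theta$-directions, converting the $y$-only Moyal contraction into the mixed $y$--$\theta$ operator $\Delta$. Feeding Fedosov's equation \eqref{Fedosov-eqn} into these boundary terms collapses the sum: the noncentral part of $\nabla\gamma+\tfrac{1}{2\hbar}[\gamma,\gamma]_\star+R_\nabla$ assembles into the action-form identity above, and the central remainder $\omega_\hbar$ drops out because it is $y$-independent and is therefore annihilated by the fiber operators $d_{TM}$ and $\Delta$ that mediate the transfer---the same mechanism that makes $\int_{Ber}$ a cochain map in Lemma \ref{lem-cochain-BV}. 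Conceptually this is the statement that renormalization-group flow carries solutions of the quantum master equation to solutions, with Fedosov's connection furnishing the ultraviolet seed.

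The step I expect to be the main obstacle is precisely this boundary analysis: proving that higher-codimension faces and collisions of non-adjacent vertices contribute nothing, and verifying that the symmetry factors from $\Mult$, the vertex symmetrization, and the $\hbar\log$ conspire to reproduce the exact coefficient $\tfrac{1}{2\hbar}$ in front of the Moyal commutator. Once the QME holds, nilpotency of $\gamma_\infty$ is routine: every graph raises the weight in the grading assigning $y^i$ weight $1$, and the exterior factors $\A^\bullet_M$ and $\wedge^{-\bullet}(T^*M)$ saturate in degree $2n$, so that $\gamma_\infty^N=0$ for $N\gg 0$.
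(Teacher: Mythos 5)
Your proposal follows essentially the same route as the paper: expand the QME into its action-form version, realize $\gamma_\infty$ as a sum of connected configuration-space graph integrals, apply Stokes' theorem on $S^1[k]$ so that the bulk terms trade for boundary faces, identify the two-point collision faces (where the lifted propagator equals $\pm\tfrac12$) with the Moyal commutator $\tfrac{1}{2\hbar}[\gamma,\gamma]_\star$ while higher-codimension faces die by form-degree counting, and then invoke Fedosov's equation so that only the central term $\omega_\hbar$ remains, which contributes nothing since $D(\omega_\hbar)=0$ leaves the integrand short of top degree on the configuration space. This is precisely the content of Lemma \ref{lem-BV-tansfer}, its corollary, and the paper's concluding type-reason argument, and your nilpotency observation (saturation of form degree on $M$) is the intended one.
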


The meaning of the symbols in the above formula is a bit involved but rather explicit. The rest of this subsection is devoted to the explanation of this formula and the proof of Theorem \ref{main-thm}. To get a feel for the flavor of this formula, $\gamma_\infty$ is obtained as a sum of all connected Feynman diagram integrals in a topological quantum mechanical model. Such a physical interpretation will be given in Section \ref{sect:3}. 
\bigskip

\begin{rmk}\label{rmk-Hochschild} The existence of a flat structure $\gamma_\infty$ from $\gamma$ as in Theorem \ref{main-thm} is a rather formal consequence from noncommutative geometry. In fact, 
$\Omega^{-\bullet}_V$ represents the Hochschild homology of the commutative algebra $(\widehat{\OO}(V), \cdot)$ (Hochschild-Kostant-Rosenberg Theorem). Similarly, the complex $(\Omega^{-\bullet}_V[[\hbar]], \hbar \Delta)$ models the Hochschild homology of the Weyl algebra $\W(V)$, which holds true for any deformation quantization associated to a Poisson tensor.  This can be seen from Tsygan's formality conjecture for Hochschild chains \cite{Tsygan} (proved by Shoikhet \cite{Sh} and Dolgushev \cite{dolgushev-formality}), which asserts a morphism of $L_\infty$-modules of polyvector fields from  Hochschild chains to  differential forms. Then the linearization of this $L_\infty$-morphism at the Maurer-Cartan element of the Poisson tensor $\Pi$ in the Lie algebra of polyvector fields yields the quasi-isomorphism between the Hochschild chain complex of $\W(V)$ and the complex $(\Omega^{-\bullet}_V[[\hbar]], \hbar \Delta)$. A flat family of Weyl algebras (via $\gamma$) leads to a flat family of Hochschild chains, hence a flat family of BV algebras (via $\gamma_\infty$). The authors  thank the referee for pointing this out. Theorem \ref{main-thm} then gives an explicit formula for the transferred data. 

\end{rmk}

\subsubsection{Quantum master equation via Fedosov quantization}

Let us first introduce some notations.  

\begin{defn}
We denote the $k$-th tensor product bundle of $\widehat\Omega_{TM}^{-\bullet}$ by
\[
  (\widehat\Omega_{TM}^{-\bullet})^{\otimes k}:= \widehat\Omega_{TM}^{-\bullet}\otimes \cdots \otimes \widehat\Omega_{TM}^{-\bullet}.
\]
 Further, we define the "multiplication map`` 
\[
    \Mult:  (\widehat\Omega_{TM}^{-\bullet})^{\otimes k}\to \widehat\Omega_{TM}^{-\bullet}, \quad a_1\otimes \cdots \otimes a_k\to a_1\cdots a_k
\]
by the natural graded commutative product. This definition is $\A_M^\bullet$-linearly extended to $\A_M^\bullet(\widehat\Omega_{TM}^{-\bullet})$
\[
\Mult: \A_M^\bullet(\widehat\Omega_{TM}^{-\bullet})^{\otimes k}:=\A_M^\bullet \left( (\widehat\Omega_{TM}^{-\bullet})^{\otimes k}\right) \to \A_M^\bullet(\widehat\Omega_{TM}^{-\bullet}).
\]
\end{defn}

Recall the Poisson tensor $\Pi=\omega^{-1}\in \Gamma(M, \wedge^2 TM)$ and the associated contraction map
\[
\iota_\Pi: \widehat\Omega_{TM}^{-\bullet}\to \widehat\Omega_{TM}^{-(\bullet-2)}. 
\]
The BV operator is $\Delta=\L_{\Pi}=[d_{TM}, \iota_\Pi]$. In terms of local coordinates $\{x^i, y^i\}$ (as in section \ref{section-weyl}), 
\[
\iota_{\Pi}=\frac{1}{2}\omega^{ij}(x)\iota_{\pa_{y^i}}\iota_{\pa_{y^j}} \; \text{ and } \; \Delta=\omega^{ij}(x)\L_{\pa_{y^i}}\iota_{\pa_{y^j}}. 
\]

\begin{defn}We extend the definitions of $d_{TM}, \iota_{\Pi},$ and $\Delta$ to $\A^\bullet_M(\widehat\Omega_{TM}^{-\bullet})^{\otimes k}$ by declaring
\begin{align*} 
     d_{TM} (a_1\otimes \cdots \otimes a_k)&:=\sum_{1\leq \alpha\leq k} \pm  a_1\otimes \cdots \otimes d_{TM} a_\alpha\otimes \cdots \otimes a_k , \\[1ex]
  \iota_\Pi (a_1\otimes \cdots \otimes a_k)&:=\frac{1}{2}\sum_{1\leq \alpha, \beta\leq k}\pm \omega^{ij}(x) a_1\otimes \cdots \iota_{\pa_{y^i}} a_\alpha \otimes \cdots \iota_{\pa_{y^j}} a_\beta\otimes \cdots \otimes a_k, \text{ and } \\[1ex]
  \Delta(a_1\otimes \cdots \otimes a_k)&:=\sum_{1\leq \alpha, \beta\leq k}  \pm \omega^{ij}(x) a_1\otimes \cdots \L_{\pa_{y^i}} a_\alpha \otimes \cdots \iota_{\pa_{y^j}} a_\beta\otimes \cdots \otimes a_k. 
\end{align*}
Here  $\pm$ are Koszul signs generated by passing odd operators through graded objects. 
\end{defn}

It is straightforward to check that these definitions are compatible with the multiplication map
\[
 \quad \Delta \circ \Mult=\Mult \circ \Delta, \quad \iota_{\Pi}\circ \Mult=\Mult \circ \iota_{\Pi}. 
\]

In the formulation and proof of Theorem \ref{main-thm} we will use configuration spaces and integrals over them.  We have recalled some elementary properties in   Appendix \ref{sec: configuration}.

Let $S^1[m]$ denote the compactifed configuration space of $m$ ordered  points on the circle $S^1$, which is constructed via successive real-oriented blow ups of $(S^1)^m$ (see Definition \ref{defn-configuration}); let
\[
  \pi: S^1[m]\to (S^1)^m
\]
 denote the blow down map.  $S^1[m]$ is a manifold with corners.
In particular, 
$S^1[2]$ is parametrized by a cylinder (see Example \ref{example: two-point-configuration})
\[
  S^1[2]=\{(e^{2\pi i\theta},u)| 0\leq \theta<1, 0\leq u\leq 1\}. 
\]
With this parametrization, the blow down map is 
\[
   \pi: S^1[2]\to (S^1)^2, \quad (e^{2\pi i \theta},u)\to (e^{2\pi i (\theta+u)}, e^{2\pi i \theta}).
\]

\begin{defn}
We define the function $P$ on $S^1[2]$ by the formula: 
\begin{align}
    P(\theta, u)= u-\frac{1}{2}. 
\end{align}
$P$ will be called the \emph{propagator}. 
\end{defn}

\begin{rmk}
$P$ is the first derivative of Green's function on $S^1$ with respect to the standard flat metric. In particular, it represents the propagator of topological quantum mechanics on $S^1$, hence the name. See Remark \ref{remark: propagator} in Appendix \ref{sec: configuration}. 
\end{rmk}

\begin{defn} We define the $\A^\bullet_{S^1[k]}$-linear operators
\[
  \pa_P, D:  \A^\bullet_{S^1[k]} \otimes_{\R} \A^\bullet_M(\widehat\Omega_{TM}^{-\bullet})^{\otimes k}\to \A^\bullet_{S^1[k]}\otimes_{\R}  \A^\bullet_M(\widehat\Omega_{TM}^{-\bullet})^{\otimes k}
\]
 by 
\[
   \pa_P(a_1\otimes \cdots \otimes a_k):=\frac{1}{2} \sum_{1\leq \alpha\neq \beta\leq k} \pi_{\alpha\beta}^*(P)\otimes_{\R} \bracket{\omega^{ij}(x) a_1\otimes \cdots \L_{\pa_{y^i}} a_\alpha \otimes \cdots \L_{\pa_{y^j}} a_\beta\otimes \cdots \otimes a_k}, \text{ and}
\]
\[
   D (a_1\otimes \cdots \otimes a_k):=\sum_{\alpha}\pm d\theta^\alpha\otimes_{\R} \bracket{a_1\otimes \cdots \otimes  d_{TM} a_\alpha\otimes \cdots \otimes a_k}.  
\]
 Here $a_i \in \A^\bullet_M(\widehat\Omega_{TM}^{-\bullet})$.  $\pi_{\alpha\beta}: S^1[m]\to S^1[2]$ is the forgetful map to the two points indexed by $\alpha, \beta$. $\theta^\alpha\in [0,1)$ is the parameter on the $S^1$ indexed by $1\leq \alpha\leq k$ and $d\theta^\alpha$ is viewed as a 1-form on $S^1[k]$ via the pull-back $\pi_\alpha: S^1[m]\to S^1$.  Again, $\pm$ is the appropriate Koszul sign.
\end{defn}

Let $d_{S^1}$ denote the de Rham differential on $\A^\bullet_{S^1[k]}$.

\begin{lem}\label{lem-operators}
As operators on $\A^\bullet_{S^1[k]} \otimes_{\R} \A^\bullet_M(\widehat\Omega_{TM}^{-\bullet})^{\otimes k}$,
\[
  [d_{S^1}, \pa_P]= [\Delta, D] \quad \text{and} \quad  [d_{S^1}, D]=[\pa_P, \Delta]=0.  
\]
\end{lem}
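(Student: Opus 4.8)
The plan is to verify the three identities directly from the explicit definitions, treating $d_{S^1}$ as acting only on the $\A^\bullet_{S^1[k]}$-factor. The geometric input I would isolate first is the behavior of the propagator under $d_{S^1}$. Writing $\theta^\alpha$ for the angular coordinate of the $\alpha$-th point, the blow-down description $\pi(e^{2\pi i\theta},u)=(e^{2\pi i(\theta+u)},e^{2\pi i\theta})$ of $S^1[2]$ shows $u=\theta^1-\theta^2$, so $d_{S^1}P=du$, and after pullback along the forgetful map this becomes
\[
  d_{S^1}\bracket{\pi_{\alpha\beta}^*(P)}=d\theta^\alpha-d\theta^\beta .
\]
This single identity is the conceptual heart of the first equation.

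For $[d_{S^1},\pa_P]=[\Delta,D]$: since $\pa_P$ multiplies by the degree-zero functions $\pi_{\alpha\beta}^*(P)$ and otherwise acts by the even operator $T_{\alpha\beta}:=\omega^{ij}(x)\,\L^{(\alpha)}_{\pa_{y^i}}\L^{(\beta)}_{\pa_{y^j}}$ on the $M$-factor (which $d_{S^1}$ does not see), I obtain $[d_{S^1},\pa_P]=\tfrac12\sum_{\alpha\neq\beta}(d\theta^\alpha-d\theta^\beta)\otimes T_{\alpha\beta}$. On the other side I write $D=\sum_\gamma \pm\, d\theta^\gamma\otimes d_{TM}^{(\gamma)}$ and expand $\Delta$ into its diagonal ($\gamma=\delta$) and cross ($\gamma\neq\delta$) terms. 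The diagonal pieces contribute nothing, since on a single factor $[\Delta,d_{TM}]=[\L_\Pi,d_{TM}]=0$ (equivalently $\omega^{kl}\L_{\pa_{y^k}}\L_{\pa_{y^l}}=0$ by antisymmetry of $\omega$). A cross term $\omega^{kl}\L^{(\gamma)}_{\pa_{y^k}}\iota^{(\delta)}_{\pa_{y^l}}$ survives only when the contraction slot meets the differential, i.e. for the factor $\delta$, where the Cartan relation $[\iota_{\pa_{y^l}},d_{TM}]=\L_{\pa_{y^l}}$ converts the contraction into a second Lie derivative; the factor $\gamma$ gives nothing because $[\L_{\pa_{y^k}},d_{TM}]=0$. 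This produces exactly $\sum_{\alpha\neq\beta}\pm\, d\theta^\beta\otimes T_{\alpha\beta}$. Matching the two sides then uses the antisymmetry $T_{\beta\alpha}=-T_{\alpha\beta}$, which follows from $\omega^{ji}=-\omega^{ij}$ together with the commutativity of the degree-zero operators $\L^{(\alpha)}_{\pa_{y^i}}$ and $\L^{(\beta)}_{\pa_{y^j}}$; indeed antisymmetrizing gives $\tfrac12\sum_{\alpha\neq\beta}(d\theta^\alpha-d\theta^\beta)T_{\alpha\beta}=-\sum_{\alpha\neq\beta}d\theta^\beta T_{\alpha\beta}$.

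For the two vanishing statements: $[d_{S^1},D]=0$ because $d\theta^\gamma$ is closed and $d_{S^1}$ commutes with $d_{TM}^{(\gamma)}$, so every term of the commutator carries a factor $d_{S^1}(d\theta^\gamma)=0$. And $[\pa_P,\Delta]=0$ because $\Delta$ is $\A^\bullet_{S^1[k]}$-linear, so it commutes with multiplication by $\pi_{\alpha\beta}^*(P)$, while the fiberwise constant-coefficient operators appearing in $T_{\alpha\beta}$ and $\Delta$ pairwise commute: $[\pa_{y^i},\pa_{y^j}]=0$ forces $[\L_{\pa_{y^i}},\L_{\pa_{y^j}}]=[\L_{\pa_{y^i}},\iota_{\pa_{y^j}}]=0$, and the coefficients $\omega^{ij}(x)$ are central with respect to all fiber operations.

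The main obstacle is bookkeeping rather than conceptual: tracking the Koszul signs $\pm$ produced by commuting the odd operators $d_{TM}$ and $\iota_{\pa_{y^j}}$ past the graded tensor factors, and confirming that the sign emerging from the $[\iota_{\pa_{y^l}},d_{TM}]=\L_{\pa_{y^l}}$ step in the cross-term computation of $[\Delta,D]$ coincides with the minus sign coming from antisymmetrizing $T_{\alpha\beta}$ on the $[d_{S^1},\pa_P]$ side. I would fix a single convention for the Koszul signs in the definitions of $D$ and $\Delta$ and carry it consistently through both computations.
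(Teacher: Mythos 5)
Your proof is correct and follows essentially the same route as the paper: the paper's entire proof consists of observing the identity $d_{S^1}\pi_{\alpha\beta}^*(P)=d\theta^\alpha-d\theta^\beta$ and declaring the rest "a direct computation," which is exactly the identity you isolate as the conceptual heart before carrying out that computation (diagonal versus cross terms, the Cartan relation, and the antisymmetry of $T_{\alpha\beta}$) in full.
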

\begin{proof} The lemma follows from a direct computation after observing that
\[
   d_{S^1}\pi_{\alpha\beta}^*(P)=d\theta^\alpha-d\theta^\beta. 
\]
\end{proof}

\begin{defn}\label{defn-gamma-infty} Let $\gamma \in \A^\bullet_M(\W)$ (see Definition \ref{defn-AW}). We define $\gamma_\infty\in \A^\bullet_M(\widehat\Omega_{TM}^{-\bullet})[[\hbar]]$ by
\begin{align}\label{gamma-infty}
  e^{\gamma_\infty/\hbar}:=\Mult \int_{S^1[*]} e^{\hbar \pa_P+D} e^{\otimes \gamma/\hbar},
\end{align}
where
$
  e^{\otimes \gamma/\hbar}:=\sum_{k\geq 0} \frac{1}{k! \hbar^k} \gamma^{\otimes k},  \gamma^{\otimes k}\in \A^\bullet_M(\widehat\Omega_{TM}^{-\bullet})^{\otimes k}.   
$
\end{defn}
\begin{notn}\label{notn: configuration-integral-degree}
In the above definition and later discussions, we adopt the following convention: 
$$
\int_{S^1[*]}: \A^\bullet_{S^1[k]}\otimes_{\R} \A^\bullet_M(\widehat\Omega_{TM}^{-\bullet})^{\otimes k}\to  \A^\bullet_M(\widehat\Omega_{TM}^{-\bullet})^{\otimes k}
$$
is the integration $\int_{S^1[k]}$ over the appropriate configuration space $S^1[k]$. 
\end{notn}

In particular, 
\begin{align}\label{feynman-formula}
e^{\gamma_\infty/\hbar}=\sum_k \Mult \int_{S^1[k]} e^{\hbar \pa_P+D} \frac{1}{k! \hbar^k} \gamma^{\otimes k}=\sum_k \pm \Mult \int_{S^1[k]} d\theta^1\wedge\cdots \wedge d\theta^k e^{\hbar \pa_P} \frac{1}{k! \hbar^k} (d_{TM} \gamma)^{\otimes k} . 
\end{align}

\begin{rmk} 
The above formula has a combinatorial description (Feynman diagram expansion)
\begin{align}\label{graph-formula}
  \gamma_\infty=\sum_{\cG} \frac{W_\cG(P, \gamma)}{|Aut(\cG)|},
\end{align}
where the summation is over all connected graphs. $|\text{Aut}(\cG)|$ is the cardinality of the automorphism group of $\cG$. $W_\cG(P, \gamma)$ is a Feynman graph integral with $P$ as propagator labeling the edges and $d_{TM}\gamma$ labeling the vertices. This in particular implies that such defined $\gamma_\infty$ is an element of $ \A^\bullet_M(\widehat\Omega_{TM}^{-\bullet})[[\hbar]]$. We refer to Appendix \ref{Sec: Feynman} for details on such Feynman graph formula. The origin of such interpretation will be explained in Section \ref{sect:3}.
 
\end{rmk}

\begin{lem}\label{lem-BV-tansfer} Let $\gamma \in \A^\bullet_M(\W)$ and $\gamma_\infty\in \A^\bullet_M(\widehat\Omega_{TM})[[\hbar]]$ as in Definition \ref{defn-gamma-infty}. Then
\begin{align}\label{lem-BV-transfer}
\hbar \Delta  e^{\gamma_\infty/\hbar}=\Mult \int_{S^1[*]} e^{\hbar \pa_P+D} \frac{1}{2\hbar^2}[\gamma, \gamma]_\star\otimes e^{\otimes \gamma/\hbar}.
\end{align}
Here we recall that $[-,-]_\star$ is the commutator with respect to the fiberwise Moyal product. 
\end{lem}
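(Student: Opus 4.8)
The plan is to rewrite the left-hand side as a total $d_{S^1}$-derivative over the configuration spaces and then apply Stokes' theorem, so that the Moyal commutator is produced by the two boundary values of the propagator $P$.

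First I would use the compatibility $\Delta\circ\Mult=\Mult\circ\Delta$ recorded above to move $\hbar\Delta$ inside the configuration space integral,
\[
\hbar\Delta\, e^{\gamma_\infty/\hbar}=\Mult\int_{S^1[*]}\hbar\Delta\left(e^{\hbar\pa_P+D}\,e^{\otimes\gamma/\hbar}\right),
\]
where $\Delta$ on the right is the total BV operator on $\A^\bullet_M(\widehat\Omega_{TM}^{-\bullet})^{\otimes k}$. Writing $O:=\hbar\pa_P+D$ and $X:=d_{S^1}-\hbar\Delta$, Lemma \ref{lem-operators} (that is, $[d_{S^1},\pa_P]=[\Delta,D]$, $[d_{S^1},D]=0$ and $[\Delta,\pa_P]=0$) gives at once $[X,O]=\hbar[\Delta,D]-\hbar[\Delta,D]=0$; iterating the graded Leibniz rule then upgrades this to $[X,e^{O}]=0$. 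Since $\gamma$ is pulled back from $M$ it carries no $S^1[k]$-form component, so $d_{S^1}e^{\otimes\gamma/\hbar}=0$; and since $\gamma$ has fibre-form degree zero, every term of $\Delta$ contains a contraction $\iota_{\pa_{y^j}}\gamma=0$, so $\Delta e^{\otimes\gamma/\hbar}=0$. Therefore $X\!\left(e^{O}e^{\otimes\gamma/\hbar}\right)=e^{O}X e^{\otimes\gamma/\hbar}=0$, i.e. $\hbar\Delta\!\left(e^{O}e^{\otimes\gamma/\hbar}\right)=d_{S^1}\!\left(e^{O}e^{\otimes\gamma/\hbar}\right)$, the identity I will feed into Stokes.

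Next I would apply Stokes' theorem on each compactified configuration space $S^1[k]$, which is a manifold with corners, converting $\int_{S^1[k]}d_{S^1}(\cdots)$ into $\int_{\pa S^1[k]}(\cdots)$. The codimension-one faces of $\pa S^1[k]$ are the collisions of subsets of marked points. On a face where three or more points collide, the propagators $\pi_{\alpha\beta}^*(P)$ restrict to constants and the one-forms $d\theta^\alpha$ supplied by $D$ degenerate, so the restricted integrand has form degree strictly below the dimension of the face and its contribution vanishes (the usual hidden-face vanishing; see Appendix \ref{sec: configuration}). Hence only the two-point collision faces contribute.

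Finally, on the face where points $\alpha$ and $\beta$ collide -- canonically $S^1[k-1]$ with the pair replaced by a single point -- the propagator $\pi_{\alpha\beta}^*(P)=u-\tfrac12$ restricts to its two boundary values $\pm\tfrac12$ on the two sides of the collision. Evaluating $e^{\hbar\pa_P}$ along the collapsing edge at $P=\pm\tfrac12$ produces exactly $\gamma\star\gamma$ and its opposite, and the boundary orientation assembles them into $\tfrac{1}{2\hbar^2}[\gamma,\gamma]_\star$ seated at the collapsed point, while the surviving propagators and $D$-insertions reassemble into $e^{O}$ on $S^1[k-1]$ acting on the remaining $\gamma$'s. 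Summing over the $\binom{k}{2}$ such faces and bookkeeping the prefactors, $\tfrac{1}{k!}\binom{k}{2}=\tfrac{1}{2(k-2)!}$, which matches $\tfrac{1}{2\hbar^2}$ times the $(k-2)$-fold term of $e^{\otimes\gamma/\hbar}$, reproduces the right-hand side of \eqref{lem-BV-transfer}. I expect this last step to be the main obstacle: one must control the stratified geometry of $\pa S^1[k]$, prove the hidden-face vanishing, and track the boundary orientations together with the Koszul signs so that the two boundary values of $P$ combine into the full Moyal commutator with the stated coefficient. By contrast, the algebraic reduction in the first paragraph is routine once Lemma \ref{lem-operators} is available.
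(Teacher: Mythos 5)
Your proposal is correct and follows essentially the same route as the paper's proof: move $\hbar\Delta$ inside the configuration-space integral, use Lemma \ref{lem-operators} to show $\hbar\Delta-d_{S^1}$ commutes with $e^{\hbar\pa_P+D}$ and annihilates $e^{\otimes\gamma/\hbar}$, apply Stokes' theorem, discard the faces with $|I|\geq 3$ by the type/dimension argument on the $d\theta^\alpha$'s, and read off the Moyal commutator from the values $\pm\tfrac12$ of $P$ on the two components of each two-point collision face. The only quibble is a phrasing one: on a hidden face the integrand is still a form of top degree $k-1$, but it vanishes because the product of $d\theta^\alpha$'s is pulled back from the diagonal $D_I$, which has dimension less than $k-1$ when $|I|>2$.
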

\begin{proof} By Lemma \ref{lem-operators}, $\hbar\Delta -d_{S^1}$ commutes with $\hbar \pa_P+D$. 
\begin{align*}
\hbar \Delta  e^{\gamma_\infty/\hbar}&=\Mult \int_{S^1[*]} \hbar \Delta e^{\hbar \pa_P+D} e^{\otimes \gamma/\hbar}\\[1ex]
&=\Mult \int_{S^1[*]} d_{S^1} e^{\hbar \pa_P+D} e^{\otimes \gamma/\hbar}+\Mult \int_{S^1[*]} (\hbar\Delta -d_{S^1})e^{\hbar \pa_P+D} e^{\otimes \gamma/\hbar}\\[1ex]
&=\Mult \int_{\pa S^1[*]}  e^{\hbar \pa_P+D} e^{\otimes \gamma/\hbar}+\Mult \int_{S^1[*]} e^{\hbar \pa_P+D} (\hbar\Delta -d_{S^1}) e^{\otimes \gamma/\hbar}.
\end{align*}
The second term vanishes since $(\hbar\Delta -d_{S^1}) e^{\otimes \gamma/\hbar}=0$.

The boundary of $S^1[k]$ is given explicitly by (see Example \ref{example: boudary-circle-configuration}):
\[
   \pa S^1[k]=\bigcup_{I\subset\{1,\cdots, k\}, |I|\geq 2} \pi^{-1}(D_I),
\]
where $D_I\subset (S^1)^k$ is the small diagonal where points indexed by $I$  coincide. 

We first observe that subsets $I$ with $|I|>2$ don't contribute to the above boundary integral. This follows by type reason: in the boundary integral over $\pa S^1[k]$,  the integrand must  consist of  $k-1$ factors of $d\theta^\alpha$'s, and let us denote this product of $d\theta^\alpha$'s by  $\Theta$. It then follows that
\[
 \Theta|_{\pi^{-1}(D_I)}=\pi^*(\Theta|_{D_I})=0
\]
since $D_I$ is of codimension greater than 1 for $|I|>2$ and hence $\Theta|_{D_I}=0$. 

For $|I|=2$, say $I=\{\alpha, \beta\}$, we have the forgetful map
\[
  \pi_{\alpha\beta}: S^1[k]\to S^1[2], \quad \pi^{-1}(D_I)=\pi_{\alpha\beta}^{-1}(\pa S^1[2]). 
\]
The propagator $\pi_{\alpha\beta}^*P|_{\pi^{-1}(D_I)}=\pm 1/2$ on the two disconnected components of $\pi_{\alpha\beta}^{-1}(\pa S^1[2])$. Observe
\[
\left. \exp\bracket{\frac{\hbar}{2}\omega^{ij} \frac{\pa}{\pa y^i_\alpha} \frac{\pa}{\pa y^j_\beta}}a(y_\alpha) b(y_\beta)\right |_{y_\alpha=y_\beta=y}=(a\star b)(y), 
\]
and
\[
\quad \left. \exp\bracket{-\frac{\hbar}{2}\omega^{ij}\frac{\pa}{\pa y^i_\alpha}\frac{\pa}{\pa y^j_\beta}}a(y_\alpha) b(y_\beta)\right |_{y_\alpha=y_\beta=y}=(b\star a)(y). 
\]
Taking into account the different orientations of the two disconnected components of $\pi_{\alpha\beta}^{-1}(\pa S^1[2])$ and the permutation symmetry of $\Mult$, we have 
\begin{align*}
\Mult \int_{\pa S^1[*]}e^{\hbar \pa_P+D} e^{\otimes \gamma/\hbar}&=\Mult \int_{S^1[*-1]}e^{\hbar \pa_P+D}\frac{1}{2\hbar^2}[\gamma,\gamma]_\star \otimes e^{\otimes \gamma/\hbar}\\[1ex]
&=\Mult \int_{S^1[*]}e^{\hbar \pa_P+D}\frac{1}{2\hbar^2}[\gamma,\gamma]_\star \otimes e^{\otimes \gamma/\hbar}.
\end{align*}
\end{proof}

\begin{cor} Let $\gamma \in \A^\bullet_M(\W)$ and $\gamma_\infty\in \A^\bullet_M(\widehat\Omega^{-\bullet}_{TM})[[\hbar]]$ as in Definition \ref{defn-gamma-infty}. Then
\begin{align}\label{lem-homotopy-transfer}
(\nabla+\hbar \Delta+\hbar^{-1}d_{TM} R_\nabla)  e^{\gamma_\infty/\hbar}=\Mult \int_{S^1[*]} e^{\hbar \pa_P+D} \left (\frac{1}{\hbar}\nabla\gamma+ \frac{1}{2\hbar^2}[\gamma, \gamma]_\star+ \frac{1}{\hbar} R_\nabla \right ) \otimes e^{\otimes \gamma/\hbar}.
\end{align}
\end{cor}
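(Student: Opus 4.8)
The plan is to apply the differential $\nabla+\hbar\Delta+\hbar^{-1}d_{TM}(R_\nabla)$ to $e^{\gamma_\infty/\hbar}$ term by term and match each summand with the corresponding insertion on the right-hand side of \eqref{lem-homotopy-transfer}. The middle term is immediate: $\hbar\Delta\,e^{\gamma_\infty/\hbar}$ is precisely the content of Lemma \ref{lem-BV-tansfer}, producing the $\tfrac{1}{2\hbar^2}[\gamma,\gamma]_\star$ insertion. So the work reduces to the $\nabla$ term and the $\hbar^{-1}d_{TM}(R_\nabla)$ term.

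For the $\nabla$ term I would pull $\nabla$ through the Feynman integral defining $e^{\gamma_\infty/\hbar}$ in \eqref{gamma-infty}. This is legitimate because $\nabla$ commutes with each ingredient: with $\int_{S^1[*]}$ (the connection is along $M$, the integration along configuration spaces of $S^1$, so there are no boundary contributions here, in contrast to $\hbar\Delta$), with $\Mult$ by the Leibniz rule, and with $e^{\hbar\pa_P+D}$. The last commutation is the only point needing justification: $[\nabla,D]=0$ because $d_{TM}$ is $\nabla$-parallel (Lemma \ref{lem-compatible}) while $d\theta^\alpha$ is an $S^1$-form, and $[\nabla,\pa_P]=0$ because $\pa_P$ is assembled from the parallel Poisson bivector $\omega^{ij}\pa_{y^i}\wedge\pa_{y^j}$ together with the $S^1$-propagator $P$, using that $\omega$, hence $\Pi$, is $\nabla$-flat exactly as in Lemma \ref{lem-compatible}. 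Having commuted $\nabla$ inside, the Leibniz rule gives $\nabla\,e^{\otimes\gamma/\hbar}=\tfrac{1}{\hbar}(\nabla\gamma)\otimes e^{\otimes\gamma/\hbar}$, which yields the $\tfrac{1}{\hbar}\nabla\gamma$ insertion.

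The $\hbar^{-1}d_{TM}(R_\nabla)$ term acts on $e^{\gamma_\infty/\hbar}$ simply by multiplication by the element $\hbar^{-1}d_{TM}(R_\nabla)$, so I must show that the insertion $\Mult\int_{S^1[*]}e^{\hbar\pa_P+D}\,\tfrac{1}{\hbar}R_\nabla\otimes e^{\otimes\gamma/\hbar}$ collapses to this multiplication, i.e. that every Feynman diagram in which the distinguished $R_\nabla$-vertex carries at least one propagator leg vanishes. This is where the argument has real content, and I expect it to be the main obstacle. Two observations settle it. First, since the configuration integral over $S^1[m]$ is nonzero only on the top form $d\theta^1\wedge\cdots\wedge d\theta^m$, the operator $D$ must act exactly once on each vertex; in particular $R_\nabla$ is replaced by $d_{TM}R_\nabla=\tfrac12 R_{ijkl}\theta^i y^j dx^k\wedge dx^l$, which is linear in the fiber variable $y$. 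As each propagator leg applies one $\L_{\pa_{y^i}}$ and $\L_{\pa_{y^i}}\theta^j=0$, the $R_\nabla$-vertex can absorb at most one propagator. Second, a single such leg forces the angular position $\theta^0$ of the $R_\nabla$-vertex to enter the integrand only through $\pi_{0\alpha}^*(P)\,d\theta^0$, and integrating $\theta^0$ out gives $\int_{S^1}P\,d\theta^0=\int_0^1(u-\tfrac12)\,du=0$ because the propagator has vanishing average. Hence only the diagram with $R_\nabla$ isolated survives; there the leftover factor $d\theta^0$ integrates to $1$ along the forgetful fibration $S^1[k+1]\to S^1[k]$, producing exactly $\hbar^{-1}d_{TM}(R_\nabla)\cdot e^{\gamma_\infty/\hbar}$.

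Summing the three contributions gives \eqref{lem-homotopy-transfer}. The crux is the $R_\nabla$ insertion: the two saving facts, that $d_{TM}R_\nabla$ is linear in the fiber variables (capping the valence of its vertex at one) and that $P$ averages to zero on $S^1$, are precisely what make the naive multiplication operator and the diagrammatic insertion agree.
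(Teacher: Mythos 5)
Your proposal is correct and follows essentially the same route as the paper's proof: the middle term via Lemma \ref{lem-BV-tansfer}, the $\nabla$ term by commuting the connection through the graph integral, and the $R_\nabla$ term by observing that $d_{TM}R_\nabla$ is linear in the fiber variables so its vertex absorbs at most one propagator leg, which then vanishes because $\int_{S^1}d\theta\,\pi^*(P)=\int_0^1(u-\tfrac12)\,du=0$. Your write-up merely makes explicit the commutation checks $[\nabla,\pa_P]=[\nabla,D]=0$ that the paper compresses into the phrase ``since $\nabla$ is a symplectic connection.''
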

\begin{proof} Since $\nabla$ is a symplectic connection, 
\[
\nabla e^{\gamma_\infty/\hbar}=\Mult \int_{S^1[*]} e^{\hbar \pa_P+D} \frac{1}{\hbar}\nabla\gamma \otimes e^{\otimes \gamma/\hbar}.
\]
On the other hand, 
\[
\Mult \int_{S^1[*]} e^{\hbar \pa_P+D} R_\nabla \otimes e^{\otimes \gamma/\hbar}=\Mult \int_{S^1[*]} e^{\hbar \pa_P+D} (d\theta^1 d_{TM} R_\nabla) \otimes e^{\otimes \gamma/\hbar}.
\]
Observe that $d_{TM} R_\nabla$ is linear in $y^i$ and $dy^i$, hence $\pa_P$ can be applied at most once to $d_{TM}R_\nabla$. If $\pa_P$ is applied exactly once to $d_{TM}R_\nabla$, then the integration over $\theta^1$ vanishes since
\[
  \int_{S^1}d\theta^1 \pi_{1\beta}^*(P)=\int_0^1 du (u-1/2)=0, \quad \beta>1. 
\]
Therefore ,
\begin{align*}
\MoveEqLeft[8] \Mult \int_{S^1[*]}e^{\hbar \pa_P+D} (d\theta^1 d_{TM}R_\nabla)\otimes e^{\otimes \gamma/\hbar}\\[1ex]
&=\Mult \int_{S^1[*]} (d\theta^1 d_{TM}R_\nabla)\otimes e^{\hbar \pa_P+D} e^{\otimes \gamma/\hbar}\\[1ex]
&= \bracket{\int_{S^1}d\theta} ( d_{TM}R_\nabla) \Mult \int_{S^1[*]}e^{\hbar \pa_P+D} e^{\otimes \gamma/\hbar}= d_{TM}R_\nabla\ e^{\gamma_\infty/\hbar}. 
\end{align*}
The corollary follows from the above identities and Lemma \ref{lem-BV-tansfer}. 
\end{proof}

\begin{proof}[Proof of Theorem \ref{main-thm}] By equation \eqref{lem-homotopy-transfer}, 
\begin{align*}
(\nabla+\hbar \Delta+\hbar^{-1}d_{TM} R_\nabla)  e^{\gamma_\infty/\hbar}&=\Mult \int_{S^1[*]} e^{\hbar \pa_P+D} \left(\frac{1}{\hbar}\nabla\gamma+ \frac{1}{2\hbar^2}[\gamma, \gamma]_\star+ \frac{1}{\hbar} R_\nabla \right) \otimes e^{\otimes \gamma/\hbar}\\[1ex]
&=\Mult \int_{S^1[*]} e^{\hbar \pa_P+D} \omega_\hbar/\hbar \otimes e^{\otimes \gamma/\hbar}=0
\end{align*}
The last equality follows for type reason: $\omega_\hbar/\hbar\in\A_M^2[[\hbar]]$ contains no factor of $d\theta$ and  $D(\omega_\hbar/\hbar)=0$. (Recall the convention of integrals on configuration spaces Notation \ref{notn: configuration-integral-degree}.) 
\end{proof}

\subsubsection{Local-to-global morphism}

\begin{defn}\label{defn-local-to-globle}
We define the \emph{local-to-global} morphism $[-]_\infty$ to be the following
\begin{align*}
 [-]_\infty:  {\A^\bullet_M(\W)}&\to{\A^\bullet_{TM}(\widehat\Omega_{TM}^{-\bullet})}[[\hbar]]\\
 O &\mapsto [O]_\infty
\end{align*}
by the equation
\begin{align}\label{observable-map}
[O]_\infty e^{\gamma_\infty/\hbar}:= \Mult \int_{S^1[*]} e^{\hbar \pa_P+D} (O d\theta^1)\otimes e^{\otimes \gamma/\hbar}.
\end{align}
\end{defn}

The name "local-to-global`` alludes to the fact that the map represents a factorization map from local observables on the interval to global observables on the circle (see section \ref{section-observable}). Our convention ensures that
\[
  [1]_\infty=\int_{S^1}d\theta=1. 
\]

\begin{thm}\label{thm-ob-cochain}
The map $[-]_\infty$ is a cochain map from the complex $\bracket{\A^\bullet_M(\W), \nabla+\hbar^{-1}\bbracket{\gamma,-}_{\star}}$ to the complex $\bracket{\A^\bullet_M(\widehat\Omega_{TM}^{-\bullet})[[\hbar]], \nabla+\hbar \Delta+\{\gamma_\infty,-\}_{\Delta}}$. In other words, 
\begin{align}\label{thm-local-to-global}
\bbracket{\nabla O+\hbar^{-1}\bbracket{\gamma,O}_{\star}}_\infty=\nabla [O]_\infty+\hbar \Delta [O]_\infty+\{\gamma_\infty,[O]_\infty\}_{\Delta}, \quad \forall O\in \A^\bullet_M(\W). 
\end{align}
\end{thm}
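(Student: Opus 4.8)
The plan is to reduce the cochain identity \eqref{thm-local-to-global} to a single identity about the \emph{un-normalized} object $\Phi := [O]_\infty\, e^{\gamma_\infty/\hbar}$, which by \eqref{observable-map} is exactly the explicit configuration-space integral $\Mult \int_{S^1[*]} e^{\hbar \pa_P+D}(O\, d\theta^1)\otimes e^{\otimes \gamma/\hbar}$. First I would invoke the conjugation identity from the proof of Lemma \ref{lem-naive-differential}: since $\gamma_\infty$ solves the quantum master equation (Theorem \ref{main-thm}), one has $e^{-\gamma_\infty/\hbar}\bracket{\nabla+\hbar \Delta+ \hbar^{-1} d_{TM}(R_\nabla)}e^{\gamma_\infty/\hbar}=\nabla+\hbar \Delta+\{\gamma_\infty,-\}_{\Delta}$ as operators. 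Applying both sides to $[O]_\infty$ and using that $e^{\gamma_\infty/\hbar}$ is an even, invertible multiplier, the target equation \eqref{thm-local-to-global} becomes equivalent to
\[
\bracket{\nabla+\hbar \Delta+ \hbar^{-1} d_{TM}(R_\nabla)}\Phi=\bbracket{\nabla O+\hbar^{-1}[\gamma,O]_\star}_\infty\, e^{\gamma_\infty/\hbar}. \tag{$\star$}
\]
This trades the BV bracket with the complicated object $\gamma_\infty$ for the constant-coefficient differential on the left, which can be attacked directly on the Feynman integral.

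Then I would prove $(\star)$ by evaluating the left-hand side term by term, mirroring Lemma \ref{lem-BV-tansfer} and its corollary \eqref{lem-homotopy-transfer}, the only new feature being the distinguished marked point carrying $O\,d\theta^1$. For $\nabla$: since $\nabla$ is symplectic and (Lemma \ref{lem-compatible}) parallel to $d_{TM},\iota_\Pi,\Delta$ and to the coefficients $\omega^{ij}$, it commutes with $\pa_P$, $D$, $\Mult$ and the integration, hence acts as a derivation on $(O\,d\theta^1)\otimes e^{\otimes\gamma/\hbar}$, producing $[\nabla O]_\infty e^{\gamma_\infty/\hbar}$ together with a $\hbar^{-1}\nabla\gamma$-insertion. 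For $\hbar\Delta$: by Lemma \ref{lem-operators}, $\hbar\Delta-d_{S^1}$ commutes with $\hbar\pa_P+D$, and $(\hbar\Delta-d_{S^1})$ annihilates $(O\,d\theta^1)\otimes e^{\otimes\gamma/\hbar}$ (the $d\theta^1$ is $d_{S^1}$-closed and $O,\gamma$ are constant along $S^1$, while $\hbar\Delta$ vanishes because $O$ and $\gamma$ carry no odd fibre variables, so every $\iota_{\pa_{y^j}}$ kills them); thus $\hbar\Delta\Phi$ reduces by Stokes to a boundary integral over $\pa S^1[*]$. For $\hbar^{-1}d_{TM}(R_\nabla)$: the argument of the corollary applies verbatim, since a single $\pa_P$ hitting $R_\nabla$ integrates to zero by $\int_{S^1}d\theta\,P=0$, leaving a $\hbar^{-1}R_\nabla$-insertion.

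The heart of the matter is the boundary integral, and I expect this to be the main obstacle. As in Lemma \ref{lem-BV-tansfer}, only codimension-one strata $|I|=2$ survive (higher diagonals force a vanishing pullback of the $d\theta$'s), but now there are two kinds of colliding pairs. A collision of two $\gamma$-points reproduces, with identical local Moyal analysis, the insertion $\tfrac{1}{2\hbar^2}[\gamma,\gamma]_\star$. A collision of the distinguished point $1$ with a $\gamma$-point is the genuinely new contribution: the propagator restricts to $\pm 1/2$ on the two boundary components, and summing the two orientations fuses $O$ and $\gamma$ through the fibrewise Moyal product into the commutator, yielding the insertion $\hbar^{-1}[\gamma,O]_\star$ carrying the retained $d\theta^1$, i.e. the term $[\hbar^{-1}[\gamma,O]_\star]_\infty e^{\gamma_\infty/\hbar}$. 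Verifying the precise numerical factor and sign of this distinguished-point collision is the delicate step, since its symmetry factor differs from the $\gamma$-$\gamma$ case (point $1$ is not interchangeable with the $\gamma$-insertions). Finally, the three bulk insertions $\hbar^{-1}\nabla\gamma$, $\tfrac{1}{2\hbar^2}[\gamma,\gamma]_\star$, and $\hbar^{-1}R_\nabla$ assemble by Fedosov's equation \eqref{Fedosov-eqn} into a single $\hbar^{-1}\omega_\hbar$-insertion, which vanishes for the same degree reason as in Theorem \ref{main-thm}, sharpened here by the presence of $O$: the $\omega_\hbar$-vertex carries no $y$ and satisfies $d_{TM}\omega_\hbar=0$, so it is attached to no propagator and can receive no $d\theta$ from $D$, forcing the integrand to have form-degree strictly below the dimension of the configuration space. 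What remains is precisely $[\nabla O]_\infty e^{\gamma_\infty/\hbar}+[\hbar^{-1}[\gamma,O]_\star]_\infty e^{\gamma_\infty/\hbar}$, which is $(\star)$, and the theorem follows.
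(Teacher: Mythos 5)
Your argument is correct and, at bottom, performs the same configuration-space boundary computation that underlies the paper's proof, but the paper packages it differently and thereby sidesteps the step you flag as delicate. The paper introduces a nilpotent formal parameter $\xi$ with $\xi^2=0$, sets $\tilde\gamma=\gamma+\xi\, O\,d\theta$, observes that $\tilde\gamma_\infty=\gamma_\infty+\xi\,[O]_\infty$ by Definition \ref{defn-local-to-globle}, and then simply applies the already-established identity \eqref{lem-homotopy-transfer} to $\tilde\gamma$; comparing $\xi$-linear terms on both sides yields \eqref{thm-local-to-global} immediately. In particular, the commutator insertion arises there as the cross-term of $\frac{1}{2\hbar^2}[\tilde\gamma,\tilde\gamma]_\star$, so the factor of $\frac{1}{2}$ cancels against the $2$ coming from the two cross-terms automatically --- this is precisely the symmetry-factor bookkeeping for the collision of the distinguished point with a $\gamma$-vertex that you single out as ``the delicate step'' and would otherwise have to verify by hand on the codimension-one strata (your expectation that no $\frac{1}{2}$ appears, because the two colliding vertices are distinguishable, is the right answer). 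Your reduction to $(\star)$ via the conjugation identity of Lemma \ref{lem-naive-differential}, your treatment of the $\nabla$, $\hbar\Delta$, and $\hbar^{-1}d_{TM}(R_\nabla)$ terms, and your use of Fedosov's equation to collapse the bulk insertions into a vanishing $\omega_\hbar/\hbar$-insertion are all sound and reproduce the content of \eqref{lem-homotopy-transfer}. What the direct route buys is an explicit picture of where each term originates; what the paper's $\xi$-trick buys is that the coefficient and sign of the $O$--$\gamma$ collision never need to be computed separately.
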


\begin{proof} Let 
\[
  \tilde\gamma=\gamma+\xi O d\theta , \quad \tilde \gamma_\infty=\gamma_\infty+\xi O_\infty, 
\]
where $\xi$ is a formal variable and $\xi^2=0$. Then 
\[
    e^{\tilde \gamma_\infty/\hbar}=\Mult \int_{S^1[*]}e^{\hbar \pa_P+D} e^{\otimes \tilde \gamma/\hbar}.
\]
By the same reasoning, \eqref{lem-homotopy-transfer} also holds for $\tilde \gamma$. Therefore, we find
\begin{align*}
  (\nabla+\hbar \Delta+\hbar^{-1}d_{TM} R_\nabla)e^{\tilde \gamma_\infty/\hbar}=\Mult \int_{S^1[*]}e^{\hbar \pa_P+D} \left(\hbar^{-1}\nabla \tilde \gamma+\frac{1}{2 \hbar^2}[\tilde \gamma, \tilde \gamma]_\star +\hbar^{-1}R_\nabla\right)\otimes e^{\otimes \tilde \gamma/\hbar}. 
\end{align*}
The theorem then follows by comparing the $\xi$-linear term on both sides.
\end{proof}

\subsection{The trace map}\label{section-trace} Recall that the symbol map gives an isomorphism 
\[
  \sigma: H^0(\A^\bullet_M(\W),  \nabla+\hbar^{-1}\bbracket{\gamma,-}_{\star})\to \cinfty(M)[[\hbar]]. 
\]
\begin{defn}\label{defn:trace} We define the trace map as follows
\begin{align}\label{defn-trace}
  \Tr: \cinfty(M)[[\hbar]]\to \R((\hbar)), \quad  \Tr (f)=  \int_M\int_{\gamma_\infty}  [\sigma^{-1}(f)]_\infty.
\end{align}
\end{defn}

\begin{rmk}Formula \eqref{defn-trace} also appears in \cite{FFS}, wherein the same propagator has been used. 
\end{rmk}

The following proposition explains the name "trace``. 

\begin{prop} Let $\star$ be the star product constructed from an Abelian connection (see Section \ref{sec:DQ}). Then 
\begin{align}\label{prop-trace}
\Tr(f\star g)=\Tr(g\star f), \forall f, g\in \cinfty(M).
\end{align} 
\end{prop}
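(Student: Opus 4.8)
The plan is to deduce the trace property from the fact that $\int_{\gamma_\infty}$ is a cochain map, exactly as set up in Section \ref{sect:integration} and Definition \ref{defn:BVint}. The trace is defined as $\Tr(f) = \int_M \int_{\gamma_\infty} [\sigma^{-1}(f)]_\infty$, so $\Tr(f\star g) - \Tr(g\star f)$ will be shown to equal an integral of a coboundary, hence to vanish. Concretely, I would first reduce the statement about $\star$ on $C^\infty(M)[[\hbar]]$ to a statement about the fiberwise Moyal product on flat sections of the Weyl bundle. By definition $\sigma^{-1}(f\star g) = \sigma^{-1}(f)\star \sigma^{-1}(g)$, where the right-hand $\star$ is the fiberwise Moyal product, since $\sigma$ is an algebra isomorphism from $\Gamma^{flat}(M,\W)$ (Section \ref{sec:DQ}). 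Writing $a = \sigma^{-1}(f)$ and $b = \sigma^{-1}(g)$, which are $\nabla + \hbar^{-1}[\gamma,-]_\star$-closed elements of $\A^\bullet_M(\W)$ of degree $0$, the goal becomes
\[
\int_M \int_{\gamma_\infty} \bracket{[a\star b]_\infty - [b\star a]_\infty} = 0.
\]

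The key step is to express $[a\star b]_\infty - [b\star a]_\infty = [\,[a,b]_\star\,]_\infty$ and recognize the commutator $[a,b]_\star$ as a coboundary in the Weyl-bundle complex. Since $a,b$ are flat, $\nabla a + \hbar^{-1}[\gamma,a]_\star = 0$ and likewise for $b$, and compatibility of $\nabla$ and $\gamma$ with the Moyal product (the Leibniz rule stated before the definition of Abelian connection) gives that $a\star b$ is again flat. The crucial algebraic identity is that $[a,b]_\star$ is, up to a sign and factor of $\hbar$, the image of $a\star b$ under the Weyl-complex differential composed with an appropriate operation — more precisely, I expect to use that $\hbar^{-1}[\gamma,-]_\star$ together with $\nabla$ controls how the product fails to descend, so that $[a,b]_\star$ lies in the image of $\nabla + \hbar^{-1}[\gamma,-]_\star$ once one accounts for the central term $\omega_\hbar$. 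I would then apply Theorem \ref{thm-ob-cochain}: the local-to-global map $[-]_\infty$ intertwines $\nabla + \hbar^{-1}[\gamma,-]_\star$ with $\nabla + \hbar\Delta + \{\gamma_\infty,-\}_\Delta$, so a coboundary in the source maps to a coboundary in the target. Finally, $\int_{\gamma_\infty}$ is a cochain map to $(\A^\bullet_M((\hbar)), d_M)$ (the remark following Definition \ref{defn:BVint}), so $\int_{\gamma_\infty}$ of the coboundary is $d_M$-exact, and $\int_M$ of a $d_M$-exact form vanishes since $M$ is closed (compact without boundary, as a symplectic manifold carrying $\omega^n$ as a volume form).

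The main obstacle will be the algebraic bookkeeping in the key step: showing that the Moyal commutator $[a,b]_\star$ of two flat sections is genuinely a coboundary for $\nabla + \hbar^{-1}[\gamma,-]_\star$, with careful attention to the factors of $\hbar$ and the role of the central element $\omega_\hbar$ in Fedosov's equation \eqref{Fedosov-eqn}. One must verify that the naive expectation — that $[a,b]_\star = \hbar\bracket{\nabla + \hbar^{-1}[\gamma,-]_\star}(\text{something})$ — holds in the correct degree and does not pick up an anomalous contribution from $\omega_\hbar$ or from the weight-grading. I would track this by using flatness of both $a$ and $b$ to rewrite $[\gamma, a\star b]_\star$ via the graded Leibniz rule and the Jacobi identity for $[-,-]_\star$, reducing everything to expressions in $\nabla a$, $\nabla b$ and their star products; the centrality of $\omega_\hbar$ (already used to prove flatness of the Abelian connection after Theorem \ref{thm: existence-uniqueness-Fedesov-equation}) should ensure the unwanted terms cancel. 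Once this identity is in hand, the three cochain-map properties assemble the vanishing essentially formally.
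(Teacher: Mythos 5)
Your overall architecture (reduce to $\Tr([a,b]_\star)=0$ with $a=\sigma^{-1}(f)$, $b=\sigma^{-1}(g)$; exhibit the relevant quantity as exact; then use that $[-]_\infty$ and $\int_{\gamma_\infty}$ are cochain maps and integrate over $M$) is the right shape, and your final steps are sound. But the step you yourself flag as the main obstacle is not just delicate --- it is false. The commutator $[a,b]_\star$ of two flat sections is a degree-$0$ cocycle in $\bracket{\A^\bullet_M(\W),\ \nabla+\hbar^{-1}[\gamma,-]_\star}$, and this complex is concentrated in non-negative cohomological degrees (the Weyl bundle carries no negative grading; only the form degree on $M$ contributes). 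So there is no element of degree $-1$ whose differential could equal $[a,b]_\star$, and indeed the symbol map identifies $H^0$ of this complex with $C^\infty(M)[[\hbar]]$, under which the class of $[a,b]_\star$ is $f\star g-g\star f$, generically nonzero. No amount of bookkeeping with $\omega_\hbar$ or the Jacobi identity can make a nontrivial degree-$0$ cohomology class into a coboundary. The vanishing of $\Tr$ on commutators cannot be seen upstairs in the Weyl-bundle complex; it is a property of the map $[-]_\infty$ and of the target complex.

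The correct mechanism, which is what the paper's proof implements, is to produce the primitive directly downstairs in $\A^\bullet_M(\widehat\Omega^{-\bullet}_{TM})[[\hbar]]$, where negative degrees do exist. One perturbs $\gamma$ by \emph{both} observables with nilpotent parameters, $\tilde\gamma=\gamma+\xi_1\sigma^{-1}(f)d\theta+\xi_2\sigma^{-1}(g)$, and applies the identity \eqref{lem-homotopy-transfer} to $\tilde\gamma$. Flatness of $a,b$ and Fedosov's equation kill every term except the cross term, yielding
\[
(\nabla+\hbar\Delta+\hbar^{-1}d_{TM}R_\nabla)\,e^{\tilde\gamma_\infty/\hbar}
=\frac{\xi_1\xi_2}{\hbar^2}\,\bbracket{[\sigma^{-1}(f),\sigma^{-1}(g)]_\star}_\infty\, e^{\gamma_\infty/\hbar},
\]
so $\bbracket{[a,b]_\star}_\infty$ is exact for $\nabla+\hbar\Delta+\fbracket{\gamma_\infty,-}_\Delta$, with primitive the $\xi_1\xi_2$-component of $\tilde\gamma_\infty$ --- a two-point configuration-space integral on $S^1$ that is \emph{not} of the form $[c]_\infty$ for any $c$ in the Weyl complex. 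Geometrically, the exactness comes from moving the two insertion points around the circle past each other, i.e.\ from the boundary of $S^1[2]$, not from any algebraic identity among flat sections. From that point on, your concluding steps (cochain property of $\int_{\gamma_\infty}$, then $\int_M$ of a $d_M$-exact form vanishes) go through as you wrote them.
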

\begin{proof} Introduce two formal variables $\xi_1, \xi_2$ with $\xi_1^2=\xi_2^2=0$. The analogue of  \eqref{lem-homotopy-transfer}  with $\tilde \gamma=\gamma+\xi_1 \sigma^{-1}(f)d\theta+\xi_2 \sigma^{-1}(g)$ gives 
\begin{align*}
  (\nabla+\hbar \Delta+\hbar^{-1}d_{TM} R_\nabla)e^{\tilde \gamma_\infty/\hbar}&=\Mult \int_{S^1[*]}e^{\hbar \pa_P+D} \left(\hbar^{-1}\nabla \tilde \gamma+\frac{1}{2 \hbar^2}[\tilde \gamma, \tilde \gamma]_\star +\hbar^{-1}R_\nabla\right)\otimes e^{\otimes \tilde \gamma/\hbar}\\[1ex]
  &= \Mult \int_{S^1[*]}e^{\hbar \pa_P+D} \left( \frac{\xi_1\xi_2}{\hbar^2}[\sigma^{(-1)}(f), \sigma^{(-1)}(g)]_\star d\theta^1 \right)\otimes e^{\otimes \tilde \gamma/\hbar}\\[1ex]
  &=\frac{\xi_1\xi_2}{\hbar^2}\bbracket{[\sigma^{-1}(f), \sigma^{-1}(g)]_{\star}}_\infty e^{\gamma_\infty/\hbar}.
\end{align*}

It follows that $\bbracket{[\sigma^{-1}(f), \sigma^{-1}g]_{\star}}_\infty$ is $(\nabla+\hbar \Delta+\{\gamma_\infty,-\}_\Delta)$-exact, hence 
\[
  \Tr ([f,g]_\star)=\int_{M}\int_{\gamma_\infty} [[\sigma^{-1}(f), \sigma^{-1}(g)]_{\star}]_\infty=0. 
\]
\end{proof}

\begin{prop} The leading $\hbar$-order of the trace map is given as follows:
\begin{align}\label{prop-normalization}
\Tr(f)=\frac{(-1)^n}{\hbar^n}\left(\int_M f \frac{\omega^n}{n!}+ O(\hbar) \right), \; \forall f\in \cinfty(M).
\end{align}
\end{prop}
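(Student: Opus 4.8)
The plan is to extract the leading $\hbar$-order of $\Tr(f) = \int_M \int_{\gamma_\infty} [\sigma^{-1}(f)]_\infty$ by a careful weight/degree analysis of the Feynman integral defining $[\sigma^{-1}(f)]_\infty$. Recall that $\int_{\gamma_\infty} a = \int_{Ber} e^{\gamma_\infty/\hbar} a$, and the Berezin integral is nonzero only on the top odd component $\widehat\Omega^{-2n}_{TM}$, that is, on the coefficient of $\theta^1 \cdots \theta^{2n}$ (equivalently $dx^1 \cdots dx^{2n}$ at the level of fiber forms). The first thing I would do is identify which terms in $e^{\gamma_\infty/\hbar}[\sigma^{-1}(f)]_\infty$ can survive both the Berezin integral (which demands $2n$ odd/form-fiber variables) and the final $\int_M$ (which demands a top $2n$-form $dx^1\cdots dx^{2n}$ on the base). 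Matching these two top-degree requirements while tracking the powers of $\hbar$ should pin down the leading contribution.

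Next I would make the leading-order approximation explicit. As $\hbar \to 0$, $\sigma^{-1}(f) = f + O(\hbar)$ and, by Lemma~\ref{lemma-curvature} and the iterative structure of Theorem~\ref{thm: existence-uniqueness-Fedesov-equation}, the connection term $\gamma = \sum_{i,j}\omega_{ij}y^i dx^j + r$ has leading part $\omega_{ij}y^i dx^j$ with the corrections $r$ of higher weight. The key simplification is that the propagator $\pa_P$ contracts pairs $\L_{\pa_{y^i}}\L_{\pa_{y^j}}$ weighted by $\hbar\,\omega^{ij}$, so each propagator carries one power of $\hbar$, while each vertex from $e^{\otimes\gamma/\hbar}$ carries $\hbar^{-1}$. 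The lowest-order (tree/leading) diagram that produces the required $2n$ form-degree in the fiber and the $2n$ form-degree $dx^1\cdots dx^{2n}$ on the base, after applying $\Mult$ and $\int_{Ber}$, should be the one built entirely from the quadratic vertex $\omega_{ij}y^i dx^j$ and, after $d_{TM}$, the term $\omega_{ij}\theta^i dx^j$. I expect this to contract down, via $n$ applications of $\tfrac{1}{2}\omega^{ij}\pa_{\theta^i}\pa_{\theta^j}$ in the Berezin integral, to exactly $\omega^n/n!$ up to the claimed sign $(-1)^n$ and the overall $\hbar^{-n}$.

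The bookkeeping I would carry out is therefore: (i) show that the only contribution to the $\hbar^{-n}$ coefficient comes from replacing $\sigma^{-1}(f)$ by $f$ and $\gamma$ by its leading linear-in-$y$ piece $\omega_{ij}y^i dx^j$; (ii) check that all higher-weight terms in $\gamma$, the $O(\hbar)$ corrections in $\sigma^{-1}(f)$, and any diagram with more than the minimal number of propagators contribute only to strictly higher orders $O(\hbar^{-n+1})$; and (iii) evaluate the minimal diagram, tracking the combinatorial factor $1/k!$, the automorphism count $|\Aut(\cG)|$, and the factor $\tfrac{1}{n!}(\tfrac12\omega^{ij}\pa_{\theta^i}\pa_{\theta^j})^n$ from \eqref{fiber-BV-integration} to produce $\int_M f\,\tfrac{\omega^n}{n!}$ with the sign $(-1)^n$. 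Here the identity $[1]_\infty = \int_{S^1}d\theta = 1$ and the vanishing integrals $\int_{S^1}d\theta^1\,\pi_{1\beta}^*(P)=0$ used in the preceding Corollary will control which configuration-space integrals survive.

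The main obstacle I anticipate is the combined degree-matching argument in step (i)--(ii): one must simultaneously saturate the Berezin integral in the fibers (needing exactly $2n$ of the $\theta$'s from the $d_{TM}\gamma$ vertices) and the top-form condition on $M$ (needing $2n$ of the $dx$'s), and then verify that this forces precisely $n$ propagator contractions, hence precisely $\hbar^{-n}$ as the leading power, with all competing diagrams suppressed. The sign $(-1)^n$ requires careful Koszul-sign tracking as the odd operators $\pa_{\theta^i}$ are passed through the graded objects and as $\theta$'s are reordered into $\theta^1\cdots\theta^{2n}$; I would isolate this sign computation at the very end, after the structure of the leading diagram is fixed, so that it reduces to evaluating $\bigl(\tfrac12\omega^{ij}\pa_{\theta^i}\pa_{\theta^j}\bigr)^n$ on $\omega_{kl}\theta^k\wedge dx^l$ raised to the appropriate power against the Liouville form $\omega^n$.
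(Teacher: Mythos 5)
Your proposal is correct and follows essentially the same route as the paper's (very brief) sketch: expand $e^{\gamma_\infty/\hbar}$ via the Feynman graph formula \eqref{graph-formula}, observe that $\int_{Ber}$ sets the bosonic variables $y^i$ to zero so that at leading order only the quadratic term $\omega_{ij}dy^i\,dx^j$ of $d_{TM}\gamma$ (and $f$ in place of $\sigma^{-1}(f)$) can contribute, and then evaluate $\int_M\int_{Ber} f\, e^{\omega_{ij}dy^i dx^j/\hbar}$ directly. One caution on the $\hbar$-count, which you rightly identify as the crux: the $2n$ quadratic vertices needed to saturate the Berezin integral contain no $y^i$'s, so they cannot be contracted by $\pa_P$ (whose $\mathcal{L}_{\pa_{y^i}}$ annihilates $dy^j$); the $n$ contractions producing $(-1)^n\omega^n/n!$ are instead the $\iota_\Pi=\tfrac12\omega^{ij}\pa_{\theta^i}\pa_{\theta^j}$ inside $\int_{Ber}$, so you must check how the normalization of \eqref{fiber-BV-integration} (compare the equivariant $u^n e^{\hbar\iota_\Pi/u}$ and Lemma \ref{lem-S1-BV-extension}) converts the naive $\hbar^{-2n}$ coming from the $2n$ vertices into the stated $\hbar^{-n}$.
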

\begin{proof}[Sketch of proof] This proposition follows from a direct computation. We sketch a quick proof in terms of the Feynman diagram expression \eqref{graph-formula}. The $\hbar$-leading term of $\int_{\gamma_\infty}[\sigma^{-1}(f)]_\infty=\int_{Ber} [\sigma^{-1}(f)]_\infty e^{\gamma_\infty/\hbar}$ comes from tree diagrams in equation \eqref{graph-formula}. Since $\int_{Ber}$ sets all bosonic variables to be zero, the only contribution to the leading $\hbar$-order comes from the leading term $\omega_{ij}dy^i dx^j$ of $d_{TM}\gamma$ which gives 
\[
  \int_M  \int_{Ber} f e^{\omega_{ij}dy^i dx^j/\hbar}=\frac{(-1)^n}{\hbar ^n}\int_M f \frac{\omega^n}{n!}. 
\]
\end{proof}

Trace maps in the context of deformation quantization have been constructed via different methods by Fedosov \cite{Fed} and Nest-Tsygan  \cite{Nest-Tsygan}. Further, it is proved in \cite{Nest-Tsygan} that there exists a unique $\R[[\hbar]]$-linear evaluation map defined on $\cinfty(M)[[\hbar]]$ satisfying equation \eqref{prop-trace} and \eqref{prop-normalization}. Formula \eqref{defn-trace} then gives a rather explicit formula for the trace map via BV integration.  It can be viewed as lifting Fedosov's work \cite{Fed}  to the chain level and is essentially equivalent to the explicit formula in \cite{FFS} via Hochschild chains. 

\subsection{The algebraic index theorem}\label{section-index} The trace map contains highly nontrivial information about the geometry of the underlying manifold $M$. For instance, consider the algebraic index theorem \cites{Fed, Nest-Tsygan}, which states that 
\[
  \Tr (1) = \int_M e^{-\omega_\hbar/\hbar} \hat A(M). 
\]
Here $\omega_\hbar$ represents the class of the deformation quantization (see Theorem \ref{thm: existence-uniqueness-Fedesov-equation}). As an application of our formula \eqref{defn-trace}, we will give an alternate proof of the algebraic index theorem by adapting an equivariant localization computation to BV integration. 

Consider the $S^1$-action on $(S^1)^k$ by 
\[
    e^{2\pi i \theta}: (e^{2\pi i \theta_1}, \cdots, e^{2\pi i \theta_k})\to  (e^{2\pi i (\theta_1+\theta)}, \cdots, e^{2\pi i (\theta_k+\theta)}). 
\]
It induces a free $S^1$-action on $S^1[k]$ such that $\pi: S^1[k]\to (S^1)^k$ is equivariant. Let us denote its infinitesimal vector field on $S^1[k]$ by $\pa_\theta$ and let $\iota_{\pa_\theta}$ and $\L_{\pa_\theta}$ be the associated contraction and Lie derivative operators respectively.

\iffalse
\begin{lem}Let $a\in \A_{S^1[k]}$. Then $\int_{S^1[k]} \L_{\pa_\theta}a=0$. 
\end{lem}
\begin{proof} This follows from the fact that the $S^1$-action preserves the boundary $\pa S^1[k]$. In fact, 
\[
   \int_{S^1[k]} \L_{\pa^\theta}a=\int_{S^1[k]} d \iota_{\pa^\theta} a=\int_{\pa S^1[k]} \iota_{\pa_\theta} a=0
\]
since $\pa_\theta$ is tangent to $\pa S^1[k]$. 
\end{proof}
\fi
\begin{defn}\label{defn-extended-symbol}
We extend the symbol map $\sigma$ on the Weyl bundle to the BV bundle by
\begin{align}
  \sigma:  \A^\bullet_M(\widehat\Omega_{TM}^{-\bullet})\to \A^\bullet_M, \quad y^i, dy^i\mapsto 0. 
\end{align}
\end{defn}
\iffalse
The celebrated Atiyah-Bott localization formula \cite{AB} says that the integral of equivariant closed forms can be computed simply at the fixed points of the group action via a homotopy by equivariantly exact terms. In our situation, the $S^1$-rotation on $S^1[k]$ doesn't have any fixed points. However $S^1[k]$ has a boundary and equivariantly exact forms will contribute a boundary integral. This is the place where Fedosov's equation will play a role. 
\fi

\subsubsection{Equivariant trace map} 
We first extend our previous constructions to an $S^1$-equivariant version. 

\begin{defn} \label{defn-S1-complex} We define two $S^1$-equivariantly extended complexes by  
\begin{align*}
\A^\bullet_M(\W)^{S^1}:=\bracket{\A^\bullet_M(\W)[u,u^{-1},d\theta], \nabla+\frac{1}{\hbar}[\gamma,-]_\star-u\iota_{\pa_\theta}}, 
\end{align*}
\begin{align*}
\A^\bullet_M(\widehat\Omega_{TM}^{-\bullet})^{S^1}[[\hbar]]:=\bracket{\A^\bullet_M(\widehat\Omega_{TM}^{-\bullet})[u,u^{-1}], \nabla+\hbar \Delta+\{\gamma_\infty,-\}_{\Delta}+u d_{TM} }. 
\end{align*}
In the first formula, $d\theta$ is an odd variable of degree $1$ representing a generator of $H^1(S^1)$ and $\iota_{\pa_\theta}$ is simply the contraction map
$$
  \iota_{\pa_\theta}: d\theta a\to a, \quad a \in \A^\bullet_M(\W)[u,u^{-1}]. 
$$
Finally, $u$ is a formal variable of cohomological degree $2$. It is direct to check that $\A^\bullet_M(\W)^{S^1}$ and $\A^\bullet_M(\widehat\Omega_{TM}^{-\bullet})^{S^1}[[\hbar]]$ are differential complexes. 
\end{defn}

\begin{rmk}\label{rmk-S1} The motivation for the preceding definition is as follows.
\begin{itemize}
\item The Cartan model of the equivariant de Rham complex for an $S^1$-action on a manifold $M$ is given by \cite{GS}
$$
    \bracket{ (\A_M)^{S^1}[u], \quad d+ u\iota_{X}},
$$
where $(\A_M)^{S^1}$ are $S^1$-invariant forms, and $X$ is the vector field on $M$ generated by the $S^1$-action. In the case of $S^1$ acting on itself, we get the complex $\bracket{\C[u,d\theta], u\iota_{\pa_\theta}}$ whose further localization for $u$ gives $\bracket{\C[u,u^{-1},d\theta], u\iota_{\pa_\theta}}$. Then $\A_M(\W)^{S^1}$ is the tensor product of complexes $\A_M(\W)\otimes \C[u,u^{-1}, d\theta]$. This can be viewed as $S^1$-equivariant complex (further inverting $u$) on the subspace of maps
$$
  S^1 \to M
$$
corresponding to constant maps. This is the locus upon which our quantum field theory in Section \ref{sect:3} is modeled. 

\item The complex $(\Omega_{TM}^{-\bullet}[[\hbar]],\hbar \Delta)$ models a family of hochschild chain complexes of Weyl algebras (see Remark \ref{rmk-Hochschild}), and $d_{TM}$ plays the role of Connes's cyclic B-operator. Then $\A^\bullet_M(\widehat\Omega_{TM}^{-\bullet})^{S^1}[[\hbar]]$ can be viewed as modelling the de Rham complex of Getzler's connection \cite{Getzler-GM} on periodic cyclic homology. 
\end{itemize}

\end{rmk}

\begin{defn}
We define the $S^1$-equivariant version of the local-to-global morphism (Definition \ref{defn-local-to-globle}) by
\begin{align}
  [-]^{S^1}_\infty: \A^\bullet_M(\W)^{S^1}\to \A^\bullet_M(\widehat\Omega_{TM}^{-\bullet})^{S^1}[[\hbar]],
   \quad [O]_\infty^{S^1}e^{\gamma_\infty/\hbar} := \Mult \int_{S^1[*]} e^{\hbar \pa_P+D} O \otimes e^{\otimes  \gamma/\hbar}.
\end{align}
\end{defn}

\begin{rmk}\label{rmk-compare-equivariant-observable}
To compare with equation \eqref{observable-map}, for $O\in \A^\bullet_M(\W)$, we have
\[
   [O d\theta]^{S^1}_\infty=[O]_\infty. 
\]
\end{rmk}

\begin{lem}  $[-]^{S^1}_\infty$ is a cochain map with respect to the differential complexes as in Definition \ref{defn-S1-complex}. 
\end{lem}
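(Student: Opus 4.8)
The statement is the $S^1$-equivariant upgrade of Theorem~\ref{thm-ob-cochain}, so the plan is to reuse that theorem for the $u$-independent part and to isolate the genuinely new content in the $u$-linear terms. Write the two differentials as $Q_s=\nabla+\hbar^{-1}[\gamma,-]_\star-u\iota_{\pa_\theta}$ and $Q_t=\nabla+\hbar\Delta+\{\gamma_\infty,-\}_\Delta+u\,d_{TM}$. The identity to be proved, $[Q_s O]^{S^1}_\infty=Q_t[O]^{S^1}_\infty$, splits by powers of $u$: the $u^0$ part is exactly Theorem~\ref{thm-ob-cochain}, applied to the $d\theta$-components of $O$ via Remark~\ref{rmk-compare-equivariant-observable} (which identifies $[Od\theta]^{S^1}_\infty$ with the non-equivariant $[O]_\infty$), while the $u^1$ part is the single new identity
\[
 d_{TM}[O]^{S^1}_\infty=-[\iota_{\pa_\theta}O]^{S^1}_\infty .
\]
Thus everything reduces to establishing this last equation and checking that no cross-terms spoil the $u$-grading.

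The engine for the new identity is an operator relation on $\A^\bullet_{S^1[k]}\otimes_{\R}\A^\bullet_M(\widehat\Omega_{TM}^{-\bullet})^{\otimes k}$ that complements Lemma~\ref{lem-operators}. Since the propagator $P$ is invariant under the diagonal rotation, $\pa_P$ commutes with the rotation contraction, $[\iota_{\pa_\theta},\pa_P]=0$; and since $\iota_{\pa_\theta}d\theta^\alpha=1$ for all $\alpha$, a Koszul-sign computation gives
\[
 \{\iota_{\pa_\theta},D\}=d_{TM},
\]
where on the right $d_{TM}=\sum_\alpha(d_{TM})_\alpha$ is the fiber de~Rham operator on the tensor product, i.e. exactly the operator that descends through $\Mult$ to the $u\,d_{TM}$ appearing in $Q_t$. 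In words: $d_{TM}$ on the BV side is produced by contracting, against the rotation vector field, the $d\theta$ that the vertex operator $D$ attaches.

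I would then run the deformation argument of Theorem~\ref{thm-ob-cochain} with the full equivariant differential. Setting $\tilde\gamma=\gamma+\xi O$ with $\xi^2=0$, so that $\tilde\gamma_\infty=\gamma_\infty+\xi[O]^{S^1}_\infty$, the aim is the equivariant transfer formula
\[
 (\nabla+\hbar\Delta+\hbar^{-1}d_{TM}R_\nabla+u\,d_{TM})e^{\tilde\gamma_\infty/\hbar}=\Mult\int_{S^1[*]}e^{\hbar\pa_P+D}\Bigl(\hbar^{-1}\nabla\tilde\gamma+\tfrac{1}{2\hbar^2}[\tilde\gamma,\tilde\gamma]_\star+\hbar^{-1}R_\nabla-\hbar^{-1}u\,\iota_{\pa_\theta}\tilde\gamma\Bigr)\otimes e^{\otimes\tilde\gamma/\hbar}.
\]
Its $u^0$ part is \eqref{lem-homotopy-transfer}, valid for any $\tilde\gamma$. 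For the $u^1$ part I would compute $u\,d_{TM}e^{\tilde\gamma_\infty/\hbar}=\tfrac{u}{\hbar}\Mult\int_{S^1[*]}e^{\hbar\pa_P+D}(d_{TM}\tilde\gamma)\otimes e^{\otimes\tilde\gamma/\hbar}$, using that $d_{TM}$ commutes with $\pa_P$, $D$, $\Mult$ and the integral and acts on $e^{\otimes\tilde\gamma/\hbar}$ as a derivation, and then substitute $d_{TM}=\{\iota_{\pa_\theta},D\}$. The $\iota_{\pa_\theta}D$-piece reproduces $-\hbar^{-1}u\,\iota_{\pa_\theta}\tilde\gamma$, while the $D\iota_{\pa_\theta}$-piece and the Stokes boundary term drop out by the same type/degree reasoning as in Lemma~\ref{lem-BV-tansfer} (an inserted $d_{TM}\tilde\gamma$ cannot absorb a further $d\theta$, since $d_{TM}^2=0$) together with the fact that $\int_{S^1[k]}\L_{\pa_\theta}(-)=0$, which holds because $\pa_\theta$ is tangent to $\pa S^1[k]$. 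Extracting the $\xi$-linear coefficient gives the desired $u^1$ identity, and conjugating by $e^{\gamma_\infty/\hbar}$ as in Lemma~\ref{lem-naive-differential} converts $\hbar^{-1}d_{TM}R_\nabla$ into $\{\gamma_\infty,-\}_\Delta$, completing the cochain property.

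The main obstacle is this $u$-linear computation: one must show that after $d_{TM}=\{\iota_{\pa_\theta},D\}$ is inserted, only the single contraction $-\hbar^{-1}u\,\iota_{\pa_\theta}\tilde\gamma$ survives, i.e. that the $D\iota_{\pa_\theta}$ contribution and the Stokes boundary contribution cancel. This rests on the tangency of $\pa_\theta$ to the collision boundary and on $d_{S^1}\pi^*_{\alpha\beta}(P)=d\theta^\alpha-d\theta^\beta$ from Lemma~\ref{lem-operators}. Keeping the Koszul signs consistent through $\iota_{\pa_\theta}$, $D$, and $\Mult$ — in particular pinning down the overall minus sign in $d_{TM}[O]^{S^1}_\infty=-[\iota_{\pa_\theta}O]^{S^1}_\infty$ — is the delicate bookkeeping on which the rest of the argument depends.
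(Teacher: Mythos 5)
Your proposal is correct and follows essentially the same route as the paper: split the identity by powers of $u$, invoke the analogue of Theorem \ref{thm-ob-cochain} for the $u^0$ part, and reduce the $u$-linear part to $d_{TM}[O]^{S^1}_\infty e^{\gamma_\infty/\hbar}=-[\iota_{\pa_\theta}O]^{S^1}_\infty e^{\gamma_\infty/\hbar}$ using the operator identity $d_{TM}=[\iota_{\pa_\theta},D]$ together with $[d_{TM},\hbar\pa_P+D]=0$. The paper's execution of the $u$-linear step is slightly leaner than yours---it inserts $-\iota_{\pa_\theta}$ under $\Mult\int_{S^1[*]}$ for free (the contracted integrand is no longer a top form on the configuration space, so its integral vanishes by type reason) and then commutes $d_{TM}-\iota_{\pa_\theta}$ past $e^{\hbar\pa_P+D}$, so no Stokes/tangency argument or separate analysis of the $D\iota_{\pa_\theta}$ piece is needed.
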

\begin{proof} We need to prove that 
\[
(\nabla+\hbar\Delta+\hbar^{-1}d_{TM}R_\nabla+u d_{TM}) [O]_\infty^{S^1}e^{\gamma_\infty/\hbar} = \Mult \int_{S^1[*]} e^{\hbar \pa_P+D} \bracket{\nabla O+[\gamma/\hbar, O]_\star-u\iota_{\pa_\theta}O} \otimes e^{\otimes  \gamma/\hbar}.
\]
The analogue of Theorem \ref{thm-ob-cochain} implies that
\[
(\nabla+\hbar\Delta+\hbar^{-1}d_{TM}R_\nabla) [O]_\infty^{S^1}e^{\gamma_\infty/\hbar} = \Mult \int_{S^1[*]} e^{\hbar \pa_P+D} (\nabla O+[\gamma/\hbar,O]_\star) \otimes e^{\otimes  \gamma/\hbar}.
\]
On the other hand, 
\begin{align*}
 d_{TM} [O]_\infty^{S^1}e^{\gamma_\infty/\hbar}& = \Mult \int_{S^1[*]} d_{TM} e^{\hbar \pa_P+D} O\otimes e^{\otimes \gamma/\hbar}\\[1ex]
 &=\Mult \int_{S^1[*]} (d_{TM}-\iota_{\pa_\theta}) e^{\hbar \pa_P+D} O\otimes e^{\otimes \gamma/\hbar}\\[1ex]
 &=\Mult \int_{S^1[*]} e^{\hbar \pa_P+D}(-\iota_{\pa_\theta}) O\otimes e^{\otimes \gamma/\hbar}
\end{align*}
where we have used $d_{TM}=[\iota_{\pa_\theta},D]$ and $[d_{TM}, \hbar\pa_P+D]=0$. 
\end{proof}

\begin{defn} We define the $S^1$-equivariant BV integration map by
\[
  \int^{S^1}_{\gamma_\infty}: \A^\bullet_M(\widehat\Omega^{-\bullet}_{TM})^{S^1}[[\hbar]]\to \A^\bullet_M((\hbar))[u,u^{-1}], \quad a\mapsto \sigma\bracket{u^ne^{\hbar \iota_\Pi/u} a e^{\gamma_\infty/\hbar}}. 
\]
Here $\sigma$ is the extended symbol map as in Definition \ref{defn-extended-symbol}. 
\end{defn}

The map $\int^{S^1}_{\gamma_\infty}$ extends $\int_{\gamma_\infty}$ as follows. 

\begin{lem}\label{lem-S1-BV-extension} Let $a\in \A^\bullet_M(\widehat\Omega^{-\bullet}_{TM})$. Viewed as an element of $\A^\bullet_M(\widehat\Omega^{-\bullet}_{TM})^{S^1}$, we have
$$
\lim_{u\to 0} \int_{\gamma_\infty}^{S^1} a= \int_{\gamma_\infty}a. 
$$
\end{lem}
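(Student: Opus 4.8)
The plan is to expand the operator $u^{n}e^{\hbar\iota_\Pi/u}$ as a power series in $\iota_\Pi$ and to exploit the nilpotency of $\iota_\Pi$ on the BV bundle, so that only non-negative powers of $u$ occur and the $u\to 0$ limit becomes the evaluation at $u=0$ of a polynomial in $u$. Concretely, I would first write
\[
  u^{n}e^{\hbar\iota_\Pi/u}=\sum_{k\geq 0}\frac{\hbar^{k}}{k!}\,u^{n-k}\,\iota_\Pi^{k}.
\]
Since $\iota_\Pi$ lowers the fiberwise form degree by $2$ and the fiber forms live in $\wedge^{\leq 2n}(T^{*}M)$, we have $\iota_\Pi^{k}=0$ for $k>n$. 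Hence the sum is finite, ranging over $0\leq k\leq n$, and every exponent $n-k$ of $u$ is non-negative.

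Applying this operator to $a\,e^{\gamma_\infty/\hbar}$ followed by the extended symbol map $\sigma$ (Definition \ref{defn-extended-symbol}) gives
\[
  \int^{S^1}_{\gamma_\infty} a=\sigma\bracket{u^{n}e^{\hbar\iota_\Pi/u}\,a\,e^{\gamma_\infty/\hbar}}=\sum_{k=0}^{n}\frac{\hbar^{k}}{k!}\,u^{n-k}\,\sigma\bracket{\iota_\Pi^{k}\bracket{a\,e^{\gamma_\infty/\hbar}}},
\]
a polynomial in $u$ with no negative powers. Therefore the limit $u\to 0$ exists and equals the $u^{0}$-coefficient, which is the single $k=n$ term:
\[
  \lim_{u\to 0}\int^{S^1}_{\gamma_\infty} a=\frac{\hbar^{n}}{n!}\,\sigma\bracket{\iota_\Pi^{n}\bracket{a\,e^{\gamma_\infty/\hbar}}}.
\]

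It then remains to identify the right-hand side with $\int_{\gamma_\infty}a=\int_{Ber}\bracket{e^{\gamma_\infty/\hbar}a}$. By the definition \eqref{fiber-BV-integration} of the fiberwise Berezin integral, $\int_{Ber}$ is computed by applying $\tfrac{1}{n!}\iota_\Pi^{n}$ — with one factor of $\hbar$ attached to each of the $n$ BV contractions — and then setting $y^{i}=dy^{i}=0$, which is precisely $\sigma$. Hence $\tfrac{\hbar^{n}}{n!}\,\sigma\bracket{\iota_\Pi^{n}(\,\cdot\,)}=\int_{Ber}(\,\cdot\,)$, and since $e^{\gamma_\infty/\hbar}$ is even the two products coincide, yielding the claim.

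The computation is essentially bookkeeping, and the one genuinely load-bearing point is the nilpotency $\iota_\Pi^{\,n+1}=0$: this is what guarantees that $\int^{S^1}_{\gamma_\infty}a$ is regular at $u=0$ (no $u^{-1}$ or lower powers), so that the unadorned limit is meaningful rather than a pole. After that, the only subtlety is checking that the prefactor $u^{n}$ together with the $\hbar$ inside $e^{\hbar\iota_\Pi/u}$ exactly reproduces the $\hbar$-normalization built into $\int_{Ber}$; because $a$ itself carries no $u$, there are no cross terms and $\sigma$ commutes trivially with evaluation at $u=0$.
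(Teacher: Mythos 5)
Your argument is correct and is essentially the paper's own (one-line) proof: expand $u^{n}e^{\hbar\iota_\Pi/u}$ as a finite sum using $\iota_\Pi^{\,n+1}=0$, observe that only non-negative powers of $u$ occur, and identify the surviving $k=n$ term with the fiberwise Berezin integral of Definition \ref{defn:BVint}. One caveat on the final bookkeeping: your expansion correctly produces the coefficient $\tfrac{\hbar^{n}}{n!}\,\sigma\circ\iota_\Pi^{n}$, whereas the Berezin integral as literally written in \eqref{fiber-BV-integration} is $\tfrac{1}{n!}\,\sigma\circ\iota_\Pi^{n}$ with no powers of $\hbar$ — the paper's proof silently drops the $\hbar^{n}$, while you absorb it by asserting that $\int_{Ber}$ carries one factor of $\hbar$ per contraction. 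This is a normalization wrinkle already present in the paper's conventions rather than a gap in your reasoning, but you should state explicitly which convention for $\int_{Ber}$ you are using, since the identification of the $u^{0}$-coefficient with $\int_{\gamma_\infty}a$ holds only up to that overall $\hbar^{n}$.
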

\begin{proof} When $a$ doesn't contains $u$, 
\begin{align*}
\lim_{u\to 0} \int_{\gamma_\infty}^{S^1} a=\frac{1}{n!}\sigma\bracket{\iota_{\Pi}^n a e^{\gamma_\infty/\hbar}}.
\end{align*}
Recall Definition \ref{defn:BVint}, the right hand side is precisely $ \int_{\gamma_\infty}a$. 
\end{proof}

The equivariant analogue of Lemma \ref{lem-cochain-BV}  is the following. 
\begin{lem} $\int^{S^1}_{\gamma_\infty}$ is a cochain map. 
\end{lem}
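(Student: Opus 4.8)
The plan is to prove that $\int^{S^1}_{\gamma_\infty}$ intertwines the source differential $\nabla+\hbar\Delta+\{\gamma_\infty,-\}_\Delta+u\,d_{TM}$ with the de Rham differential $d_M$ on $\A^\bullet_M((\hbar))[u,u^{-1}]$, extended $u$-linearly (this is the target already appearing at $u=0$ in Lemma \ref{lem-S1-BV-extension}). Write $T:=u^n e^{\hbar\iota_\Pi/u}$ and $F:=e^{\gamma_\infty/\hbar}$, so that $\int^{S^1}_{\gamma_\infty}a=\sigma(T(aF))$. First I would conjugate by $F$: exactly as in the proof of Lemma \ref{lem-naive-differential}, one has $F\bracket{\nabla+\hbar\Delta+\{\gamma_\infty,-\}_\Delta}a=Q_0(aF)$ with $Q_0:=\nabla+\hbar\Delta+\hbar^{-1}d_{TM}(R_\nabla)$, while the genuinely new term obeys $F\,d_{TM}a=d_{TM}(aF)-\hbar^{-1}(d_{TM}\gamma_\infty)(aF)$, since $d_{TM}$ is an odd derivation and $\gamma_\infty$ is even. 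Setting $b:=aF$, the claim reduces to the operator identity $\sigma\bracket{T\bracket{Q_0+u\,d_{TM}-u\hbar^{-1}m_{d_{TM}\gamma_\infty}}b}=d_M\,\sigma(Tb)$, where $m_{(-)}$ denotes multiplication.

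The key step is an intertwining for $T$. From $[d_{TM},\iota_\Pi]=\Delta$ and $[\Delta,\iota_\Pi]=0$ (Lemma \ref{lem-compatible} together with the Jacobi identity for $\Pi$) one gets $d_{TM}T=T\,d_{TM}+\tfrac{\hbar}{u}T\Delta$, hence the clean relation $T\bracket{\hbar\Delta+u\,d_{TM}}=u\,d_{TM}\,T$. Combining this with $[\nabla,\iota_\Pi]=0$ (so $[\nabla,T]=0$), with $\sigma\circ\nabla=d_M\circ\sigma$ (the fibre-Christoffel corrections in $\nabla$ carry an explicit $y$ or $dy$ and die under $\sigma$), and with $\sigma\circ d_{TM}=0$ (every term of $d_{TM}(-)$ carries a $dy$), the two contributions $\sigma(T\hbar\Delta b)$ and $u\,\sigma(T\,d_{TM}b)$ collapse to $\sigma(u\,d_{TM}Tb)=0$, while $\sigma(T\nabla b)=\sigma(\nabla Tb)=d_M\,\sigma(Tb)$ produces the desired right-hand side. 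After this collapse only the two ``correction'' terms survive.

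These corrections are $\hbar^{-1}\sigma(T(d_{TM}R_\nabla)b)$ and $-u\hbar^{-1}\sigma(T(d_{TM}\gamma_\infty)b)$. The curvature term vanishes outright: $d_{TM}R_\nabla=\tfrac12 R_{ijkl}y^i\,dy^j\,dx^k\wedge dx^l$ is proportional to the fibre coordinate $y$, and since $T$ is built from $\iota_\Pi$, a pure contraction in the $dy$ variables that never touches $y$, the factor $y^i$ survives and is annihilated by $\sigma$ — precisely as the curvature term drops out of the non-equivariant Lemma \ref{lem-cochain-BV}. (Equivalently, feeding in the quantum master equation in the form $\nabla\gamma_\infty+\hbar\Delta\gamma_\infty+\tfrac12\{\gamma_\infty,\gamma_\infty\}_\Delta+d_{TM}R_\nabla=0$ replaces the $Q_0$-correction by exactly $-\hbar^{-1}d_{TM}R_\nabla$, leaving the single leftover $-u\hbar^{-1}\sigma(T(d_{TM}\gamma_\infty)b)$.)

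The main obstacle is therefore the identity $\sigma\bracket{T\,(d_{TM}\gamma_\infty)\,b}=0$ for all $b$. This cannot come from $y$-divisibility alone, since $d_{TM}\gamma_\infty$ has a $y$-independent piece originating from the leading vertex $\omega_{ij}dy^i dx^j$ of $\gamma_\infty$; morally it expresses that the $u\,d_{TM}$-direction is a boundary (cyclic $B$-operator) direction which the trace must annihilate. Rather than force the conjugation argument further, I expect it is cleanest to establish this — and hence the whole lemma — directly at the level of the configuration-space integral of Definition \ref{defn-gamma-infty}, where $u\,d_{TM}$ is implemented by $D$ and $\hbar\Delta$ by $\hbar\pa_P$ through Lemma \ref{lem-operators}. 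The vanishing of the leftover should then follow from Stokes' theorem on $S^1[k]$ together with the propagator identity $\int_{S^1}d\theta\,\pi_{1\beta}^*(P)=0$, exactly as the curvature term was treated in the corollary following Lemma \ref{lem-BV-tansfer}. This Stokes/propagator route is the part I expect to demand the most care, as it requires controlling the boundary strata of $S^1[k]$ and their orientations in the presence of the extra equivariant variable $u$.
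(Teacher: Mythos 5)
Your first three steps are exactly the paper's proof: the conjugation by $e^{\gamma_\infty/\hbar}$, the intertwining relation $u\, d_{TM}\circ T=T\circ(\hbar\Delta+u\,d_{TM})$ obtained from $\Delta=[d_{TM},\iota_\Pi]$ and $[\Delta,\iota_\Pi]=0$, and the observations that $\sigma\circ d_{TM}=0$ and that $\sigma\circ\nabla=d_M\circ\sigma$ while the curvature correction dies under $\sigma$ by $y$-divisibility. The problem is your last paragraph: the ``main obstacle'' $\sigma\left(T\,(d_{TM}\gamma_\infty)\,b\right)=0$ is never established --- you defer it to an unexecuted Stokes argument on $S^1[k]$ and flag it as the delicate part --- and the reason you give for why it cannot be handled directly is wrong. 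You assert that $d_{TM}\gamma_\infty$ has a nonzero $y$-independent piece coming from the leading vertex $\omega_{ij}dy^i dx^j$ of $\gamma_\infty$; but $d_{TM}=dy^k\,\pa_{y^k}$ annihilates that term precisely because it is $y$-independent.

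In fact the leftover term vanishes for the simplest possible reason: $d_{TM}\gamma_\infty=0$ identically, so conjugating $u\,d_{TM}$ by $e^{\gamma_\infty/\hbar}$ produces no correction at all, and your computation then closes up into the paper's two-line proof. To see this, apply $d_{TM}$ to formula \eqref{feynman-formula}: $d_{TM}$ is a derivation commuting with $\Mult$, with $\pa_P$ (since $[d_{TM},\L_{\pa_{y^i}}]=0$), with $D$, and with the integration over $S^1[k]$, so it lands on a vertex and turns $d_{TM}\gamma$ into $d_{TM}^2\gamma=0$. (Equivalently, the vertex hit by $d_{TM}$ can no longer acquire its $d\theta$ factor from $D$, so the integrand has too low a form degree to survive $\int_{S^1[k]}$; this is the $O=1$ instance of the computation in the proof of the preceding lemma.) Once $d_{TM}e^{\gamma_\infty/\hbar}=0$ is in hand, no boundary-stratum analysis of $S^1[k]$ is needed for this lemma.
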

\begin{proof} 
\begin{align*}
 \int^{S^1}_{\gamma_\infty}(\nabla+\hbar \Delta+\{\gamma_\infty,-\}_\Delta+ud_{TM})a= \sigma\bracket{u^ne^{\hbar \iota_\Pi/\hbar}  (\nabla+\hbar\Delta+\hbar^{-1}d_{TM}R_\nabla+ud_{TM}) a e^{\gamma_\infty/\hbar}}.
\end{align*}
As in the proof of Lemma \ref{lem-cochain-BV}, 
\[
 \sigma\bracket{u^ne^{\hbar \iota_\Pi/\hbar}  (\nabla+\hbar^{-1}d_{TM}R_\nabla) a e^{\gamma_\infty/\hbar}}=d_M  \sigma\bracket{u^ne^{\hbar \iota_\Pi/u} a e^{\gamma_\infty/\hbar}}.
\]
On the other hand, 
\[
 \sigma\bracket{u^ne^{\hbar \iota_\Pi/u}  (\hbar \Delta+ud_{TM}) a e^{\gamma_\infty/\hbar}}= \sigma\bracket{ ud_{TM} u^ne^{\hbar \iota_\Pi/u}  a e^{\gamma_\infty/\hbar}}=0,
\]
where we have used $\Delta=[d_{TM}, \iota_{\Pi}]$ which implies 
$
\hbar \Delta+ud_{TM}=e^{-\hbar \iota_\Pi/u}ud_{TM}e^{\hbar \iota_\Pi/u}.
$
\end{proof}

\begin{cor}\label{prop-equivariant-trace}  The composition of  the equivariant version of local-to-global map and BV integration
\[
   \int_{\gamma_\infty}^{S^1} [-]^{S^1}_\infty: \A^\bullet_M(\W)^{S^1}\to \A^\bullet_M((\hbar))[u,u^{-1}]
\]
is a cochain map. Here the differential on $\A_M((\hbar))[u,u^{-1}]$ is simply the de Rham differetial ($u$ and $\hbar$ linearly extended). 
\end{cor}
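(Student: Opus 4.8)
The plan is to observe that the asserted composition is literally the composition of the two cochain maps established in the two immediately preceding lemmas, so that nothing new needs to be proved beyond checking that the intermediate complex and its differential match on the nose.

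First I would recall that the first of these lemmas shows that $[-]^{S^1}_\infty$ intertwines the differential $\nabla+\frac{1}{\hbar}[\gamma,-]_\star-u\iota_{\pa_\theta}$ on $\A^\bullet_M(\W)^{S^1}$ with the differential $\nabla+\hbar \Delta+\{\gamma_\infty,-\}_{\Delta}+u\,d_{TM}$ on $\A^\bullet_M(\widehat\Omega_{TM}^{-\bullet})^{S^1}[[\hbar]]$ (the complexes of Definition \ref{defn-S1-complex}). Next I would recall that the second lemma shows that $\int^{S^1}_{\gamma_\infty}$ intertwines this same differential $\nabla+\hbar \Delta+\{\gamma_\infty,-\}_{\Delta}+u\,d_{TM}$ on $\A^\bullet_M(\widehat\Omega_{TM}^{-\bullet})^{S^1}[[\hbar]]$ with the de Rham differential $d_M$ on $\A^\bullet_M((\hbar))[u,u^{-1}]$, extended $u$- and $\hbar$-linearly. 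Since the target complex of the first map coincides exactly with the source complex of the second, carrying identical differential, the composite $\int^{S^1}_{\gamma_\infty}\circ[-]^{S^1}_\infty$ sends the source differential $\nabla+\frac{1}{\hbar}[\gamma,-]_\star-u\iota_{\pa_\theta}$ to $d_M$, which is precisely the assertion that it is a cochain map.

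There is in fact no genuine obstacle here: all of the analytic and combinatorial content has already been absorbed into the two lemmas, and the corollary merely records that their composition is again a cochain map. The only thing I would verify explicitly is the bookkeeping point that the intermediate differential appearing as the \emph{target} differential for $[-]^{S^1}_\infty$ agrees term-for-term with the one appearing as the \emph{source} differential for $\int^{S^1}_{\gamma_\infty}$, which is immediate from Definition \ref{defn-S1-complex}. Granting this, the proof reduces to a one-line appeal to the fact that a composition of cochain maps is again a cochain map.
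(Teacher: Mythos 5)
Your proposal is correct and matches the paper exactly: the corollary is stated without proof precisely because it is the composition of the two cochain maps established in the two preceding lemmas, with the intermediate complex $\A^\bullet_M(\widehat\Omega_{TM}^{-\bullet})^{S^1}[[\hbar]]$ and its differential $\nabla+\hbar\Delta+\{\gamma_\infty,-\}_\Delta+u\,d_{TM}$ agreeing on the nose. Nothing further is needed.
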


\begin{defn} We define the $S^1$-equivariant trace map by
 \begin{align*}
   \Tr^{S^1}:  \A^\bullet_M(\W)^{S^1}  \to \R((\hbar))[u,u^{-1}], \quad
        O \mapsto  \int_M \int^{S^1}_{\gamma_\infty} [O]^{S^1}_\infty. 
 \end{align*}
\end{defn}

The relation between $\Tr$ and $\Tr^{S^1}$ is explained by the following proposition. 
\begin{prop} If $f\in \cinfty(M)$, then
\begin{align}\label{equivariant-limit}
   \Tr(f)=\Tr^{S^1}(d\theta \sigma^{-1}(f)). 
\end{align}
\end{prop}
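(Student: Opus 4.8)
The plan is to reduce the equivariant trace on the right-hand side to the ordinary trace by a degree/type argument that forces the equivariant parameter $u$ to drop out after integrating over $M$.

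First I would use Remark \ref{rmk-compare-equivariant-observable} to rewrite the equivariant observable. Since $\sigma^{-1}(f)$ is even we have $d\theta\,\sigma^{-1}(f) = \sigma^{-1}(f)\,d\theta$, so $[d\theta\,\sigma^{-1}(f)]^{S^1}_\infty = [\sigma^{-1}(f)]_\infty$. This is the crucial simplification: the equivariant local-to-global class of $d\theta\,\sigma^{-1}(f)$ is exactly the ordinary class $[\sigma^{-1}(f)]_\infty \in \A^\bullet_M(\widehat\Omega^{-\bullet}_{TM})[[\hbar]]$, which carries no $u$ and which, because $\sigma^{-1}(f)$ is a flat section of cohomological degree $0$ and $[-]_\infty$ is a (degree-preserving) cochain map by Theorem \ref{thm-ob-cochain}, itself has cohomological degree $0$. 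Hence
\[
\Tr^{S^1}(d\theta\,\sigma^{-1}(f)) = \int_M \int^{S^1}_{\gamma_\infty} [\sigma^{-1}(f)]_\infty.
\]

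Next I would show this expression is independent of $u$. Writing $a := [\sigma^{-1}(f)]_\infty$ and expanding the defining formula $\int^{S^1}_{\gamma_\infty} a = \sigma\bracket{u^n e^{\hbar\iota_\Pi/u}\, a\, e^{\gamma_\infty/\hbar}}$, with $\sigma$ the extended symbol of Definition \ref{defn-extended-symbol} ($y^i,dy^i \mapsto 0$), the term containing $\iota_\Pi^k$ carries the factor $u^{n-k}$. Because both $a$ and $\gamma_\infty$ have cohomological degree $0$, every monomial of $a\,e^{\gamma_\infty/\hbar}$ lying in $\A^p_M(\widehat\Omega^{-q}_{TM})$ satisfies $p = q$; applying $\iota_\Pi^k$ (which lowers the fibre wedge-degree by $2k$ and leaves the de Rham degree on $M$ untouched) and then $\sigma$ (which retains only the fibre wedge-degree-$0$ part) produces a differential form on $M$ of degree exactly $2k$. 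Since $\iota_\Pi^k$ annihilates everything once $k > n$, only non-negative powers of $u$ appear.

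Finally, applying $\int_M$ selects the top-degree component $\A^{2n}_M$, forcing $2k = 2n$, i.e.\ $k = n$, so the surviving term carries $u^{n-k} = u^0$ and all $u$-dependence vanishes. The whole expression therefore equals its $u\to 0$ limit, and Lemma \ref{lem-S1-BV-extension} identifies that limit with the ordinary BV integration, giving
\[
\int_M \int^{S^1}_{\gamma_\infty}[\sigma^{-1}(f)]_\infty = \int_M \lim_{u\to 0}\int^{S^1}_{\gamma_\infty}[\sigma^{-1}(f)]_\infty = \int_M \int_{\gamma_\infty}[\sigma^{-1}(f)]_\infty = \Tr(f).
\]
The main obstacle is the bookkeeping in the middle step: one must verify carefully that the degree-$0$ condition on $a\,e^{\gamma_\infty/\hbar}$ couples the de Rham degree on $M$ to the fibre wedge-degree, since it is exactly this coupling — together with the top-form projection $\int_M$ — that pins $k$ to $n$ and removes $u$. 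Everything else follows directly from Remark \ref{rmk-compare-equivariant-observable}, the cochain properties already established, and Lemma \ref{lem-S1-BV-extension}.
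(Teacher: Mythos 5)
Your proposal is correct and follows essentially the same route as the paper: both reduce to $\Tr(f)=\lim_{u\to 0}\Tr^{S^1}(d\theta\,\sigma^{-1}(f))$ via Remark \ref{rmk-compare-equivariant-observable} and Lemma \ref{lem-S1-BV-extension}, and then argue that the $u$-dependence drops out. The paper dispatches the latter point in one line by noting that $\Tr^{S^1}(d\theta\,\sigma^{-1}(f))$ has cohomological degree $0$ while $\deg u=2$, so only the $u^0$ coefficient can survive in $\R((\hbar))[u,u^{-1}]$; your explicit bookkeeping of how $\iota_\Pi^k$, the symbol map, and $\int_M$ pin $k=n$ is just the unpacked version of that same degree argument.
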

\begin{proof} By Remark \ref{rmk-compare-equivariant-observable} and Lemma \ref{lem-S1-BV-extension}, 
\[
    \Tr(f)= \lim_{u\to 0}   \Tr^{S^1}(d\theta \sigma^{-1}(f)). 
\]
On the other hand, it is easy to see that $\Tr^{S^1}(d\theta \sigma^{-1}(f))$ is of cohomological degree $0$. Since $\deg u=2$, we conclude that
\[
 \Tr^{S^1}(d\theta \sigma^{-1}(f))=\lim_{u\to 0}   \Tr^{S^1}(d\theta \sigma^{-1}(f)). 
\]
\end{proof}

\subsubsection{The algebraic index} We are now ready to compute the algebraic index $\Tr(1)$. By equation \eqref{equivariant-limit}, it is equivalent to compute the equivariant version $\Tr^{S^1}(d\theta)$. The idea of the computation comes from equivariant localization: we show that in the $S^1$-equivariant complex, the variation of $\hbar$ leads to exact terms that do not contribute to $\Tr^{S^1}$. This is the analogue of the rigidity property of the trace of periodic cyclic cycles that is used in \cite{Nest-Tsygan} to compute the algebraic index in terms of cyclic cochains. Our  formalism can be viewed as an explicit de Rham model of \cite{Nest-Tsygan}, where $\Tr^{S^1}$ can be computed via semi-classical approximation in terms of one-loop Feynman diagrams related to $\hat A(M)$. This relates Fedosov's quantization, topological quantum mechanics and index theorem in a manifest way. The rest of this subsection is devoted to carrying out the details. 

Consider the following Euler vector field (which defines the weight decomposition of the Weyl bundle)
\[
  E=\sum y^i \frac{\pa}{\pa y^i}+ 2\hbar \frac{\pa}{\pa \hbar}. 
\]
Let $\iota_E$ be the contraction and $\L_E$ be the Lie derivative with respect to $E$. These can be extended as derivations to define natural operators 
\[
  \iota_E, \L_E: \A^\bullet_M(\widehat\Omega_{TM}^{-\bullet})^{\otimes k}\to \A^\bullet_M(\widehat\Omega_{TM}^{-\bullet})^{\otimes k}
\]
which we denote by the same symbols. 

\begin{lem}
\begin{align}\label{LE-gamma}
   \nabla \L_E(\gamma/\hbar)+[\gamma/\hbar, \L_E(\gamma/\hbar)]_\star=2\hbar \frac{\pa}{\pa \hbar}(\omega_\hbar/\hbar). 
\end{align}
\end{lem}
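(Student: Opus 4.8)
The plan is to apply the Lie derivative $\L_E$ directly to Fedosov's equation \eqref{Fedosov-eqn}, after rewriting it in a form adapted to $\gamma/\hbar$. Since the Moyal product is $\hbar$-bilinear we have $\frac{1}{2\hbar^2}[\gamma,\gamma]_\star=\frac12[\gamma/\hbar,\gamma/\hbar]_\star$, so dividing \eqref{Fedosov-eqn} by $\hbar$ gives
\[
\nabla(\gamma/\hbar)+\frac12[\gamma/\hbar,\gamma/\hbar]_\star+R_\nabla/\hbar=\omega_\hbar/\hbar.
\]
I would then apply $\L_E$ to both sides and match each resulting term with a term of \eqref{LE-gamma}.

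The two structural facts that make this work are: (i) $\L_E$ commutes with $\nabla$; and (ii) $\L_E$ is a graded derivation of the fiberwise Moyal product $\star$. Both follow from a single observation, which I expect to be the main point to pin down carefully: the operators $\nabla$ and $\star$ each preserve the weight grading on $\W$ (weight $2$ to $\hbar$, weight $1$ to each $y^i$), for which $E$ is exactly the Euler vector field. For $\star$ this holds because every elementary contraction $\hbar\,\omega^{ij}\pa_{y^i}\pa_{y^j}$ in the Moyal expansion has net weight $2-1-1=0$; for $\nabla$ it holds because the induced symplectic connection is $\hbar$-linear and preserves the symmetric (i.e.\ $y$-)degree. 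Since $\L_E$ acts on a weight-homogeneous element as multiplication by its weight, (i) is immediate, and (ii) follows because for homogeneous $a,b$ of weights $p,q$ the product $a\star b$ is homogeneous of weight $p+q$, whence $\L_E(a\star b)=(p+q)(a\star b)=\L_E(a)\star b+a\star\L_E(b)$.

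Granting (i) and (ii), the computation is term by term. The first term gives $\L_E\nabla(\gamma/\hbar)=\nabla\L_E(\gamma/\hbar)$. For the quadratic term, (ii) yields $\L_E\bigl(\frac12[\gamma/\hbar,\gamma/\hbar]_\star\bigr)=[\L_E(\gamma/\hbar),\gamma/\hbar]_\star$, and since $\gamma/\hbar$ and $\L_E(\gamma/\hbar)$ are both odd (degree-one forms) the Moyal bracket is graded symmetric, so this equals $[\gamma/\hbar,\L_E(\gamma/\hbar)]_\star$. The curvature term drops out: $R_\nabla$ is quadratic in the $y^i$, hence of weight $2$, so $R_\nabla/\hbar$ has weight $0$ and $\L_E(R_\nabla/\hbar)=0$. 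Finally $\omega_\hbar/\hbar$ depends on $\hbar$ alone, so $\L_E$ acts on it as $2\hbar\,\frac{\pa}{\pa\hbar}$, producing the right-hand side of \eqref{LE-gamma}. Assembling these identities gives the claim. The only real obstacle is the weight-preservation in (i) and (ii); the remaining manipulations are formal, the one sign to double-check being the bracket symmetry, which for degree-one forms is manifest since $[a,b]_\star=a\star b+b\star a$.
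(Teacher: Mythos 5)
Your proposal is correct and is exactly the paper's argument: the paper's proof is the single line ``Applying $\L_E$ to $\hbar^{-1}$\eqref{Fedosov-eqn} we obtain equation \eqref{LE-gamma}.'' You have simply supplied the supporting details (weight-preservation of $\nabla$ and $\star$, vanishing of $\L_E(R_\nabla/\hbar)$, and the bracket symmetry for one-forms), all of which check out.
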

\begin{proof} Applying $\L_E$  to $\hbar^{-1}$\eqref{Fedosov-eqn} we obtain equation \eqref{LE-gamma}. 

\end{proof}

We now consider the $\hbar$-dependence of the expression
\[
e^{\omega_\hbar/{u \hbar}} \int^{S^1}_{\gamma_\infty} [d\theta]^{S^1}_\infty.
\]
This expression is not independent of $\hbar$. However as we next show, any $\hbar$-dependent terms are $d_M$-exact and hence will not contribute upon integrating over the manifold $M$.

\begin{cor} The expression
\begin{align}\label{rescaling-eqn}
  \hbar \pa_\hbar \bracket{ e^{\omega_\hbar/u\hbar} \int^{S^1}_{\gamma_\infty} [d\theta]^{S^1}_\infty}
\end{align}
is $d_M-$exact.
\end{cor}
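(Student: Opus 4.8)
The plan is to differentiate under the fiberwise Berezin integral and trade the naive $\hbar$-derivative for the Euler field $\L_E$, whose effect on Fedosov's data is governed by equation \eqref{LE-gamma}. Using $[d\theta]^{S^1}_\infty=1$ (Remark \ref{rmk-compare-equivariant-observable} together with $[1]_\infty=1$), the bracketed quantity is $\sigma\bracket{u^n e^{\hbar\iota_\Pi/u}e^{\gamma_\infty/\hbar}}e^{\omega_\hbar/u\hbar}$, and since $\sigma$ is $\hbar$-linear, $\hbar\pa_\hbar$ passes inside. First I would record the operator identity $\hbar\pa_\hbar=\tfrac12\bracket{\L_E-N}$, where $N$ is the fiber-weight operator counting $y^i$'s and $dy^i$'s (so $\L_E=N+2\hbar\pa_\hbar$). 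The advantage of $\L_E$ is that, unlike $\hbar\pa_\hbar$, it commutes with every ingredient of the Feynman-integral definition of $\gamma_\infty$: the operators $\hbar\pa_P$ and $D$ have $\L_E$-weight $0$, and $\tfrac{\hbar}{u}\iota_\Pi$ is $\L_E$-invariant since $\iota_\Pi$ lowers the fiber weight by $2$ while $\hbar$ raises it by $2$.

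Next I would show the $N$-part drops out after integration. As $\sigma$ sets $y^i=dy^i=0$, one has $\sigma\circ N=0$; combined with $[N,\iota_\Pi]=-2\iota_\Pi$ this forces $\sigma\bracket{u^n e^{\hbar\iota_\Pi/u}N\,(-)}=0$ (the two $\iota_\Pi$-contributions produced when $N$ is commuted through $e^{\hbar\iota_\Pi/u}$ cancel). Hence $\hbar\pa_\hbar$ acts here exactly as $\tfrac12\L_E$. Pushing $\L_E$ through the configuration-space integral and onto the insertions, and feeding the derivative of the prefactor back through $[d\theta]^{S^1}_\infty=1$ (via $[\L_E(\gamma/\hbar)]^{S^1}_\infty=\L_E(\gamma_\infty/\hbar)$ and $\A^\bullet_M$-linearity of $[-]^{S^1}_\infty$), I expect to arrive at
\[
 \hbar\pa_\hbar\bracket{e^{\omega_\hbar/u\hbar}\int^{S^1}_{\gamma_\infty}[d\theta]^{S^1}_\infty}=\tfrac12\, e^{\omega_\hbar/u\hbar}\int^{S^1}_{\gamma_\infty}[\Theta]^{S^1}_\infty,\qquad \Theta:=\L_E(\gamma/\hbar)+\tfrac1u\L_E(\omega_\hbar/\hbar)\,d\theta.
\]

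The heart of the argument is then to recognize $\Theta$ as a coboundary in the equivariant Weyl complex $\bracket{\A^\bullet_M(\W)^{S^1},Q}$ with $Q:=\nabla+\tfrac1\hbar[\gamma,-]_\star-u\iota_{\pa_\theta}$. Equation \eqref{LE-gamma} is precisely the statement $Q\,\L_E(\gamma/\hbar)=\L_E(\omega_\hbar/\hbar)$ (the $u\iota_{\pa_\theta}$ term is absent because $\L_E(\gamma/\hbar)$ carries no $d\theta$), while directly $Q(d\theta)=-u$. Setting $\Lambda:=\tfrac1u\L_E(\gamma/\hbar)\,d\theta$ and using that $\L_E(\gamma/\hbar)$ has odd degree, the Leibniz rule gives
\[
 Q\Lambda=\tfrac1u\bracket{Q\,\L_E(\gamma/\hbar)}d\theta-\tfrac1u\L_E(\gamma/\hbar)\,Q(d\theta)=\tfrac1u\L_E(\omega_\hbar/\hbar)\,d\theta+\L_E(\gamma/\hbar)=\Theta,
\]
so $\Theta$ is $Q$-exact. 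Since the composition $\int^{S^1}_{\gamma_\infty}[-]^{S^1}_\infty$ is a cochain map (Corollary \ref{prop-equivariant-trace}) and $e^{\omega_\hbar/u\hbar}$ is $d_M$-closed ($\omega_\hbar$ being closed), this identifies the expression as $\tfrac12\,d_M\bracket{e^{\omega_\hbar/u\hbar}\int^{S^1}_{\gamma_\infty}[\Lambda]^{S^1}_\infty}$, which is $d_M$-exact.

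The main obstacle is the construction of the primitive $\Lambda$. The $\hbar$-variation unavoidably produces the central term $\L_E(\omega_\hbar/\hbar)$, and the crucial mechanism is that the equivariant generator $d\theta$, through $Q(d\theta)=-u$, absorbs exactly this term; this is where the $S^1$-equivariant enhancement (rather than a naive rescaling) is indispensable. The remaining items to check carefully are the vanishing $\sigma\bracket{u^n e^{\hbar\iota_\Pi/u}N(-)}=0$ and the bookkeeping that converts $\hbar\pa_\hbar e^{\omega_\hbar/u\hbar}$ into the $d\theta$-insertion in $\Theta$; both are routine once the identity $\hbar\pa_\hbar=\tfrac12(\L_E-N)$ and the normalization $[d\theta]^{S^1}_\infty=1$ are in place.
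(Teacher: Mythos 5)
Your argument is essentially the paper's proof: both trade $\hbar\pa_\hbar$ for the Euler derivative $\L_E$ using $\sigma\circ N=0$ together with the $E$-invariance of $e^{\hbar\iota_\Pi/u}$ and $e^{\hbar\pa_P+D}$, invoke equation \eqref{LE-gamma}, and exhibit the primitive $u^{-1}\,d\theta\,\L_E(\gamma/\hbar)$ in the equivariant complex before applying the cochain-map property of $\int^{S^1}_{\gamma_\infty}[-]^{S^1}_\infty$ (the paper merely keeps the central term $\L_E(\omega_\hbar/\hbar)$ on the other side of the equation instead of folding it into your $\Theta$). One small correction: the intermediate claim $\sigma\bracket{u^ne^{\hbar\iota_\Pi/u}N(-)}=0$, with $N$ inserted \emph{inside} the exponential, is neither needed nor true in general, since $[N,e^{t\iota_\Pi}]=-2t\,\iota_\Pi e^{t\iota_\Pi}$ produces a single term that survives $\sigma$; what your argument actually uses, and what holds trivially, is $\sigma\circ N=0$ applied to the whole integrand $u^ne^{\hbar\iota_\Pi/u}e^{\gamma_\infty/\hbar}$.
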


\begin{proof} Observe that $e^{\hbar \iota_{\Pi}/u}$ and $e^{\hbar \pa_P+D}$ are invariant under the Euler vector field $E$. Hence
\begin{align*}
  2\hbar \frac{\pa}{\pa\hbar} \bracket{\int^{S^1}_{\gamma_\infty} [d\theta]^{S^1}_\infty}&=\sigma\bracket{ \L_E u^ne^{\hbar \iota_\Pi/u} \int_{S^1[*]} e^{\hbar \pa_P+D}  e^{\otimes \gamma/\hbar}}={\int^{S^1}_{\gamma_\infty} [\L_E(\gamma/\hbar)]^{S^1}_\infty}\\[1ex]
  &=-u^{-1} {\int^{S^1}_{\gamma_\infty} [(\nabla+[\gamma/\hbar,-]_\star-u\iota_{\pa_\theta}) d\theta \L_E(\gamma/\hbar)]^{S^1}_\infty}\\[1ex]
 &\qquad \quad +u^{-1} {\int^{S^1}_{\gamma_\infty} [(\nabla+[\gamma/\hbar,-]_\star) d\theta \L_E(\gamma/\hbar)]^{S^1}_\infty}\\[1ex]
  &=-u^{-1}{\int^{S^1}_{\gamma_\infty} [d\theta \L_E(\omega_\hbar/\hbar)]^{S^1}_\infty}+ d_M\text{-exact}\\[1ex]
  &=-\bracket{2\hbar \frac{\pa}{\pa \hbar}(\omega_\hbar/u\hbar)}{\int^{S^1}_{\gamma_\infty} [d\theta]^{S^1}_\infty}+ d_M\text{-exact},
\end{align*}
where on the second line we have used Corollary \ref{prop-equivariant-trace}, and on the third line we have used equation \eqref{LE-gamma}. The corollary follows immediately. 
\end{proof}

Given a differential form $A=\sum_{p\ \text{even}} A_p,\;  A_p\in \A^P_M$, of even degree, we let $A_u \in \A^\bullet_M [u^{-1}, u]$ denote 
\[
A_u :=\sum u^{-p/2}A_p.
\]

\begin{lem}[Semi-classical limit]\label{Semi-classical limit}
\[
\int_{\gamma_\infty}^{S^1}[d\theta]^{S^1}_\infty=u^n e^{-\omega_\hbar/u\hbar}(\hat A(M)_u+O(\hbar)).
\]
Here $\hat A(M)$ is the $\hat A$-genus of $M$. 
\end{lem}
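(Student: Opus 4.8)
The plan is to compute the left-hand side explicitly in the semi-classical ($\hbar \to 0$) limit using the Feynman-diagram expansion \eqref{graph-formula}, and to identify the resulting integral over the tangent-space fiber as the $\hat{A}$-genus via a Gaussian (Mehler-type) evaluation. Recall that
\[
\int_{\gamma_\infty}^{S^1}[d\theta]^{S^1}_\infty = \sigma\!\left(u^n e^{\hbar\iota_\Pi/u}\, e^{\gamma_\infty/\hbar}\right),
\]
so I must understand $e^{\gamma_\infty/\hbar}$ through its graph expansion and then apply $u^n e^{\hbar\iota_\Pi/u}$ followed by the symbol map $\sigma$ (which sets $y^i = dy^i = 0$). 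The key structural input is that the BV integration $\int_{Ber}$, equivalently the top power of $\iota_\Pi$, forces a specific counting of fermionic ($dy^i$) insertions, and that in the $\hbar\to 0$ limit only a restricted class of diagrams survives.

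First I would isolate the relevant vertex data. By Lemma \ref{lemma-curvature}, the weight-$3$ part of $\gamma$ (mod $\hbar$) is $\tfrac{1}{8}R_{(ijk)l}\,dx^l\otimes y^iy^jy^k$, while the leading weight-$2$ term is $\omega_{ij}y^i dx^j$; the term $\omega_{ij}\,dy^i dx^j$ arising from $d_{TM}\gamma$ supplies the fermionic propagator legs. I would argue that in the $\hbar\to 0$ limit, after the Berezin integral selects the top fermionic degree $2n$ and $\sigma$ kills the bosonic variables, the surviving contributions organize into connected diagrams (``wheels'') built from the curvature two-form $R = \tfrac{1}{4}R_{ijkl}\,y^iy^j dx^k dx^l$, with propagators $P$ on the edges. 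The combinatorial sum over such wheel diagrams with $2m$ vertices, weighted by $\tfrac{1}{|\mathrm{Aut}(\cG)|}$, is precisely the expansion of a determinant of the form $\det\!\big(\tfrac{R/2}{\sinh(R/2)}\big)^{1/2}$, which is the definition of $\hat{A}(M)$. The factor $u^n e^{-\omega_\hbar/u\hbar}$ comes from combining $u^n e^{\hbar\iota_\Pi/u}$ with the leading symplectic term of $\gamma_\infty$, paralleling the normalization computation in Proposition \eqref{prop-normalization} but carried out equivariantly with the $u$-rescaling recorded via the assignment $A\mapsto A_u$.

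The precise mechanism I would use is the evaluation of the configuration-space integrals $\int_{S^1[k]}$ of products of pulled-back propagators $\pi_{\alpha\beta}^*(P)$. For a wheel with cyclically arranged vertices, these integrals reduce to iterated integrals of $P(\theta,u) = u - \tfrac{1}{2}$ over the circle, and the generating function of their values produces exactly the Bernoulli-number coefficients in the Taylor expansion of $\tfrac{x/2}{\sinh(x/2)}$. I would match the symmetrized curvature $R_{(ijk)l}$ appearing at the vertices against the matrix entries of the curvature two-form, so that each closed loop of length $2m$ contributes $\Tr(R^{2m})$ against the appropriate Bernoulli weight. Assembling the exponential of the connected wheel sum yields the square-root determinant formula for $\hat{A}(M)$.

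The hard part will be the bookkeeping in the second step: showing rigorously that \emph{only} the wheel diagrams built from the weight-$3$ curvature term and the fermionic symplectic legs survive the $\hbar\to 0$ limit after Berezin integration, and that all other diagrams either vanish for type/degree reasons (insufficient or excess $d\theta$ or $dy$ factors, as in the cancellations used in the proof of Theorem \ref{main-thm} and its Corollary) or contribute only at higher order in $\hbar$. Establishing that the surviving combinatorics reproduce exactly the $\hat{A}$-genus—i.e., verifying that the configuration-space integrals of propagators generate the correct Bernoulli coefficients—is the computational heart of the argument, and it is here that the standard one-loop Feynman-diagram evaluation of the index density is recovered.
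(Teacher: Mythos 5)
Your overall strategy is the same as the paper's: expand $\sigma\bracket{u^n\int_{S^1[*]}e^{\hbar(\pa_P+\iota_\Pi/u)}e^D e^{\otimes\gamma/\hbar}}$ in Feynman diagrams with propagator $P+\iota_\Pi/u$, let the tree diagrams produce the exponential prefactor, and identify the sum of curvature wheels (built from the weight-$3$ vertex of Lemma \ref{lemma-curvature}, with each wheel vertex contracted against an $\omega_{ij}dy^idx^j$ leg to produce $R_\nabla/u$ via the Bianchi identity) with $\log\hat A(M)_u$. Your evaluation of the wheel integrals via Bernoulli coefficients of $\frac{x/2}{\sinh(x/2)}$ is equivalent to the paper's computation $\int_{S^1[k]}\pi_{12}^*(P)\cdots\pi_{k1}^*(P)\,d\theta^1\cdots d\theta^k=\Tr_{L^2(S^1)}\left(\frac{d}{d\theta}\right)^{-k}=\frac{2\zeta(k)}{(2\pi i)^k}$ for $k$ even, so that part is fine.

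There is, however, one concrete gap: you attribute the entire prefactor $u^ne^{-\omega_\hbar/u\hbar}$ to ``the leading symplectic term,'' but the tree diagrams (two $\omega_{ij}dy^idx^j$ vertices joined by $\iota_\Pi/u$) only produce the leading exponential in $\omega$ itself. Since $\omega_\hbar=-\omega+\hbar\omega_1+\cdots$, the factor $e^{\omega_1/u}$ appearing in $e^{-\omega_\hbar/u\hbar}$ is of order $\hbar^0$, not $O(\hbar)$, and therefore cannot be absorbed into the error term. In the paper this factor comes from a second, distinct family of one-loop diagrams — two vertices joined by $\iota_\Pi/u$, one of which is the $\hbar$-linear part of $\gamma$ — and its identification with $\omega_1$ is not automatic: it uses Fedosov's equation \eqref{Fedosov-eqn}, which fixes the $\hbar$-order content of $\gamma$ in terms of $\omega_1$. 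Without accounting for these diagrams (and invoking Fedosov's equation at this step), your computation would yield the wrong constant-in-$\hbar$ prefactor whenever $\omega_1\neq 0$. The rest of your deferred ``bookkeeping'' (that only wheels survive $\sigma$ and Berezin integration, and that higher-loop graphs are $O(\hbar)$) is indeed how the paper closes the argument.
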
 
\begin{proof} This follows from a Feynman diagram computation. Let us write
\[
\int_{\gamma_\infty}^{S^1}[d\theta]^{S^1}_\infty=\sigma\bracket{u^n e^{\hbar \iota_\Pi/u} \int_{S^1[*]}e^{\hbar \pa_P+D} e^{\otimes \gamma/\hbar} }=\sigma\bracket{u^n  \int_{S^1[*]}e^{\hbar (\pa_P+\iota_{\Pi}/u)} e^{D}e^{\otimes \gamma/\hbar} }.
\]
Similar to equation \eqref{graph-formula}, the above formula can be written as sum of Feynman diagrams with vertex $d_{TM}\gamma$, propagator $P+ \iota_\Pi/u$, and integrated over configuration space. Let us formally write 
\[
\int_{S^1[*]}e^{\hbar (\pa_P+\iota_{\Pi}/u)} e^{D}e^{\otimes \gamma/\hbar}=\exp\bracket{\hbar^{-1}\sum_{\cG:\text{connected}} \frac{W_\cG(P+\iota_\Pi/u, d_{TM}\gamma)}{|Aut(\cG)|}}. 
\]

By Lemma \ref{lemma-curvature}, 
\[
\gamma=\omega_{ij}y^i dx^j+\frac{1}{8}R_{(ijk)l}y^iy^jy^kdx^l+ O(y^4)+O(\hbar). 
\]

It follows that the tree diagrams which survive under the symbol map $\sigma$ are given by two vertices of $\omega^{ij}dy^i dx^j$ connected by the propagator $\iota_{\iota_\Pi}/u$:
\[
\frac{\omega}{u}=\figbox{0.22}{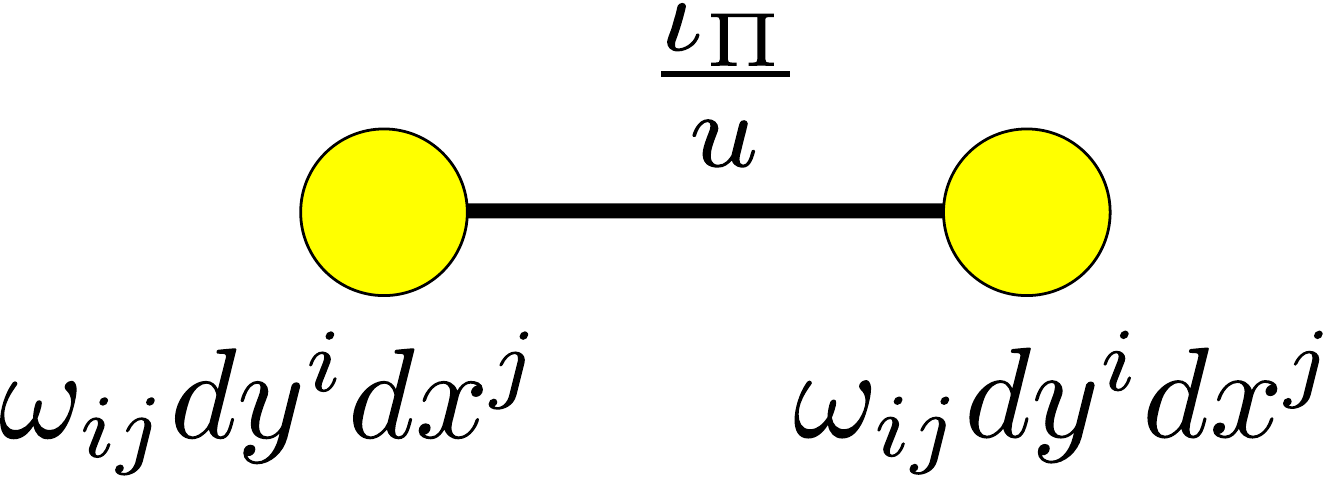}
\]
These terms together contribute a factor of
\[
   e^{-\omega/\hbar u}. 
\]

There are two types of one-loop diagrams that survive under the symbol map $\sigma$. The first type has two vertices, one of which is a tree-level vertex given by $\omega^{ij}dy^i dx^j$ while the other vertex is a one-loop vertex given by the $\hbar$-order in $\gamma$. The two vertices are connected by the propagator $\iota_{\iota_\Pi}/u$:
\[
\frac{\omega_1}{u}=\figbox{0.22}{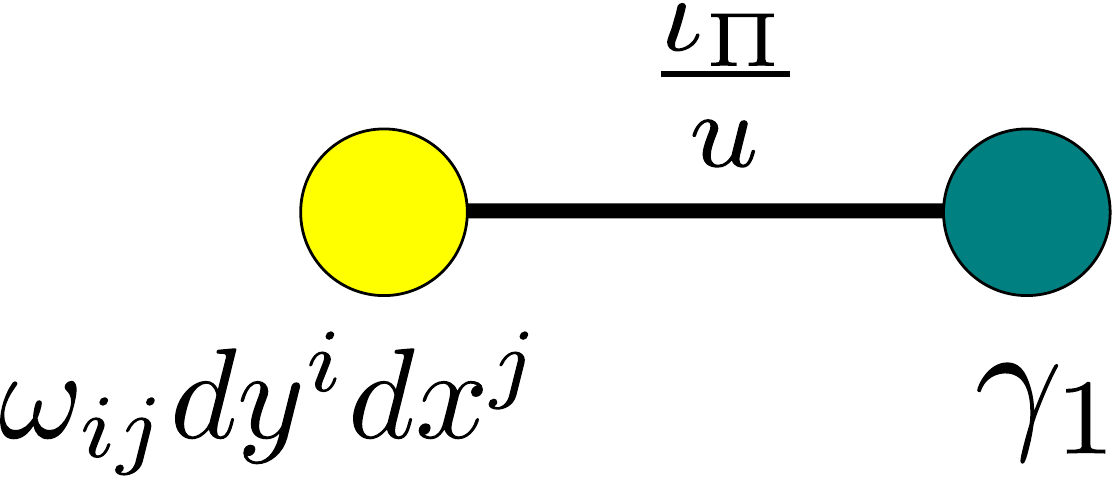}
\]
By Fedosov's equation \eqref{Fedosov-eqn}, such diagrams contribute 
\[
   e^{\omega_1/u}, 
\]
where recall that $\omega_{\hbar}=-\omega+\omega_1\hbar+\cdots$. 

The second type of diagram is a wheel of $k$-vertices decorated by $d_{TM} \bracket{\frac{1}{8}R_{(ijk)l}y^iy^jy^kdx^l}$ and internal edges decorated by the propagator $P$. Then, each vertex of the wheel is connected to another vertex of type $\omega_{ij}dy^i dx^j$ by propagator $\iota_\Pi/u$: %({\color{red}{we need to change the notation $d_W$ in the diagram to $d_{TM}$}})
\[
\figbox{0.21}{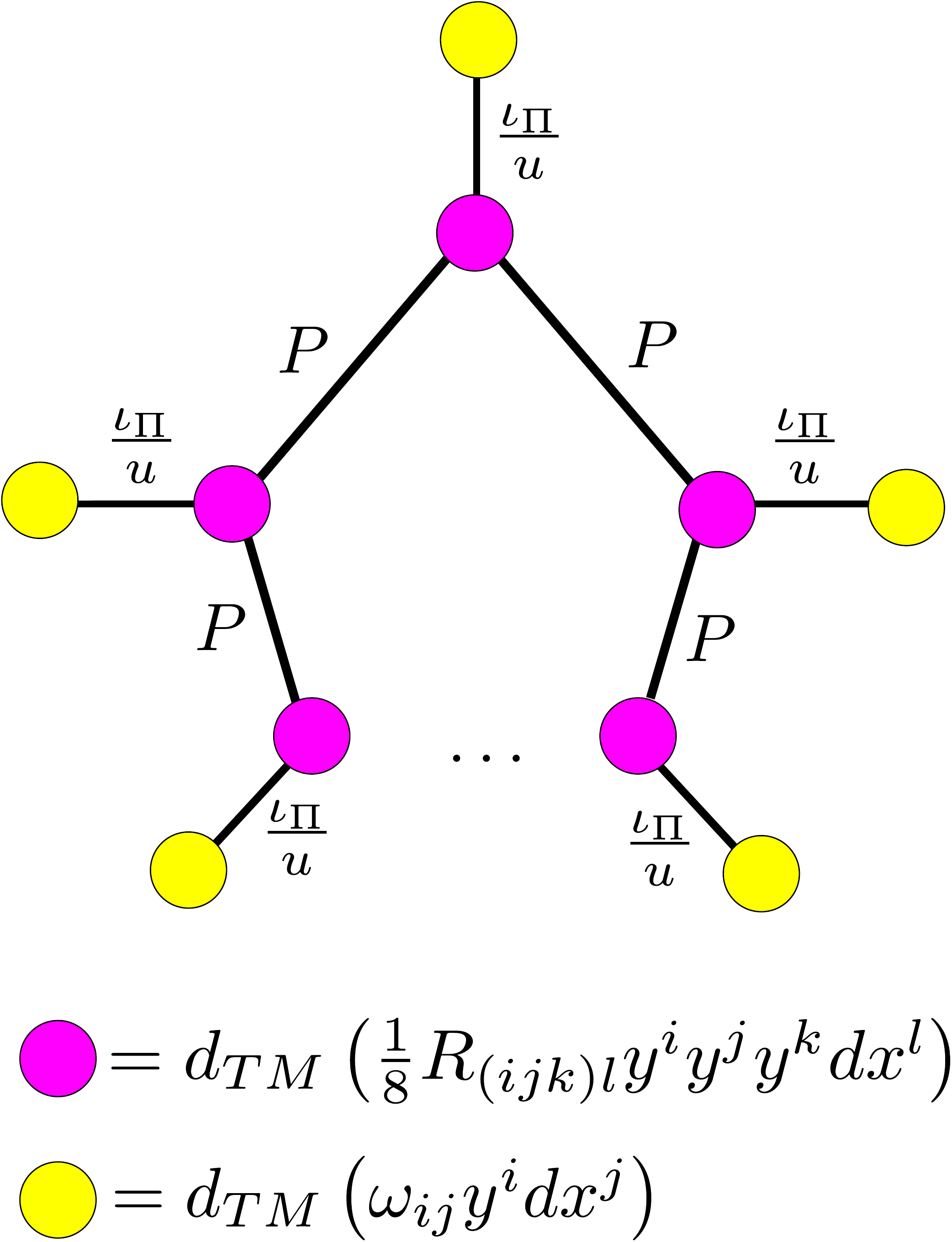}
\]
For the computation of one-loop graphs, we need the following lemma:
\begin{lem}
The following graph represents the functional $R_\nabla/u$. Here the vertex of the same color represents the same functional as in the above one-loop graph.
$$
\figbox{0.21}{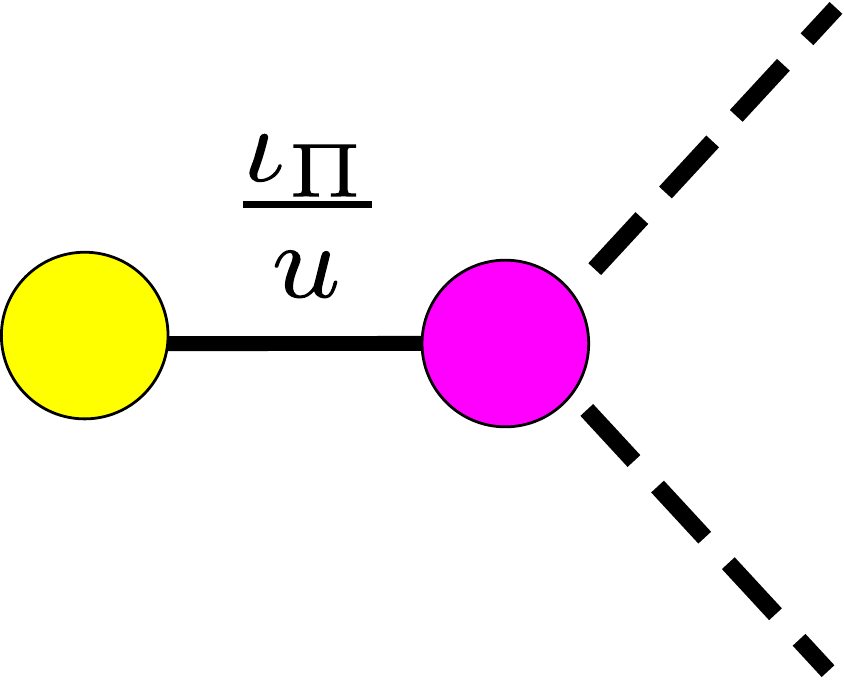}
$$
\end{lem}
\begin{proof}
It is straightforward to see that contracting a functional with the yellow vertex by the internal edge $\iota_\Pi/u$ has the effect of replacing $dy^i$ by $dx^i$. Thus the above graph represents the functional 
\begin{align*}
\frac{3}{8}R_{(ijk)l}y^iy^jdx^kdx^l&=\frac{3}{16}\left(R_{(ijk)l}y^iy^jdx^kdx^l+R_{(ijl)k}y^iy^jdx^ldx^k\right)\\
&=\frac{3}{16}\left(R_{(ijk)l}-R_{(ijl)k}\right)y^iy^jdx^kdx^l.
\end{align*}
The following computation finishes the proof of the lemma.
\begin{align*}
3\left(R_{(ijk)l}-R_{(ijl)k}\right)&=\left(R_{ijkl}+R_{jkil}+R_{kijl}\right)-\left(R_{ijlk}+R_{jlik}+R_{lijk}\right)\\
&=2R_{ijkl}+\left(R_{jkil}-R_{jlik}\right)+\left(R_{kijl}-R_{lijk}\right)\\
&=2R_{ijkl}-R_{jilk}-R_{ijlk}\\
&=4R_{ijkl},
\end{align*}
where we have used the Bianchi identity in the third equality.
\end{proof}
Thus, by contracting ``mixed edges'' (those equipped with $\iota_\Pi/u$), the above one-loop diagram is equivalent to a wheel of $k$-vertices of type  $R_{\nabla}/u$  and internal edges equipped the with propagator $P$. Such a diagram contributes 
\[
\frac{u^{-k}}{k} \Tr ((\nabla^2)^k)   \int_{S^1[k]} \pi_{12}^*(P)\pi_{23}^*(P)\cdots \pi_{k1}^*(P)d\theta^1\wedge\cdots d\theta^k.
\]
Here $\nabla^2$ is the curvature of $\nabla$. The extra factor $\frac{1}{k}$ comes from the cyclic symmetry of the diagram. Since the propagator $P$ represents the kernel of the operator $\left (\frac{d}{d\theta}\right )^{-1}$ on the circle $S^1$, we have 	
\[
 \int_{S^1[k]} \pi_{12}^*(P)\pi_{23}^*(P)\cdots \pi_{k1}^*(P)d\theta^1\wedge\cdots d\theta^k=\Tr_{L^2(S^1)} \left (\frac{d}{d\theta}\right )^{-k} =\begin{cases} \frac{2\zeta(k)}{(2\pi i)^k} & k\text{ is even} \\ 0 & k\text{ odd}\end{cases}.
\]
Here $\zeta (k)$ is Riemann's zeta function. In the last equality,  we have used the fact that $\{e^{2\pi i m\theta}\}_{m\in \Z}$ forms an $L^2$-basis, hence
\[
\Tr_{L^2(S^1)} \left (\frac{d}{d\theta}\right )^{-k} =\sum_{m\in \Z} \frac{1}{(2\pi i m)^k}. 
\]

The total contribution from such one-loop diagrams is 
\[
  \sum_{k\geq 1} \frac{u^{-k/2}}{2k} \Tr \left ( \left (\nabla^2 \right)^{2k} \right) \frac{2\zeta(2k)}{(2\pi i)^{2k}}=\sum_{k\geq 1} \frac{2 (2k-1)!\zeta(2k)}{(2\pi i)^{2k}}u^{-k/2}ch_{2k}(M), 
\]
where $ch_*$ is the Chern-character. As explained in \cite{Owen-Ryan} (see also \cite{BHJ}), this is precisely 
\[
  \log \hat A(M)_u. 
\]
The diagrams beyond one-loop will contribute to higher $\hbar$-order, therefore 
\[
\int_{\gamma_\infty}^{S^1}[d\theta]^{S^1}_\infty= u^n e^{-\omega/u\hbar+\omega_1/u}(\hat A(M)_u+O(\hbar))=u^n e^{\omega_\hbar/u\hbar}(\hat A(M)_u+O(\hbar)). 
\]
\end{proof}

\begin{rmk} The one-loop computation is essentially the semi-classical analysis in \cite{Owen-Ryan}. 

\end{rmk}
 
By Lemma \ref{Semi-classical limit} and equation \eqref{rescaling-eqn}, we find 
\begin{align*}
\int^{S^1}_{\gamma_\infty}[d\theta]^{S^1}_\infty=u^n e^{-\omega_\hbar/u\hbar}(\hat A(M)_u+d_M\text{-exact}). 
\end{align*}

Therefore the algebraic index is computed as 
\[
  \Tr(1)=\Tr^{S^1}(d\theta)=u^n \int_M e^{-\omega_\hbar/u\hbar} (\hat A(M)_u+d_M\text{-exact})=\int_M e^{-\omega_\hbar/\hbar} \hat A(M),
\]
which is independent of $u$ as expected. 

\section{One-dimensional Chern-Simons theory}\label{sect:3}

In this section, we explain the quantum field theory aspects underlying the constructions of the previous section. In more detail, we construct a one-dimensional analogue of Chern-Simons theory which is of AKSZ type and describes the sigma model from $S^1$ to a symplectic manifold $(M,\omega)$ in a neighborhood of  the constant maps. Its \BV quantization leads to a rigorous quantum field theoretical interpretation of Section \ref{section-Fedosov-BV}. In particular, Fedosov's Abelian connections correspond to action functionals satisfying the quantum master equation and the assignment $\gamma \mapsto \gamma_\infty$ is realized via the homotopic renormalization group flow in the sense of \cite{Kevin-book}. It is remarkable that Fedosov's Abelian connection incorporates perturbative quantum corrections at all loops, as required by gauge consistency at the quantum level;  we prove this in Theorem \ref{thm: QME=Fedosov's equation}. 

\subsection{\BV formalism and homotopic renormalization}\label{sec: homotopic-BV}
In this subsection we collect basics on \BV formalism and explain Costello's homotopic renormalization theory of \BV quantization \cite{Kevin-book}. We follow the presentation in \cite{Si-vertex}. 

\subsubsection{\BV master equation}

\begin{defn} A differential \BV (BV) algebra is a triple $(\A, Q, \Delta)$ where
\begin{itemize}
\item $(\A, \Delta)$ is a BV algebra (see Definition \ref{defn-BV}), and
\item $Q: \A\to \A$ is a derivation of degree $1$ such that $Q^2=0$ and $[Q, \Delta]=0$. 
\end{itemize}

\end{defn}

The induced \BV bracket $\{-,-\}_{\Delta}$ will simply be written as $\{-,-\}$ in this section. 

\begin{defn}
Let $(\A, Q, \Delta)$ be a differential BV algebra. A degree $0$ element $I_0\in \A$ is said to satisfy the \emph{classical master equation} (CME) if 
$$
QI_0+\frac{1}{2}\fbracket{I_0,I_0}=0.
$$ 
A degree $0$ element $I\in \A[[\hbar]]$ is said to satisfy the \emph{quantum master equation} (QME) if 
$$
QI+\hbar \Delta I+\frac{1}{2}\fbracket{I,I}=0.
$$
Here $\hbar$ is a formal variable representing the quantum parameter. 
\end{defn}

 The ``second-order" property of $\Delta$ implies that the QME is equivalent to 
$$
   (Q+\hbar \Delta)e^{I/\hbar}=0. 
$$
If we decompose $I=\sum\limits_{g\geq 0}I_g\hbar^g$, then the $\hbar\to0$ limit of QME is precisely CME: 
$
QI_0+\frac{1}{2}\fbracket{I_0, I_0}=0.
$

\begin{rmk}
A solution $I_0$ of CME leads to a differential $Q+\{I_0,-\}$, which is usually called the BRST operator in physics. In practice, any action functional with a local gauge symmetry can be completed into one that satisfies CME.  Upon quantization,  the QME describes the quantum consistency of gauge transformations. This fact is at the heart of the \BV formalism.
\end{rmk}

\subsubsection{(-1)-shifted symplectic geometry} A general class of differential BV algebras arises from odd symplectic geometry. 

Let us start with a finite dimensional toy model. Let $(V,Q)$ be a finite dimensional dg vector space. The differential $Q: V\to V$ induces a differential on various tensors of $V, V^*$, still denoted by $Q$. Let 
$$
   \omega \in \wedge^2 V^*, \quad Q(\omega)=0, 
$$
be a $Q$-compatible symplectic structure such that $\deg(\omega)=-1$ (this explains the name $(-1)$-shifted symplectic). Then $\omega$ induces an isomorphism
$$
     V^* \simeq V[1].
$$
Let $K=\omega^{-1}\in \Sym^2(V)$ be the Poisson kernel of degree $1$ under the identification
\[     \wedge^2 V^* \simeq\  \Sym^2(V)[2],
\]
where we have used the canonical identification $\wedge^2(V[1])\simeq \Sym^2(V)[2]$. Finally, define 
$$
\widehat\OO(V):=\widehat{\Sym}(V^*)=\prod_{n}\Sym^n(V^*).
$$
Then $(\widehat\OO(V), Q)$ is a graded-commutative dga. Let $\Delta_K$ denote the second order operator $\pa_K$ (see Conventions). It is straight-forward to see that 
$(\widehat\OO(V), Q, \Delta_K)$ defines a differential BV algebra. The above construction can be summarized as 
$$
\mbox{$(-1)$-shifted dg symplectic}  \Longrightarrow \mbox{differential BV}.
$$

\begin{rmk}\label{rmk-degenerate-BV} 
Note that the relevant data is $K$ instead of $\omega$. In fact,  for any $Q$-closed element $K\in \Sym^2(V)$ of degree $1$, the triple $(\widehat\OO(V), Q, \Delta_K)$ defines a differential BV algebra. In particular, $K$ could be degenerate corresponding to the Poisson case. For example, such degenerate BV structure appears naturally in the topological string field theory \cite{BCOV} which is fully developed in \cite{Kevin-Si} to formulate the quantum B-model aspect of mirror symmetry. 
\end{rmk}

Let $P$ be a degree $0$ element of $\Sym^2(V)$, and consider the new Poisson kernel 
$$
  K_P:= K+Q(P). 
$$
We say that $K_P$ is homologous to $K$. We can form a new differential BV algebra $(\widehat\OO(V), Q, \Delta_{K_P})$ as in the preceding remark. The following simple algebraic lemma is central to Costello's homotopic renormalization theory \cite{Kevin-book}. 

\begin{lem}\label{lem-HRG} Let $\pa_P$ denote the second order operator on $\widehat\OO(V)$ via the contraction with $P$ (see Conventions). The following equation holds formally as operators on $\widehat\OO(V)[[\hbar]]$
$$
     \bracket{Q+\hbar \Delta_{K_{P}}}e^{\hbar \pa_{P}}=e^{\hbar \pa_{P}}\bracket{Q+\hbar \Delta_{K}},
$$
i.e., the following diagram commutes
$$
\xymatrix{
    \widehat\OO(V)[[\hbar]] \ar[rr]^{Q+\hbar \Delta_{K}} \ar[d]_{\exp\bracket{\hbar \pa_{P}}} && \widehat\OO(V)[[\hbar]] \ar[d]^{\exp\bracket{\hbar \pa_{P}}}\\
   \widehat \OO(V)[[\hbar]] \ar[rr]_{Q+\hbar \Delta_{K_{P}}} && \widehat\OO(V)[[\hbar]]
}
$$
\end{lem}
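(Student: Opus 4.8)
The plan is to recast the claimed operator identity as a conjugation statement and then reduce everything to two elementary commutator computations. Multiplying the asserted identity on the left by $e^{-\hbar \pa_P}$, it is equivalent to show
$$
e^{-\hbar \pa_P}\bracket{Q+\hbar \Delta_{K_P}}e^{\hbar \pa_P}=Q+\hbar \Delta_K
$$
as $\hbar$-linear operators on $\widehat\OO(V)[[\hbar]]$. Since the assignment $A\mapsto \pa_A$ is linear in $A\in \Sym^2(V)$ and $K_P=K+Q(P)$, I would first record that $\Delta_{K_P}=\pa_{K_P}=\Delta_K+\pa_{Q(P)}$. The left-hand side then splits as $e^{-\hbar \pa_P}Q e^{\hbar \pa_P}+\hbar\, e^{-\hbar \pa_P}\bracket{\Delta_K+\pa_{Q(P)}}e^{\hbar \pa_P}$, so the whole problem becomes understanding how $Q$, $\Delta_K$ and $\pa_{Q(P)}$ are transported by conjugation with $e^{\hbar \pa_P}$.

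The two facts I would isolate are: (i) for any $A,B\in \Sym^2(V)$ the second-order operators graded-commute, $[\pa_A,\pa_B]=0$; and (ii) for any $A\in \Sym^2(V)$ the derivation $Q$ satisfies $[Q,\pa_A]=-\pa_{Q(A)}$, where $Q(A)$ denotes the induced $Q$-action on $\Sym^2(V)$. Fact (i) holds because each $\pa_A$ is a constant-coefficient second-order operator: writing $\pa_A=\tfrac12 A^{ij}\pa_{x^i}\pa_{x^j}$ in linear coordinates, the $A^{ij}$ are scalars and the partial derivatives graded-commute, so the bracket vanishes termwise on each $\Sym^n(V^*)$. Fact (ii) is the computational heart: $Q$ is first order and $\pa_A$ is second order, so their bracket is again a second-order operator of the same shape, and tracking where $Q$ lands on the two contracted slots produces exactly $-\pa_{Q(A)}$, the sign being fixed by the Koszul rule as $Q$ passes the odd contraction operators. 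As a consistency check, the special case $A=K$ recovers $[Q,\Delta_K]=-\pa_{Q(K)}=0$, since $Q(\omega)=0$ forces $Q(K)=0$; this is precisely the compatibility making $(\widehat\OO(V),Q,\Delta_K)$ a differential BV algebra.

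Granting (i) and (ii), I would finish as follows. By (i), $\pa_P$ commutes with both $\Delta_K$ and $\pa_{Q(P)}$, so conjugation by $e^{\hbar \pa_P}$ fixes them. For $Q$, I would expand the conjugation as the Hadamard series
$$
e^{-\hbar \pa_P}Q e^{\hbar \pa_P}=Q+[Q,\hbar \pa_P]+\frac{1}{2!}\bbracket{[Q,\hbar \pa_P],\hbar \pa_P}+\cdots.
$$
By (ii) the first bracket equals $-\hbar \pa_{Q(P)}$, and then by (i) the next term is $\tfrac{\hbar^2}{2}[-\pa_{Q(P)},\pa_P]=0$, so the series truncates and $e^{-\hbar \pa_P}Q e^{\hbar \pa_P}=Q-\hbar \pa_{Q(P)}$. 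Assembling, the left-hand side becomes $\bracket{Q-\hbar \pa_{Q(P)}}+\hbar \Delta_K+\hbar \pa_{Q(P)}=Q+\hbar \Delta_K$, which is the desired identity; equivalently the stated square commutes.

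I expect the main obstacle to be a careful proof of (ii) with the correct Koszul signs, in particular verifying that the bracket of the odd derivation $Q$ with the second-order contraction $\pa_A$ produces precisely $-\pa_{Q(A)}$ and no higher-order terms. Everything else — the reduction to conjugation, the linearity $\Delta_{K_P}=\Delta_K+\pa_{Q(P)}$, the vanishing $[\pa_A,\pa_B]=0$, and the truncation of the Hadamard series — is then routine, with the truncation being the structural reason the exponential $e^{\hbar \pa_P}$ intertwines the two BV differentials \emph{exactly} rather than only to leading order.
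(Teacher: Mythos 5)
Your proposal is correct and is essentially an expanded version of the paper's own (one-line) sketch, whose sole observation $[Q,\pa_P]=\Delta_K-\Delta_{K_P}$ is precisely your fact (ii) applied to $A=P$ combined with the linearity $\Delta_{K_P}=\Delta_K+\pa_{Q(P)}$. The remaining steps you supply --- commutativity of the constant-coefficient contractions and the resulting truncation of the conjugation series --- are exactly what the paper leaves implicit.
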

\begin{proof}[Sketch of proof] This follows from the observation that 
$
  \bbracket{Q, \pa_{P}}=\Delta_{K}-\Delta_{K_{P}}. 
$
\end{proof}

\begin{defn} Let 
$$
\OO^+(V):= \Sym^{\geq 3}(V^*)\oplus \hbar \widehat\OO(V)[[\hbar]]
$$
denote the subspace of $\widehat\OO(V)[[\hbar]]$ consisting of functions that are at least cubic modulo $\hbar$. 

\end{defn}

\begin{cor}\label{cor-HRG} Let $I\in \OO^+(V)$ be a solution of quantum master equation in $(\widehat\OO(V)[[\hbar]], Q, \Delta_K)$. Let $I_P$ be  defined by the equation
$$
  e^{I_P/\hbar}= e^{\hbar \pa_P} e^{I/\hbar}. 
$$ 
Then $I_P$ is a well-defined element of $\OO^+(V)$ that solves the quantum master equation in $(\widehat\OO(V)[[\hbar]], Q, \Delta_{K_P})$.
\end{cor}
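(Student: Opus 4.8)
The plan is to split the statement into two independent claims: that $I_P$ solves the quantum master equation for $\Delta_{K_P}$, and that $I_P$ is a genuine element of $\OO^+(V)$. The first is a purely formal consequence of Lemma \ref{lem-HRG} and costs essentially nothing; the second, the well-definedness, is where the real work lies.

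For the master equation I would simply apply $Q + \hbar\Delta_{K_P}$ to the defining relation $e^{I_P/\hbar} = e^{\hbar\pa_P}e^{I/\hbar}$. Lemma \ref{lem-HRG} says $Q + \hbar\Delta_{K_P}$ intertwines with $Q + \hbar\Delta_K$ through $e^{\hbar\pa_P}$, so
\[
\bracket{Q + \hbar\Delta_{K_P}} e^{I_P/\hbar} = e^{\hbar\pa_P}\bracket{Q + \hbar\Delta_K}e^{I/\hbar} = 0,
\]
the last equality being exactly the QME for $I$. Since $\Delta_{K_P}$ is a second-order operator, this is equivalent to $QI_P + \hbar\Delta_{K_P}I_P + \tfrac{1}{2}\fbracket{I_P,I_P} = 0$.

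For well-definedness I would first make sense of $I_P := \hbar\log\bracket{e^{\hbar\pa_P}e^{I/\hbar}}$ through the standard cluster (exponential) formula, which identifies it with a sum over connected graphs,
\[
I_P = \sum_{\cG\ \text{connected}} \frac{\hbar}{|\op{Aut}(\cG)|}\, W_\cG(P,I),
\]
where each vertex is decorated by a homogeneous component $I_{g_v}$ of $I = \sum_{g\geq 0}\hbar^g I_g$ and each edge by the contraction $\pa_P$. An $\hbar$-power count (each vertex carrying $\hbar^{g_v-1}$, each edge $\hbar$, and an overall $\hbar$ from $\hbar\log$) then shows that a connected graph $\cG$ with $V$ vertices, $E$ edges and loop number $L = E - V + 1$ contributes at order $\hbar^{L + \sum_v g_v}$; that is, it enters $I_P$ at $\hbar$-order equal to its genus $g(\cG) = L + \sum_v g_v$.

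The main obstacle is to show this sum defines an element of $\widehat\OO(V)[[\hbar]]$, i.e., that for each fixed genus $g$ and each fixed polynomial degree $n$ only finitely many graphs contribute. Here I would use crucially that $I_0 \in \Sym^{\geq 3}(V^*)$, so every genus-$0$ vertex has valence at least $3$. Fixing the number $n$ of external legs and using $L \le g$ together with the bound $V_+ \le \sum_v g_v \le g$ on the number of positive-genus vertices, the valence relation $\sum_v \mathrm{val}(v) = 2E + n$ combined with $E = V + L - 1$ forces both $V$ and $E$ to be bounded in terms of $g$ and $n$; hence only finitely many decorated graph types occur, each contributing a single finite contraction, and $I_P \in \widehat\OO(V)[[\hbar]]$ follows. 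Finally, restricting to $g = 0$ forces all vertices to be genus $0$ and $\cG$ to be a tree, for which the same valence count gives $n \ge V + 2 \ge 3$; thus the $\hbar^0$-part $(I_P)_0$ lies in $\Sym^{\geq 3}(V^*)$ while everything else lies in $\hbar\,\widehat\OO(V)[[\hbar]]$, placing $I_P$ in $\OO^+(V)$ as required. I expect the combinatorial finiteness estimate to be the only genuinely delicate point.
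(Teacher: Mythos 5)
Your proposal is correct and follows essentially the same route as the paper's own (sketched) proof: the QME part is the direct conjugation consequence of Lemma \ref{lem-HRG}, and well-definedness comes from the connected Feynman graph expansion together with the fact that $I$ is at least cubic modulo $\hbar$. The only difference is that you spell out the genus/valence counting that guarantees finiteness at each order, which the paper merely asserts; your count is correct.
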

\begin{proof}[Sketch of proof] The real content of the above formal definition of $I_P$ is that 
$$
  I_P= \sum_{\Gamma: \text{connected}} \frac{\hbar^{g(\Gamma)}}{|\text{Aut}(\Gamma)|} W_{\Gamma}(P, I) 
$$
where the summation is over all connected Feynman graphs with $P$ being the propagator and $I$ being the vertex. Here $g(\Gamma)$ denotes the genus of $\Gamma$, and $|\text{Aut}(\Gamma)|$ denotes the cardinality of the automorphism group of $\Gamma$. See Appendix \ref{Sec: Feynman}. 

$I$ being at least cubic modulo $\hbar$ implies that the above infinite graph sum leads to a well-defined element of $\OO^+(V)$. The statement of $I_P$ being a solution of the quantum master equation is a direct consequence of Lemma \ref{lem-HRG}. 

\end{proof}

\begin{defn} Given $P\in \Sym^2(V)$ of degree $0$,   we define the \emph{homotopic renormalization group flow} (HRG) operator 
\begin{align*}
  W(P, -): \OO^+(V)\to \OO^+(V), \quad 
           e^{W(P,I)/\hbar}=e^{\hbar \pa_P} e^{I/\hbar}. 
\end{align*}
This expression is understood in terms of Feynman graphs as in Corollary \ref{cor-HRG}. 
\end{defn}

By Lemma \ref{lem-HRG}, we see that HRG links QME for differential BV algebras whose Poisson kernels are homologous. 
\bigskip

\begin{rmk} In \cite{Kevin-book}, $W(P,-)$ is called the renormalization group flow operator. We adopt the notion homotopic renormalization group flow operator following \cite{Si-vertex} to distinguish it from another homotopy that comes from a rescaling symmetry of the underlying manifold (which is called local renormalization group flow in \cite{Kevin-book}). 

\end{rmk}

\subsubsection{Costello's homotopic renormalization}\label{Costello-renormalization}
In real life, quantum field theories almost always have infinite dimensional space of fields playing the role of $V$ in the toy model. Typically, the dg vector space $V$ is replaced by the space of smooth sections of a graded vector bundle $E$ on a manifold $X$:
$$
 V\rightsquigarrow  \E=\Gamma(X, E). 
$$
The differential $Q$ is a differential operator on $E$ that makes $(\E, Q)$ into an elliptic complex. For example, it could be the de Rham complex or the Dolbeaut complex. The $(-1)$-shifted symplectic pairing is local:
$$
   \omega(s_1, s_2)=\int_X \bracket{s_1, s_2}, \quad s_i\in \E,
$$
where $(-,-)$ is a degree $-1$ skew-symmetric pairing on $\E$ valued in the density line bundle on $X$. 

The space of fields $\E$ carries a natural topology, and $V^*$ is replaced by the continuous dual of $\E$ (see Conventions)
$$
  \E^*=\Hom(\E, \R),
$$
i.e., distributions. Then formal functions $\widehat\OO(\E)$ are defined in terms of the completed tensor product. Precisely, 
$$
(\E^*)^{\otimes k}=\E^*{\otimes}\cdots {\otimes} \E^*
$$
are distributions on the bundle $E \boxtimes \cdots \boxtimes E $ over $X\times \cdots \times X$.  $\Sym^k(\E^*)$ is defined similarly (paying attention to the sign of permutations). Then 
$$
   \widehat\OO(\E):=\prod_{k\geq 0} \Sym^k(\E^*)
$$
is the analogue of $\widehat\OO(V)$.

Since $\omega$ is given by an integration, the Poisson kernel $K=\omega^{-1}$ is given by $\delta$-function. In particular, $K$ is \emph{singular}. Therefore, the naive analogue of the finite dimensional model
$$
  \Delta_K: \widehat\OO(\E)\to \widehat\OO(\E)
$$
is ill-defined. This difficulty is called the ultra-violet problem in quantum field theory. 

In \cite{Kevin-book}, Costello uses a homotopy to link $K$ to a smooth kernel. For example, $K$ is homologous to the heat kernel $K_L$ associated to a generlized Laplacian constructed from the elliptic operator $Q$.  Then $K_L$ is smooth for $0<L<\infty$ and $K=K_0$. Since smooth functions can be paired with distributions, the BV operator (defined by the same formula as the finite dimensional case)
$$
\Delta_L:=  \Delta_{K_L}: \widehat\OO(\E)\to \widehat\OO(\E)
$$
is well-defined. In this way we obtain a family of differential BV algebras $\{\widehat\OO(\E), Q, \Delta_L\}_{L>0}$. A (perturbative) BV quantization of such quantum field theory is  a family of solutions of the QME in $(\widehat\OO(\E), Q, \Delta_L)$ linked by HRG. This is the formalism that we will use to quantize the one-dimensional Chern-Simons theory that we will next describe.

\subsection{AKSZ formalism}
Chern-Simons theory is usually formulated on three dimensional manifolds. In \cite{AKSZ}, it is realized that Chern-Simons theory can be formulated as a sigma model. This construction is internal to the \BV formalism and can be generalized in various ways, which is nowadays called the AKSZ-construction. In this section, we describe a prototypical model to motivate a version of one-dimensional sigma model 
$$
  S^1 \to (M, \omega)
$$
from a circle to a symplectic manifold in the vicinity of constant maps. This is rooted in the AKSZ-construction, and we will just call it one-dimensional Chern-Simons theory. It can be perturbatively quantized rigorously via Costello's homotopic renormalization theory. When the target $M=T^*X$ is the total space of a cotangent bundle, this is considered in \cite{Owen-Ryan}. 

\subsubsection{AKSZ-construction}
We briefly explain the AKSZ-construction to motivate our model. We refer to \cite{AKSZ} for precise definitions of the relevant notions in super geometry.  

Let $\Sigma$ be a supermanifold equipped with a volume form of degree $-k$, and let $X$ be a supermanifold equipped with a symplectic form of degree $k-1$.  The mapping space 
$$
  Map(\Sigma, X)
$$
carries a natural $(-1)$-shifted sympletic structure $\omega$ as follows.  Given $f: \Sigma\to X$, its tangent space at $f$ is 
$$
  T_fMap(\Sigma, X)=\Gamma(\Sigma, f^* T_X). 
$$
Then 
$$
  \omega(\alpha, \beta)=\int_{\Sigma} (\alpha, \beta)_X, \quad \alpha, \beta\in T_fMap(\Sigma, X)
$$
Here $(-,-)_X$ denotes the pairing on $T_X$ that arises from the symplectic structure on $X$, and $\int_\Sigma$ denotes the integration with respect to the volume form on $\Sigma$. 

The dg structures on $\Sigma$ and $X$ generate a BRST transformation, which allows us to construct a solution of classical master equation. Action functionals obtained in this way will be called of AKSZ type. 

Classical Chern-Simons theory on a three-dimensional manifold is of AKSZ type
$$
  (M, \Omega^\bullet_M) \to B\g.
$$
Here the source is the supermanifold whose underlying space and structure sheaf is a three dimensional manifold $M$ and the de Rham complex on $M$ respectively. The target space $B\g$ is the supermanifold with its underlying space a point, and its structure sheaf the \CE complex $\wedge^\bullet  \g^*=\OO(\g[1])$ of the Lie algebra $\g$. The integration $\int_M$ produces a volume form of degree $-3$ on $(M, \Omega^\bullet_M)$. The killing form on $\g$ produces a symplectic form of degree $2$ on the shifted space $\g[1]$. Then the de Rham differential on $M$ together with the \CE differential on $B\g$ produce the standard Chern-Simons action on $M$ valued in the Lie algebra $\g$. 

This construction can be generalized to other dimensions. We will  be interested in the one dimensional case, where the source supermanifold is the de Rham complex on the circle
$
  (S^1, \Omega^\bullet_{S^1}).
$ The integration $\int_{S^1}$ produces a volume form of degree $-1$. It follows that the AKSZ construction can be formulated on a target with symplectic form of degree $0$, i.e., an ordinary symplectic manifold. This explains the name for our one-dimensional Chern-Simons theory. 

\subsubsection{The prototype for our model}\label{sec-prototype}
We describe a prototype of our one-dimensional model on a linear symplectic target space. The case for a  symplectic manifold target will be a globalization/gluing of our prototype. 

 Let $V$ be a graded vector space  with a degree 0 symplectic pairing
$$
  (-,-): \wedge^2 V\to \R. 
$$ 
The space of fields in our prototype will be 
$$
   \E= \A^\bullet_{S^1}\otimes_{\R} V. 
$$
Let $S^1_{dR}$ denote the locally ringed space whose underlying topological space is the circle $S^1$ and whose structure sheaf is the de Rham complex on $S^1$. We can view  $\varphi\in \E$ as a $V$-valued function on $S^1_{dR}$, describing a map (denoted by $\hat \varphi$)
$$
 \hat \varphi: S^1_{dR} \to V.
$$

The differential $Q=d_{S^1}$ on $\E$ is the de Rham differential on $\A^\bullet_{S^1}$. The $(-1)$-shifted symplectic pairing is 
$$
  \omega(\varphi_1, \varphi_2):=\int_{S^1} (\varphi_1, \varphi_2), \quad \varphi_i \in \E. 
$$
Given $I\in \widehat\OO(V)$ of degree $k$, it induces an element $\rho(I)\in \widehat\OO(\E)$ of degree $k-1$ via
\begin{equation}\label{equation: the-map-rho}
   \rho(I)(\varphi):=\int_{S^1} \hat\varphi^*(I), \quad \forall \varphi \in \E. 
\end{equation}
Here, $\hat\varphi^*(I)\in \A^\bullet_{S^1}$ is the valuation of $I$ at $\varphi$ ($\A^\bullet_{S^1}$-linearly extended). 

We choose the standard flat metric on $S^1$ and use  heat kernel regularization to formulate the homotopic quantum master equations described in Section \ref{Costello-renormalization}. Let $h_t(\theta_1,\theta_2)$ be the associated heat kernel function on $S^1$, where $0\leq \theta_i<1$ are coordinates parametrizing $S^1$.  Explicitly, 
$$
h_t(\theta_1, \theta_2)=\frac{1}{\sqrt{4\pi t}}\sum_{m\in \Z}e^{-(\theta_1-\theta_2+m)^2/4t}.
$$

Let $\Pi\in \wedge^2 V$ be the Poisson tensor on $V$. Let
$$
K_L= h_L(\theta_1,\theta_2)(d\theta_1\otimes 1-1\otimes d\theta_2) \Pi \in \Sym^2(\E).
$$ 
Then $K_L$ is a smooth kernel for $L>0$ and $K_0$ is the singular Poisson kernel for the inverse of $\omega^{-1}$. Let
$$
  P_\epsilon^L=\int_\epsilon^L (d^*_{S^1}\otimes 1)K_L, \quad 0<\epsilon<L,
$$ 
where $d^{*}_{S^1}$ is the adjoint of $d_{S^1}$ with respect to the chosen flat metric. Then it is easy to see that $K_L$ is homologous to $K_\epsilon$ via $P_\epsilon^L$.  

\begin{thm}\label{thm-weyl} Given $I\in\OO^+(V)$ of degree $1$, the limit 
$$
   \rho(I)[L]:= \lim_{\epsilon \to 0} W(P_\epsilon^L, \rho(I))
$$
exists as an degree $0$ element of $\OO^+(\E)$. Moreover,
\begin{itemize}
\item The family $\{\rho(I)[L]\}_{L>0}$ satisfies the following homotopic renormalization group flow equation
$$
     \rho(I)[L_2]=W(P_{L_1}^{L_2}, \rho(I)[L_1]), \quad \forall\ 0<L_1<L_2. 
$$
\item
The family $\{\rho(I)[L]\}_{L>0}$ solves the  quantum master equation
$$
      (Q+\hbar \Delta_L)e^{\rho(I)[L]/\hbar}=0, \quad \forall\  L>0,
$$
if and only if 
$$
   \bbracket{I, I}_\star=0. 
$$
Here $\OO(V)[[\hbar]]$ inherits a natural Moyal product $\star$ from the linear symplectic form $(-,-)$, and the bracket $[-,-]_\star$ is the commutator with respect to the Moyal product. 
\end{itemize}
\end{thm}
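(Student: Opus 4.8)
The plan is to treat the three assertions in turn, reserving the real analysis for the quantum master equation. For existence, I would expand $W(P_\epsilon^L,\rho(I))$ as a sum over connected Feynman graphs (Corollary \ref{cor-HRG} and Appendix \ref{Sec: Feynman}), with the propagator $P_\epsilon^L$ decorating edges and $\rho(I)$ decorating vertices, and reduce to showing that each graph weight $W_\Gamma(P_\epsilon^L,\rho(I))$ has a finite $\epsilon\to0$ limit. First I would record that $P_\epsilon^L=\int_\epsilon^L(d^*_{S^1}\otimes 1)K_t\,dt$ converges, as $\epsilon\to0$, to a kernel that is smooth on the open configuration space of two points and extends continuously to the compactification $S^1[2]$, attaining across the diagonal the coincidence values $\pm\tfrac12$ computed exactly as for the propagator $P$ of Section \ref{section-Fedosov-BV}. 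Because the source is one dimensional, naive power counting shows the configuration-space integrals converge absolutely, with no subdivergences to renormalize, so the limit may be taken graph by graph. That $I\in\OO^+(V)$ is at least cubic modulo $\hbar$ guarantees, exactly as in Corollary \ref{cor-HRG}, that only finitely many graphs contribute at each fixed order in $\hbar$ and each fixed polynomial degree; hence $\rho(I)[L]$ is a well-defined element of $\OO^+(\E)$.

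\textbf{The renormalization group flow equation.} This is the semigroup property of $W$ and requires no analysis. The propagators compose additively, $P_\epsilon^{L_2}=P_\epsilon^{L_1}+P_{L_1}^{L_2}$, and the second-order operators $\pa_P$ mutually commute, so $e^{\hbar\pa_{P_{L_1}^{L_2}}}e^{\hbar\pa_{P_\epsilon^{L_1}}}=e^{\hbar\pa_{P_\epsilon^{L_2}}}$, i.e. $W(P_{L_1}^{L_2},W(P_\epsilon^{L_1},\rho(I)))=W(P_\epsilon^{L_2},\rho(I))$. Letting $\epsilon\to0$ and invoking the existence just established yields $\rho(I)[L_2]=W(P_{L_1}^{L_2},\rho(I)[L_1])$.

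\textbf{The quantum master equation.} Here I would first reduce to a single, scale-independent computation. By Lemma \ref{lem-HRG} the operator $e^{\hbar\pa_{P_\epsilon^L}}$ intertwines $Q+\hbar\Delta_\epsilon$ with $Q+\hbar\Delta_L$, so the defect $(Q+\hbar\Delta_L)e^{\rho(I)[L]/\hbar}$ vanishes for one $L>0$ if and only if it vanishes for all $L>0$; it thus suffices to analyze it as $\epsilon\to0$, where the interaction reduces to the bare $\rho(I)$. Since $\rho(I)$ is $Q$-closed by Stokes' theorem ($Q=d_{S^1}$ and $\int_{S^1}d_{S^1}(-)=0$), the defect is governed by $\hbar\Delta$ together with the tree bracket $\tfrac12\fbracket{\rho(I),\rho(I)}$ alone. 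The decisive computation then mirrors the proof of Lemma \ref{lem-BV-tansfer}: one rewrites the $\hbar\Delta_\epsilon$ term as a total derivative on configuration space plus a term annihilating $e^{\rho(I)[\epsilon]/\hbar}$, converting the defect into an integral over the boundary $\pa S^1[*]$ of the compactified configuration spaces. This boundary integral localizes on the codimension-one faces where exactly two points collide, where the propagator attains the values $\pm\tfrac12$; on these faces the edge contraction reproduces the Gaussian kernel $\exp(\pm\tfrac\hbar2\omega^{ij}\tfrac{\pa}{\pa y^i}\tfrac{\pa}{\pa z^j})$ defining the Moyal product, and the two oppositely oriented faces combine to give $\rho$ of a nonzero multiple of $\bbracket{I,I}_\star$. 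As $\rho$ is injective in the relevant range of degrees, the family $\{\rho(I)[L]\}$ satisfies the quantum master equation for all $L$ if and only if $\bbracket{I,I}_\star=0$.

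\textbf{Main obstacle.} I expect the last step to carry the genuine difficulty: organizing the $\epsilon\to0$ limit of the defect so that the would-be singular coincidence contributions reassemble, via the $\pm\tfrac12$ jump of the propagator across the diagonal, into the full Moyal commutator rather than merely its leading Poisson bracket. This requires controlling the small-$t$ asymptotics of the heat kernel $K_t$ and verifying that the higher $\hbar$-corrections produced by $\hbar\Delta$ match the higher-order terms of $\star$ term by term --- the one-dimensional, configuration-space incarnation of the identification between the BV bracket and the Moyal commutator already present in the boundary analysis of Lemma \ref{lem-BV-tansfer}.
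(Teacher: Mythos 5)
Your proposal is correct and follows essentially the same route the paper takes: the paper omits a separate proof of this theorem, deferring to the proof of Theorem \ref{thm: QME=Fedosov's equation} in Section \ref{section-configuration}, which proceeds exactly as you do --- smooth extension of the propagator to $S^1[2]$ with boundary values $\pm\tfrac12$ for existence, the semigroup property for HRG, and Stokes' theorem on compactified configuration spaces reducing the QME defect to the two-point collision faces, where the $\pm\tfrac12$ values reassemble into $\tfrac{1}{2\hbar}\rho([I,I]_\star)$. Your identification of the main obstacle (matching all higher $\hbar$-orders of the Moyal commutator, not just the Poisson bracket) is precisely the content of the paper's two-vertex boundary computation.
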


This theorem is proved in the same way as Theorem \ref{thm: QME=Fedosov's equation} using configuration space techniques. It can be viewed as a specialization of Theorem \ref{thm: QME=Fedosov's equation} to the tangent space at a point in a symplectic manifold. We will not repeat the proof here and leave it to the interested reader. This theorem illustrates the relationship between BV quantization in one dimension and the Weyl quantization. It is not surprising from here that Fedosov's deformation quantization will appear naturally when we formulate the theory on a symplectic manifold. 

\subsection{One-dimensional Chern-Simons theory}\label{sect:model}
We are now ready to present our one-dimensional Chern-Simons theory, which can be viewed as a global version of our prototype (Section \ref{sec-prototype}) over a symplectic manifold. 

\begin{defn}
Let $\g_M[1]$ denote $\A_M^\bullet(TM)$ on $M$. The space of fields in our one-dimensional Chern-Simons theory is 
\[
\mathcal{E}:=\A^\bullet_{S^1}\otimes_{\R} \g_M[1].
\]
\end{defn}
\bigskip

\begin{rmk} Note that $\g_M$ defines a local $L_\infty$-algebra on $M$ which characterizes the smooth structure of $M$. This structure was used in \cite{Owen-Ryan} to describe a semi-classical limit theory, i.e., when $M=T^* X$. In that case, the classical action functional corresponds to the $L_\infty$-analogue of the Maurer-Cartan equation and the quantization terminates at the one-loop level. As we will see, when $M$ is an arbitrary symplectic manifold, the quantization requires contributions from all loops. Consequently, we take an approach different from that of \cite{Owen-Ryan}, using the geometry of the Weyl bundle to characterize the quantization. In particular, we will not use such an $L_\infty$ interpretation in our presentation. 
\end{rmk}

Let 
\[
\E^*=\Hom_{\A^\bullet_M}(\E, \A^\bullet_M)=\Hom(\A^\bullet_{S^1}, \R)\otimes_{\R} \A_M^\bullet(T^* M)
\] 
denote the $\A^\bullet_M$-linear dual of $\E$ as an $ \A_M^\bullet(T^* M)$-valued distribution on $S^1$. 

\begin{defn}
We define the functionals on $\E$ valued in  $\A^\bullet_M[[\hbar]]$ by
\[
\widehat\OO(\E):=\widehat{\Sym}(\E^*)[[\hbar]]:=\prod_{k\geq 0} \OO^{(k)}(\E):=\prod_{k\geq 0} \Sym^k_{\A^\bullet_M}(\E^*)[[\hbar]],
\]
where $\Sym^k_{\A^\bullet_M}(\E^*)$ is the (graded)-symmetric $\A^\bullet_M$-linear completed tensor product represented by distributions on $k$ products of $S^1$. This is similar to that in Section \ref{Costello-renormalization} except that our base coefficient ring is $\A^\bullet_M$. 

Further, let $\Ol(\E)\subset \widehat\OO(\E)$ denote the subspace of local functionals on $S^1$, i.e., those of the form 
$
  \int_{S^1}  \mathcal L
$
for some lagrangian density $\mathcal L$ on $S^1$. 
\end{defn}

There exists a natural generalization of the map $\rho$ from our prototype  (equation \eqref{equation: the-map-rho})
\[
   \rho: \A^\bullet_M(\W) \to \Ol(\E)[[\hbar]]
\]
defined as follows. Given 
$
I\in \Gamma(M,\Sym^k(T^* M)),
$
we can associate an ($\A^\bullet_M$-valued) functional on $\g_M[1]$ using the natural pairing between $TM$ and $T^* M$. This functional can be  further $\A^\bullet_M$-linearly extended to $\E$ via integration over $S^1$. Explicitly, 
\begin{align}\label{rho-map}
\rho(I):\mathcal{E}\rightarrow \A^\bullet_M,\quad
\alpha \mapsto\frac{1}{k!}\int_{S^1}I(\alpha,\cdots,\alpha).
\end{align}
 Since $\rho(I)$ requires at least one input to be a $1$-form on $S^1$, our convention is that
\[
  \rho(I)=0 \quad \text{if}\ I\in \A^\bullet_M[[\hbar]] .
\]
Moreover, $\A^\bullet_M[[\hbar]]$ can be viewed as the base (coefficient) ring of our theory. 

Given a symplectic connection $\nabla$ and  $I\in \A_M^1 (\W)$,
our action functional will be of the form 
\begin{equation}\label{eqn:action-functional}
S(\alpha):=\frac{1}{2}\int_{S^1}\omega(d_{S^1}\alpha+\nabla\alpha,\alpha)  +  \rho(I)(\alpha), \quad \alpha\in \E. 
\end{equation}
Here $d_{S^1}$ is the de Rham differential on $S^1$. We have used the symplectic form $\omega$ in the first term to pair factors in $TM$. This first term constitutes the free part of the theory, which defines a derivative 
\[
   Q= d_{S^1}+\nabla. 
\]
In contrast to the general setting as in \cite{Kevin-book}, $Q^2\neq 0$ but instead gives rise to the curvature of  $\nabla$. 

We will show that in order to quantize the resulting theory, $I$ must satisfy Fedosov's equation (\ref{Fedosov-eqn}) for Abelian connections on $\W$. This can be viewed as a family version of Theorem \ref{thm-weyl}. 

\subsubsection{Regularization} We now carry out the \BV quantization of our model as outlined in Section \ref{Costello-renormalization}. Such a quantization amounts to constructing an effective action which is compatible with homotopic renormalization group (HRG) flow and satisfies the quantum master equation (QME). 

Let us fix the standard flat metric on $S^1$ and let $d_{S^1}^*$ denote the adjoint of the de Rham differential $d_{S^1}$. Let $\mathbb{K}_t\in\mathcal{E}\otimes_{\A^\bullet_M}\mathcal{E}$ denote the kernel of the $\A^\bullet_M$-linear operator 
\[
e^{-tD}: \E\to \E, 
\]
where $D:=[d_{S^1},d^*_{S^1}]$ is the Hodge Laplacian. $\mathbb{K}_t$ contains an analytic part on $S^1$ and a combinatorial part on $\g_M[1]$. If we denote by $K_t$ the heat kernel on $\A^\bullet_{S^1}$, then
\[
   K_t(\theta_1, \theta_2)=\frac{1}{\sqrt{4\pi t}}\sum_{m\in \Z}e^{-(\theta_1-\theta_2+m)^2/4t} (d\theta_1\otimes 1-1\otimes d\theta_2) \in \A^1_{S^1\times S^1}, 
\]
where $\theta_1, \theta_2\in [0,1)$ are coordinates on  two copies of  $S^1$ respectively,  consequently
\[
   \mathbb{K}_t=K_t \cdot  \frac{1}{2}\omega^{ij}\pa_i\otimes \pa_j \in \Sym^2_{\A^\bullet_M}(\E).
\]
\bigskip
\begin{rmk}
We use the following convention in the definition of the heat kernel $K_t$:
$$
e^{-tD}(\alpha)=\int_{\theta_2\in S^1}K_t(\theta_1,\theta_2)\wedge\alpha.
$$
\end{rmk}

\begin{defn}For $L>0$, we define the scale $L$ BV-operator on $\widehat\OO(\mathcal{E})$
\[
\Delta_L:\widehat\OO(\mathcal{E})\rightarrow \widehat\OO(\mathcal{E})
\]
to be the second order operator given by contracting with the smooth kernel $\mathbb{K}_L$. 
\end{defn}

For $\Phi\in \OO^{(k)}(\E)$ and $\alpha_i\in \E$, let
\[
    \abracket{\Phi, \alpha_1\otimes\cdots\otimes \alpha_k}
\]
denote the natural evaluation map. Then $\Delta_L \Phi\in \OO^{(k-2)}(\E)$ is defined explicitly by
\[
   \abracket{\Delta_L \Phi, \alpha_1\otimes \cdots \otimes \alpha_{k-2}}=\abracket{\Phi, \mathbb{K}_L\otimes \alpha_1\otimes \cdots\otimes \alpha_{k-2}}. 
\]
Note that $\Delta_L$ is well-defined since $\mathbb{K}_L$ is smooth. 

\begin{defn}
The scale $L$ effective BV bracket of $\Phi_1, \Phi_2 \in \widehat\OO(\E)$ is defined by:
\begin{equation}\label{eqn:BV-bracket}
\fbracket{\Phi_1,\Phi_2}_L:=\Delta_L(\Phi_1\cdot \Phi_2)-\Delta_L(\Phi_1)\cdot \Phi_2-(-1)^{|\Phi_1|}\Phi_1\cdot\Delta_L(\Phi_2). 
\end{equation}
\end{defn}

\begin{defn}
We define the effective propagator  by
$
\mathbb{P}_\epsilon^L=\int_{\epsilon}^L (d^*_{S^1}\otimes 1) \mathbb{K}_t dt.
$ As before, $\mathbb{P}_\epsilon^L$  defines a second order operator on $\OO(\E)$ denoted by ${\pa_{\mathbb{P}_\epsilon^L}}$. 
\end{defn}

The effective propagator also contains two parts: an analytic part $P_\epsilon^L=\int_\epsilon^L (d^*_{S^1}\otimes 1)K_t dt$ and a combinatorial part on $\g_M[1]$, such that
\[
   \mathbb{P}_\epsilon^L=P_\epsilon^L \cdot \frac{1}{2}\omega^{ij}\pa_i\otimes \pa_j \in \Sym^2_{\A^\bullet_M}(\E).
\]

The full propagator $\mathbb{P}_0^\infty$ has a simple form as computed in Proposition \ref{prop:propagator-step-function} of the Appendix.

\subsubsection{Quantum master equation} Following \cite{Kevin-book}, we describe the quantum master equation as the quantum gauge consistency
condition. We need a slight modification since $Q$ contains a connection and $Q^2\neq 0$ represents its curvature. We follow the strategy of \cite{Qin-Si}.

As we explained in Section \ref{Costello-renormalization}, the basic idea of \cite{Kevin-book} is to start with the ill-defined BV operator $Q+\hbar \Delta_0$ at $L=0$ and then use the (singular) propagator (and associated HRG operator) as a homotopy to get a well-defined effective one at $L>0$
\[
    Q+\hbar \Delta_L= e^{\hbar {\pa_{\mathbb{P}_0^L}}}(Q+\hbar \Delta_0) e^{-\hbar {\pa_{\mathbb{P}_0^L}}}.
\]
In our case, we can modify the singular BV operator as 
\[
   Q+\hbar \Delta_0+ \hbar^{-1}\rho(R_{\nabla}),
\]
where $R_{\nabla}$ is the curvature \eqref{curvature} and the last term represents multiplication by the quadratic functional $\rho(R_\nabla)$. Formally, 
\[
   [\Delta_0, \rho(R_\nabla)]=-\nabla^2,
\]
i.e., the curvature. This implies that  $Q+\hbar \Delta_0+ \hbar^{-1}\rho(R_{\nabla})$ would be a square zero operator if it were well-defined. To get a well-defined effective BV operator, we use the same HRG operator and formally define 
\[
  Q_L+\hbar \Delta_L+\hbar^{-1}\rho(R_{\nabla}):=  e^{\hbar {\pa_{\mathbb{P}_0^L}}}(Q+\hbar \Delta_0+\hbar^{-1}\rho(R_{\nabla})) e^{-\hbar {\pa_{\mathbb{P}_0^L}}}.
\]
It is not hard to see that $Q_L$ is the derivation on $\OO(\E)$ induced from the following operator on $\E$
\begin{equation}\label{QL}
Q_L:=Q+\bbracket{{\pa_{\mathbb{P}_0^L}}, \rho(R_\nabla)}=Q - \nabla^2\int_0^L d_{S^1}^* e^{-tD}dt.
\end{equation}

The following Lemma is a direct consequence of the formal consideration above.

\begin{lem}\label{lem:sqzero}
As an operator on $\widehat\OO(\E)$, we have
\[
  \bracket{  Q_L+\hbar \Delta_L+\hbar^{-1}\rho(R_{\nabla})}^2=0. 
\]
Moreover, it is compatible with the homotopic renormalization group flow in the following sense
\[
  \bracket{Q_L+\hbar \Delta_L+\hbar^{-1}\rho(R_{\nabla})}e^{\hbar {\pa_{\mathbb{P}_\epsilon^L}}} =  e^{\hbar {\pa_{ \mathbb{P}_\epsilon^L}}} \bracket{Q_\epsilon+\hbar \Delta_\epsilon+\hbar^{-1}\rho(R_{\nabla})}.
\]
\end{lem}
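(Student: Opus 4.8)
The plan is to treat both assertions as formal consequences of the conjugation relations that \emph{define} the scale-$L$ operators, rather than attempting any direct manipulation at $L>0$. The key input is already present in the excerpt: by the preceding discussion the operators are introduced precisely by the formulas
\[
  Q_L+\hbar \Delta_L+\hbar^{-1}\rho(R_\nabla)
     = e^{\hbar \pa_{\mathbb{P}_0^L}}\bracket{Q+\hbar \Delta_0+\hbar^{-1}\rho(R_\nabla)}e^{-\hbar \pa_{\mathbb{P}_0^L}},
\]
and $Q_L$ itself is realized as the derivation \eqref{QL}. So the first thing I would establish is that, at the formal $L=0$ level, the operator $Q+\hbar\Delta_0+\hbar^{-1}\rho(R_\nabla)$ squares to zero. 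This is exactly the finite-dimensional $(-1)$-shifted symplectic computation of Section \ref{sec: homotopic-BV}: $Q^2=0$ is the curvature contribution, $\Delta_0^2=0$ is the BV axiom, $[Q,\Delta_0]=0$ since $Q$ preserves the pairing, $\rho(R_\nabla)^2=0$ for degree/parity reasons (it is a fixed even functional multiplied against itself), and the cross terms cancel because $[\Delta_0,\rho(R_\nabla)]=-\nabla^2=-Q^2$ (the displayed relation $[\Delta_0,\rho(R_\nabla)]=-\nabla^2$ just above the Lemma). Collecting these,
\[
  \bracket{Q+\hbar\Delta_0+\hbar^{-1}\rho(R_\nabla)}^2
    = Q^2+[\Delta_0,\rho(R_\nabla)] = \nabla^2-\nabla^2 = 0,
\]
where I am suppressing the harmless $\hbar$-bookkeeping.

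Next I would transport this to scale $L$. Since squaring is preserved under conjugation by any invertible operator, and $e^{\hbar\pa_{\mathbb{P}_0^L}}$ is formally invertible with inverse $e^{-\hbar\pa_{\mathbb{P}_0^L}}$, the defining conjugation immediately gives
\[
  \bracket{Q_L+\hbar\Delta_L+\hbar^{-1}\rho(R_\nabla)}^2
   = e^{\hbar\pa_{\mathbb{P}_0^L}}\bracket{Q+\hbar\Delta_0+\hbar^{-1}\rho(R_\nabla)}^2 e^{-\hbar\pa_{\mathbb{P}_0^L}}=0,
\]
which is the first claim. The point here is purely algebraic once the $L=0$ nilpotency is known; no analytic estimate enters because the identity is an identity of formal operators on $\widehat\OO(\E)$.

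For the HRG-compatibility statement, I would use the semigroup property of the propagators, namely $\mathbb{P}_0^L=\mathbb{P}_0^\epsilon+\mathbb{P}_\epsilon^L$, which yields the operator factorization $e^{\hbar\pa_{\mathbb{P}_0^L}}=e^{\hbar\pa_{\mathbb{P}_\epsilon^L}}e^{\hbar\pa_{\mathbb{P}_0^\epsilon}}$ (the two second-order operators commute since they are both contractions with symmetric kernels). Substituting the definitions of the scale-$L$ and scale-$\epsilon$ operators and cancelling the common factor $e^{\hbar\pa_{\mathbb{P}_0^\epsilon}}$ gives
\[
  \bracket{Q_L+\hbar\Delta_L+\hbar^{-1}\rho(R_\nabla)}e^{\hbar\pa_{\mathbb{P}_\epsilon^L}}
    = e^{\hbar\pa_{\mathbb{P}_\epsilon^L}}\bracket{Q_\epsilon+\hbar\Delta_\epsilon+\hbar^{-1}\rho(R_\nabla)},
\]
as desired. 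The main obstacle I anticipate is not any of these formal cancellations but rather justifying that the conjugation formulas and the operator $\pa_{\mathbb{P}_0^L}$ make literal sense when the $L=0$ propagator is singular; I would handle this exactly as in Costello's framework (Section \ref{Costello-renormalization}) by reading every displayed identity as a well-defined relation among the Feynman-graph expansions it generates, so that all manipulations take place at $L,\epsilon>0$ where the kernels are smooth, and only the composite statements—not the singular building blocks—are asserted.
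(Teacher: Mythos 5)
Your proposal follows exactly the paper's route: the paper offers no more proof than ``a direct consequence of the formal consideration above,'' which is precisely the conjugation definition of the scale-$L$ operator together with the formal nilpotency of $Q+\hbar\Delta_0+\hbar^{-1}\rho(R_\nabla)$ at $L=0$, and, for the second claim, the semigroup property $\mathbb{P}_0^L=\mathbb{P}_0^\epsilon+\mathbb{P}_\epsilon^L$. So your argument has the right structure and matches the intended proof.

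Two details in the $L=0$ computation should be tightened. First, expanding the square of $Q+\hbar\Delta_0+\hbar^{-1}\rho(R_\nabla)$ produces \emph{three} cross terms, and you only dispose of two of them: the remaining one, $\hbar^{-1}[Q,\rho(R_\nabla)]$, does not vanish for purely formal reasons but because $[\nabla,\rho(R_\nabla)]=\rho(\nabla R_\nabla)=0$ by the Bianchi identity, while $[d_{S^1},\rho(R_\nabla)]=0$ by Stokes' theorem on the closed circle; the paper makes exactly this Bianchi-identity step explicit in the parallel computation on the BV bundle (the lemma showing that $\nabla+\hbar\Delta+\hbar^{-1}d_{TM}(R_\nabla)$ squares to zero). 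Second, your parenthetical that $\rho(R_\nabla)$ is ``a fixed even functional'' is off: it is odd (of cohomological degree $1$, as it must be in order to be added to the odd operators $Q$ and $\hbar\Delta_0$), and that is precisely why $\rho(R_\nabla)^2=0$ by graded commutativity --- were it even, this term would not vanish. Neither point changes the outcome, but both are needed for the formal $L=0$ cancellation to actually close up.
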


\begin{defn}
A family of functionals $F[L]\in \OO^+(\E)$ parametrized by $L>0$ is said to satisfy the quantum master equation (QME) if for any $L>0$,
\[
   Q_L+\hbar\Delta_L+\{F[L], -\}_L
\]
defines a square-zero operator (i.e., a differential) on $\widehat\OO(\E)[[\hbar]]$.  
\end{defn}

\begin{lem}\label{lem:qmerewrite}
$F[L]$ satisfies the QME if and only if
\[
    Q_L F[L]+\hbar \Delta_L F[L]+\frac{1}{2}\{F[L], F[L]\}_L+\rho(R_\nabla)=0
\]
modulo constant terms valued in the coefficient ring $\A^\bullet_M[[\hbar]]$.
\end{lem}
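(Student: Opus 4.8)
The plan is to identify the displayed left-hand side as the scalar obstruction produced when the square-zero operator of Lemma~\ref{lem:sqzero} is conjugated by $e^{F[L]/\hbar}$, to observe that a Bianchi-type identity kills one of the two resulting pieces, and then to invoke non-degeneracy of the effective bracket modulo constants. Throughout I write $F=F[L]$ and set
\[
\mathcal{O} := Q_L F + \hbar\Delta_L F + \tfrac{1}{2}\{F,F\}_L + \rho(R_\nabla) \in \widehat\OO(\E)[[\hbar]],
\]
so that the asserted equation reads $\mathcal{O}\equiv 0$ modulo the base ring $\A^\bullet_M[[\hbar]]$.

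First I would record the purely algebraic conjugation identity, valid for any degree $1$ derivation $Q_L$ and BV operator $\Delta_L$ (it uses only $\Delta_L(e^{G})=e^{G}(\Delta_L G+\tfrac12\{G,G\}_L)$ for even $G$, together with the Leibniz rules for $Q_L$ and $\{F,-\}_L$):
\[
e^{-F/\hbar}\bigl(Q_L + \hbar\Delta_L\bigr)e^{F/\hbar} = Q_L + \hbar\Delta_L + \{F,-\}_L + \tfrac1\hbar\bigl(Q_LF + \hbar\Delta_LF + \tfrac12\{F,F\}_L\bigr)\cdot ,
\]
the last term being multiplication. Since $\hbar^{-1}\rho(R_\nabla)$ is itself a multiplication operator it commutes with $e^{\pm F/\hbar}$, so conjugating the operator $\mathbf{D}:=Q_L+\hbar\Delta_L+\hbar^{-1}\rho(R_\nabla)$ of Lemma~\ref{lem:sqzero} gives
\[
e^{-F/\hbar}\,\mathbf{D}\,e^{F/\hbar} = \mathbf{D}_F + (\mathcal{O}/\hbar)\cdot, \qquad \mathbf{D}_F := Q_L + \hbar\Delta_L + \{F,-\}_L .
\]
Because $\mathbf{D}^2=0$ and conjugation preserves squaring to zero, $(\mathbf{D}_F+(\mathcal{O}/\hbar)\cdot)^2=0$; as $\mathcal{O}/\hbar$ is odd it squares to zero in the graded-commutative algebra $\widehat\OO(\E)$, so multiplication by it squares to zero and
\[
\mathbf{D}_F^2 = -\bigl[\mathbf{D}_F,\,(\mathcal{O}/\hbar)\cdot\bigr] = -\bigl(\mathbf{D}_F(\mathcal{O}/\hbar)\bigr)\cdot \;-\; \{\mathcal{O},-\}_L .
\]
Here the bracket term arises only from $[\hbar\Delta_L,(\mathcal{O}/\hbar)\cdot]$, where the second-order part of $\Delta_L$ produces the derivation $\{\,\cdot\,,-\}_L$ and the $\hbar$ cancels the $\hbar^{-1}$; the derivations $Q_L$ and $\{F,-\}_L$ contribute only multiplication terms.

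The crucial simplification is the Bianchi-type identity $\mathbf{D}_F(\mathcal{O}/\hbar)=0$, which I would obtain by evaluating $(\mathbf{D}_F+(\mathcal{O}/\hbar)\cdot)^2=0$ on the unit $1$: since every derivation and $\Delta_L$ annihilate $1$ we have $\mathbf{D}_F 1=0$, hence $(\mathbf{D}_F+(\mathcal{O}/\hbar)\cdot)1=\mathcal{O}/\hbar$ and applying the operator once more forces $\mathbf{D}_F(\mathcal{O}/\hbar)=0$. This collapses the previous display to
\[
\mathbf{D}_F^2 = -\{\mathcal{O},-\}_L .
\]
Thus $F$ satisfies the QME, i.e. $\mathbf{D}_F^2=0$, exactly when $\{\mathcal{O},-\}_L=0$ as an operator on $\widehat\OO(\E)[[\hbar]]$.

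It remains to translate $\{\mathcal{O},-\}_L=0$ into the statement that $\mathcal{O}$ is a constant. The ``if'' direction is immediate, since the effective bracket with any element of $\A^\bullet_M[[\hbar]]$ vanishes. The ``only if'' direction is the one real obstacle and is the source of the qualifier ``modulo constant terms'': I must argue that $\{-,-\}_L$ is non-degenerate modulo the base ring. This follows because $\{\mathcal{O},-\}_L$ is contraction against the smooth kernel $\mathbb{K}_L = K_L\cdot\tfrac12\omega^{ij}\partial_i\otimes\partial_j$, where $\omega^{ij}$ is non-degenerate and the heat operator $e^{-LD}$ underlying $K_L$ is injective with dense image on $\A^\bullet_{S^1}$; hence $\{\mathcal{O},-\}_L=0$ forces all field-directional derivatives of $\mathcal{O}$ to vanish, so $\mathcal{O}$ is independent of the fields, i.e. $\mathcal{O}\in\A^\bullet_M[[\hbar]]$. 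Pinning down this regularized non-degeneracy carefully (order by order in symmetric degree, using the explicit $\mathbb{K}_L$) is the main thing to verify; granting it, $\mathbf{D}_F^2=0 \iff \mathcal{O}\equiv 0 \pmod{\A^\bullet_M[[\hbar]]}$, which is precisely the displayed equation.
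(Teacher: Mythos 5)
Your proposal is correct and is in essence the paper's own argument: the paper obtains exactly your key identity $\bigl(Q_L+\hbar\Delta_L+\{F[L],-\}_L\bigr)^2=-\bigl\{\,Q_LF[L]+\hbar\Delta_LF[L]+\tfrac{1}{2}\{F[L],F[L]\}_L+\rho(R_\nabla),\,-\,\bigr\}_L$ by directly expanding the square using $Q_L^2=-\{\rho(R_\nabla),-\}_L$ and $[Q_L,\Delta_L]=0$, and then concludes, as you do, by noting that the coefficient ring $\A^\bullet_M[[\hbar]]$ is the center of the BV bracket. Your conjugation of the square-zero operator of Lemma~\ref{lem:sqzero} by $e^{F[L]/\hbar}$, together with the evaluation-at-$1$ trick that kills the multiplication term, is a repackaging of that same expansion, and your explicit density/nondegeneracy argument for why the center is exactly $\A^\bullet_M[[\hbar]]$ merely spells out what the paper states as an observation.
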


\begin{proof}Observing that
\[
  Q_L^2=-\fbracket{\rho(R_\nabla),-}_L, \quad \bbracket{Q_L, \Delta_L}=0.
\]
We have
\begin{align*}
\bracket{ Q_L+\hbar\Delta_L+\{F[L], -\}_L}^2&=Q_L^2- \fbracket{ Q_L F[L]+\hbar \Delta_L F[L]+\frac{1}{2}\{F[L], F[L]\}_L, -}_L\\[1ex]
&=-\fbracket{ \rho(R_\nabla)+Q_L F[L]+\hbar \Delta_L F[L]+\frac{1}{2}\{F[L], F[L]\}_L, -}_L.
\end{align*}
Then we observe that the coefficient ring $\A^\bullet_M[[\hbar]]$ is the center of $\OO(\E)[[\hbar]]$ with respect to the BV bracket.
 \end{proof}
\begin{rmk} Equivalently, $F[L]$ satisfies the QME if and only if 
\[
 (Q_L+\hbar \Delta_L+\hbar^{-1}\rho(R_\nabla))e^{F[L]/\hbar}=A e^{F[L]/\hbar}
\]
for some $A\in \A^1_M[[\hbar]]$, see {\cite{Qin-Si}}. The $A$ here is the constant term in Lemma \ref{lem:qmerewrite}, which is $d_M$-closed. 

\end{rmk}

As in \cite{Kevin-book}, the quantum master equation is compatible with the homotopic renormalization group equation in the following sense:  if $F[\epsilon]$ satisfies the
QME, then $F[L]=W(\mathbb{P}_\epsilon^L, F[\epsilon])$ also satisfies the QME. This can also be seen directly by combining Lemma \ref{lem:sqzero} with Lemma \ref{lem:qmerewrite}.

The locality axiom in \cite{Kevin-book} says that $F[L]$ has an asymptotic expansion (in $L$) via local functionals as $L\to 0$ if $F[L]$ is constructed from a local functional by the addition of local counter terms. Our next goal is to establish a connection between the solutions of the quantum master equation $\{F[L]\}$ and solutions of Fedosov's equation (\ref{Fedosov-eqn}) governing Abelian connections. 

\begin{thm}\label{thm: QME=Fedosov's equation}
Let $I\in \A^1_M(\W)$ be a one form valued in the Weyl bundle over $M$. Let $\rho(I)$ be the associated $\A_M^{\bullet}[[\hbar]]$-valued local functional on $\E$ defined in equation \eqref{rho-map}. 
\begin{itemize}
\item For any $L>0$, the limit \[
   \lim\limits_{\epsilon\to 0} W({\mathbb{P}}_\epsilon^L, \rho(I)) \in \OO^+(\E)
\]
exists as a functional on $\E$, which will be denoted by $\rho(I)[L]$. The family $\{\rho(I)[L]\}_{L>0}$ satisfies the following homotopic renormalization group flow (HRG) equation
$$
   \rho(I)[L_2]=W({\mathbb{P}}_{L_1}^{L_2}, \rho(I)[L_1]), \; \text{ for } \; 0< L_1\leq L_2. 
$$
\item The family of functionals $\{\rho(I)[L]\}_{L>0}$ satisfy the quantum master equation if and only if $I$ satisfies Fedosov's equation for Abelian connection as in Theorem \ref{thm: existence-uniqueness-Fedesov-equation}. 
\end{itemize}
\end{thm}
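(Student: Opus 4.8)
The plan is to read this theorem as the family (over $M$) version of the linear prototype Theorem \ref{thm-weyl}, the essential new feature being that here $Q^2 = \nabla^2 \neq 0$, which is why the quantum BV operator must be corrected by the curvature insertion $\hbar^{-1}\rho(R_\nabla)$ as in \eqref{QL}. For the first bullet, the homotopic renormalization group flow equation is formal: it follows from the additivity ${\mathbb{P}}_\epsilon^{L_2} = {\mathbb{P}}_\epsilon^{L_1} + {\mathbb{P}}_{L_1}^{L_2}$ together with the semigroup identity $e^{\hbar \pa_{{\mathbb{P}}_\epsilon^{L_2}}} = e^{\hbar \pa_{{\mathbb{P}}_{L_1}^{L_2}}} e^{\hbar \pa_{{\mathbb{P}}_\epsilon^{L_1}}}$, giving $W({\mathbb{P}}_\epsilon^{L_2}, -) = W({\mathbb{P}}_{L_1}^{L_2}, W({\mathbb{P}}_\epsilon^{L_1}, -))$. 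For existence of $\lim_{\epsilon\to 0} W({\mathbb{P}}_\epsilon^L, \rho(I))$ I would argue by power counting on configuration spaces: each graph weight is an integral over a power of $S^1$ of a product of analytic propagators $P_\epsilon^L$ on edges and $I$ on vertices, and in one dimension $P_\epsilon^L$ extends as $\epsilon \to 0$ to the smooth step-function propagator computed in Proposition \ref{prop:propagator-step-function} on the compactified configuration space $S^1[k]$. Since $S^1[k]$ is a compact manifold with corners and the integrands are smooth, every weight is finite, so the limit $\rho(I)[L] \in \OO^+(\E)$ exists.

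For the second bullet I would first use the renormalization-group compatibility of the quantum BV operator (Lemma \ref{lem:sqzero}) to reduce the QME to a scale-independent statement. Since $Q_L + \hbar \Delta_L + \hbar^{-1}\rho(R_\nabla)$ is conjugate to $Q_\epsilon + \hbar \Delta_\epsilon + \hbar^{-1}\rho(R_\nabla)$ via $e^{\hbar \pa_{{\mathbb{P}}_\epsilon^L}}$, the obstruction functional
\[
\Theta[L] := Q_L\,\rho(I)[L] + \hbar\Delta_L\,\rho(I)[L] + \tfrac{1}{2}\{\rho(I)[L],\rho(I)[L]\}_L + \rho(R_\nabla)
\]
of Lemma \ref{lem:qmerewrite} is carried into itself under HRG flow, so it suffices to evaluate it at the bare (small-scale) level and transport the answer to every $L$. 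By Lemma \ref{lem:qmerewrite}, the family $\{\rho(I)[L]\}$ solves the QME precisely when $\Theta[L]$ lands in the central coefficient ring $\A^\bullet_M[[\hbar]]$.

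The heart of the argument is the functional identity
\[
\bracket{Q_L + \hbar\Delta_L + \hbar^{-1}\rho(R_\nabla)}\, e^{\rho(I)[L]/\hbar}
= \frac{1}{\hbar}\,\rho\!\bracket{\nabla I + \tfrac{1}{2\hbar}[I,I]_\star + R_\nabla}[L]\; e^{\rho(I)[L]/\hbar},
\]
which is the quantum-field-theoretic analogue of Lemma \ref{lem-BV-tansfer} and would be proved by the same Stokes-on-configuration-space computation. Applying $\hbar\Delta_L$ (equivalently $\hbar\Delta_L - d_{S^1}$ up to a total $S^1$-derivative) to the graph expansion of $e^{\rho(I)[L]/\hbar}$ produces boundary integrals over $\pa S^1[k]$; only the codimension-one strata where two points collide survive, on which the propagator takes the boundary value $\pm \tfrac{1}{2}$ and reassembles, through the Moyal exponential, into the star-commutator $\tfrac{1}{2\hbar^2}[I,I]_\star$. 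The free operator $Q = d_{S^1}+\nabla$ contributes $\rho(\nabla I)$, the $d_{S^1}$ part integrating to terms that vanish by type reasons exactly as in the transfer computation of Theorem \ref{main-thm}, while the correction $\hbar^{-1}\rho(R_\nabla)$ supplies the curvature term $R_\nabla$.

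Finally, the HRG operator $W({\mathbb{P}}_0^L,-)$ is invertible and fixes the coefficient ring $\A^\bullet_M[[\hbar]]$ (central elements carry no legs to contract), so $\rho(X)[L]$ is central if and only if $\rho(X)$ is. Combined with the identity above, $\Theta[L] \in \A^\bullet_M[[\hbar]]$ if and only if $\nabla I + \tfrac{1}{2\hbar}[I,I]_\star + R_\nabla$ is $\star$-central, and since $\rho$ has kernel exactly the center $\A^\bullet_M[[\hbar]]$, this is equivalent to
\[
\nabla I + \tfrac{1}{2\hbar}[I,I]_\star + R_\nabla = \omega_\hbar \in \A^\bullet_M[[\hbar]],
\]
which is precisely Fedosov's equation \eqref{Fedosov-eqn}; closedness of $\omega_\hbar$ is encoded in the $d_M$-closedness of the constant term noted after Lemma \ref{lem:qmerewrite}. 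I expect the main obstacle to be the boundary/collision analysis in the displayed identity: matching Koszul signs, propagator boundary values, and orientations of the strata of $\pa S^1[k]$ so that the colliding-point contributions assemble into the \emph{full} Moyal commutator rather than only its leading Poisson term, exactly as in the proof of Theorem \ref{main-thm}.
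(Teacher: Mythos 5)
Your proposal is correct and follows essentially the same route as the paper: the first bullet via the semigroup property and the smooth extension of the propagator to $S^1[2]$, and the second via Stokes' theorem on compactified configuration spaces, with the $\pm\frac{1}{2}$ boundary values of the lifted propagator assembling the two-point collision terms into the full Moyal commutator, so that centrality of the obstruction is equivalent to Fedosov's equation. The only cosmetic difference is that you state the key identity at finite scale $L$, whereas the paper evaluates the obstruction in the $L\to 0$ limit (where boundary strata indexed by proper subsets $I\neq V$ are killed by dominated convergence and strata with $|I|\geq 3$ by type reasons) and then transports the conclusion to all scales by RG compatibility.
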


\subsubsection{The BV bundle revisited}\label{sec-BV-revisited} We are now ready to explain the construction in Section \ref{section-Fedosov-BV}.

Let $\H$ denote the space of harmonic fields
\[
\H := \H (S^1) \otimes \g_M[1]\subset \E.
\]
Here $\H(S^1):=\R[d\theta]$ is the space of Harmonic forms on $S^1$. When we consider the scale $L=\infty$, the corresponding BV kernel $\mathbb{K}_\infty$ at $L=\infty$ lies in 
$$
  \mathbb{K}_\infty\in \Sym^2(\H)\subset \Sym^2(\E). 
$$
In this way we obtain a BV kernel on the subspace $\H$.  Let
\[
\H^*=\Hom_{\A^\bullet_M}(\H, \A^\bullet_M)=\Hom(\H({S^1}), \R)\otimes_{\R} \A^\bullet_M(T^* M),
\] 
and 
\[
\widehat\OO(\H):=\widehat{\Sym}(\H^*):=\prod_{k\geq 0} \Sym^k_{\A^\bullet_M}(\H^*).
\]

The embedding $i_\H: \H\subset \E$ induces a natural map
$$
 i_\H^*: \widehat\OO(\E)[[\hbar]]\to \widehat\OO(\H)[[\hbar]]. 
$$

Let $I\in \A^1_M(\W)$ be a solution of Fedosov's equation (\ref{Fedosov-eqn}). Theorem \ref{thm: QME=Fedosov's equation} implies that $\rho(I)[\infty]$ satisfies the quantum master equation at $L=\infty$. Since the BV kernel $K_\infty\in \Sym^2(\H)$,  the restriction $i_\H^*(\rho(I)[\infty])$ satisfies a version of quantum master equation formulated in $\widehat\OO(\H)[[\hbar]]$. 
 
After unwinding various definitions, it is easy to see that 
$$\widehat\OO(\H) = \A_M(\widehat\Omega^{-\bullet}_{TM})
$$ exactly corresponds to our BV bundle in Section \ref{sec-BV-bundle}. Similarly, the quantum master equation for $i_\H^*(\rho(I)[\infty])$  corresponds to the quantum master equation in Definition \ref{defn-QME}. This explains Theorem \ref{main-thm}: the transformation 
$$
\gamma\mapsto \gamma_{\infty}
$$
in Theorem \ref{main-thm} is precisely the homotopic renormalization group flow
$$
\rho(I)[L=0]\mapsto \rho(I)[L=\infty]. 
$$
See also Theorem \ref{thm-harmonic-observable}.

\subsection{Configuration spaces and proof of Theorem \ref{thm: QME=Fedosov's equation}}\label{section-configuration}
This subsection is devoted to proving Theorem \ref{thm: QME=Fedosov's equation}; again, we will use compactified configuration spaces. The idea of using configuration spaces to quantize a classical field theory was introduced by Kontsevich in the case of Chern-Simons theory \cites{Kontsevich, Kontsevich-notes} and around the same time further developed by Axelrod and Singer in \cite{AS2}. They are the real analogue of the algebraic construction by Fulton and MacPherson \cite{Fulton-MacPherson}.

\subsubsection{HRG flow}\label{subsection:RG flow}
Utilizing compactified configuration spaces, we define the HRG flow of functionals.  By Lemma \ref{lem: lifting-propagator}, the kernel $\mathbb{P}_0^L$ as a function on $(S^1)^2$ extends to a smooth function on $S^1[2]$.  

\begin{defn}
Define $\widetilde{\mathbb{P}}_0^L$ to be the smooth lifting of $\mathbb{P}_0^L$ to $S^1[2]$ by
\[
\widetilde{\mathbb{P}}_0^L=\widetilde{P}_0^L\cdot  \frac{1}{2}\omega^{ij}\pa_i\otimes \pa_j. 
\]
\end{defn}
The following lemma allows us to exclude graphs with tadpoles  (i.e., internal edges connecting a vertex to itself) in the HRG flow of our one-dimensional Chern-Simons theory.
\begin{lem}\label{lem:self-loop-vanishing}
Let $\cG$ be a graph that contains a tadpole. Then the corresponding graph weight vanishes.
\end{lem}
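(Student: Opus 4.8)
The plan is to show that a tadpole always forces a symmetric tensor to be paired with the antisymmetric Poisson bivector, and hence vanishes independently of any analytic input. Concretely, recall that the lifted propagator factors as
\[
  \widetilde{\mathbb{P}}_0^L=\widetilde{P}_0^L\cdot \tfrac{1}{2}\omega^{ij}\pa_i\otimes \pa_j,
\]
where $\widetilde{P}_0^L$ is the analytic factor on $S^1[2]$ and $\tfrac12\omega^{ij}\pa_i\otimes\pa_j$ is the combinatorial factor acting on the $\g_M[1]$-legs. A tadpole is by definition an internal edge both of whose half-edges are incident to one and the same vertex $v$; thus, in the graph weight $W_\cG$, both insertion operators $\pa_i$ and $\pa_j$ carried by that edge are applied to the single functional labeling $v$.

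First I would recall that every vertex is labeled by (a derivative of) $\rho(I)$ for $I\in\A^1_M(\W)$, and that by the construction \eqref{rho-map} the functional $\rho(I)$ is built from a section of $\prod_k\Sym^k(T^*M)$, hence is graded-symmetric under permutation of its $\g_M[1]$-inputs along the fiber directions $y^i$. Since the $y^i$ are even, the two insertions $\pa_i,\pa_j$ that the tadpole applies to $v$ commute and produce an expression symmetric under the exchange $i\leftrightarrow j$. Contracting this symmetric expression against the antisymmetric tensor $\omega^{ij}=-\omega^{ji}$ then yields
\[
  \omega^{ij}\,\pa_i\pa_j(\,\cdots\,)=0,
\]
so the combinatorial factor attached to the tadpole vanishes identically.

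The only point requiring care, and the one place where the compactified configuration space enters, is the analytic factor $\widetilde{P}_0^L$. For a tadpole the two half-edges are sent to a single point of $S^1$, so one must evaluate the propagator on the locus where the two configuration points collide, where the uncompactified kernel $P_0^L$ is a priori singular. By Lemma \ref{lem: lifting-propagator}, however, $P_0^L$ extends to a \emph{smooth} function $\widetilde{P}_0^L$ on $S^1[2]$, so its restriction to this collision locus is a finite, well-defined coefficient. Thus the tadpole weight factors as a finite analytic coefficient times the combinatorial contraction, and the latter is zero; there is no competing $0\cdot\infty$ indeterminacy.

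The main obstacle I anticipate is the bookkeeping of Koszul signs and the precise identification of which two slots of the vertex functional the tadpole contracts, so as to be sure that the relevant permutation is genuine symmetry rather than antisymmetry before pairing with $\omega^{ij}$. Once this is organized, the vanishing is immediate and entirely insensitive to the value of $\widetilde{P}_0^L$, which is why the conclusion holds for \emph{any} graph $\cG$ containing a tadpole.
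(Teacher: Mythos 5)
Your proof is correct, but it vanishes for a different reason than the one the paper uses, so the two arguments are worth contrasting. The paper's proof kills the \emph{analytic} factor: since the vertex density $\rho(I)$ contains no $d_{S^1}$-derivatives of the fields, a tadpole evaluates the kernel at coincident points, and $P_\epsilon^L(\theta,\theta)=0$ because the heat-kernel summand is odd under $\theta_1\leftrightarrow\theta_2$. You instead kill the \emph{combinatorial} factor: both legs of $\tfrac12\omega^{ij}\pa_i\otimes\pa_j$ land on a single vertex drawn from $\Sym(T^*M)$, whose even inputs make the double insertion symmetric in $i,j$, so the contraction with the antisymmetric $\omega^{ij}$ vanishes. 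Each argument buys something: yours is insensitive to the value of the propagator on the diagonal (it would survive a different gauge-fixing, or vertices containing $S^1$-derivatives), while the paper's is insensitive to the algebraic structure of the vertex and needs only that the vertex is a local functional without derivatives. One small correction: your appeal to Lemma \ref{lem: lifting-propagator} is slightly misplaced, since a tadpole evaluates the kernel at the honest diagonal of $S^1\times S^1$ rather than on the boundary of the blow-up $S^1[2]$, where the lift is two-valued ($+\tfrac12$ and $-\tfrac12$ on the two boundary circles) and so does not single out a finite coefficient by restriction. The correct statement is simply that $P_\epsilon^L$ is smooth on all of $(S^1)^2$ for $\epsilon>0$, uniformly bounded as $\epsilon\to 0$, and in fact equal to $0$ on the diagonal; this is all you need to rule out a $0\cdot\infty$ indeterminacy, so the conclusion stands.
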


\[
\figbox{0.23}{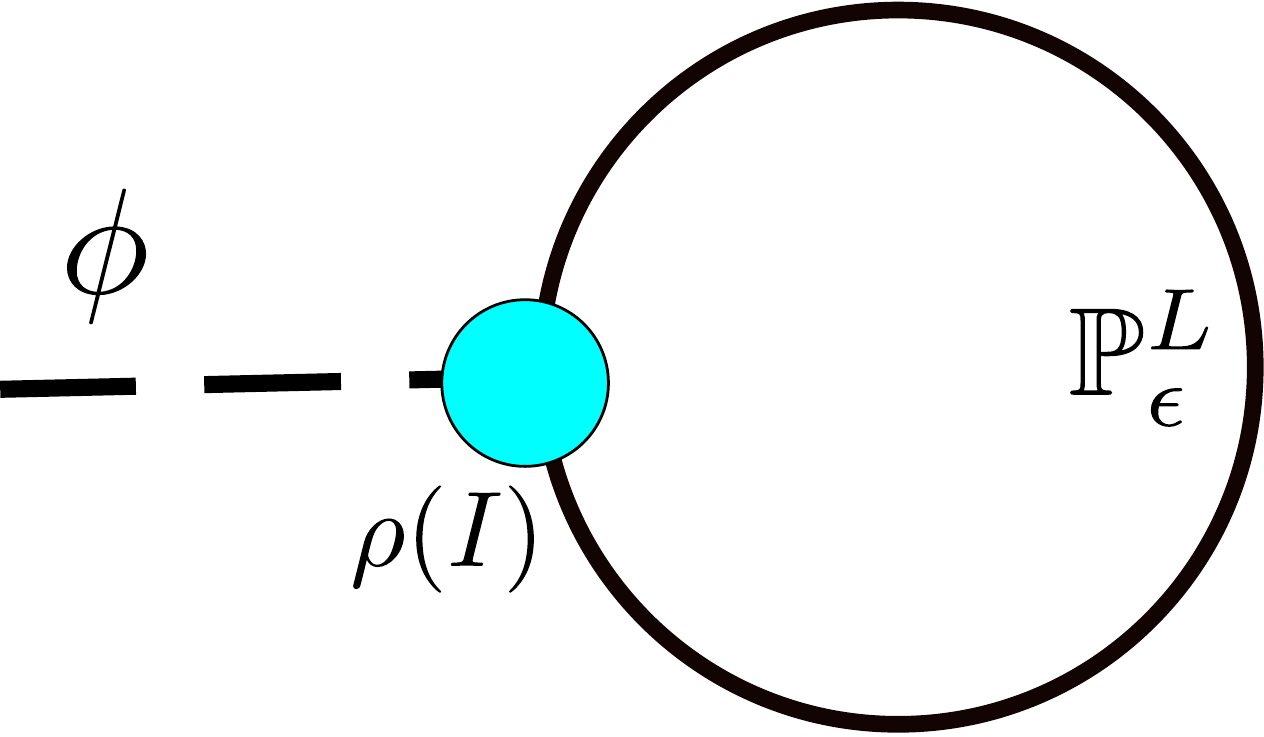}
\]

 \begin{proof} This follows as $P_\epsilon^L(\theta_1, \theta_2)=0$ when $\theta_1=\theta_2$ and the fact that the lagrangian density of $\rho(I)$ attached to the vertex does not contain derivatives. 
 \end{proof}

Let $\cG$ be  a graph without tadpoles, we denote by $V(\cG)$ and $E(\cG)$ the set of vertices and edges of $\cG$ respectively (see Appendix \ref{Sec: Feynman}). For each edge $e\in E(\cG)$, let $v(e)$ denote the
2-point set of the vertices incident to $e$. There are natural maps
\begin{align*}
&\pi_e: S^1[V(\cG)]\rightarrow S^1[v(e)], \\
&\pi_v: S^1[V(\cG)]\rightarrow S^1,
\end{align*}
for all internal edges $e$ and vertices $v$ in $\cG$. We can now use the projections $\pi_e$ to pull back $\widetilde{\mathbb{P}}_0^L$ and get a smooth function on the compactified configuration space $S^1[V(\cG)]$. Also, we can use $\pi_v$ to pull back the inputs of the Feynman graphs on the tails. Altogether we can use the compactified configuration spaces to define graph weights without worrying about singularities of the propagators. We let
\[
W_{\cG}(\widetilde{\mathbb{P}}_0^L, \rho(I)) \in \widehat\OO(\E)
\]
denote the corresponding Feynman graph integral on $S^1[V(\cG)]$. 

\begin{lem} For any $I\in \A^\bullet_M(\W)$, the limit
\[
   \lim\limits_{\epsilon\to 0} W({\mathbb{P}}_\epsilon^L, \rho(I)) \in \widehat\OO(\E)[[\hbar]]
\]
exists as a functional on $\E$, which will be denoted by $\rho(I)[L]$.  The limit value is given by 
\[
\rho(I)[L]=\sum_{\cG}\frac{\hbar^{g(\cG)}}{|\text{Aut}(\cG)|}W_\cG(\widetilde{\mathbb{P}}_0^L, \rho(I)),
\]
where the sum is over connected graphs. Moreover, $\rho(I)[L]$ satisfies the homotopic renormalization group flow (HRG) equation
$$
   \rho(I)[L_2]=W({\mathbb{P}}_{L_1}^{L_2}, \rho(I)[L_1]), \; \text{ for } \; 0< L_1\leq L_2. 
$$
\end{lem}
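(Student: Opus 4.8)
The plan is to establish the three assertions in order: the convergence of the $\epsilon\to 0$ limit, the identification of the limit with a sum of integrals over compactified configuration spaces, and the homotopic renormalization group (HRG) semigroup identity. The guiding idea is to trade the integrals over the singular products $(S^1)^{V(\cG)}$ that appear in the definition of $W(\mathbb{P}_\epsilon^L,\rho(I))$ for integrals over the blow-ups $S^1[V(\cG)]$, on which the propagator becomes a smooth form. First I would reduce to tadpole-free graphs: by Lemma \ref{lem:self-loop-vanishing} every connected graph carrying a self-loop has vanishing weight, so the graph sum effectively runs over graphs in which each internal edge joins two distinct vertices. For such a graph $\cG$, and for $\epsilon>0$ (where $P_\epsilon^L$ is smooth on all of $(S^1)^2$), the weight $W_\cG(\mathbb{P}_\epsilon^L,\rho(I))$ is an honest finite-dimensional integral over $(S^1)^{V(\cG)}$ of a top-form built from one copy of $P_\epsilon^L$ per edge (pulled back to the two relevant factors), the combinatorial tensors $\tfrac12\omega^{ij}\pa_i\otimes\pa_j$, and the vertex densities coming from $\rho(I)$. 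Since the blow-down $\pi\colon S^1[V(\cG)]\to (S^1)^{V(\cG)}$ is a diffeomorphism off the measure-zero diagonals, this integral is unchanged upon pulling the integrand back to $S^1[V(\cG)]$, where the edge factors become precisely $\pi_e^*(P_\epsilon^L)$ for the forgetful maps $\pi_e\colon S^1[V(\cG)]\to S^1[v(e)]=S^1[2]$.

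The analytic heart is the $\epsilon\to0$ limit of each such weight. For $\epsilon>0$ the function $P_\epsilon^L$ is smooth on $(S^1)^2$, whereas its $\epsilon=0$ value $\mathbb{P}_0^L$ is singular along the diagonal; the point of the blow-up is that, by Lemma \ref{lem: lifting-propagator}, $\mathbb{P}_0^L$ lifts to a globally smooth function $\widetilde{\mathbb{P}}_0^L$ on the compact manifold-with-corners $S^1[2]$. I would therefore show, directly from the heat-kernel expression for $P_\epsilon^L$ together with this smooth extension, that $\pi_e^*(P_\epsilon^L)\to\pi_e^*(\widetilde{P}_0^L)$ uniformly on the compact space $S^1[V(\cG)]$. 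Because the domain is compact and the vertex factors are fixed smooth forms, uniform convergence of the edge coefficients lets me pass the limit inside the finite-dimensional integral, yielding
\[
\lim_{\epsilon\to0}W_\cG(\mathbb{P}_\epsilon^L,\rho(I))=W_\cG(\widetilde{\mathbb{P}}_0^L,\rho(I)),
\]
which is finite precisely because it is the integral of a smooth form over a compact space. Organizing by $\hbar$-order and by homogeneous degree in $\widehat\OO(\E)$, only finitely many graphs contribute to each component, so the limit exists in $\widehat\OO(\E)[[\hbar]]$ and equals the stated sum $\rho(I)[L]=\sum_{\cG}\tfrac{\hbar^{g(\cG)}}{|\mathrm{Aut}(\cG)|}W_\cG(\widetilde{\mathbb{P}}_0^L,\rho(I))$.

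For the HRG equation I would invoke the semigroup property of the propagator. Since $\mathbb{P}_\epsilon^{L_2}=\mathbb{P}_\epsilon^{L_1}+\mathbb{P}_{L_1}^{L_2}$ and the second-order operators $\pa_{\mathbb{P}}$ mutually commute, the defining relation $e^{W(\mathbb{P},F)/\hbar}=e^{\hbar\pa_{\mathbb{P}}}e^{F/\hbar}$ gives, for every $\epsilon>0$,
\[
W(\mathbb{P}_{L_1}^{L_2},\,W(\mathbb{P}_\epsilon^{L_1},\rho(I)))=W(\mathbb{P}_\epsilon^{L_2},\rho(I)).
\]
Letting $\epsilon\to0$, the right-hand side tends to $\rho(I)[L_2]$ by the first part, while on the left the propagator $\mathbb{P}_{L_1}^{L_2}$ is smooth (both endpoints are positive), so $W(\mathbb{P}_{L_1}^{L_2},-)$ is continuous under the limit and the left-hand side tends to $W(\mathbb{P}_{L_1}^{L_2},\rho(I)[L_1])$. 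This is the asserted identity.

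I expect the main obstacle to be the uniformity of the convergence $\pi_e^*(P_\epsilon^L)\to\pi_e^*(\widetilde{P}_0^L)$ up to and including the exceptional boundary faces of $S^1[V(\cG)]$ — precisely the locus where $P_0^L$ is singular before blowing up. Controlling the heat-kernel integral $\int_\epsilon^L$ uniformly as the configuration degenerates onto a diagonal, so that the limit genuinely commutes with integration over every corner stratum, is where Lemma \ref{lem: lifting-propagator} carries the essential analytic weight; the remaining combinatorics and the algebraic semigroup identity are then formal.
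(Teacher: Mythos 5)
Your overall strategy is the same as the paper's: reduce to tadpole-free graphs, replace the integrals over $(S^1)^{V(\cG)}$ by integrals over the compactifications $S^1[V(\cG)]$ where the scale-$0$ propagator lifts smoothly (Lemma \ref{lem: lifting-propagator}), and derive the HRG identity from the semigroup property $W(\mathbb{P}_\epsilon^{L_2},-)=W(\mathbb{P}_{L_1}^{L_2},W(\mathbb{P}_\epsilon^{L_1},-))$ together with continuity of $W(\mathbb{P}_{L_1}^{L_2},-)$ for the smooth propagator $\mathbb{P}_{L_1}^{L_2}$. That part, and the combinatorial bookkeeping, is correct and matches the paper.

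The one step that fails as stated is the claim that $\pi_e^*(P_\epsilon^L)\to\pi_e^*(\widetilde{P}_0^L)$ \emph{uniformly} on $S^1[V(\cG)]$. For every $\epsilon>0$ the function $P_\epsilon^L$ is smooth on $(S^1)^2$ and vanishes identically on the diagonal (in the heat-kernel sum the terms $\frac{n}{4t}e^{-n^2/4t}$ cancel under $n\mapsto -n$), so its pullback to $S^1[2]$ vanishes on both exceptional boundary circles; by contrast $\widetilde{P}_0^L$ restricts to the constants $\pm\tfrac12$ there. Hence $\sup_{S^1[2]}\left|\pi^*(P_\epsilon^L)-\widetilde{P}_0^L\right|\geq \tfrac12$ for all $\epsilon>0$, and uniform convergence is false precisely on the boundary faces you single out as the crux. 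What is actually true, and what justifies exchanging limit and integral, is that the family $\{P_\epsilon^L\}_{0<\epsilon\le L}$ is uniformly bounded and converges pointwise to $\widetilde{P}_0^L$ on the interior of $S^1[2]$, whose complement has measure zero; dominated convergence then gives $\lim_{\epsilon\to 0}W_\cG(\mathbb{P}_\epsilon^L,\rho(I))=W_\cG(\widetilde{\mathbb{P}}_0^L,\rho(I))$. This is exactly the estimate-plus-dominated-convergence argument the paper carries out for the analogous $L\to 0$ limit in Section \ref{subsection-QME}, and with this substitution the rest of your proof goes through.
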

\begin{proof} The existence of the $\epsilon\to 0$ limit follows from the fact that the propagator $\mathbb{P}_0^L$ extends to a smooth function on $S^1[2]$, hence $W({\mathbb{P}}_\epsilon^L, \rho(I))$ (and also its limit as $\epsilon\to 0$) becomes an integration of smooth forms over a compact manifold (with corners). The fact about HRG equation is a rather formal consequence of the semi-group property: 
$$
   W({\mathbb{P}}_{\epsilon}^{L_2}, -)=W({\mathbb{P}}_{L_1}^{L_2}, W({\mathbb{P}}_{\epsilon}^{L_1}, -)).
$$
\end{proof}

This proves the first statement in Theorem \ref{thm: QME=Fedosov's equation}. 

\subsubsection{The QME revisited}\label{subsection-QME}
The next goal is to describe the quantum master equation for $\rho(I)$ in terms of configuration spaces. Since the QME is compatible with renormalization group flow, we will consider the $L\to 0$ limit of 
\[
    Q_L \rho(I)[L]+\hbar \Delta_L \rho(I)[L]+\frac{1}{2}\{\rho(I)[L], \rho(I)[L]\}_L+\rho(R_\nabla).
\]
It is in this limit that Fedosov's equation (\ref{Fedosov-eqn}) appears.  

\begin{lem}\label{lem:differential-of-propagator}
On the space $S^1[2]$ we have 
\[
d(\widetilde{\mathbb{P}}_0^L)=-\mathbb{K}_L.
\]
\end{lem}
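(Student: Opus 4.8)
The plan is to split the kernel into its analytic and combinatorial parts and reduce the statement to a one–variable heat–kernel identity on the off–diagonal, and then to cross the boundary of $S^1[2]$ by a smoothness–and–density argument. Since $\widetilde{\mathbb{P}}_0^L = \widetilde{P}_0^L\cdot\frac12\omega^{ij}\pa_i\otimes\pa_j$ and $\mathbb{K}_L = K_L\cdot\frac12\omega^{ij}\pa_i\otimes\pa_j$ carry the same combinatorial factor, and this factor involves only the fiber/base data (which the de Rham differential $d=d_{S^1}$ on the configuration space treats as constant), it suffices to prove the identity of analytic parts
\[
d\,\widetilde{P}_0^L = -K_L, \qquad K_L = h_L\,(d\theta_1 - d\theta_2),
\]
as smooth $1$-forms on $S^1[2]$.

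First I would run the computation on the interior of $S^1[2]$, which the blow-down $\pi$ identifies with $(S^1)^2$ minus the diagonal. Using $d^*_{S^1}(f\,d\theta)=-\pa_\theta f$ for the flat metric on $S^1$, only the $d\theta_1$–component of $K_t$ survives $(d^*_{S^1}\otimes 1)$, so $(d^*_{S^1}\otimes 1)K_t = -\pa_{\theta_1}h_t$ and hence $P_0^L = -\int_0^L \pa_{\theta_1}h_t\,dt$. Because $h_t$ depends on $\theta_1,\theta_2$ only through $\theta_1-\theta_2$, one has $\pa_{\theta_2}=-\pa_{\theta_1}$ on $h_t$, giving
\[
d\,P_0^L = (\pa_{\theta_1}P_0^L)\,(d\theta_1 - d\theta_2).
\]
Next I would invoke the heat equation $\pa_t h_t = \pa_{\theta_1}^2 h_t$ (exactly the statement that $K_t$ is the kernel of $e^{-tD}$ with $D=[d_{S^1},d^*_{S^1}]$) together with the fundamental theorem of calculus:
\[
\pa_{\theta_1}P_0^L = -\int_0^L \pa_{\theta_1}^2 h_t\,dt = -\int_0^L \pa_t h_t\,dt = -\Big(h_L - \lim_{t\to 0^+}h_t\Big).
\]
On the off-diagonal the Gaussian sum defining $h_t$ decays exponentially as $t\to 0^+$, so $\lim_{t\to 0^+}h_t=0$ there; thus $\pa_{\theta_1}P_0^L=-h_L$ and $d\,P_0^L = -h_L(d\theta_1-d\theta_2) = -K_L$ on the interior. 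The interchange of $d$, $\int_0^L dt$, and the $t\to 0^+$ limit is harmless here since, away from the diagonal, all of this convergence is uniform on compact sets.

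Finally I would extend the identity across $\pa S^1[2]$. The right-hand side $\mathbb{K}_L=\pi^*K_L$ is manifestly smooth on $S^1[2]$, as $h_L$ is smooth on $(S^1)^2$ for $L>0$, while the left-hand side is smooth by the existence of the smooth lift $\widetilde{\mathbb{P}}_0^L$ (Lemma \ref{lem: lifting-propagator}); two smooth forms that agree on the dense open interior agree on all of $S^1[2]$. I expect the boundary step to be the only real subtlety: the naive computation on $(S^1)^2$ produces a delta-function along the diagonal from the $t\to0^+$ limit of $h_t$ (morally $d P_0^L = -K_L + K_0$, with $K_0$ supported on the diagonal), and it is precisely the real-oriented blow-up defining $S^1[2]$ that removes this diagonal contribution and renders $\widetilde{P}_0^L$, and hence the identity, smooth and global.
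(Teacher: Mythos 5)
Your proof is correct and follows essentially the same route as the paper's: reduce to the open dense interior $(S^1)^2\setminus\Delta$ where the lift agrees with $\mathbb{P}_0^L$, establish $d(\mathbb{P}_0^L)=\mathbb{K}_0-\mathbb{K}_L$ there with $\mathbb{K}_0$ supported on the diagonal, and conclude by smoothness of both sides on $S^1[2]$. The only difference is that you derive the interior identity explicitly from the heat equation and the fundamental theorem of calculus, whereas the paper simply asserts it as clear.
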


\begin{proof}
Since the propagator $\widetilde{\mathbb{P}}_0^L$ is defined by lifting $\mathbb{P}_0^L$ smoothly from $S^1\times S^1\setminus\Delta$ to $S^1[2]$, we only need to show the equality on $S^1\times S^1\setminus\Delta$. But this is clear since $d({\mathbb{P}}_0^L)=\mathbb{K}_0-\mathbb{K}_L$ and the support of $\mathbb{K}_0$ is contained in the diagonal $\Delta$.
\end{proof}

The functional $\rho(I)[L]$ consists of graph weights $W_\cG (\widetilde{\mathbb{P}}_0^L, \rho(I))$. The de Rham differential $d_{S^1}
\rho(I)[L]$ will contribute an integration over the boundary of the configuration space. To illustrate this, let us consider a  graph weight with
external inputs $\phi_1, \cdots, \phi_k$ and propagator $\widetilde{\mathbb{P}}_0^L$: 
\[
    \varpi_\cG (\phi_1, \cdots, \phi_k)=\int_{S^1[V(\cG)]}\prod_{e\in E(\cG)}\pi_e^*(\widetilde{\mathbb{P}}_0^L)\prod_{i=1}^k\phi_i.
\]

\begin{rmk}
The term  $\prod_{i=1}^k\phi_i$ in the above equation is a little vague. Actually we are taking the sum of all possible ways of assigning $\phi_i$'s
to the tails in $\cG$. 
\end{rmk}

Then,
\begin{align*}
\MoveEqLeft[2] d_{S^1}\varpi_\cG(\phi_1,\cdots,\phi_k)\\[1ex]
=&\sum_{i=1}^k \varpi_\cG(\phi_1,\cdots,d_{S^1}(\phi_i),\cdots,\phi_k)\\[1ex]
=& \int_{S^1[V(\cG)]}d\left(\prod_{e\in E(\cG)}\pi_e^*(\widetilde{\mathbb{P}}_0^L)\prod_{i=1}^k\phi_i\right)-\int_{S^1[V(\cG)]}\sum_{e_0\in E(\cG)}d(\pi_{e_0}^*(\widetilde{\mathbb{P}}_0^L))\prod_{e\in E(\cG)\setminus e_0}\pi_e^*(\widetilde{\mathbb{P}}_0^L)\prod_{i=1}^k\phi_i   \\[1ex]
 \overset{(1)}{=}  &\int_{\partial S^1[V(\cG)]}\prod_{e\in E(\cG)}\pi_e^*(\widetilde{\mathbb{P}}_0^L)\prod_{i=1}^k\phi_i-\int_{S^1[V(\cG)]}\sum_{e_0\in E(\cG)}\pi_{e_0}^*(\mathbb{K}_L)\prod_{e\in E(\cG)\setminus e_0}\pi_e^*(\widetilde{\mathbb{P}}_0^L)\prod_{i=1}^k\phi_i,
\end{align*}
where we have used Stokes' theorem and Lemma \ref{lem:differential-of-propagator} in the last equality (1). The edge $e_0$ in the above equation 
could be separating or not. Namely, deleting the edge $e_0$ results in either two connected graphs or one connected graphs as shown in the following pictures:
\begin{align*}
&\text{separating:}\hspace{27mm} \figbox{0.2}{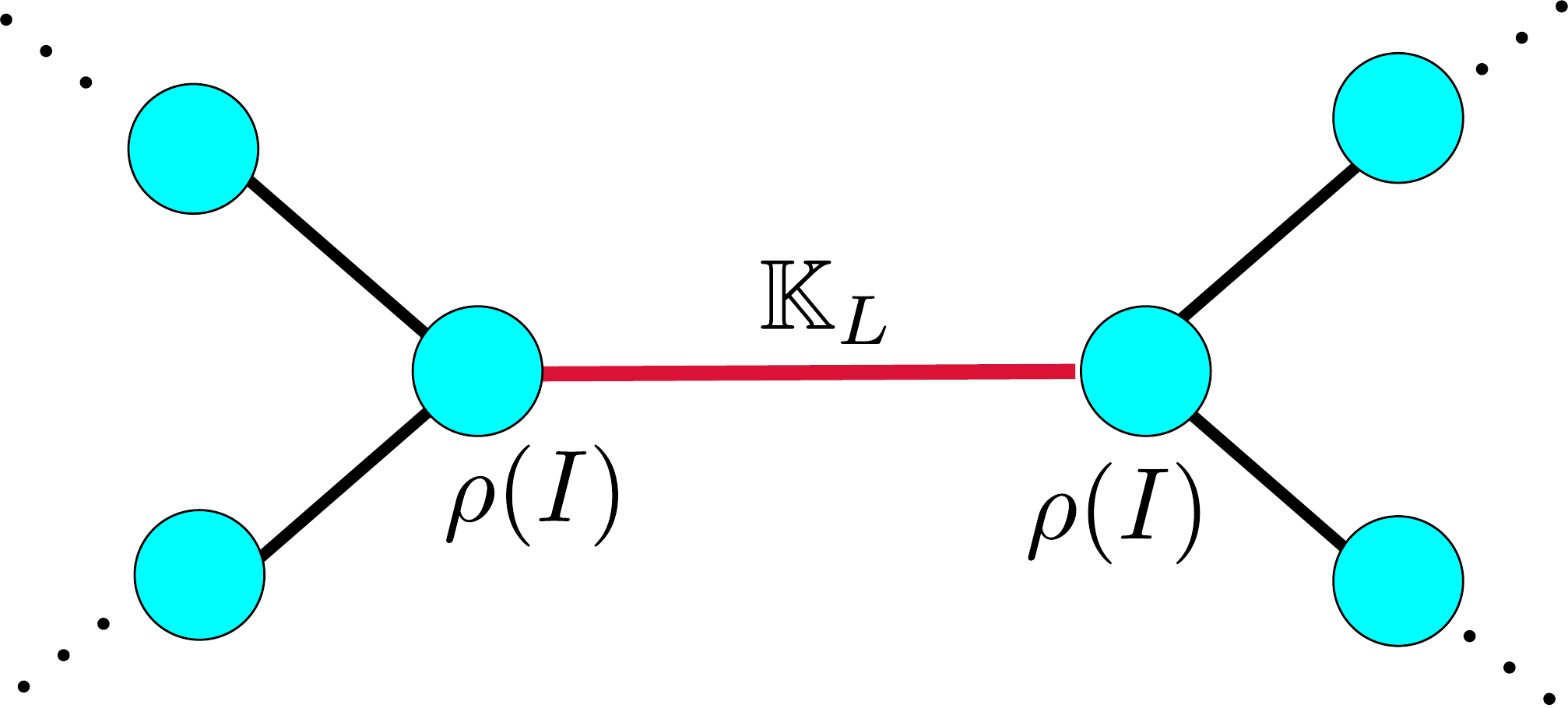}\\
&\text{non-separating:}\hspace{20mm} \figbox{0.19}{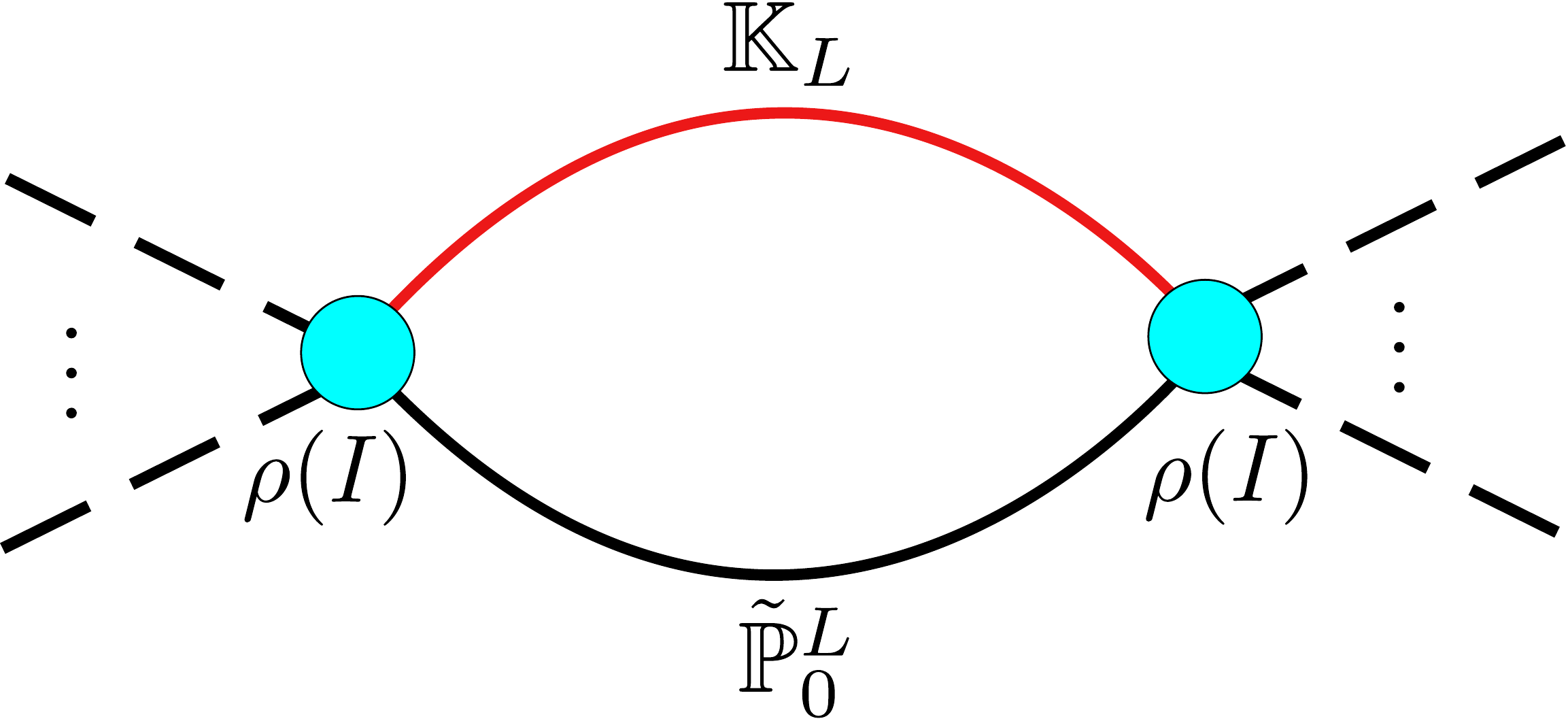}
\end{align*}

We apply this formula to $d_{S^1} \rho(I)[L]$. The sum of  the terms corresponding to non-separating edges over all  graphs $\cG$ will cancel with the term $\hbar\Delta_L\rho(I)[L]$ in the quantum master equation and the sum of all the other terms (corresponding to separating edges) will cancel with those terms in $\{\rho(I)[L],\rho(I)[L]\}_L$ .

Let us now consider the integrals over the codimension $1$ strata of $S^1[V]$.  As explained in the Appendix (in particular, Example \ref{example: boudary-circle-configuration}), these strata correspond bijectively to subsets $I\subset V$ with cardinality $\geq 2$ and we denote them by $S^1(I)=\pi^{-1}D(I)$. 

\begin{lem} For $I\neq V$ and $\cG$ connected,
\[
\lim_{L\to 0} \int_{S^1(I)}\prod_{e\in E(\cG)}\pi_e^*(\widetilde{\mathbb{P}}_0^L)\prod_{i=1}^k\phi_i=0.
\]
\end{lem}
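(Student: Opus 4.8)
The plan is to exploit the connectivity of $\cG$ together with the short-time decay of the propagator away from the diagonal. First I would observe that, since $\cG$ is connected and $\emptyset\neq I\subsetneq V(\cG)$, there is at least one edge $e_0\in E(\cG)$ joining a vertex $a\in I$ to a vertex $b\in V(\cG)\setminus I$. On the open dense part of the boundary stratum $S^1(I)=\pi^{-1}(D(I))$ the points indexed by $I$ have collided to a single location $p\in S^1$, while $b$ sits at a location $q$ with $q\neq p$; the exceptional locus $q=p$ is contained in a lower-dimensional (measure-zero) sub-stratum. Thus along $S^1(I)$ the forgetful map $\pi_{e_0}$ lands in the interior of $S^1[2]$ away from the diagonal, so the only $L$-dependent factor of $\pi_{e_0}^*(\widetilde{\mathbb P}_0^L)$, namely the analytic scalar $\widetilde{P}_0^L$, is the genuine propagator $P_0^L(p,q)$ evaluated at a pair of distinct points.

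The next step is two elementary estimates on $P_0^L$. Writing $(d^*_{S^1}\otimes 1)K_t=-\pa_{\theta_1}h_t$ with $h_t$ the heat kernel, for fixed $\theta_1\neq\theta_2$ the scalar $P_0^L(\theta_1,\theta_2)=\int_0^L(-\pa_{\theta_1}h_t)\,dt$ is a convergent sum of incomplete Gaussian tails $\int_0^L \tfrac{s+m}{2t}e^{-(s+m)^2/4t}\,dt$ with $s=\theta_1-\theta_2\notin\Z$, each of which tends to $0$ as $L\to 0$; hence $\widetilde{P}_0^L\to 0$ pointwise off the diagonal. Second, from $\mathbb P_0^L=\mathbb P_0^\infty-\mathbb P_L^\infty$ and the smoothness of $\mathbb P_\bullet^\infty$ on the compact space $S^1[2]$ (Lemma \ref{lem: lifting-propagator} together with Proposition \ref{prop:propagator-step-function}), the family $\{\widetilde{P}_0^L\}_{0<L\leq 1}$ is uniformly bounded on $S^1[2]$. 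Combining these facts, the coefficient of $\pi_{e_0}^*(\widetilde{\mathbb P}_0^L)$ converges to $0$ almost everywhere on $S^1(I)$ while staying uniformly bounded.

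Finally I would bound the integral by isolating the factor attached to $e_0$:
\[
\left|\int_{S^1(I)}\prod_{e\in E(\cG)}\pi_e^*(\widetilde{\mathbb P}_0^L)\prod_{i=1}^k\phi_i\right|\leq C\int_{S^1(I)}\bigl|\pi_{e_0}^*(\widetilde{\mathbb P}_0^L)\bigr|,
\]
where $C$ bounds, uniformly for $0<L\leq 1$, the product of the remaining (smooth) propagator factors together with the $L$-independent coefficients coming from the vertices and the inputs $\phi_i$ over the compact stratum. Since $S^1(I)$ is a fixed compact manifold with corners of finite volume and the integrand on the right converges to $0$ almost everywhere while remaining uniformly bounded, the dominated convergence theorem forces the right-hand side to $0$ as $L\to 0$. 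I expect the only genuinely delicate point to be the geometric claim of the first step — that $\pi_{e_0}$ avoids the diagonal of $S^1[2]$ off a measure-zero subset of $S^1(I)$ — which requires unwinding the fibered-product description of the boundary stratum $S^1(I)$ recalled in the Appendix; everything else reduces to a uniform bound plus dominated convergence. Note that the hypothesis $I\neq V(\cG)$ enters essentially: when $I=V(\cG)$ there is no crossing edge $e_0$, and that deepest stratum is precisely the one surviving the limit.
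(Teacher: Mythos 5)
Your proposal is correct and follows essentially the same route as the paper: use connectivity to produce an edge crossing between $I$ and $V\setminus I$, observe that $\widetilde{P}_0^L\to 0$ pointwise off the boundary of $S^1[2]$ while staying uniformly bounded, and conclude by dominated convergence. The one soft spot is your justification of the uniform bound: smoothness of each $\mathbb{P}_L^\infty$ on the compact $S^1[2]$ does not by itself bound the family uniformly in $L$; you need either the paper's direct Gaussian-tail estimate of $P_0^L$ or the observation that $P_L^\infty$ is the heat-flow smoothing of the bounded sawtooth $P_0^\infty$ (so $\abs{P_L^\infty}\leq \tfrac{1}{2}$ by positivity of the heat kernel).
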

\begin{proof} Assume $I\neq V$. Since $\cG$ is connected, there exists an internel edge $e\in E(\cG)$ connecting a vertex in $I$ to a vertex in $V\backslash I$. For $-\frac{1}{ 2}<\theta_1-\theta_2<\frac{1}{2}$,
\begin{align*}
  P_0^L(\theta_1, \theta_2)&=\int_0^L \frac{dt}{\sqrt{4\pi t}} \sum_{n\in \Z} \frac{(\theta_1-\theta_2+n)}{4t}e^{-\frac{(\theta_1-\theta_2+n)^2}{4t}}\\[1ex]
  &=\int_0^L \frac{dt}{\sqrt{4\pi t}} \frac{(\theta_1-\theta_2)}{4t}e^{-\frac{(\theta_1-\theta_2)^2}{4t}}+\int_0^L \frac{dt}{\sqrt{4\pi t}} \sum_{n\neq 0} \frac{(\theta_1-\theta_2+n)}{4t}e^{-\frac{(\theta_1-\theta_2+n)^2}{4t}}.
\end{align*}
The second term is uniformly bounded for $L\leq 1$ and vanishes as $L\to 0$. The first term equals 
\begin{align*}
  \pm \frac{1}{8\sqrt{\pi}} \int_\frac{2\sqrt{L}}{|\theta_1-\theta_2|}^\infty dt e^{-t^2}
\end{align*}
which is also uniformly bounded for $L\leq 1$. It follows that $\widetilde P_0^L$ is uniformly bounded as $L\to 0$. Since $\lim_{L\to 0}\widetilde P_0^L=0$ outside $\pa S^1[2]$, the lemma follows from the dominated convergence theorem. 

\end{proof}

\begin{lem}
Let $|I|\geq 3$, then the integral over the boundary stratum corresponding to $I$ vanishes.
\end{lem}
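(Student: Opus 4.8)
The plan is to prove the vanishing by the same \emph{type/dimension} argument used to discard the subsets with $|I|>2$ in Lemma~\ref{lem-BV-tansfer}: once one keeps track only of the form-degree along the $S^1$-directions, the propagators drop out, and the stratum $S^1(I)=\pi^{-1}(D(I))$ turns out to be too small to support the top form that would have to be integrated over it.

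First I would isolate the differential-form content of the boundary integrand $\prod_{e\in E(\cG)}\pi_e^*(\widetilde{\mathbb P}_0^L)\prod_{i}\phi_i$. The analytic part $\widetilde P_0^L$ of each propagator is a \emph{function} on $S^1[2]$ (it is obtained by applying $d^*_{S^1}\otimes 1$ to $\mathbb K_t$, which lowers the $S^1$-form degree to zero), so every propagator factor $\pi_e^*(\widetilde{\mathbb P}_0^L)$ is a $0$-form. Consequently the only one-forms occurring in the integrand are the factors $\pi_v^*(d\theta)$ produced at the vertices $v\in V(\cG)$, coming from the $S^1$ one-form slots of the local functional $\rho(I)$ and of the external inputs placed at $v$. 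Each such factor is pulled back from the $v$-th factor of $(S^1)^{V(\cG)}$ through $\pi_v=\mathrm{pr}_v\circ\pi$.

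Next I would restrict everything to $S^1(I)$ and read off the base directions. On this stratum the collision of the points indexed by $I$ means that $\pi_v|_{S^1(I)}$ factors through the blow-down $\pi|_{S^1(I)}\colon S^1(I)\to D(I)$ for \emph{every} $v$: for $v\in I$ it is the common collision coordinate, and for $v\notin I$ it is the corresponding free point, both of which are coordinates on $D(I)$. Hence the entire form-content of the integrand is of the shape $\pi|_{S^1(I)}^*(\beta)$ for some form $\beta$ on $D(I)$, multiplied by the $0$-form propagator factors. For the boundary integral over the $(|V(\cG)|-1)$-dimensional stratum to be non-zero one would need $\beta$ to have degree $|V(\cG)|-1$. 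But $D(I)$ has dimension $|V(\cG)|-(|I|-1)\le |V(\cG)|-2$ as soon as $|I|\ge 3$, so every $(|V(\cG)|-1)$-form on $D(I)$ vanishes; therefore $\beta=0$ and the boundary integrand restricts to $0$ on $S^1(I)$.

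The step that needs care is the first one: verifying that all the one-form content is genuinely carried by the pullbacks $\pi_v^*(d\theta)$ and that nothing survives along the fibre of $S^1(I)\to D(I)$. This is exactly where the $0$-form nature of $\widetilde{\mathbb P}_0^L$ is essential — it guarantees that the relative (bubble) directions of the colliding points, which span a fibre of dimension $|I|-2\ge 1$, carry no form degree at all, so integrating over that positive-dimensional fibre kills the contribution. Granting this, the vanishing is immediate, and combined with the previous lemma (which handles $I\neq V(\cG)$) it leaves only the strata with $|I|=2$, precisely the ones through which Fedosov's equation~\eqref{Fedosov-eqn} enters the quantum master equation.
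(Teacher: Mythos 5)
Your proposal is correct and is essentially the paper's own argument: the stratum $\pi^{-1}(D_I)$ is a sphere bundle over a base of dimension $|V(\cG)|-|I|+1$ with fibers of dimension $|I|-2\geq 1$, the propagators are $0$-forms, and all remaining form content is pulled back from the base, so the integrand cannot reach top degree on the codimension-one stratum. The paper states exactly this type/dimension reasoning, only more tersely; your explicit degree count via $D_I$ fills in the same picture.
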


\begin{proof} This is similar to the argument in Lemma \ref{lem-BV-transfer}. When $|I|\geq 3$, the corresponding boundary stratum is a sphere bundle with the dimension of the fibers at least 1. On one hand, the propagators
are only zero forms and on the other hand all the inputs attached to the tails becomes smooth forms on the base of this fiber bundle. Thus the
integral vanishes for type/dimension reason.  
\end{proof}

In the $L\to 0$ limit, it remains to consider the cases with $|V(\cG)|=|I|=2$, i.e., those two-vertex graphs. The corresponding boundary integral gives rise to a local functional on $\E$ denoted by $O_2$ which admits the following description.

\begin{prop}
The boundary integral corresponding to all the two-vertex diagrams is the functional 
\[
O_2=\frac{1}{2\hbar}\rho([I,I]_\star). 
\]
\end{prop}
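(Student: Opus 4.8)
The plan is to recognize this proposition as the quantum field theoretic incarnation of Lemma~\ref{lem-BV-tansfer}: the boundary contribution of two colliding vertices is governed by the boundary value of the propagator, which reassembles the Moyal commutator. Since $\cG$ is connected with $|V(\cG)|=2$, the two vertices---each decorated by the Lagrangian density of $\rho(I)$---are joined by some number $m\geq 1$ of internal edges, and by Lemma~\ref{lem:self-loop-vanishing} there are no tadpoles. The relevant codimension-one stratum is $S^1(\{1,2\})=\pi^{-1}(D_{\{1,2\}})$; from the cylinder description of $S^1[2]$ (Example~\ref{example: two-point-configuration}) it has two connected components, sitting at $u=0$ and $u=1$, carrying opposite induced orientations. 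All the reductions performed in the preceding lemmas ($|I|\geq 3$ vanishes by type/dimension, $I\neq V$ vanishes by dominated convergence) leave exactly these two-vertex diagrams, so $O_2$ is their total boundary integral.

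First I would restrict the propagator to the boundary. By Lemma~\ref{lem: lifting-propagator} the lift $\widetilde{\mathbb{P}}_0^L$ is smooth on $S^1[2]$, and as $L\to 0$ it restricts on the two components to $P=u-\tfrac12=\mp\tfrac12$ times the combinatorial factor $\tfrac12\omega^{ij}\pa_i\otimes\pa_j$, exactly as in the proof of Lemma~\ref{lem-BV-tansfer}. The two ways of attaching each edge's endpoints to the two vertices cancel the $\tfrac12$ in $\mathbb{P}$ against the antisymmetry of $\omega^{ij}$, so each internal edge effectively contributes $P\cdot\omega^{ij}\pa_{y^i_1}\pa_{y^j_2}$. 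Summing over the edge number $m$ with the factor $\tfrac1{m!}$ then exponentiates the boundary contraction into $\exp\bracket{\pm\tfrac{\hbar}{2}\omega^{ij}\pa_{y^i_1}\pa_{y^j_2}}$ applied to $I(y_1)\otimes I(y_2)$. Evaluating at $y_1=y_2$ via $\Mult$ gives $I\star I$ on the component where $P=+\tfrac12$ and the $\star$-reversed product on the component where $P=-\tfrac12$; combining the two with their opposite orientations and the form-degree Koszul sign produces $[I,I]_\star$, while the integration over the single surviving marked point on $S^1$ is precisely the map $\rho$.

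The normalization then works out as follows. In the convention $\rho(I)[L]=\sum_\cG \tfrac{\hbar^{g(\cG)}}{|\mathrm{Aut}(\cG)|}W_\cG(\widetilde{\mathbb{P}}_0^L,\rho(I))$ the genus weighting for the $m$-edge two-vertex diagram is $\hbar^{g(\cG)}=\hbar^{m-1}$, which turns the above sum over $m$ into $\hbar^{-1}\exp\bracket{\tfrac{\hbar}{2}\omega^{ij}\pa_{y^i_1}\pa_{y^j_2}}$, i.e. $\hbar^{-1}$ times the genuine Moyal kernel; the order-two automorphism swapping the two identically decorated vertices supplies the remaining $\tfrac12$, yielding the coefficient $\tfrac{1}{2\hbar}$ and hence $O_2=\tfrac{1}{2\hbar}\rho([I,I]_\star)$. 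The main obstacle is precisely this bookkeeping: one must track the symmetry factors $|\mathrm{Aut}(\cG)|$, the powers of $\hbar$ coming from the genus weighting, and the orientation and Koszul signs so that the two boundary components assemble into the $\star$-commutator with coefficient exactly $\tfrac{1}{2\hbar}$, rather than into $\rho(I\star I)$ alone. The conceptual content is identical to Lemma~\ref{lem-BV-tansfer}, so that computation can be transported verbatim once the configuration-space boundary has been identified.
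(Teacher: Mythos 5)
Your proposal is correct and follows essentially the same route as the paper's own argument: identify the two boundary circles of $S^1[2]$ where $\widetilde{P}_0^L$ restricts to $\pm\tfrac12$, let the sum over parallel edges exponentiate into the Moyal kernel $\exp\bracket{\pm\tfrac{\hbar}{2}\omega^{ij}\pa_{y^i_1}\pa_{y^j_2}}$, and combine the two oppositely oriented components into the $\star$-commutator, with the single remaining $1$-form tail producing the map $\rho$. Your accounting of the prefactor via $\hbar^{g(\cG)}=\hbar^{m-1}$ and $|\mathrm{Aut}(\cG)|=2\cdot m!$ is in fact more explicit than the paper's terse ``the analytic part contributes $\tfrac{1}{(k+1)!}$'' remark, and it reproduces the coefficient $\tfrac{1}{2\hbar}$ correctly.
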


\begin{proof} 
The boundary of $S^1[2]$ consists of two copies of $S^1$, on which the values of $\widetilde P_0^L$ are $\pm 1/2$.
Consider the two vertex diagram with $k$ loops, hence $k+1$ propagators. The following depicts the integral over the boundary of
configuration
space. 
\[
\figbox{0.23}{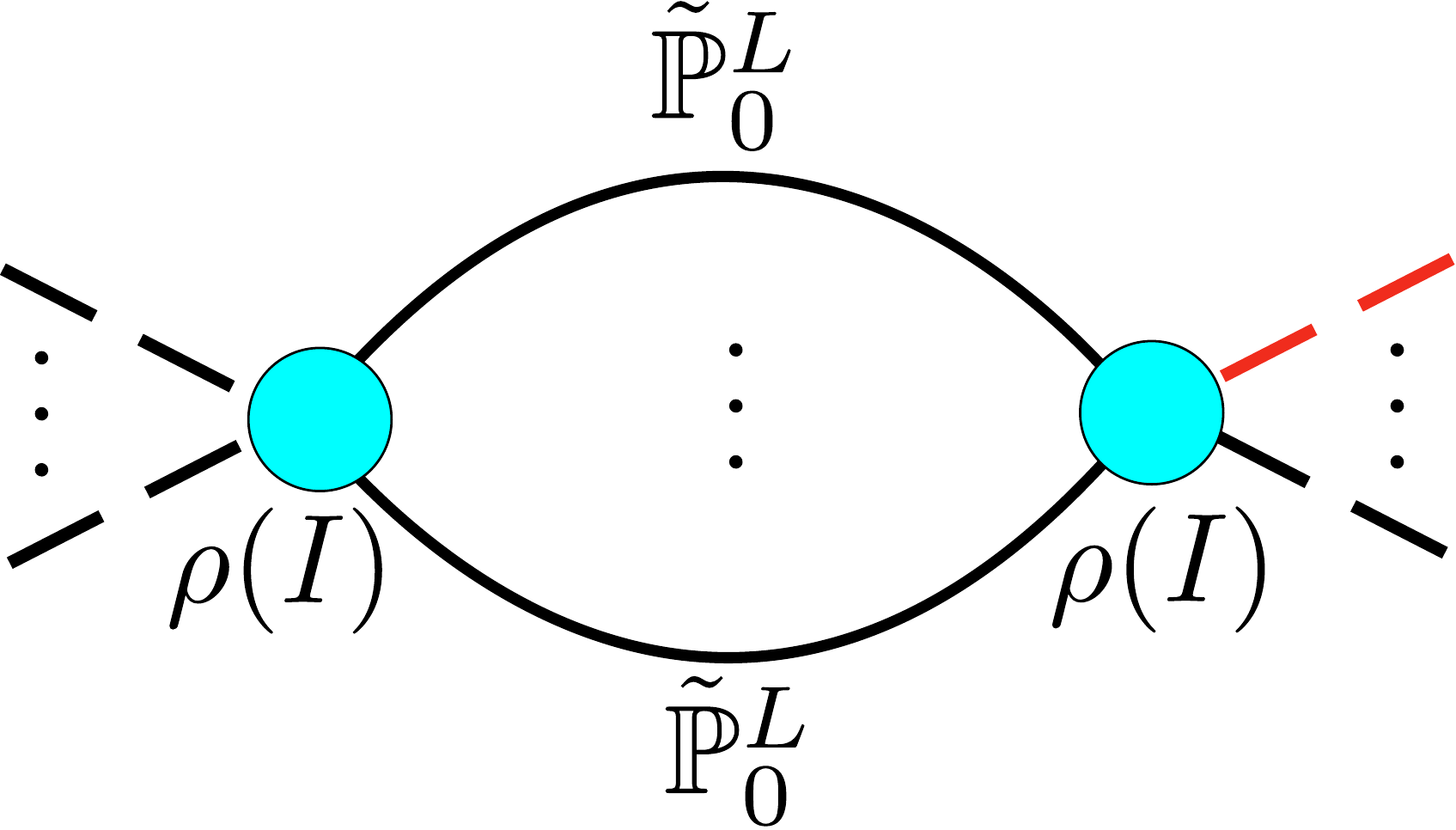}=\frac{1}{(k+1)!}\cdot \rho\left(\figbox{0.23}{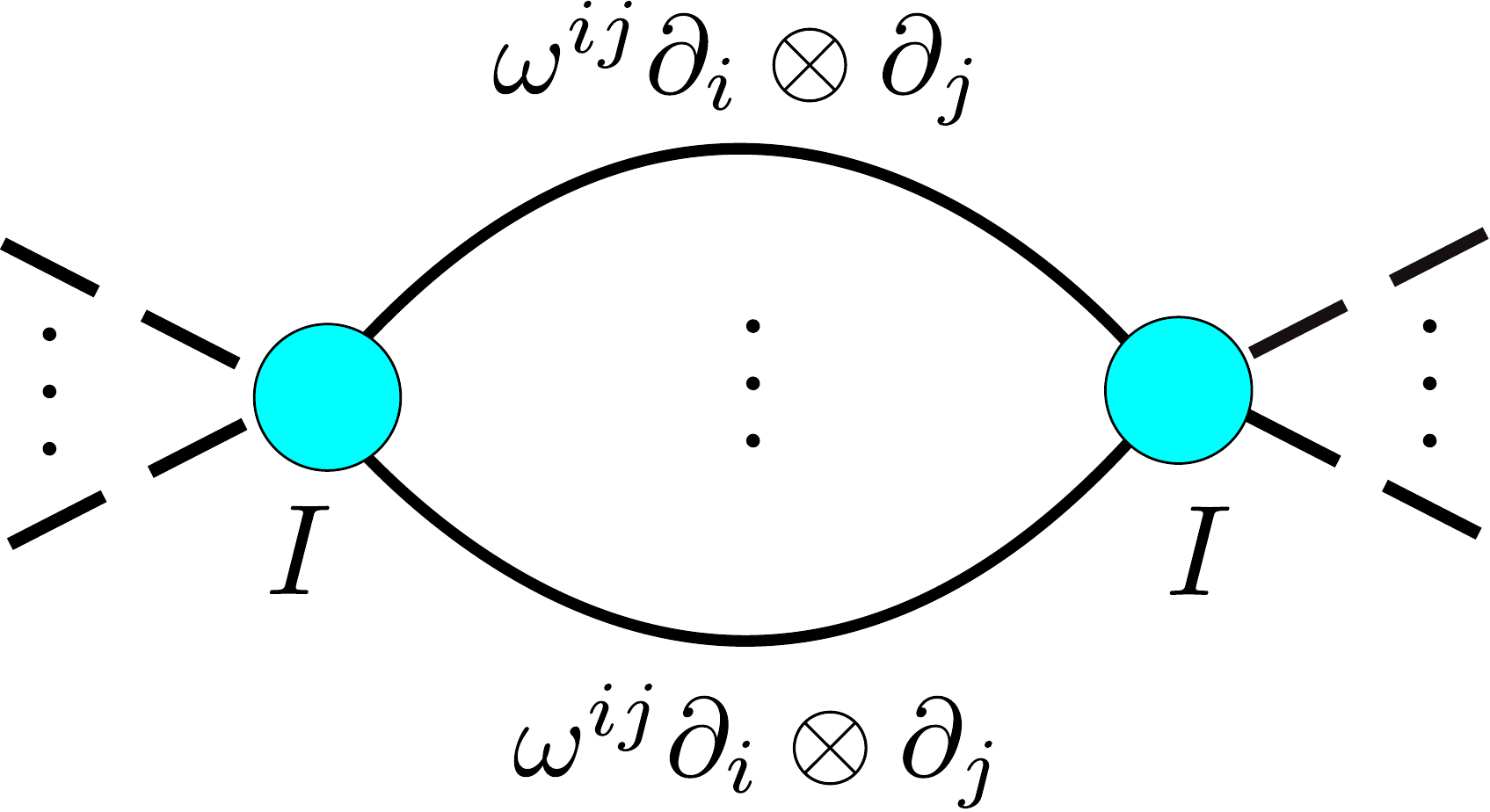}\right)
\]
By type reason, the left picture above contains exactly one input of a $1$-form on $S^1$ which labels the red tail. The
analytic part of $\widetilde{\mathbb{P}}_0^L$ will contribute $\frac{1}{(k+1)!}$, which together with the combinatorial factor $\omega^{ij}\pa_i\otimes
\pa_j$ exactly gives the formula of the Moyal product. Finally, the constant term in $I * I$, which is annihilated by $\rho$, doesn't appear in $O_2$ for type reason (since $O_2$
contains at least one input of $1$-form on $S^1$).  
\end{proof}

We are now in a position to prove the second statement of Theorem \ref{thm: QME=Fedosov's equation}, relating the QME to Fedosov's equation. Let $\gamma$ be a solution of Fedosov's equation of Abelian connection. From the above computation, we have 
\begin{align*}
\MoveEqLeft[8] \lim_{L\to 0} \left ( Q_L \rho(I)[L]+\hbar \Delta_L \rho(I)[L]+\frac{1}{2}\{\rho(I)[L], \rho(I)[L]\}_L+\rho(R_\nabla)\right )\\
  = & \rho(\nabla(I))+O_2+\rho(R_\nabla)=\rho\bracket{\nabla I+\frac{1}{2\hbar}[I, I]_*+R_\nabla}=\rho(\omega_\hbar)=0,
\end{align*}
since $\omega_\hbar\in \A^2_M[[\hbar]]$ lies in the coefficient ring. It follows from the compatibility of homotopic renormalization group flow equation with quantum master equation that $\rho(I)[L]$ solves the quantum master equation at any scale $L>0$. This argument can be reversed. We have completed the proof of Theorem \ref{thm: QME=Fedosov's equation}.

\subsection{Observable theory}\label{section-observable}
In this subsection we explain the significance of the local-to-global (cochain) morphism in Definition \ref{defn-local-to-globle}. This is related to the factorization algebra of quantum observables in our one-dimensional Chern-Simons theory. This observable theory in the \BV formalism is fully developed in \cite{Kevin-Owen}. 

\subsubsection{Local observables}
Let us first recall the definition of local quantum observables.

\begin{defn}
A local quantum observable $O$ supported on an open set $U\subset S^1$ is an assignment 
\[
L\mapsto O[L]\in\widehat\OO(\E)[[\hbar]], \ L>0
\]
such that 
\begin{enumerate}
\item The following homotopic RG flow equation is satisfied:
\[
\rho(I)[L]+\xi\cdot O[L]=W({\mathbb{P}}_\epsilon^L,\rho(I)[\epsilon]+\xi\cdot O[\epsilon]), \quad \forall 0<\epsilon<L. 
\]
Here $\xi$ is a formal variable with $\xi^2=0$.
\item As $L\rightarrow 0$, the observable becomes supported on $U$. 
\end{enumerate}

\begin{rmk}\label{rmk-support} The meaning of the support condition $(2)$ is quite involved and we choose not to state it precisely but refer to \cite{Kevin-Owen} for a thorough discussion. Instead, we sketch the basic idea of \cite{Kevin-Owen}. Naively, the BV kernel $K_L$ is getting close to the $\delta$-function as $L\to 0$, which can be used to control the support to be compatible with the homotopic RG flow equation. To make this naive thought precise, we have to replace the heat kernel regularization by regularizations of the $\delta$-function whose supports are arbitrary close to that of the $\delta$-function, i.e. the diagonal.  Then the support condition $(2)$ can be precisely formulated. The heat kernel regularization is related to an arbitrary regularization via the homotopic renormalization group flow. 
\end{rmk}

We will let $\text{Obs}^q(U)$ denote the cochain complex of local quantum observables on $U$, where we equip it with the quantum differential
\[
Q_L+ \{\rho(I)[L],-\}_L+\hbar \Delta_L. 
\]
\end{defn}

\begin{thm}\label{thm-local-obs} Let $U\subset S^1$ be a small open interval, then the complex $\text{Obs}^q(U)$ of local quantum observables is quasi-isomorphic to $(\A^\bullet_M(\W), \nabla+\frac{1}{\hbar}[\gamma, -]_\star)$. 
\end{thm}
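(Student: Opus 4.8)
The plan is to prove the theorem by homological perturbation theory (homotopy transfer), exhibiting $\mathrm{Obs}^q(U)$ as the transfer of the observable complex onto the harmonic fields supported on $U$. The first step is to build a deformation retract of the fields. Since $U$ is a contractible interval, the de Rham complex $(\A^\bullet_U, d_{S^1})$ deformation retracts onto its cohomology $H^\bullet(\A^\bullet_U)=\R$, concentrated in degree $0$; I would take the contracting homotopy to be the one associated to the propagator $\mathbb{P}_0^\infty$ (equivalently, built from $d^*_{S^1}$). Tensoring with $\g_M[1]$ gives a deformation retract of $\E|_U$ onto the harmonic fields $\H(U):=\R\otimes \g_M[1]=\g_M[1]$. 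The crucial point, in contrast to the global computation on $S^1$ of Section \ref{sec-BV-revisited}, is that over an interval there is no $d\theta$-harmonic mode, so $\H(U)=\g_M[1]$ and hence $\widehat\OO(\H(U))[[\hbar]]=\A^\bullet_M(\W)$; this is exactly the target algebra in the statement.

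Next I would transfer the complex. The retract on fields induces a deformation retract $\widehat\OO(\E|_U)[[\hbar]]\rightleftarrows \A^\bullet_M(\W)$ on functionals. The free (linearized) differential is $Q=d_{S^1}+\nabla$: the $d_{S^1}$-part is the internal differential that the chosen homotopy contracts, while $\nabla$ preserves the constant harmonic mode. Hence the transferred free differential on $\A^\bullet_M(\W)$ is simply $\nabla$.

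The main work is then to transfer the remaining operator $\{\rho(I)[L],-\}_L+\hbar\Delta_L$ (with $I=\gamma$) as a perturbation, via the homological perturbation lemma. The transferred perturbation is a sum of Feynman-graph weights over the configuration spaces $U[k]$, with internal edges labeled by the homotopy (the propagator $\widetilde{\mathbb{P}}$) and vertices labeled by $\gamma$. I expect this to be the main obstacle: one must show that exactly the \emph{Moyal} commutator emerges, rather than merely the Poisson bracket. The mechanism is identical to the boundary-integral analysis of Section \ref{section-configuration}: the two-point contributions, weighted by the values $\pm\tfrac12$ of the propagator at the two ends of $U$, assemble $\gamma\star(-)-(-)\star\gamma$, while strata where three or more points collide, and graphs with tadpoles, drop out for type and dimension reasons (as in Lemma \ref{lem:self-loop-vanishing} and the codimension analysis of Section \ref{section-configuration}). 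The combination of $\hbar\Delta_L$ with the propagator produces precisely the higher-order terms of the $\star$-product, exactly as in the prototype Theorem \ref{thm-weyl}. The upshot is that the transferred perturbation equals $\tfrac{1}{\hbar}[\gamma,-]_\star$, so the total transferred differential on $\A^\bullet_M(\W)$ is $\nabla+\tfrac{1}{\hbar}[\gamma,-]_\star$, Fedosov's flat connection.

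Finally I would record that the homological perturbation lemma guarantees the structure maps of the transfer are quasi-isomorphisms, and check compatibility with the data defining $\mathrm{Obs}^q(U)$: the support condition and the homotopic RG-flow equation are preserved because the retract is local on the small interval $U$, which is the observable-theoretic content of Costello--Gwilliam's framework \cite{Kevin-Owen}. This yields the desired quasi-isomorphism $\mathrm{Obs}^q(U)\simeq(\A^\bullet_M(\W),\,\nabla+\tfrac{1}{\hbar}[\gamma,-]_\star)$.
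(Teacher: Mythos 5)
Your proposal is correct in substance but is packaged differently from the paper's proof. The paper does not use homotopy transfer: it filters $\widehat\OO(\E)[[\hbar]]$ by powers of $\A^\bullet_M$ and $\hbar$, observes that the associated graded differential is $d_{S^1}$, and computes the $E_1$-page using $H^*(\E(U)^*, d_{S^1})\iso \delta_p\otimes(\g_M[1])^*$ for a point $p\in U$; since $\delta_p$ sits in degree $0$ the spectral sequence degenerates and $\text{Obs}^q(U)$ is quasi-isomorphic to $\Sym_{\A^\bullet_M}\bracket{\delta_p\otimes(\g_M[1])^*}\iso\A^\bullet_M(\W)$. The quasi-isomorphism is then realized by the explicit cochain map $O\mapsto O[L]$ obtained by running HRG flow on $\rho(\gamma)+\xi\,\delta_p(O)$, and the identification of the induced differential with $\nabla+\frac{1}{\hbar}[\gamma,-]_\star$ is exactly the configuration-space boundary computation you describe: the stratum where a $\gamma$-vertex collides with the insertion point contributes the values $\pm\frac{1}{2}$ of the lifted propagator on the two components of $\pa S^1[2]$ (the two sides of the collision, not the endpoints of $U$), assembling the Moyal commutator, while higher-codimension strata and tadpoles vanish. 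So the decisive analytic input is the same in both arguments; the difference is spectral sequence versus homological perturbation lemma. The paper's route avoids having to verify that the homotopy built from $\mathbb{P}_0^\infty$ restricted to $U$ satisfies the side conditions of a special deformation retract (a check your version would owe), since degeneration only uses that $\delta_p$ is concentrated in degree $0$; your route makes more transparent \emph{why} the answer is the Weyl bundle rather than the BV bundle, namely that an interval has no harmonic $1$-form, in exact parallel with the global computation of Section \ref{sec-BV-revisited}. Both treatments handle the Costello--Gwilliam support condition at the same informal level, so that is not a defect specific to your argument.
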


\begin{proof}  We consider the spectral sequence of the  filtration $F^k$ on $\widehat\OO(\E)[[\hbar]]$ defined by 
$$
F^k= \sum_{p+q\geq k}\A^p_M \hbar^q  \widehat\OO(\E)[[\hbar]]. 
$$
The differential of the graded complex is given by $d_{S^1}$.

Let $\E(U)=\A^\bullet_{U}\otimes_{\R} \g_M[1]$ be the fields supported on $U$. The $E_1$-page is given by the $d_{S^1}$-cohomology and 
\[
  H^*(\E(U)^*, d_{S^1})\iso \delta_p \otimes (\g_M[1])^*,
\]
where $p\in U$ is an arbitrary point and $\delta_p$ represents the $\delta$-function distribution at $p$
\[
 \delta_p:  f\to f_0(p), \quad f=f_0+f_1, \quad f_i\in \A^i_U.  
\]
Since $\delta_p$ is concentrated in degree $0$, it follows easily that the cochain complex of local quantum observables is quasi-isomorphic to the $E_1$-page of this spectral sequence. At the $E_1$-page, 
\[
  \text{Obs}(U) \iso \Sym_{\A^\bullet_M}( \delta_p \otimes (\g_M[1])^*)\iso \Sym_{\A^\bullet_M}( \g_M[1])^*)=\A^\bullet_M(\W). 
\]
We still need to analyze the differential.

Let $O\in \A^\bullet_M(\W)$ and write 
\[
   \delta_p(O)\in \Sym_{\A^\bullet_M}( \delta_p \otimes (\g_M[1])^*)
\]
for the $\A^\bullet_M$-valued distribution on the product of $S^1$'s under the above identification. We obtain an observable $O[L]\in \text{Obs}^q(U)$ by the formula 
\[
\rho(I)[L]+\xi\cdot O[L]=\lim_{\epsilon\to 0}W(\widetilde{\mathbb{P}}_\epsilon^L,\rho(I)+\xi\cdot \delta_p(O)), \quad \xi^2=0,
\]
where the limit is well-defined as before. We only need to show that the map
\[
  \A^\bullet_M(\W)\to \text{Obs}(U), \quad O\to O[L], 
\]
is a cochain morphism. This is similar to the argument in section \ref{subsection-QME} and we only sketch it here. 

Let
\[
  \pi: S^1[k]\to (S^1)^k
\]
be the blow down map. $O[L]$ is obtained via a graph integral on the subspace $\pi^{-1}(p\times (S^1)^{k-1})\subset S^1[k]$.  When we apply the  quantum differential, we end up with two types of boundary integrals over the configuration space. The first one is over
\[
  p\times \pa S^1[k-1],
\]
which vanishes by the quantum master equation applied to $\rho(\gamma)$ as in Theorem \ref{thm: QME=Fedosov's equation}. The second one is over
\[
  \pi^{-1}(p\times p\times (S^1)^{k-2}),
\]
which again consists of two connected components. This boundary integral in the $L\to 0$ limit gives 
\[
   \{\rho(\gamma), \delta_p(O)\}_{L\to 0}=\delta_p([\gamma/\hbar, O]_\star).
\]
It follows that
\[
\lim_{L\to 0}Q_L O[L]+ \{\rho(I)[L],O[L]\}_L+\hbar \Delta_LO[L]=\nabla \delta_p(O)+ \delta_p ([\gamma, O]_\star)=\delta_p(\nabla O+[\gamma/\hbar, O]_\star). 
\]
Running the renormalization group flow backward, we find
\[
Q_L O[L]+ \{\rho(I)[L],O[L]\}_L+\hbar \Delta_LO[L]=(\nabla O+[\gamma/\hbar, O]_\star)[L]
\]
as expected. 
\end{proof}

\begin{rmk} Let $U_1, U_2$ be two disjoint open intervals and $U_1, U_2\subset U$ contained in a bigger open interval $U$. One important feature of observables is the existence of a factorization product of cochain complexes 
\[
   \text{Obs}^q(U_1)\otimes \text{Obs}^q(U_2)\to \text{Obs}^q(U). 
\]
It can be shown via similar arguments on configuration space that under the equivalence in Theorem \ref{thm-local-obs}, this product is identified with the Moyal product on $\A^\bullet_M(\W)$. Alternatively, one can identify the factorization algebra, $\text{Obs}^q$, via a localization/descent construction as detailed in \cite{GGW}.
\end{rmk}

\subsubsection{Global observable}\label{sec:global-observable} Consider the cochain complex of global quantum observables at scale $L$ 
\[
\text{Obs}^q (S^1)[L] := \left ( \widehat\OO (\mathcal{E})[[\hbar]], Q_L + \{\rho(I)[L], - \}_L + \hbar \Delta_L \right ).
\]
A global quantum observable is an assignment $L \mapsto O[L] \in \text{Obs}^q (S^1)[L]$ compatible with HRG flow (as explained in the previous section).

HRG flow defines a homotopy equivalence between the complexes of global quantum observables at different scales (this follows from compatibility between the QME and RG flow which is Lemma \ref{lem:sqzero} , see \cite{Kevin-book} for further details) and for simplicity we consider the complex of observables at scale $L= \infty$; we will denote this complex by $\text{Obs}^q$.  It follows that (up to an explicit homotopy equivalence) we have the quasi-isomorphism 
\[
\text{Obs}^q \simeq \left ( \widehat\OO (\H) , Q_\infty +\{  i_\H^*\rho(I)[\infty]  , - \}_\infty + \hbar \Delta_\infty \right ),
\]
where $\H$ denotes the space of harmonic fields $
\H := \H^\ast (S^1) \otimes \g_M[1]
$ and we have kept the same notations as in Section \ref{sec-BV-revisited}. Recall that $\g_M[1]=\A^\bullet_M(T_M)$ implies the identification \begin{align}\label{nifty-obs}
\widehat\OO(\H) = \A^\bullet_M(\widehat\Omega^{-\bullet}_{TM}). 
\end{align}

\begin{thm}\label{thm-harmonic-observable} Let $\gamma$ be a solution of Fedosov's equation \eqref{Fedosov-eqn} and let $\gamma_\infty$ be as in Definition \ref{defn-gamma-infty}. Then under the identification \eqref{nifty-obs}, we have
\[
  i_\H^* \rho(\gamma)[\infty]= \gamma_\infty. 
\]
Moreover, we have the quasi-isomorphic complexes
\[
\text{Obs}^q \simeq (\A^\bullet_M(\widehat\Omega_{TM}^{-\bullet})[[\hbar]], \nabla+\hbar \Delta+ \{\gamma_\infty,-\}_\Delta). 
\]
\end{thm}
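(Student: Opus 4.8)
The plan is to prove the theorem in two stages. First I would establish the identity $i_\H^*\rho(\gamma)[\infty] = \gamma_\infty$ by comparing the Feynman graph expansion of the homotopic renormalization group flow at infinite scale with the configuration-space formula of Definition \ref{defn-gamma-infty}. Then I would identify the three operators $Q_\infty$, $\Delta_\infty$, $\{-,-\}_\infty$ on the harmonic subspace $\H$ with $\nabla$, $\Delta$, $\{-,-\}_\Delta$, and feed these together with the first identity into the quasi-isomorphism $\text{Obs}^q \simeq (\widehat\OO(\H), Q_\infty + \{i_\H^*\rho(\gamma)[\infty],-\}_\infty + \hbar\Delta_\infty)$ recorded above.

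For the first stage I would start from the graph expansion of Section \ref{subsection:RG flow},
\[
\rho(\gamma)[\infty] = \sum_{\cG}\frac{\hbar^{g(\cG)}}{|\text{Aut}(\cG)|}\,W_\cG(\widetilde{\mathbb{P}}_0^\infty, \rho(\gamma)),
\]
with vertices labelled by the local functional $\rho(\gamma)$ and internal edges by the full propagator $\widetilde{\mathbb{P}}_0^\infty$. The propagator factors into an analytic piece on the configuration space and the combinatorial bivector $\tfrac12\omega^{ij}\pa_i\otimes\pa_j$; by Proposition \ref{prop:propagator-step-function} its analytic piece lifts to the function $P(\theta,u)=u-\tfrac12$ on $S^1[2]$, so that $\hbar\pa_{\widetilde{\mathbb{P}}_0^\infty}$ becomes exactly the operator $\hbar\pa_P$ of \eqref{gamma-infty}. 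Restricting along $i_\H^*$ and using $\widehat\OO(\H)=\A^\bullet_M(\widehat\Omega^{-\bullet}_{TM})$, the even summand $\R\otimes\g_M[1]$ of $\H$ supplies the bosonic fibre coordinates $y^i$ and the odd summand $\R\,d\theta\otimes\g_M[1]$ supplies the fermionic coordinates $dy^i$. The integration $\int_{S^1}$ built into each copy of $\rho(\gamma)$ forces exactly one $d\theta$-leg at that vertex; selecting this leg turns one $y$-derivative of $\gamma$ into a $dy$, and summing over the choice of leg reproduces the fibrewise de Rham differential $d_{TM}\gamma$ together with the attached one-form $d\theta^\alpha$. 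This is precisely the effect of the operator $D$ in \eqref{gamma-infty}, while the symmetric-algebra product on $\widehat\OO(\H)$ corresponds to the map $\Mult$. Matching propagators, vertices, configuration-space domains and the genus/automorphism weights term by term then gives $i_\H^*\rho(\gamma)[\infty]=\gamma_\infty$.

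For the second stage I would identify the operators directly on $\H$. Since harmonic forms on $S^1$ are both closed and coclosed, $d_{S^1}$ and $d^*_{S^1}$ vanish on $\H$, so the curvature-correction term in \eqref{QL} drops out and, because $\nabla$ acts only along $M$ and commutes with the harmonic projection on $S^1$, the induced derivation satisfies $Q_\infty|_\H = \nabla$. The infinite-scale BV kernel is $\mathbb{K}_\infty = (d\theta_1\otimes 1 - 1\otimes d\theta_2)\cdot\tfrac12\omega^{ij}\pa_i\otimes\pa_j\in\Sym^2(\H)$; contracting against its odd--even structure yields $\omega^{ij}\L_{\pa_{y^i}}\iota_{\pa_{y^j}}$, whence $\Delta_\infty=\Delta$ and therefore $\{-,-\}_\infty=\{-,-\}_\Delta$. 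Substituting $i_\H^*\rho(\gamma)[\infty]=\gamma_\infty$ and these three identifications into the quasi-isomorphism above produces the complex $(\A^\bullet_M(\widehat\Omega_{TM}^{-\bullet})[[\hbar]], \nabla+\hbar\Delta+\{\gamma_\infty,-\}_\Delta)$. The curvature does not appear explicitly in this differential but is encoded in $Q_\infty^2 = -\{\rho(R_\nabla),-\}_\infty$, which matches $\nabla^2=-\{d_{TM}(R_\nabla),-\}_\Delta$; square-zeroness of the total differential is guaranteed by the quantum master equation for $\gamma_\infty$ (Lemma \ref{lem-naive-differential}), itself a consequence of Theorem \ref{thm: QME=Fedosov's equation}.

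The main obstacle is the term-by-term matching in the first stage. The delicate points are the explicit evaluation of $\widetilde{\mathbb{P}}_0^\infty$ and the verification that the $\int_{S^1}$-factor at each vertex contributes the data $d\theta^\alpha\otimes d_{TM}$ of the operator $D$ with the correct Koszul signs and combinatorial normalizations; equivalently, one must exhibit a weight-preserving bijection between the field-theoretic Feynman graphs integrated over $S^1[V(\cG)]$ and the abstract graphs of \eqref{graph-formula}. Once the propagator and vertices are correctly matched, the operator identifications and the final assembly of the quasi-isomorphism are formal.
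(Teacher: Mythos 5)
Your proposal is correct and follows essentially the same route as the paper: both reduce the first identity to matching $\pa_{\widetilde{\mathbb P}_0^\infty}$ with $\pa_P$ and the vertex $\rho(\gamma)|_\H$ with $d_{TM}\gamma$ via formula \eqref{feynman-formula}, and both conclude by observing $Q_\infty|_\H=\nabla$ and $\Delta_\infty=\Delta$ under the identification \eqref{nifty-obs}. Your version simply spells out in more detail the bookkeeping (the $d\theta$-leg at each vertex producing $D$, and the vanishing of the curvature-correction term on harmonic fields) that the paper treats as a direct consequence.
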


\begin{proof}
Recall that we can express RG flow for a functional $F$ by
\[
e^{W(\widetilde{\mathbb{P}}_\epsilon^L, F)/\hbar} = e^{\hbar \partial_{\widetilde{\mathbb{P}}_\epsilon^L}} e^{F/\hbar}.
\]
So we need to show that (upon restricting to $\H$) we have
\[
e^{\gamma_\infty / \hbar} = e^{W(\widetilde{\mathbb{P}}_0^\infty, \rho(\gamma))/\hbar} = e^{\hbar \partial_{\widetilde{\mathbb{P}}_0^\infty}} e^{\rho (\gamma)/\hbar}.
\]
But this is a direct consequence of formula \eqref{feynman-formula}. In fact, the propagator  $\pa_{\widetilde{\mathbb P}_0^\infty}$ coincides with $\pa_P$ in equation \eqref{feynman-formula}. The vertex $\rho(\gamma)$ coincides with $d_{TM}\gamma$ upon restricting to $\H$, where $y^i$ represents a functional with harmonic $0$-form input on $S^1$ and $dy^i=d_\W y^i$ represents a functional with harmonic $1$-form input on $S^1$. This implies 
\[
  i_\H^* \rho(\gamma)[\infty]= \gamma_\infty. 
\]
To show that the differentials also coincide, we simply need to observe that 
\[
Q_\infty|_{\H}=\nabla \quad \text{ and }  \quad \Delta_\infty=\Delta
\]
when restricted to $\H$ under the identification  \eqref{nifty-obs}.

\end{proof}

\subsubsection{Local to global factorization map}
Now we give the quantum field theoreical interpretation of Section \ref{section-Fedosov-BV}. This will be sketchy and idea-based to avoid introducing too much technicality from \cite{Kevin-Owen}.

There is a natural map of cochain complexes 
\[
   \text{Obs}^q(U)[L] \to \text{Obs}^q(S^1)[L], 
\]
which is the factorization map from local observables to global observables. The HRG flow gives a homotopy between different scales $L$. Our definition of 
$
  [-]_\infty
$
in equation \eqref{observable-map} is precisely the cochain map
\[
   \text{Obs}^q(U)[L=0]  \to \text{Obs}^q(S^1)[L=\infty] = \text{Obs}^q. 
\]

Moreover, the evaluation map of Section \ref{sect:integration} can be thought of as the expectation map
\[
\langle - \rangle : H^0 (\text{Obs}^q) \to \R ((\hbar)).
\]
The trace map $\Tr : C^\infty (M) [[\hbar]] \to \R((\hbar))$  of Section \ref{section-trace} is then constructed as follows. Let $f \in C^\infty (M)[[\hbar]]$ and let $O_f \in \text{Obs}^q (U)$ be a cochain representative, then
\[
\Tr (f) = \int_M \int_{\gamma_\infty} [ \sigma^{-1} (f) ]_\infty = \left \langle [O_f]_\infty \right \rangle.
\] 
The fact that the map $\Tr$ vanishes on $\star$-commutators is just a consequence of the local quantum observables forming a translation invariant factorization algebra on $S^1$.
Concretely, let $U_1 \subset S^1$ be an open interval on the circle and for $\vartheta \in S^1$, let $T_\vartheta U_1 \subset S^1$ be the translation of $U_1$ by $\vartheta$, then we have an isomorphism
\[
H^\ast (\text{Obs}^q (U_1)) \cong H^\ast (\text{Obs}^q (T_\vartheta U_1)).
\]
Let $\{O_1 (L)\} \in H^\ast (\text{Obs}^q (U_1))$ and $\{O_2 (L) \} \in H^\ast (\text{Obs}^q (U_2))$ be local quantum observables on disjoint intervals on $S^1$. Then we have
\[
\langle O_1 \star O_2 \rangle = \langle O_2 \star O_1 \rangle .
\]
To explicitly see the equivalence of the two point correlation functions we simply choose $\mu, \eta, \zeta \in S^1$ and consider the resulting isomorphisms induced by translation as described by the following figure.

\[
  \figbox{0.24}{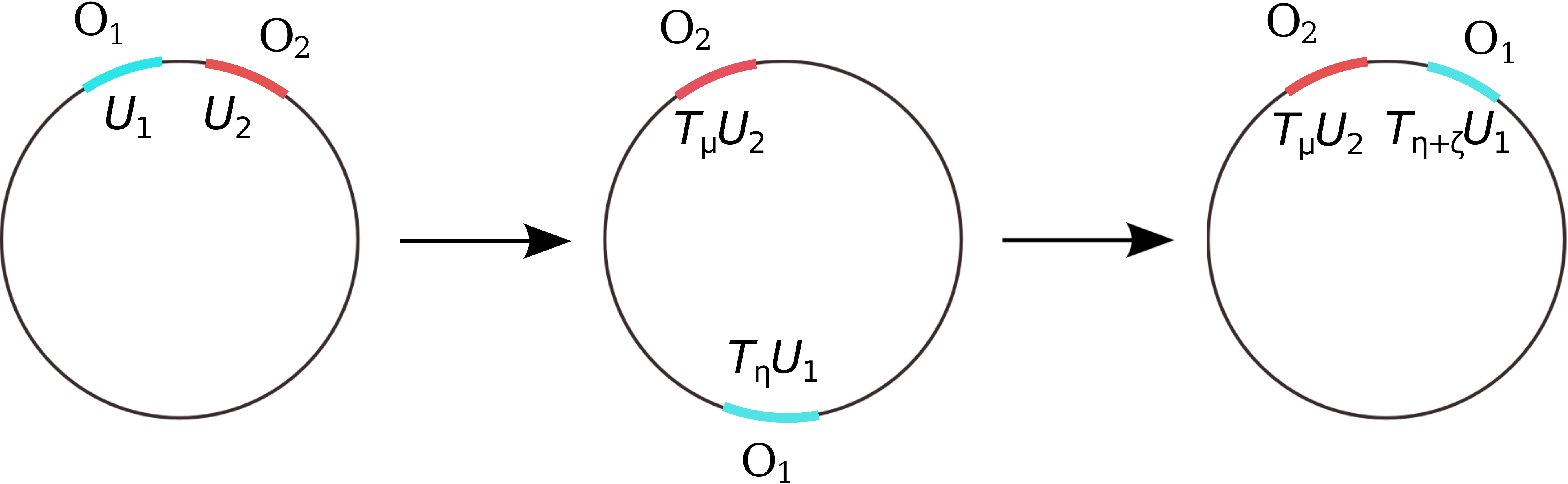}
\]

\appendix
\renewcommand*{\thesection}{\Alph{section}}

\section{Feynman diagrams}\label{Sec: Feynman}
In this section we describe the relevant Feynman diagrammatics  that are used in this paper. 
For a full exposition, see for example  \cite{QFT-graph}. 

\begin{defn}
A graph $\cG$ consists of the following data:
\begin{enumerate}
 \item A finite set of vertices $V(\cG)$;
 \item A finite set of half-edges $H(\cG)$;
 \item An involution $\sigma: H(\cG)\rightarrow H(\cG)$. The set of fixed points of this map is denoted by $T(\cG)$ and is
called the set of tails of $\cG$. The set of two-element orbits is denoted by $E(\cG)$ and is called the set of internal edges of
$\cG$;
 \item A map $\pi:H(\cG)\rightarrow V(\cG)$ sending a half-edge to the vertex to which it is attached;
 \item A map $g:V(\cG)\rightarrow \mathbb{Z}_{\geqslant 0}$ assigning a genus to each vertex.
\end{enumerate}
\end{defn}
It is clear how to construct a topological space $|\cG|$ from the above abstract data. A graph $\cG$ is called $connected$ if
$|\cG|$ is connected. The genus of the graph $\cG$ is defined to be 
\[
g(\cG):=b_1(|\cG|)+\sum_{v\in V(\cG)}g(v),
\] 
where
$b_1(|\cG|)$ denotes the first Betti number of $|\cG|$.

 Let $\E$ be a graded vector space over a base (coefficient) ring $R$. Let $\E^*$ be its $R$-linear dual. When $\E$ carries a topology, $\E^*$ denotes the continuous linear dual (see Conventions).  Let  
 $$
 \widehat\OO(\E):=\prod_{k\geq 0} \OO^{(k)}(\E), \quad \OO^{(k)}(\E)=\Sym^k_{R}(\E),
 $$ 
 denote the space of formal functions on $\E$. Let
\[
\OO^+(\E) \subset\widehat\OO(\mathcal{E})[[\hbar]]
\] 
be the subspace consisting of those
functions  which are at least cubic modulo $\hbar$ and the nilpotent ideal $\mathcal{I}$ in the base ring
$R$.  Let $F\in \OO^+(\E)[[\hbar]]$,  which can be expanded as
\[
F=\sum_{g,k\geq 0}\hbar^g F_{g}^{(k)}, \quad F_{g}^{(k)}\in \OO^{(k)}(\E).
\]
We view each $F_{g}^{(k)}$ as an
$S_k$-invariant linear map
\[
F_{g}^{(k)}: \mathcal{E}^{\otimes k}\rightarrow R.
\]

Fix a $P\in \Sym^2(\E)$ be an element of degree $0$, which will be called the \emph{propagator}. 
With $F$ and $P$, we will describe the {\it (Feynman) graph weights}
\[
W_\cG(\mathbb{P}_\epsilon^L,F)\in \OO^+(\E)
\] 
for any connected graph $\cG$. We label each vertex $v$ in $\cG$ of genus $g(v)$ and valency $k$ by
$F^{(k)}_{g(v)}$. This defines an assignment
\[
F(v):\mathcal{E}^{\otimes H(v)}\rightarrow R,
\]
where $H(v)$ is the set of half-edges of $\cG$ which are incident to $v$.
Next, we label each internal edge $e$ by the propagator 
\[
P_e=P\in\mathcal{E}^{\otimes H(e)},
\]
where $H(e)\subset H(\cG)$ is the two-element set consisting of the half-edges forming $e$. We can then contract
\[
\otimes_{v\in V(\cG)}F(v): \mathcal{E}^{H(\cG)}\rightarrow R
\]
with 
\[
\otimes_{e\in E(\cG)}P_e\in\mathcal{E}^{H(\cG)\setminus T(\cG)}
\] 
to yield an $R$-linear map
\[
W_\cG(P,F) : \mathcal{E}^{\otimes T(\cG)}\rightarrow R.
\]

\begin{defn}
We define the homotopic renormalization group flow operator (HRG) with respect to the propagator $P$ 
\[
   W(P, -): \OO^+(\E)\to \OO^+(\E), 
\]
by
\begin{equation}\label{RG-flow}
W(P, F):=\sum_{\cG}\frac{\hbar^{g(\cG)}}{\lvert \text{Aut}(\cG)\rvert}W_\cG(P, F),
\end{equation}
where the sum is over all connected graphs, and $\text{Aut}(\cG)$ is the automorphism group of $\cG$ as labelled graphs. It is straight-forward to check that $W(P,-)$ is well-defined on $\OO^+(\E)[[\hbar]]$. 

\end{defn}

Equivalently, it is useful to describe the HRG operator formally via the simple equation 
\[
e^{W(P, F)/\hbar}=e^{\hbar \partial_{P}} e^{F/\hbar}.
\]
Here $\pa_P$ is the second order operator on $\OO(\E)$ by contracting with $P$ (See Conventions). This formula is equivalent to the combinatorial one \eqref{RG-flow} by an explicit expansion of the contractions. See \cite{Kevin-book} for a thorough discussion on (homotopic) renormalization group flow operator.

\section{Configuration spaces}\label{sec: configuration}
We will briefly  recall the construction and basic facts  of  compactified configuration spaces. We refer the reader to \cites{AS2, Fulton-MacPherson} for further details. Let $M$ be a smooth manifold and let $V:=\{1,\cdots,n\}$ for some integer $n\geq 2$.  The compactified configuration space $M[V]$ is  a smooth manifold with corners as we now recall. 
Let $S\subset V$ be any subset with $|S|\geq 2$. We denote by $M^S$ the set of all maps from $S$ to $M$ and by $\Delta_S \subset M^S$ the small diagonal. The real oriented blow up of $M^S$ along $\Delta_S$, denoted as $\text{Bl}(M^S,\Delta_S)$, is a manifold with boundary whose interior is diffeomorphic to $M^S\setminus\Delta_S$ and whose boundary is diffeomorphic to the unit sphere bundle associated to the normal bundle of $\Delta_S$ inside $M^S$.

\begin{defn}\label{defn-configuration}
Let $V$ be as above and let $M_0^V$ be the configuration space of $n=|V|$ pairwise different points in $M$,
\[
M_0^V:=\{(x_1,\cdots, x_n)\in M^{V}: x_i\not=x_j\ \text{for}\ i\not=j\}.
\]
There is the following embedding:
\[
M_0^V\hookrightarrow M^V\times\prod_{|S|\geq 2}\text{Bl}(M^S,\Delta_S).
\]
The space $M[V]$ is defined as the closure of the above embedding.
\end{defn}

\begin{rmk}
We will denote $M[V]$ by $M[n]$ for $V=\{1,\cdots, n\}$.
\end{rmk}
It follows from the definition of $M[V]$ that for every subset $S\subset V$, there is a natural projection
\[
\pi_S: M[V]\rightarrow M[S].
\]
and
\[
\pi:  M[n]\to M^n. 
\]
\begin{eg}\label{example: two-point-configuration}
The space $S^1[2]$ is a cylinder $S^1\times [0,1]$, which can be explicitly constructed by cutting along the diagonal in $S^1\times S^1$. Its boundary is given by $\partial S^1[2]=S^1\times S^0$, i.e.,  two copies of $S^1$.  More explicitly, let us fix a parametrization  
\[
  S^1=\{e^{2\pi i \theta}|0\leq \theta <1\}.
\]
Then $S^1[2]$ is parametrized by a cylinder 
\[
  S^1[2]=\{(e^{2\pi i\theta},u)| 0\leq \theta<1, 0\leq u\leq 1\}. 
\]
Further, 
\[
   \pi: S^1[2]\to (S^1)^2, \quad (e^{2\pi i \theta},u)\mapsto (e^{2\pi i (\theta+u)}, e^{2\pi i \theta}).
\]
\end{eg}
We now consider the lifting of propagators to compactifications of configuration spaces.
\begin{prop}\label{prop:propagator-step-function}
Let $S^1$ be the interval $[0,1]$ with $0$ and $1$ identified. The analytic part of the  propagator $P:={P}_0^\infty(\theta_1,\theta_2)$ is the following periodic  function of $\theta_1-\theta_2\in\R\backslash\mathbb{Z}$
\begin{equation}\label{eqn:propagator-formula} 
P_0^\infty(\theta_1, \theta_2)=\theta_1-\theta_2-\frac{1}{2},\quad \text{if}\hspace{2mm}  0< \theta_1-\theta_2<1. 
\end{equation}
\end{prop}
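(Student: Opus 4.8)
The plan is to compute the $L\to\infty$ limit of the analytic propagator directly from its heat-kernel definition $P_0^\infty=\int_0^\infty (d^*_{S^1}\otimes 1)K_t\,dt$ and to recognize the answer as the Fourier series of the $1$-periodic sawtooth function. First I would apply $(d^*_{S^1}\otimes 1)$ to the explicit kernel $K_t=h_t(\theta_1,\theta_2)(d\theta_1\otimes 1-1\otimes d\theta_2)$. Since $d^*_{S^1}$ annihilates the $0$-form factor $1\otimes d\theta_2$ and sends the $1$-form $h_t\,d\theta_1$ to $\partial_{\theta_1}h_t$ (up to the fixed sign convention for $d^*$), the integrand reduces to a scalar multiple of $\partial_{\theta_1}h_t$, where $h_t(\theta_1,\theta_2)=\frac{1}{\sqrt{4\pi t}}\sum_{m\in\Z}e^{-(\theta_1-\theta_2+m)^2/4t}$ is the scalar heat kernel on $S^1$. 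As $h_t$ depends only on $s:=\theta_1-\theta_2$, the whole problem is reduced to evaluating $\int_0^\infty \partial_s h_t(s)\,dt$ for $0<s<1$.

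The main obstacle is the zero mode: $\int_0^\infty h_t\,dt$ diverges, because $h_t(s)\to 1$ as $t\to\infty$ (the heat flow equilibrates to the constant harmonic mode on the unit circle). Consequently one cannot naively integrate the winding-number expansion term by term in $t$ over $(0,\infty)$: each Gaussian term $\int_0^\infty\frac{1}{\sqrt{4\pi t}}\frac{s+m}{2t}e^{-(s+m)^2/4t}\,dt$ evaluates to $\tfrac12\operatorname{sgn}(s+m)$, and $\sum_m\operatorname{sgn}(s+m)$ is divergent. The resolution is that $\partial_s$ kills precisely this divergent constant mode, so the subtraction of the zero mode must be carried out \emph{before} integrating in $t$. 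Conceptually this says that $P_0^\infty$ is the kernel of $d^*_{S^1}$ composed with the Green's operator (the inverse Laplacian on the orthogonal complement of the constants), i.e.\ the derivative of the Green's function of $S^1$; this is the content of Remark \ref{remark: propagator}.

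To carry this out rigorously I would pass to the Laplace eigenbasis $\{e^{2\pi i k s}\}_{k\in\Z}$, equivalently apply Poisson summation to the Gaussian sum, to write $h_t(s)=\sum_{k\in\Z}e^{-4\pi^2 k^2 t}e^{2\pi i k s}$. Differentiating gives $\partial_s h_t(s)=\sum_{k\in\Z}2\pi i k\,e^{-4\pi^2 k^2 t}e^{2\pi i k s}$, in which the $k=0$ term drops out. Now each summand decays exponentially in $t$ uniformly away from $k=0$, so Fubini justifies interchanging the sum with the $t$-integral, yielding
\[
\int_0^\infty \partial_s h_t(s)\,dt=\sum_{k\neq 0}\frac{2\pi i k}{4\pi^2 k^2}\,e^{2\pi i k s}=\sum_{k\neq 0}\frac{i}{2\pi k}\,e^{2\pi i k s}.
\]

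Finally I would identify this series with the sawtooth function. For $0<s<1$ the classical Fourier expansion gives $\sum_{k\neq 0}\frac{i}{2\pi k}e^{2\pi i k s}=-\frac{1}{\pi}\sum_{k\geq 1}\frac{\sin(2\pi k s)}{k}=s-\tfrac12$, so with the sign convention fixed in the main text we obtain $P_0^\infty(\theta_1,\theta_2)=\theta_1-\theta_2-\tfrac12$ on $0<\theta_1-\theta_2<1$, as claimed. As an independent check of both the value and the normalization I would alternatively solve the defining ODE directly: off the diagonal $P_0^\infty=\partial_{\theta_1}G$, where $G(s)=\tfrac12 B_2(s)=\tfrac12\!\left(s^2-s+\tfrac16\right)$ (the second Bernoulli polynomial) is the Green's function of $-\partial_\theta^2$ on $S^1$, characterized by $-\partial_s^2 G=\delta(s)-1$ and $\int_0^1 G=0$; differentiating recovers $\partial_s G=s-\tfrac12$, confirming the linear profile and its slope.
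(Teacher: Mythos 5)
Your proposal is correct and follows essentially the same route as the paper's proof: both pass to the Fourier eigenbasis of the Laplacian via Poisson summation, integrate the nonzero modes term by term in $t$ to obtain $\sum_{n\neq 0}\frac{i}{2\pi n}e^{2\pi i n(\theta_1-\theta_2)}$, and identify this with the Fourier expansion of the sawtooth $\theta_1-\theta_2-\tfrac12$. Your additional remarks on the divergent zero mode and the Green's-function/Bernoulli-polynomial cross-check are sound elaborations of points the paper leaves implicit, but they do not change the argument.
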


\begin{proof}
Recall that the heat kernel on $S^1=\R/\Z$ is given explicitly by
\begin{align*}
K_t(\theta_1,\theta_2)&=\frac{1}{\sqrt{4\pi t}}\sum_{n\in\Z}e^{-\frac{(\theta_1-\theta_2+n)^2}{4t}}(d\theta_1\otimes 1-1\otimes d\theta_2)\\[1ex]
&=\sum_{n\in\Z}e^{-4\pi^2n^2t}\cdot e^{2\pi in(\theta_1-\theta_2)}(d\theta_1\otimes 1-1\otimes d\theta_2),
\end{align*}
where in the last identity we have used the Poisson summation formula. Hence,
\begin{align*}
{P}_0^\infty(\theta_1,\theta_2)&=\int_0^\infty d^*_{\theta^1}(K_t(\theta_1,\theta_2)) dt\\[1ex]
&=\int_0^\infty\left(\sum_{n\in\Z}(2\pi in)\cdot e^{-4\pi^2n^2t}\cdot e^{2\pi in(\theta_1-\theta_2)}\right)dt\\[1ex]
&=\sum_{n\in\Z\setminus \{0\}}\frac{i}{2\pi n}e^{2\pi in(\theta_1-\theta_2)}.
\end{align*}
It is easy to check that the Fourier coefficients of the function in equation (\ref{eqn:propagator-formula}) are the same as above.
\end{proof}

Since $\mathbb{P}_0^L=\mathbb{P}_0^\infty-\mathbb{P}_L^\infty$ and $\mathbb{P}_L^\infty$ is a smooth kernel, the following lemma is then immediate.

\begin{lem}\label{lem: lifting-propagator}
For all $0 <L \leq \infty$, the propagator ${P}_0^L$ can be lifted to a smooth function on $S^1[2]$. Let us denote this lift by $\widetilde{{P}}_0^L$. In particular, the restriction of $\widetilde{{P}}_0^L$ to the boundary components of $\pa S^1[2]$ are the constant functions $\frac{1}{2}$ and $-\frac{1}{2}$ respectively. 
\end{lem}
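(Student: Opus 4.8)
The plan is to reduce everything to the explicit formula already established in Proposition \ref{prop:propagator-step-function}, combined with the geometry of the cut cylinder $S^1[2]$ from Example \ref{example: two-point-configuration}. First I would dispose of the case $L=\infty$. Recall that $S^1[2]$ is parametrized by $(e^{2\pi i\theta},u)$ with $0\le\theta<1$, $0\le u\le 1$, and that the blow-down map sends this point to $(e^{2\pi i(\theta+u)},e^{2\pi i\theta})\in(S^1)^2$; in particular the fiber coordinate satisfies $\theta_1-\theta_2=u$. Substituting this into the expression $P_0^\infty(\theta_1,\theta_2)=\theta_1-\theta_2-\tfrac12$, valid on the open stratum $0<\theta_1-\theta_2<1$ by Proposition \ref{prop:propagator-step-function}, yields $\widetilde{P}_0^\infty=u-\tfrac12$. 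This is manifestly smooth (indeed affine-linear) on the whole cylinder $S^1[2]$, and its restriction to the two boundary circles $\{u=0\}$ and $\{u=1\}$ equals $-\tfrac12$ and $+\tfrac12$ respectively. The one genuinely conceptual point is that the sawtooth jump of $P_0^\infty$ across the diagonal of $(S^1)^2$ is resolved precisely by the real-oriented blow-up: the two one-sided limits at the diagonal become the values on the two distinct boundary components of the cylinder.

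Next I would pass to finite $L$ using the decomposition $P_0^L=P_0^\infty-P_L^\infty$ noted before the lemma. For $L>0$ the analytic kernel $P_L^\infty=\int_L^\infty d^*_{S^1}K_t\,dt$ is a genuinely smooth function on $(S^1)^2$, since integrating only over $t\ge L$ cuts off the short-time singularity responsible for the diagonal jump. Because the blow-down $\pi:S^1[2]\to(S^1)^2$ is smooth, the pullback $\pi^*(P_L^\infty)$ is smooth on $S^1[2]$; therefore
\[
\widetilde{P}_0^L:=\widetilde{P}_0^\infty-\pi^*(P_L^\infty)
\]
furnishes the desired smooth lift of $P_0^L$.

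It remains only to read off the boundary values for finite $L$. Here I would use that $\pi$ maps $\partial S^1[2]$ onto the diagonal $\Delta\subset(S^1)^2$, so that $\pi^*(P_L^\infty)$ restricts on the boundary to $P_L^\infty(\theta,\theta)$. From the Fourier expansion in the proof of Proposition \ref{prop:propagator-step-function} one has $P_L^\infty(\theta_1,\theta_2)=\sum_{n\neq 0}\frac{i}{2\pi n}e^{-4\pi^2 n^2 L}e^{2\pi i n(\theta_1-\theta_2)}$, and the $n\leftrightarrow -n$ antisymmetry of the coefficients forces $P_L^\infty(\theta,\theta)=0$. Hence the boundary values of $\widetilde{P}_0^L$ agree with those of $\widetilde{P}_0^\infty$, namely the constant functions $\pm\tfrac12$ (that is, $-\tfrac12$ on $\{u=0\}$ and $+\tfrac12$ on $\{u=1\}$), as claimed. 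I do not anticipate any real obstacle: the only substantive ingredient is the geometric resolution of the diagonal discontinuity by the blow-up, and everything else is bookkeeping with the explicit Fourier series already in hand.
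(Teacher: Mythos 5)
Your proof is correct and follows essentially the same route as the paper, which treats the lemma as immediate from the decomposition $P_0^L=P_0^\infty-P_L^\infty$ together with the smoothness of $P_L^\infty$ and the explicit lift $u-\tfrac{1}{2}$ from Proposition \ref{prop:propagator-step-function}. Your additional verification that $P_L^\infty$ vanishes on the diagonal (by the $n\leftrightarrow -n$ antisymmetry of the Fourier coefficients) is exactly the detail needed to pin down the boundary values, and it is consistent with the paper's tadpole-vanishing observation.
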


\begin{rmk}\label{remark: propagator}
 In particular, we will let $P$ denote the lifting of the $\infty$-scale propagator on $S^1[2]$. More precisely, in terms of the parametrization of $S^1[2]$ in Example \ref{example: two-point-configuration}: 
 \begin{equation}\label{eqn: infinity-propagator}
 P(\theta, u)=u-\frac{1}{2}.
 \end{equation}
\end{rmk}

Let us  recall the stratification of the configuration spaces $M[V]$. The compactified configuration space $M[V]$ can be written as the disjoint union of open strata
\[
M[V]=\bigcup_{\mathcal{S}}M(\mathcal{S})^0.
\]
Here $\mathcal{S}$ denotes a collection of subsets  of the index set $V$ such that
\begin{enumerate}
\item $\mathcal{S}$ is nested: if  $S_1,S_2$ belong to $\mathcal{S}$, then either they are disjoint or one contains the other;
\item Every subset $S \in \mathcal{S}$ is of cardinality $\geq 2$.
\end{enumerate}

A useful fact is that the open stratum $M(\mathcal{S})^0$ (and the corresponding closed stratum $M(\mathcal{S})$) is of codimension $|\mathcal{S}|$. For the consideration of the boundary integrals, we only need those strata of codimension $1$ which correspond to collections $\mathcal{S}=\{S\}$ consisting of a {\em single} subset $S\subset V$ with cardinality $\geq 2$.  We will denote such a codimension $1$ closed stratum simply by $M(S)$. Recall that when $S=V$, $M (S)^0$ can be described as the sphere bundle of the normal bundle of the small diagonal $\Delta_V\subset (M)^V$. 

\begin{eg}\label{example: boudary-circle-configuration}
  The codimension 1 strata of $S^1[m]$ can be described as follows. Let $I\subset \{1,\cdots, m\}$ be a subset. Let
\[
D_I=\{\{\theta_1, \cdots, \theta_m\}\in (S^1)^m| \theta_\alpha=\theta_\beta\ \text{for}\ \alpha, \beta\in I\}\subset (S^1)^m 
\]
be the associated partial diagonal. Then 
\[
   \pa S^1[m]=\bigcup_{|I|\geq 2} \pi^{-1}(D_I). 
\] 
\end{eg}

\section*{Acknowledgements} 

The authors gratefully acknowledge support from the Simons Center for Geometry and Physics, Stony Brook University, especially for the workshop ``Homological methods in quantum field theory'' at which some of the research for this paper was performed. R Grady and S Li  would like to thank the Perimeter Institute for Theoretical Physics for its hospitality and working conditions during the writing of parts of the current paper. Research at Perimeter Institute is supported by the Government of Canada through Industry Canada and by the Province of Ontario through the Ministry of Economic Development and Innovation.

Additionally, the authors would like to thank Kevin Costello, Owen Gwilliam, and Steve Rosenberg for feedback on various versions of the present work. We are also very grateful to the referees for many useful clarifications and suggestions.  

\begin{bibdiv}
\begin{biblist}

\bib{AM}{article}{
    AUTHOR = {Alekseev, A.},
    author={Mn{\"e}v, P.},
     TITLE = {One-dimensional {C}hern-{S}imons theory},
   JOURNAL = {Comm. Math. Phys.},
  FJOURNAL = {Communications in Mathematical Physics},
    VOLUME = {307},
      YEAR = {2011},
    NUMBER = {1},
     PAGES = {185--227}
}

\bib{AKSZ}{article}{
    AUTHOR = {Alexandrov, M.},
    author={Kontsevich, M.},
     author={Schwarz, A.},
     author={Zaboronsky, O.},
     TITLE = {The geometry of the master equation and topological quantum
              field theory},
   JOURNAL = {Internat. J. Modern Phys. A},
  FJOURNAL = {International Journal of Modern Physics A. Particles and
              Fields. Gravitation. Cosmology. Nuclear Physics},
    VOLUME = {12},
      YEAR = {1997},
    NUMBER = {7},
     PAGES = {1405--1429},
%      ISSN = {0217-751X},
%   MRCLASS = {81T70 (58D29 58F05)},
%  MRNUMBER = {1432574 (98a:81235)},
%MRREVIEWER = {Luigi Martina},
%       DOI = {10.1142/S0217751X97001031},
%       URL = {http://dx.doi.org/10.1142/S0217751X97001031},
}

\bib{Alvarez}{article}{
   author={Alvarez-Gaum\'e, L.},
   title={Supersymmetry and the Atiyah-Singer index theorem},
   journal={Comm. Math. Phys.},
   volume={90},
   date={1983},
   number={2},
   pages={161--173},
%   issn={0010-3616},
%   review={\MR{714431}},
}

\bib{AB}{article}{
   author={Atiyah, M. F.},
   author={Bott, R.},
   title={The moment map and equivariant cohomology},
   journal={Topology},
   volume={23},
   date={1984},
   number={1},
   pages={1--28}
}

\bib{AS2}{article}{
   author={Axelrod, S.},
   author={Singer, I. M.},
   title={Chern-Simons perturabation theory. II},
   journal={J. Differential Geometry},
   volume={39},
   date={1994},
   pages={173--213},
}

\bib{BV}{article}{
   author={Batalin, I. A.},
   author={Vilkovisky, G. A.},
   title={Gauge algebra and quantization},
   journal={Phys. Lett. B},
   volume={102},
   date={1981},
   number={1},
   pages={27--31}
}

\bib{5author}{article}{
    AUTHOR = {Bayen, F.},
    author={Flato, M.},
    author={Fronsdal, C.},
    author={Lichnerowicz, A.},
     author={Sternheimer, D.},
     TITLE = {Deformation theory and quantization. {I}. {D}eformations of
              symplectic structures},
   JOURNAL = {Ann. Physics},
  FJOURNAL = {Annals of Physics},
    VOLUME = {111},
      YEAR = {1978},
    NUMBER = {1},
     PAGES = {61--110},
      ISSN = {0003-4916},
   MRCLASS = {81.58 (58F05 58H05)},
  MRNUMBER = {0496157 (58 \#14737a)},
MRREVIEWER = {Hans Tilgner},
}

\bib{BHJ}{book}{
    author={Berger, T.},
    AUTHOR = {Hirzebruch, F.},
    author={Jung, R.},
     TITLE = {Manifolds and modular forms},
    SERIES = {Aspects of Mathematics, E20},
      NOTE = {With appendices by Nils-Peter Skoruppa and by Paul Baum},
 PUBLISHER = {Friedr. Vieweg \& Sohn},
   ADDRESS = {Braunschweig},
      YEAR = {1992},
     PAGES = {xii+211},
      ISBN = {3-528-06414-5},
   MRCLASS = {57-02 (11F11 55N22 55R10 57R20 58G10)},
  MRNUMBER = {1189136 (94d:57001)},
MRREVIEWER = {Andrew J. Baker},
}

\bib{BCOV}{article}{
   author={Bershadsky, M.},
   author={Cecotti, S.},
   author={Ooguri, H.},
   author={Vafa, C.},
   title={Kodaira-Spencer theory of gravity and exact results for quantum
   string amplitudes},
   journal={Comm. Math. Phys.},
   volume={165},
   date={1994},
   number={2},
   pages={311--427},
 %  issn={0010-3616},
%   review={\MR{1301851}},
}

\bib{QFT-graph}{article}{
   author={Bessis, D.},
   author={Itzykson, C.},
   author={Zuber, J. B.},
   title={Quantum field theory techniques in graphical enumeration},
   journal={Adv. in Appl. Math.},
   volume={1},
   date={1980},
   number={2},
   pages={109--157},
 %  issn={0196-8858},
%   review={\MR{603127}},
 %  doi={10.1016/0196-8858(80)90008-1},
}

\bib{Brylinski}{article}{
   author={Brylinski, J.L.},
   title={A differential complex for Poisson manifolds},
   journal={J. Differential Geom.},
   volume={28},
   date={1988},
   number={1},
   pages={93--114},
%   issn={0022-040X},
%   review={\MR{950556}},
}

\bib{Poisson-sigma}{article}{
    AUTHOR = {Cattaneo, A. S.},
    author={Felder, G.},
     TITLE = {A path integral approach to the {K}ontsevich quantization
              formula},
   JOURNAL = {Comm. Math. Phys.},
  FJOURNAL = {Communications in Mathematical Physics},
    VOLUME = {212},
      YEAR = {2000},
    NUMBER = {3},
     PAGES = {591--611},
%      ISSN = {0010-3616},
%   MRCLASS = {53D55 (58D30 81S40 81T18 81T40 81T45)},
%  MRNUMBER = {1779159 (2002b:53141)},
%MRREVIEWER = {Martin Schlichenmaier},
%       DOI = {10.1007/s002200000229},
%       URL = {http://dx.doi.org/10.1007/s002200000229},
}

\bib{Kevin-book}{book}{
   author={Costello, K.},
   title={Renormalization and effective field theory},
   series={Mathematical Surveys and Monographs},
   volume={170},
   publisher={American Mathematical Society},
   place={Providence, RI},
   date={2011},
   pages={viii+251},
%   isbn={978-0-8218-5288-0},
%   review={\MR{2778558}},
	}

\bib{Kevin-HCS}{article}{
   author={Costello, K.},
   title={A geometric construction of the Witten genus, II},
   note={arXiv:1112.0816[Math.QA]},
  }

\bib{Kevin-Owen}{article}{
   author={Costello, K.},
   author={Gwilliam, O.},
   title={Factorization algebras in quantum field theory},
   eprint={http://www.math.northwestern.edu/~costello/renormalization}
  }
  
 \bib{Kevin-Si}{article}
 {
 author={Costello, K.},
 author={Li, S.},
 title={Quantum BCOV theory on Calabi-Yau manifolds and the higher genus {B}-model},
 note={arXiv: 1201.4501[math.QA]}
 } 
  
\bib{dolgushev-formality}{article}{
    AUTHOR = {Dolgushev, V. A.},
     TITLE = {A formality theorem for {H}ochschild chains},
   JOURNAL = {Adv. Math.},
  FJOURNAL = {Advances in Mathematics},
    VOLUME = {200},
      YEAR = {2006},
    NUMBER = {1},
     PAGES = {51--101}
}

\bib{dolgushev}{article}{
    AUTHOR = {Dolgushev, V. A.},
    author={Rubtsov, V. N.},
     TITLE = {An algebraic index theorem for {P}oisson manifolds},
   JOURNAL = {J. Reine Angew. Math.},
  FJOURNAL = {Journal f\"ur die Reine und Angewandte Mathematik. [Crelle's
              Journal]},
    VOLUME = {633},
      YEAR = {2009},
     PAGES = {77--113}
}

 \bib{Fed}{article}{
    AUTHOR = {Fedosov, B. V.},
     TITLE = {A simple geometrical construction of deformation quantization},
   JOURNAL = {J. Differential Geom.},
    VOLUME = {40},
      YEAR = {1994},
    NUMBER = {2},
     PAGES = {213--238}
}

 \bib{Fed-index}{article}{
    AUTHOR = {Fedosov, B. V.},
     TITLE = {The {A}tiyah-{B}ott-{P}atodi method in deformation
              quantization},
   JOURNAL = {Comm. Math. Phys.},
    VOLUME = {209},
      YEAR = {2000},
    NUMBER = {3},
     PAGES = {691--728}
}

 \bib{Fedbook}{book}{
    AUTHOR = {Fedosov, B. V.},
     TITLE = {Deformation quantization and index theory},
    SERIES = {Mathematical Topics},
    VOLUME = {9},
 PUBLISHER = {Akademie Verlag, Berlin},
      YEAR = {1996},
     PAGES = {325}
}

\bib{FFS}{article}{
   author={Feigin, B.},
   author={Felder, G.},
   author={Shoikhet, B.},
   title={Hochschild cohomology of the Weyl algebra and traces in
   deformation quantization},
   journal={Duke Math. J.},
   volume={127},
   date={2005},
   number={3},
   pages={487--517}
}

\bib{FW}{article}{
   author={Friedan, D.},
   author={Windey, P.},
   title={Supersymmetric derivation of the Atiyah-Singer index and the
   chiral anomaly},
   journal={Nuclear Phys. B},
   volume={235},
   date={1984},
   number={3},
   pages={395--416}
 %  issn={0550-3213},
 %  review={\MR{888706}},
%   doi={10.1016/0550-3213(84)90506-6},
}

\bib{Fulton-MacPherson}{article}{
    AUTHOR = {Fulton, W.},
    author = {MacPherson, R.},
     TITLE = {A compactification of configuration spaces},
   JOURNAL = {Ann. of Math. (2)},
    VOLUME = {139},
      YEAR = {1994},
    NUMBER = {1},
     PAGES = {183--225},
}

\bib{Getzler}{article}{
   author={Getzler, E.},
   title={Batalin-Vilkovisky algebras and two-dimensional topological field
   theories},
   journal={Comm. Math. Phys.},
   volume={159},
   date={1994},
   number={2},
   pages={265--285},
 %  issn={0010-3616},
 %  review={\MR{1256989}},
}

\bib{Getzler-GM}{article}{
   author={Getzler, E.},
   title={Cartan homotopy formulas and the Gauss-Manin connection in cyclic
   homology},
   conference={
      title={Quantum deformations of algebras and their representations},
      address={Ramat-Gan, 1991/1992; Rehovot},
      date={1991/1992},
   },
   book={
      series={Israel Math. Conf. Proc.},
      volume={7},
      publisher={Bar-Ilan Univ., Ramat Gan},
   },
   date={1993},
   pages={65--78},
 %  review={\MR{1261901}},
}

\bib{Owen-Ryan}{article}{
 author = {Grady, R. E.},
    AUTHOR = {Gwilliam, O.},
     TITLE = {One-dimensional {C}hern--{S}imons theory and the \^{A} genus},
   JOURNAL = {Algebr. Geom. Topol.},
    VOLUME = {14},
      YEAR = {2014},
    NUMBER = {4},
     PAGES = {419--497},
}

  \bib{GGW}{article}{
   author={Grady, R. E.},
       AUTHOR = {Gwilliam, O.},
    AUTHOR = {Williams, B.},
   title={An one-dimensional AKSZ $\sigma$-model and sheaves of differential operators},
   note={in preparation, 2017},
  }

\bib{Grigoriev-Lyakhovich}{article}{
    AUTHOR = {Grigoriev, M. A.},
    author = {Lyakhovich, S. L.},
     TITLE = {Fedosov deformation quantization as a {BRST} theory},
   JOURNAL = {Comm. Math. Phys.},
    VOLUME = {218},
      YEAR = {2001},
    NUMBER = {2},
     PAGES = {437--457},
}

\bib{GS}{book}{
   author={Guillemin, V. W.},
   author={Sternberg, Shlomo},
   title={Supersymmetry and equivariant de Rham theory},
   series={Mathematics Past and Present},
   note={With an appendix containing two reprints by Henri Cartan [
   MR0042426 (13,107e);  MR0042427 (13,107f)]},
   publisher={Springer-Verlag, Berlin},
   date={1999},
   pages={xxiv+228},
%   isbn={3-540-64797-X},
%   review={\MR{1689252}},
%   doi={10.1007/978-3-662-03992-2},
}

\bib{Guillemin-Sternberg}{book}{
    AUTHOR = {Guillemin, V. W.},
    author={Sternberg, S.},
     TITLE = {Supersymmetry and equivariant de {R}ham theory},
    SERIES = {Mathematics Past and Present},
%      NOTE = {With an appendix containing two reprints by Henri Cartan [
%              MR0042426 (13,107e);  MR0042427 (13,107f)]},
 PUBLISHER = {Springer-Verlag, Berlin},
      YEAR = {1999},
     PAGES = {xxiv+228},
      ISBN = {3-540-64797-X},
   MRCLASS = {53D20 (55N91 55R20 55R40 57R91 57T15 58C50 58D30)},
  MRNUMBER = {1689252 (2001i:53140)},
MRREVIEWER = {Liviu I. Nicolaescu},
       DOI = {10.1007/978-3-662-03992-2},
       URL = {http://dx.doi.org/10.1007/978-3-662-03992-2},
}

\bib{Kontsevich-notes}{article}{
    author = {Kontsevich, M.},
     title = {Lectures at Harvard University}
}

\bib{Kontsevich}{article}{
    AUTHOR = {Kontsevich, M.},
     TITLE = {Feynman diagrams and low-dimensional topology},
 BOOKTITLE = {First {E}uropean {C}ongress of {M}athematics, {V}ol.\ {II}
              ({P}aris, 1992)},
    SERIES = {Progr. Math.},
    VOLUME = {120},
     PAGES = {97--121},
 PUBLISHER = {Birkh\"auser, Basel},
      YEAR = {1994},
}

\bib{Kontsevich-deformation}{article}{
    AUTHOR = {Kontsevich, M.},
     TITLE = {Deformation quantization of {P}oisson manifolds},
   JOURNAL = {Lett. Math. Phys.},
  FJOURNAL = {Letters in Mathematical Physics. A Journal for the Rapid
              Dissemination of Short Contributions in the Field of
              Mathematical Physics},
    VOLUME = {66},
      YEAR = {2003},
    NUMBER = {3},
     PAGES = {157--216},
%      ISSN = {0377-9017},
%     CODEN = {LMPHDY},
%   MRCLASS = {53D55 (16E40 53D17 81S10)},
%  MRNUMBER = {2062626 (2005i:53122)},
%MRREVIEWER = {David Chataur},
%       DOI = {10.1023/B:MATH.0000027508.00421.bf},
%       URL = {http://dx.doi.org/10.1023/B:MATH.0000027508.00421.bf},
}

  \bib{DL}{article}{
    author={Lecomte, P. B. A.},
   author={De Wilde, M},
   title={Existence of star-products and of formal deformations of the
   Poisson Lie algebra of arbitrary symplectic manifolds},
   journal={Lett. Math. Phys.},
   volume={7},
   date={1983},
   number={6},
   pages={487--496}
}

\bib{Si-vertex}{article}{
   author={Li, S.},
   title={Vertex algebras and quantum master equaton},
   note={arXiv:1612.01292[Math.QA]},
  }

\bib{Qin-Si}{article}{
   author={Li, Q.},
   author={Li, S.},
   title={On the B-twisted topological sigma model and Calabi-Yau geometry},
   journal={J. Differential Geom.},
   volume={102},
   date={2016},
   number={3},
   pages={409--484},
%   issn={0022-040X},
 %  review={\MR{3466804}},
}

  \bib{Moyal}{article}{
   author={Moyal, J. E.},
   title={Quantum mechanics as a statistical theory},
   journal={Proc. Cambridge Philos. Soc.},
   volume={45},
   date={1949},
   pages={99--124},
%   review={\MR{0029330}},
}

\bib{Nest-Tsygan}{article}{
    AUTHOR = {Nest, R.},
    AUTHOR = {Tsygan, B.},
     TITLE = {Algebraic index theorem},
   JOURNAL = {Comm. Math. Phys.},
    VOLUME = {172},
      YEAR = {1995},
    NUMBER = {2},
     PAGES = {223--262},
}

\bib{Nest-Tsygan2}{article}{
    AUTHOR = {Nest, R.},
    author={Tsygan, B.},
     TITLE = {Formal versus analytic index theorems},
   JOURNAL = {Internat. Math. Res. Notices},
  FJOURNAL = {International Mathematics Research Notices},
      YEAR = {1996},
    NUMBER = {11},
     PAGES = {557--564}
}

\bib{Schwarz}{article}{
   author={Schwarz, A.},
   title={Geometry of Batalin-Vilkovisky quantization},
   journal={Comm. Math. Phys.},
   volume={155},
   date={1993},
   number={2},
   pages={249--260}
}

\bib{Sh}{article}{
   author={Shoikhet, B.},
   title={A proof of the Tsygan formality conjecture for chains},
   journal={Adv. Math.},
   volume={179},
   date={2003},
   number={1},
   pages={7--37},
 %  issn={0001-8708},
 %  review={\MR{2004726}},
 %  doi={10.1016/S0001-8708(02)00023-3},
}

\bib{Szabo}{book}{
    AUTHOR = {Szabo, R. J.},
     TITLE = {Equivariant cohomology and localization of path integrals},
    SERIES = {Lecture Notes in Physics. New Series m: Monographs},
    VOLUME = {63},
 PUBLISHER = {Springer-Verlag, Berlin},
      YEAR = {2000},
     PAGES = {xii+315},
      ISBN = {3-540-67126-9},
   MRCLASS = {53D20 (57R56 58D30 58J37 81S40 81T20)},
  MRNUMBER = {1762411 (2002a:53109)},
MRREVIEWER = {Steven Rosenberg},
       DOI = {10.1007/3-540-46550-2},
       URL = {http://dx.doi.org/10.1007/3-540-46550-2},
}

\bib{Tsygan}{article}{
   author={Tsygan, B.},
   title={Formality conjectures for chains},
   conference={
      title={Differential topology, infinite-dimensional Lie algebras, and
      applications},
   },
   book={
      series={Amer. Math. Soc. Transl. Ser. 2},
      volume={194},
      publisher={Amer. Math. Soc., Providence, RI},
   },
   date={1999},
   pages={261--274},
 %  review={\MR{1729368}},
 %  doi={10.1090/trans2/194/13},
}

\bib{treves}{book}{
    AUTHOR = {Tr{\`e}ves, F.},
     TITLE = {Topological vector spaces, distributions and kernels},
      NOTE = {Unabridged republication of the 1967 original},
 PUBLISHER = {Dover Publications, Inc., Mineola, NY},
      YEAR = {2006},
     PAGES = {xvi+565},
%      ISBN = {0-486-45352-9},
%   MRCLASS = {46-01 (46-02 46Axx 46Fxx)},
%  MRNUMBER = {2296978 (2007k:46002)},
}

\bib{Witten}{article}{
    AUTHOR = {Witten, E.},
     TITLE = {Supersymmetry and {M}orse theory},
   JOURNAL = {J. Differential Geom.},
  FJOURNAL = {Journal of Differential Geometry},
    VOLUME = {17},
      YEAR = {1982},
    NUMBER = {4},
     PAGES = {661--692},
      ISSN = {0022-040X},
     CODEN = {JDGEAS},
   MRCLASS = {58G99 (53C99 81G20)},
  MRNUMBER = {683171 (84b:58111)},
MRREVIEWER = {I. Vaisman},
       URL = {http://projecteuclid.org/euclid.jdg/1214437492},
}

\bib{Witten-index}{article}{
   author={Witten, E.},
   title={Index of Dirac operators},
   conference={
      title={Quantum fields and strings: a course for mathematicians, Vol.
      1, 2 },
      address={Princeton, NJ},
      date={1996/1997},
   },
   book={
      publisher={Amer. Math. Soc., Providence, RI},
   },
   date={1999},
   pages={475--511}
}

\end{biblist}
\end{bibdiv}

\end{document}